\newtheorem{theorem}{Theorem}[section]
\newtheorem{defin}[theorem]{Definition}
\newtheorem{lemma}[theorem]{Lemma}
\newtheorem{proposition}[theorem]{Proposition}
\newtheorem{example}[theorem]{Example}
\newtheorem{corollary}[theorem]{Corollary}
\newtheorem{remar}[theorem]{Remark}
\renewenvironment{proof}{Proof:\ \ \ }{\qed}
\newenvironment{remark}{\begin{remar}\rm}{\end{remar}}
\newenvironment{definition}{\begin{defin}\rm}{\end{defin}}
\newcommand{\bfind}[1]{\index{#1}{\bf #1}}
\newcommand{\gloss}[1]{\glossary{#1}{#1}}
\newcommand{\n}{\par\noindent}
\newcommand{\sn}{\par\smallskip\noindent}
\newcommand{\mn}{\par\medskip\noindent}
\newcommand{\bn}{\par\bigskip\noindent}
\newcommand{\pars}{\par\smallskip}
\newcommand{\parm}{\par\medskip}
\newcommand{\parb}{\par\bigskip}
\newcommand{\R}{\mathbb R}
\newcommand{\Q}{\mathbb Q}
\newcommand{\N}{\mathbb N}
\newcommand{\cB}{\mathcal B}
\newcommand{\cC}{\mathcal C}
\newcommand{\cX}{\mathcal X}
\newcommand{\cS}{\mathcal S}
\newcommand{\cN}{\mathcal N}
\newcommand{\cM}{\mathcal M}
\newcommand{\cBpfs}{{\mathcal B}_{\rm pfs}}
\newcommand{\cBiv}{{\mathcal B}_{\rm ps}}
\newcommand{\cBcb}{{\mathcal B}_{\rm ci}}
\newcommand{\fun}{\mbox{\rm f-un}}
\newcommand{\fint}{\mbox{\rm fic}}   
\newcommand{\nint}{\mbox{\rm ci}}   
\renewcommand{\int}{\mbox{\rm ic}}    
\newcommand{\scl}{\mbox{\rm scl}}
\newcommand{\cal}{\mathcal}
\newcommand{\pr}{^{\mbox{\scriptsize\rm pr}}}
\newcommand{\bpr}{^{\mbox{\scriptsize\rm bpr}}}
\newcommand{\tpr}{^{\mbox{\scriptsize\rm tpr}}}
\newcommand{\Fix}{\mbox{\rm Fix}}
\begin{document}
\title[Measuring the strength of completeness]{A generic approach to measuring the strength of completeness/compactness of various types of spaces and ordered structures}
\author[Hanna \'Cmiel, Franz-Viktor and Katarzyna Kuhlmann]{Hanna \'Cmiel, Franz-Viktor Kuhlmann and Katarzyna Kuhlmann}
\address{University of Szczecin, Institute of Mathematics, ul.~Wielkopolska 15, 70-451 Szczecin,
Poland}
\email{hannacmielmath@gmail.com}
\address{University of Szczecin, Institute of Mathematics, ul.~Wielkopolska 15, 70-451 Szczecin,
Poland}
\email{fvk@usz.edu.pl}
\address{University of Szczecin, Institute of Mathematics, ul.~Wielkopolska 15, 70-451 Szczecin,
Poland}
\email{Katarzyna.Kuhlmann@usz.edu.pl}
\thanks{The first two authors are supported by Opus grant 2017/25/B/ST1/01815 from the National Science Centre of
Poland.\\ Corresponding author: Franz-Viktor Kuhlmann}
\date{November 28, 2020}
\keywords{complete, compact,  spherically complete, ball space, metric space, ultrametric space,
topological space, partially ordered set, lattice, ordered abelian group, ordered field,
fixed point theorem, Caristi--Kirk Fixed Point Theorem, Knaster--Tarski Theorem, Tychonoff Theorem.}
\subjclass[2010]{Primary 54A05, 54H25;
Secondary 03E75, 06A05, 06A06, 06B23, 06B99, 06F20, 12J15, 12J20,
13A18, 47H09, 47H10, 54C10, 54C60, 54E50.}

\begin{abstract}
With a simple generic approach, we develop a classification that encodes and measures the strength of
completeness (or
compactness) properties in various types of spaces and ordered structures. The approach also allows us to encode
notions of functions being contractive in these spaces and structures. As a sample of possible applications we
discuss metric spaces, ultrametric spaces, ordered groups and fields, topological spaces, partially ordered
sets, and lattices. We describe several notions of completeness in these spaces and structures and
determine their respective strengths. In order to illustrate some consequences of the levels of strength, we
give examples of generic fixed point theorems which then can be specialized to theorems in various applications
which work with contracting functions and some completeness property of the underlying space.

Ball spaces are nonempty sets of nonempty subsets of a given set. They are called spherically complete if every
chain of balls has a nonempty intersection. This is all that is needed for the encoding of completeness notions.
We discuss operations on the sets of balls to determine when they lead to larger sets of balls; if so, then the
properties of the so obtained new ball spaces are determined. The operations can lead to increased level of
strength, or to ball spaces of newly constructed structures, such as products. Further,
the general framework makes it possible to transfer concepts and approaches from one application to the other;
as examples we discuss theorems analogous to the Knaster--Tarski Fixed Point Theorem for lattices and theorems
analogous to the Tychonoff Theorem for topological spaces.
\end{abstract}
\maketitle

\newpage
\tableofcontents
\bn
%
%
\section{Introduction}
In view of the notions of completeness of metric spaces, spherical completeness of ultrametric
spaces and compactness of topological spaces, the question arose how these notions can be
``reconciled'', which indicates the search for some ``umbrella'' notion. The question was
triggered in the early 1990s by the appearance of an ultrametric version of Banach's Fixed Point
Theorem (see \cite{[PC]}), which turned out to be a useful tool in valuation theory. An attempt
at finding a generic fixed point theorem for ``metric and order fixed point theory'' was made by
M.~Kostanek and P.~Waszkiewicz in an unpublished paper in the early 2010s. However, the structure
they introduced for this purpose is quite involved. Likewise, it was noticed in private
communications in the late 1990s (inspired by the article \cite{[KU3]}) that ultrametric
fixed point and related theorems appear to have a deeper topological background; but it was
only in 2011 that this observation led to the ideas for the article \cite{[KK1]}, in which ball
spaces were first introduced. They allowed us to extract the essential core of the proofs of
several fixed point theorems and present it in the simplest possible structure. The resulting
``umbrella theorems'' were then used in \cite{[KU3],[KK2],KKP,[KKSo]}) to prove fixed point
theorems in several different settings.

While fixed point theory was the driving force behind this development, the notions we introduced
are fundamental and have a multitude of other aspects and applications. They helped to shape the
approach and results on symmetrically complete ordered fields in \cite{[KKSh]}. Analogues of
basic notions from topology are studied in
\cite{BKK}. The articles \cite{KKub,KKub2} are dealing with problems in ultrametric and ball
spaces that arose when ultrametric spaces were investigated from the particular ball spaces point
of view. In \cite{BCLS} the ball spaces approach is used to prove several principles that are
related to Banach's Fixed Point Theorem but are not themselves fixed point theorems.

The purpose
of the present paper is to systematically develop the abstract theory of ball spaces and to
provide a centerpiece that ties the various applications and directions of research together.
While presenting several new results, its aim is also to present an overview.

A main goal is to show that ball spaces are suitable to encode various completeness notions,
and to measure and compare the strength of these notions. Fixed point theorems will
be used to illustrate the consequences of the level of strength and to show
(with more details than in \cite{[KK1]}) how the umbrella notion makes it
possible to formulate generic fixed point theorems which then can be specialized to theorems in
the various applications.

The inspiration for the minimal structure that allows the encoding of notions
of completeness is taken from ultrametric spaces and their notions of ``ultrametric ball'' and
``spherically complete''. We recall here the basic definitions that were first
introduced in \cite{[KK1]}.

\begin{definition}
 A \bfind{ball space} $(X,\cB)$ consists of a nonempty set $X$ together with a nonempty family
$\cB$ of distinguished nonempty subsets $B$ of $X$ (called {\bf balls}).
\end{definition}

Note that $\cB$, a subset of the power set ${\cal P}(X)$, is partially ordered by inclusion;
we will write $(\cB,\subseteq)$ when we refer to this partially ordered set (in short: poset).

\begin{definition}
A \bfind{nest of balls} in $(X,\cB)$ is a nonempty totally ordered subset of $(\cB,\subseteq)$.
A ball space $(X,\cB)$ is
called \bfind{spherically complete} if every nest of balls has a nonempty intersection.
 \end{definition}

We note that if $(X,\cB)$ is spherically complete and if $\cB'\subseteq\cB$, then also $(X,\cB')$
is spherically complete.

\pars
Beyond the basic notion of ``spherically complete'', we will distinguish various levels of
spherical completeness, which
then provide a tool for measuring the strength of completeness in the spaces and ordered
structures under consideration. On the one hand, we can specify what the intersection of a nest
really is, apart from being nonempty. On the other hand, we can consider intersections of more
general collections of balls than just nests.

A \bfind{directed system of balls} is a nonempty collection of balls such
that the intersection of any two balls in the collection contains a ball
included in the collection. A \bfind{centered system of balls} is a nonempty
collection of balls such that the intersection of any finite number of
balls in the collection is nonempty. Note that every nest is a directed system, and
every directed system is a centered system (but in general, the converses are not true).

\pars
We introduce the following hierarchy of spherical completeness properties:
\sn
{\bf S}$_1$: The intersection of each nest in $(X,\cB)$ is nonempty.
\sn
{\bf S}$_2$: The intersection of each nest in $(X,\cB)$ contains a ball.
\sn
{\bf S}$_3$: The intersection of each nest in $(X,\cB)$ contains maximal balls.
\sn
{\bf S}$_4$: The intersection of each nest in $(X,\cB)$ contains a largest ball.
\sn
{\bf S}$_5$: The intersection of each nest in $(X,\cB)$ is a ball.
\sn
{\bf S}$_i^d$: The same as {\bf S}$_i$, but with ``directed system'' in place of ``nest''.
\sn
{\bf S}$_i^c$: The same as {\bf S}$_i$, but with ``centered system'' in place of ``nest''.
\sn
Note that {\bf S}$_1$ is just the property of being spherically complete. We will use both names, depending on the context.

\pars
The strongest of these properties is {\bf S}$_5^c$; we will abbreviate it as {\bf S}$^*$ as it
will play a central role, enabling us to prove useful results about several important ball
spaces that have this property (it is the
``star'' among the above properties). In Section~\ref{sectpos} we will define an even stronger
(but much more rare) property, namely that arbitrary intersections of balls are again balls.

\parm
We have the following implications:
\mn
\begin{equation}                                 \label{hier}
\begin{array}{ccccccc}
\mbox{\bf S}_1  & \Leftarrow & \mbox{\bf S}_1^d & \Leftarrow & \mbox{\bf S}_1^c &&\\
\Uparrow &  & \Uparrow  &  & \Uparrow \\
\mbox{\bf S}_2  & \Leftarrow & \mbox{\bf S}_2^d & \Leftarrow & \mbox{\bf S}_2^c &&\\
\Uparrow &  & \Uparrow  &  & \Uparrow \\
\mbox{\bf S}_3  & \Leftarrow & \mbox{\bf S}_3^d & \Leftarrow & \mbox{\bf S}_3^c &&\\
\Uparrow &  & \Uparrow  &  & \Uparrow \\
\mbox{\bf S}_4  & \Leftarrow & \mbox{\bf S}_4^d & \Leftarrow & \mbox{\bf S}_4^c &&\\
\Uparrow &  & \Uparrow  &  & \Uparrow \\
\mbox{\bf S}_5  & \Leftarrow & \mbox{\bf S}_5^d & \Leftarrow & \mbox{\bf S}_5^c &=&
\mbox{\bf S}^*
\end{array}
\end{equation}

A question which will be addressed at various points in this paper is under which conditions some
of the implications can be reversed. For instance, it will be shown in Corollary~\ref{S4=S4d}
that {\bf S}$_4$ and {\bf S}$_4^d$ are equivalent.

\parm
In Section~\ref{sectgFPTs} we exemplify the (explicit or implicit) use of spherical completeness and its stronger
versions by presenting generic fixed point theorems for ball spaces. We discuss various ways of encoding the
property of a function of being contractive in the ball space language. We demonstrate the flexibility
of ball spaces, which allows us to taylor them to the specific function under consideration. In connection with
Theorem~\ref{GFPT2} we introduce the idea of associating with every element $x\in X$ a ball $B_x\in\cB$,
leading to the very useful notion of ``{\bf B}$_x$--ball space''.

\pars
The proofs for the generic fixed point theorems will be given in Section~\ref{sectZL}. We use Zorn's Lemma as
the main tool in two different ways: it can be applied to the set of all balls as well as to the set of all
nests, as both are partially ordered by inclusion.

\pars
The properties of hierarchy~(\ref{hier}) will be studied in more detail in Section~\ref{secth}.
We clarify the connection between properties in the hierarchy and properties of posets. Finally
we reveal the strong properties of ball spaces that are closed under various types of nonempty
intersections of balls.

\pars
In Section~\ref{sectappl} we discuss the ways in which ball spaces can be associated with metric spaces,
ultrametric spaces, ordered groups and fields, topological spaces, partially ordered sets, and lattices. In each
case we determine which completeness property is expressed by the spherical completeness of the associated ball
space; an overview is given in the table below. We also study the properties of the associated ball spaces, in
particular which of the properties in the hierarchy~(\ref{hier}) they satisfy.

\bn
\begin{center}
\begin{tabular}{|l|l|l|}
\hline
spaces & balls & completeness \\
& & property \\
\hline
\hline
ultrametric spaces& all closed ultrametric balls & spherically \\
& & complete \\
\hline
metric spaces & metric balls with radii & complete \\
& in suitable sets of & \\
& positive real numbers & \\
\hline
totally ordered sets, & & symmetrically\\
ordered abelian & all intervals $[a,b]$ with $a\leq b$ &  
complete \\
groups and fields &  &  \\
\hline
posets & intervals $[a,\infty)$ & inductively \\
& & ordered \\
\hline
topological spaces & all nonempty closed sets & compact \\
\hline
\hline
metric spaces & Caristi--Kirk balls or & complete \\
& Oettli--Th\'era balls  &  \\
\hline
\end{tabular}
\end{center}

\parm
In this table, the second column indicates a ball space whose spherical completeness is
equivalent to the completeness property stated in the third column. In the case of metric spaces,
the intuitive ball space to consider is that of all closed metric balls. However, the spherical
completeness of this ball space in general is stronger than completeness. See
Section~\ref{sectmet} for details.

\pars
The last entry, the second one for metric spaces, is different from all the other ones. In all other cases
the table has to be read as saying that the completeness property of the given space is equivalent to the spherical
completeness of one single associated ball space containing the indicated balls. But if we work with Caristi--Kirk
balls or Oettli--Th\'era balls, then the completeness of the metric space is  equivalent to the spherical
completeness of a whole variety of Caristi--Kirk ball spaces or Oettli--Th\'era ball spaces that
can be defined on it (see Section~\ref{sectmetCK}). While this may appear impracticable at first glance, it turns
out that these types of balls offer a much better ball spaces approach to metric spaces than the metric balls.

\pars
Not only the specialization of the general framework to particular
applications is important. It is also fruitful to develop the abstract theory of ball spaces,
in particular the behaviour of the various levels of spherical completeness
in the hierarchy (\ref{hier}) under basic operations on ball spaces.

\pars
In Section~\ref{sectsub} we study {\bf S}$^*$ ball spaces. Examples are the compact topological
spaces, where we take the balls to be the nonempty closed sets. Their ball spaces are closed
under arbitrary nonempty intersections of balls, and we make use of the results of
Section~\ref{secth}. We show that {\bf S}$^*$ ball spaces allow the definition of what we call
``spherical closures'' of subsets. They help us to deal with ball space structures induced
on subsets of the set underlying the ball space.

\pars
In Section~\ref{sectuc} we consider set theoretic operations on ball spaces, such as their
closure under finite unions or nonempty intersections of balls, and we study the behaviour of
spherical completeness properties under these operations. We use these preparations to associate
a topology to each ball space and show that it is compact
if and only if the ball space is {\bf S}$_1^c$.

\pars
Products of ball spaces will be studied in Section~\ref{sectTy}. In the paper \cite{BKK}, we
discuss a notion of continuity for functions between ball spaces, as well as quotient spaces and
category theoretical aspects of ball spaces. The products we define here turn out to be the
products in a suitable category of ball spaces.

\pars
Further, the fact that a general framework links various quite different
applications can help to transfer ideas, approaches and results from one to
the other. For instance, the Knaster--Tarski Theorem in the theory of complete
lattices (\cite{T}) presents a useful property of the set of fixed points: they form again a
complete lattice. In Section~\ref{sectanKT}, using the results from Section~\ref{sectsub}, we
prove a ball spaces analogue of the Knaster--Tarski Theorem (Theorem~\ref{KTball}), and an
analogue for topological spaces (Theorem~\ref{KTtop}). A further transfer to other settings,
such as ultrametric spaces, is possible and will be presented in a subsequent paper.

Similarly, in Section~\ref{sectprod} the Tychonoff Theorem from topology is proven for ball
spaces and then transferred to ultrametric spaces. To derive the topological Tychonoff Theorem
from its ball spaces analogue, essential use is made of the results of Section~\ref{sectuc}.

\parm
We hope that we have convinced the reader that the advantage of a general framework is (at least) threefold:
\sn
$\bullet$ \ compare the strength of completenes properties in various spaces and ordered
structures, and transfer concepts and results from one to another,
$\bullet$ \ provide generic proofs of results (such as generic fixed point theorems) which then
can be specialized to various applications, \n
$\bullet$ \ exhibit the underlying principles that are essential for theorems working with some
completeness notion in various spaces and ordered structures.

\bn
%
%
\section{Generic fixed point theorems and the notion of ``contractive function''}              \label{sectgFPTs}
Fixed Point Theorems (FPTs) can be divided into two classes: those dealing
with functions that are in some sense ``contracting'', like
Banach's FPT and its ultrametric variant (cf.\ \cite{[PC]},
\cite{[PR2]}), and those that do not use this property (explicitly or
implicitly), like Brouwer's FPT. In this section, we will be concerned with the
first class.

\pars
Under which conditions do ``contracting'' functions have a fixed point? First of all, we have to
say in which space we work, and we have to specify what we mean by
``contracting''. These specifications will have to be complemented
by a suitable condition on the space, in the sense that it is
``rich'' or ``complete'' enough to contain fixed points for all ``contracting'' functions. Ball
spaces constitute a simple minimal setting in which the necessary
conditions on the function and the space can be formulated.

\pars
We will now give examples of generic FPTs for ball spaces. More such theorems and related
results such as coincidence theorems and so-called attractor theorems are presented in
\cite{[KK1],[KK2],KKP,[KKSo]}. In the present paper we will not discuss the uniqueness of fixed
points; see the cited papers for this aspect. However, an exception will be made in
Theorem~\ref{basic1b}, as this will be used later for an interesting comparison with a
topological fixed point theorem proven in \cite{[SWJ]}.

\pars
{\bf For the remainder of this section, we fix a function $f:X\rightarrow X$.} We abbreviate $f(x)$ by $fx$.
Further, we call a subset $S$ of $X$ \bfind{$f$-closed} if $f(S)\subseteq S$. An
$f$-closed set $S$ will be called \bfind{$f$-contracting} if it satisfies $f(S)\subsetneq S$ unless it is a
singleton. In the search for fixed points, it is a possible strategy to try to find $f$-closed singletons $\{a\}$
because then the condition $f(\{a\})\subseteq \{a\}$ implies that $fa=a$. The significance of this idea is
particularly visible in the case of Caristi--Kirk and Oettli--Th\'era ball spaces discussed in
Section~\ref{sectmetCK}.

\pars
The proofs of the following seven theorems can be found in Section~\ref{sectproofs}.
\begin{theorem}                             \label{basic1a}
Assume that the ball space $(X,\cB)$ is an {\bf S}$_1$ ball space.
\sn
1) \ If every $f$-closed subset of $X$ contains an $f$-contracting
ball, then $f$ has a fixed point in each $f$-closed set.
\sn
2) \ If every $f$-closed subset of $X$ \emph{is} an $f$-contracting ball,
then $f$ has a \emph{unique} fixed point.
\end{theorem}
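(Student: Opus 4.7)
For part~1, fix an $f$-closed set $S\subseteq X$. The plan is to apply Zorn's Lemma to the collection
\[
\cB_S \;=\; \{B\in\cB : B\subseteq S \text{ and } B \text{ is } f\text{-contracting}\},
\]
partially ordered by reverse inclusion; the hypothesis applied to $S$ itself guarantees $\cB_S\neq\emptyset$. Granted a minimal element $B^{*}\in\cB_S$, I claim it must be a singleton: otherwise the definition of ``$f$-contracting'' gives $f(B^{*})\subsetneq B^{*}$, and the nonempty set $f(B^{*})$ is $f$-closed (as the image of an $f$-closed set) and contained in $S$, so by the hypothesis of part~1 it contains an $f$-contracting ball $B''\subseteq f(B^{*})\subsetneq B^{*}$, violating minimality. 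Since the singleton $B^{*}=\{a\}$ is $f$-closed, $fa=a$, yielding a fixed point in $S$.

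The main step is to verify the hypothesis of Zorn's Lemma for $(\cB_S,\supseteq)$: every nest $\cN\subseteq\cB_S$ admits a lower bound in $\cB_S$. Here I would invoke the $\mathbf{S}_1$ property of $(X,\cB)$, which makes $\bigcap\cN$ nonempty. This intersection is also $f$-closed, since for each $B\in\cN$ we have $f(\bigcap\cN)\subseteq f(B)\subseteq B$, hence $f(\bigcap\cN)\subseteq\bigcap\cN$; and clearly $\bigcap\cN\subseteq S$. Applying the hypothesis of part~1 to this $f$-closed subset of $X$ then produces an $f$-contracting ball contained in $\bigcap\cN$, which is the desired lower bound in $\cB_S$.

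For part~2, existence comes from part~1 applied to $S=X$ (which is $f$-closed, and which contains an $f$-contracting ball because it \emph{is} one by the stronger hypothesis). For uniqueness, suppose $a\neq a'$ were both fixed points; then $\{a,a'\}$ is $f$-closed, so by the stronger hypothesis it must itself be an $f$-contracting ball. But $f(\{a,a'\})=\{a,a'\}$ is not a proper subset of $\{a,a'\}$, and $\{a,a'\}$ is not a singleton, a contradiction.

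I expect the one delicate point to be the lower-bound step, where one must simultaneously check that $\bigcap\cN$ is $f$-closed and that the hypothesis of part~1 then yields a ball sitting inside it. Once that is settled, Zorn's Lemma together with the definition of ``$f$-contracting'' delivers the fixed point automatically.
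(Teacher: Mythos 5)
Your proof is correct, but it applies Zorn's Lemma to a different object than the paper does. You order the set $\cB_S$ of $f$-contracting balls contained in $S$ by reverse inclusion, use {\bf S}$_1$ \emph{together with the hypothesis of part 1} to show that every chain in $\cB_S$ has a lower bound (the hypothesis supplies an $f$-contracting ball inside the nonempty, $f$-closed intersection), and then show a minimal ball must be a singleton. The paper instead applies Zorn's Lemma to the poset of \emph{nests} of $f$-closed balls (its Corollary on maximal nests, which holds for any ball space with no completeness assumption at all), takes a maximal nest $\cN$ containing an $f$-contracting ball $B\subseteq S$, uses {\bf S}$_1$ only to make $\bigcap\cN$ nonempty, extracts an $f$-contracting ball $B'\subseteq\bigcap\cN$ from the hypothesis, and derives the same contradiction you do: a non-singleton $B'$ would yield $B''\subseteq f(B')\subsetneq B'$, enlarging the maximal nest. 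The core mechanism -- $f(B)\subsetneq B$ for non-singleton $f$-contracting balls feeding a maximality/minimality contradiction -- is identical in both arguments. What the paper's route buys is uniformity: the maximal-nest machinery works even when no hypothesis is available to place a ball inside the intersection of a chain (as in its Theorems on $\cB^f$ and on functions contracting on orbits), whereas your Zorn-on-balls argument is viable here precisely because the hypothesis of part 1 certifies that intersections of chains in $\cB_S$ contain elements of $\cB_S$; this is the ``delicate point'' you correctly flagged. What your route buys is directness, and in part 2 you are actually more complete than the paper: you explicitly derive existence from part 1 applied to $S=X$, while the paper's printed proof of part 2 only establishes uniqueness via the two-point set $\{x,y\}$, leaving existence implicit.
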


\pars
We will now give examples showing how some of the stronger notions of spherical completeness can be employed in
general FPTs. In the next theorem, observe how stronger assumptions on the ball space and on $f$ allow us to
only talk about $f$-closed balls instead of $f$-closed subsets.
\begin{theorem}                             \label{basic1b}
Assume that $(X,\cB)$ is an {\bf S}$_5$ ball space and that $f(B)\in\cB$ for every $B\in\cB$.
\sn
1) \ If every $f$-closed ball contains an $f$-contracting ball,
then $f$ has a fixed point in each $f$-closed ball.
\sn
2) \ If every $f$-closed ball is $f$-contracting,
then $f$ has a unique fixed point in each $f$-closed ball.
If in addition $X\in\cB$, then $f$ has a unique fixed point.
\end{theorem}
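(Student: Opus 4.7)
The plan is to apply Zorn's Lemma to a suitable family of $f$-closed balls, ordered by reverse inclusion, using $\mathbf{S}_5$ to guarantee that intersections of nests stay inside the family, and then exploit the new hypothesis $f(B)\in\cB$ to force the minimal element to be a singleton. This is structurally parallel to the proof of Theorem \ref{basic1a}, with the key improvement that we stay within balls rather than arbitrary $f$-closed subsets.

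For part 1, fix an $f$-closed ball $B_0$ and set
\[
\mathcal{F}\;=\;\{B\in\cB : B\subseteq B_0 \text{ and } B \text{ is } f\text{-closed}\},
\]
ordered by reverse inclusion. First I would check chain-completeness: given any nest $\mathcal{N}\subseteq\mathcal{F}$, the intersection $B_{\infty}:=\bigcap\mathcal{N}$ is a ball by $\mathbf{S}_5$, lies in $B_0$, and is $f$-closed, since any $x\in B_{\infty}$ satisfies $fx\in f(B)\subseteq B$ for every $B\in\mathcal{N}$. Hence $B_{\infty}\in\mathcal{F}$, and Zorn's Lemma yields a minimal $B^{*}\in\mathcal{F}$. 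By hypothesis $B^{*}$ contains an $f$-contracting ball $B'$; since being $f$-contracting includes being $f$-closed, $B'\in\mathcal{F}$, so minimality gives $B'=B^{*}$. If $B^{*}$ were not a singleton, $f$-contractivity would give $f(B^{*})\subsetneq B^{*}$; but by the standing assumption $f(B^{*})\in\cB$ and clearly $f(B^{*})$ is $f$-closed, so $f(B^{*})\in\mathcal{F}$, contradicting minimality. Therefore $B^{*}=\{a\}$, and $f(\{a\})\subseteq\{a\}$ forces $fa=a$.

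For part 2, existence in each $f$-closed ball follows from part 1, since the hypothesis trivially implies that every $f$-closed ball contains an $f$-contracting ball (itself). For uniqueness in an $f$-closed ball $B$, I would argue by contradiction: suppose $a\neq a'$ are both fixed points of $f$ in $B$, and apply Zorn to the family
\[
\mathcal{F}'\;=\;\{B'\in\cB : B'\subseteq B,\; B' \text{ is } f\text{-closed},\; a,a'\in B'\}.
\]
Chain-completeness of $\mathcal{F}'$ under reverse inclusion uses $\mathbf{S}_5$ just as before, together with the fact that $\{a,a'\}$ is preserved under intersection and $f$-closedness is preserved by the same argument as in part 1. A minimal element $B^{*}\in\mathcal{F}'$ contains both $a$ and $a'$, so it is not a singleton; the hypothesis gives $f(B^{*})\subsetneq B^{*}$, yet $f(B^{*})\in\cB$ is $f$-closed and contains $f(a)=a$ and $f(a')=a'$, hence $f(B^{*})\in\mathcal{F}'$, contradicting minimality. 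The final assertion is immediate: if $X\in\cB$, then $X$ is itself an $f$-closed ball, so the unique fixed point in $X$ is the unique fixed point of $f$.

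The main obstacle, as in Theorem \ref{basic1a}, is ensuring that the various applications of Zorn's Lemma are legitimate; this reduces to verifying that intersections of nests inherit both $f$-closedness and, in part 2, membership of the two candidate fixed points. The genuinely new ingredient here, compared with the $\mathbf{S}_1$ setting, is the way the assumption $f(B)\in\cB$ converts a strict inclusion $f(B^{*})\subsetneq B^{*}$ into an element of the family strictly below $B^{*}$; this is what allows us to replace arbitrary $f$-closed subsets by $f$-closed balls throughout.
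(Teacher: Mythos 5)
Your proof is correct, and while part 1) is essentially the paper's argument in different packaging, part 2) takes a genuinely different route. For part 1), the paper applies Zorn's Lemma to nests of $f$-closed balls (via its corollary on maximal nests) and uses {\bf S}$_5$ to see that the intersection of a maximal nest is itself an $f$-closed ball; you instead apply Zorn's Lemma directly to the poset of $f$-closed balls inside $B_0$ under reverse inclusion, using {\bf S}$_5$ for chain-completeness. In the {\bf S}$_5$ setting these are interchangeable (the intersection of a maximal nest is precisely a minimal ball), so the difference is cosmetic. For part 2), however, the paper proves existence and uniqueness simultaneously by a transfinite induction: starting from $B_0$ it iterates $B_{\nu+1}=f(B_\nu)$, takes intersections at limit ordinals (balls by {\bf S}$_5$, $f$-closed by the paper's Lemma~\ref{observe}), and stops at a stationary ball, which by $f$-contractivity is a singleton $\{x\}$; uniqueness then falls out because any $y\ne x$ in $B_0$ is expelled at some successor step $y\notin B_{\nu+1}=f(B_\nu)$, so $y$ cannot be fixed. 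You instead obtain existence from part 1) and prove uniqueness by a second Zorn argument on the family of $f$-closed balls contained in $B$ that contain two putative fixed points $a\ne a'$: a minimal such ball cannot be a singleton, hence is properly shrunk by $f$, yet its image is a ball that still contains $a=fa$ and $a'=fa'$ --- contradicting minimality. Your route is shorter and avoids transfinite induction entirely, staying uniformly within the Zorn framework and cleanly separating existence from uniqueness; the paper's construction is more explicit, exhibiting the fixed point as the stationary value of (transfinitely) iterating $f$ on $B_0$ and showing concretely how each non-fixed point is eliminated. Both arguments use the standing hypothesis $f(B)\in\cB$ at exactly the same spot: to convert the strict inclusion $f(B^{*})\subsetneq B^{*}$ into a strictly smaller member of the relevant family.
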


The next theorem is a variation on the first parts of the previous two theorems.
\begin{theorem}                             \label{basic1c}
Assume that $(X,\cB)$ is an {\bf S}$_2$ ball space. If every ball in $\cB$ contains a
fixed point or a smaller ball, then $f$ has a fixed point in every ball.
\end{theorem}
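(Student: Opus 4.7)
The plan is to apply Zorn's Lemma to a suitably restricted collection of nests, then use the S$_2$ hypothesis to produce a witness ball, and finally use the hypothesis on $f$ to force the ``fixed point'' alternative.

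Fix an arbitrary ball $B_0\in\cB$; we must produce a fixed point of $f$ inside $B_0$. Let $\mathcal{F}$ denote the family of all nests $\cN\subseteq\cB$ with the property that $B\subseteq B_0$ for every $B\in\cN$. The family $\mathcal{F}$ is nonempty since $\{B_0\}\in\mathcal{F}$, and it is partially ordered by inclusion. The union of any chain in $\mathcal{F}$ is again a totally ordered subcollection of $\cB$ consisting of balls inside $B_0$, hence an element of $\mathcal{F}$. By Zorn's Lemma, $\mathcal{F}$ has a maximal element $\cN^*$.

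Since $\cN^*$ is a nest in $(X,\cB)$, the S$_2$ assumption gives a ball $B^*\in\cB$ with $B^*\subseteq\bigcap\cN^*$. In particular $B^*\subseteq B_0$. By the hypothesis on $f$, either $B^*$ contains a fixed point of $f$ (in which case $B_0$ does too, and we are done), or $B^*$ contains a strictly smaller ball $B^{**}\in\cB$ with $B^{**}\subsetneq B^*$.

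To rule out the second case, I would observe that $\cN^*\cup\{B^{**}\}$ is still a nest: for every $B\in\cN^*$ we have $B^*\subseteq B$, whence $B^{**}\subsetneq B^*\subseteq B$. Moreover $B^{**}\subseteq B_0$, so this enlarged nest lies in $\mathcal{F}$, and it strictly extends $\cN^*$ because $B^{**}\subsetneq B$ for every $B\in\cN^*$. This contradicts the maximality of $\cN^*$, so the ``fixed point'' alternative must have occurred, completing the proof.

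The only delicate point is ensuring that the maximal nest stays inside $B_0$ so that the fixed point we find actually lies in $B_0$; this is handled by baking ``$B\subseteq B_0$'' into the definition of $\mathcal{F}$ from the outset. Everything else is a standard Zorn-plus-contradiction argument, and no strength beyond S$_2$ is used.
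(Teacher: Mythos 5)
Your proof is correct, and it reaches the conclusion by a different route than the paper. The paper's own proof is a two-liner that applies Zorn's Lemma to the poset of \emph{balls}: by Lemma~\ref{S2ub}, the {\bf S}$_2$ property says precisely that $(\cB,<)$ (balls ordered by reverse inclusion, Definition~\ref{revord}) is inductively ordered, so Corollary~\ref{ZLcor} (packaged as Proposition~\ref{S2minb}) yields a \emph{minimal} ball $B$ inside the given ball $B_0$; since $B$ cannot contain a smaller ball, the dichotomy hypothesis forces $B$ to contain a fixed point, which lies in $B_0$. You instead apply Zorn's Lemma to the poset of \emph{nests} confined to $B_0$ and obtain the fixed point by a maximality contradiction; note that your argument in effect shows that the ball $B^*$ produced by {\bf S}$_2$ from the maximal nest is itself a minimal ball, so you are re-deriving Proposition~\ref{S2minb} inline, in nest language. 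The paper's route buys brevity, since {\bf S}$_2$ was set up to be exactly the inductive-ordering condition on $(\cB,<)$; your route is self-contained and uses the same maximal-nest technique (Corollary~\ref{maxnest}) that the paper itself needs for Theorems~\ref{basic1a}, \ref{basic1b} and~\ref{BT'}, where minimal balls are not directly available. Your care in restricting the nests to balls contained in $B_0$, which localizes the fixed point in $B_0$, is the correct substitute for the paper's use of the fact that a minimal ball can be found below any given ball.
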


A condition like ``contains a fixed point or a smaller ($f$-closed) ball'' may appear a little unusual at first.
However, a possible algorithm for finding fixed points should naturally be allowed to stop when it has found one,
so from this point of view the condition is quite natural. We also sometimes use a condition like ``each
$f$-closed ball is a singleton or contains a smaller $f$-closed ball''. This implies ``contains a fixed point or a
smaller $f$-closed ball'' because in an $f$-closed singleton $\{a\}$ the element $a$ must be a fixed point. But
this condition is too strong: as we will see below, there are cases where finding a ball with a fixed point is
easier and more natural than finding a singleton. One example are partially ordered sets where the balls are taken
to be sets of the form $[a,\infty)$. On the other hand, Section~\ref{sectmetCK} shows that there are settings in
which in a natural way we are led to finding $f$-closed singletons (cf.~Proposition~\ref{uhok}).

\pars
The assumptions of these theorems can be slightly relaxed by adapting them  to the given function $f$. Instead of
talking about the intersections of all nests of balls, we need information only about the intersections of nests of
$f$-closed balls. Trivially, if $\emptyset\ne\cB'\subseteq \cB$, then also $(X,\cB')$ is a ball space, and if
$(X,\cB)$ is an {\bf S}$_1$ ball space, then so is $(X,\cB')$. This flexibility of ball spaces appeared already
implicitly in Theorem~\ref{basic1b}
where only $f$-closed balls are used; if nonempty, the subset of all $f$-closed balls is also a ball space, and it
inherits important properties from the (possibly) larger ball space. Tayloring the assumptions on the ball space
to the given function also comes in handy in the following refinement of Theorem~\ref{basic1b}. In its formulation,
the condition ``spherically complete'' does not appear explicitly anymore, but is
implicitly present for the ball space that is chosen in dependence on the function $f$.

\begin{theorem}                             \label{BT'}
Assume that for the given function $f$ there is a ball space $(X,\cB^f)$ such that
\sn
{\bf (B1)} \ each ball in $\cB^f$ is $f$-closed,
\n
{\bf (B2)} \ the intersection of every nest of balls in $\cB^f$ is a singleton or contains a
smaller ball $B\in\cB^f$.
\sn
Then $f$ admits a fixed point in every ball in $\cB^f$.
\end{theorem}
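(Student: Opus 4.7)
The plan is to apply Zorn's Lemma to the set of nests of balls in $\cB^f$ contained in a given ball, mirroring one of the two Zorn's Lemma strategies mentioned in the introduction (applying Zorn to nests rather than to balls). Note first that (B2) implicitly guarantees that $(X,\cB^f)$ is an {\bf S}$_1$ ball space, since singletons and balls are nonempty, so the spherical completeness needed for the argument is built into the hypotheses.

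Fix an arbitrary ball $B_0\in\cB^f$ in which we wish to find a fixed point. Let $\cM$ be the collection of all nests $\cN\subseteq\cB^f$ such that $B\subseteq B_0$ for every $B\in\cN$. Then $\cM$ is nonempty, since $\{B_0\}\in\cM$, and $\cM$ is partially ordered by inclusion. The union of any chain in $\cM$ is again totally ordered by $\subseteq$ and consists of balls contained in $B_0$, hence lies in $\cM$. By Zorn's Lemma, $\cM$ has a maximal element $\cN_0$.

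Apply (B2) to $\cN_0$. Suppose first that $\bigcap\cN_0$ contains a ball $B\in\cB^f$ that is strictly smaller than every ball in $\cN_0$. Then $\cN_0\cup\{B\}$ is a nest in $\cB^f$, and since $B\subseteq B_0$ (as $B_0\in\cN_0$), this extended nest still belongs to $\cM$, strictly enlarging $\cN_0$. This contradicts the maximality of $\cN_0$. Therefore the other alternative in (B2) must hold: $\bigcap\cN_0=\{a\}$ for some $a\in X$.

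Since $B_0\in\cN_0$, we have $a\in B_0$. Moreover, by (B1) every ball $B\in\cN_0$ is $f$-closed, so $fa\in B$ for each $B\in\cN_0$; hence $fa\in\bigcap\cN_0=\{a\}$, which forces $fa=a$. Thus $a$ is a fixed point of $f$ lying in $B_0$, as required. The only delicate point is the one handled above, namely that the ``smaller ball'' alternative in (B2) really contradicts maximality; this works because ``smaller'' is read as strictly contained in every ball of the nest, so adding $B$ preserves total ordering and yields a strictly larger nest still inside $B_0$.
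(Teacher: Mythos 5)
Your proof is correct and takes essentially the same route as the paper's: the paper likewise applies Zorn's Lemma to a poset of nests (via its corollary that every nest extends to a maximal nest, there applied to nests containing $\{B_0\}$), uses maximality to rule out the smaller-ball alternative of (B2), and extracts the fixed point from the $f$-closedness of the intersection guaranteed by (B1). One cosmetic remark: your parenthetical claim that $B_0\in\cN_0$ is asserted without proof (it is in fact true, since otherwise $\cN_0\cup\{B_0\}$ would be a larger nest in $\cM$), but it is also never needed, because every ball of $\cN_0$ lies in $B_0$ by the definition of $\cM$, which already gives $B\subseteq\bigcap\cN_0\subseteq B_0$ and $a\in B_0$.
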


\parm
At first glance, certain conditions of these theorems may appear somewhat unusual. But the reader should
note that their strength lies in the fact that we can freely choose
the ball space. For example, it does not have to be a topology, and in fact, for
essentially all of our applications \emph{it should not be}. This makes
it possible to even choose the balls relative to the given function,
which leads to results like the theorem above.


\pars
When uniqueness of fixed points is not required, then in certain settings (such as ultrametric spaces, see
Section~\ref{sectus}) the condition that a function be ``contracting'' on all
of the space can often be relaxed to the conditions that the function just be ``non-expanding'' everywhere and
``contracting'' on orbits. Again, there is some room for relaxation, and
this is why we will now introduce the following notion.
For each $i\in\N$, $f^i$ will denote the $i$-th iteration of $f$, that is, $f^0x=x$ and
$f^{i+1}x=f(f^ix)$.
\begin{definition}
 The function $f$ is called \bfind{ultimately
contracting on orbits} if there is a function
\begin{equation}                     \label{Bx}
X\ni x\>\mapsto\> B_x\in\cB
\end{equation}
such that for all $x\in X$, the following conditions hold:
\sn
{\bf (NB)} \ $x\in B_x\,$, \sn
{\bf (CO)} \ $B_{fx}\subseteq B_x\,$, and if $x\ne fx$, then $B_{f^ix}\subsetneq B_x$ for some $i\geq 1$.
\sn
If in addition (CO) always holds with $i=1$, then we call $f$ \bfind{contracting on orbits}.
\end{definition}

Note that (NB) and (CO) imply that $f^i x\in B_x$ for all $i\geq 0$.

\pars
The second assertion of our next theorem will show that instead of asking for general spherical
completeness, the scope can be restricted to a particular kind of nests.
\begin{definition}
 A nest $\cN$ of balls
is called an \bfind{$f$-nest} if $\cN=\{B_x\mid x\in S\}$ for some set $S\subseteq X$ that is
closed under~$f$.
\end{definition}

\begin{theorem}                             \label{GFPT2}
Assume that the function $f$ on the ball space $(X,\cB)$ is ultimately contracting on orbits and
that for every $f$-nest $\cN$ in this ball space there is some $z\in \bigcap\cN$ such that
$B_z\subseteq\bigcap\cN$. Then for every $x\in X$, $f$ has a fixed point in $B_x\,$.
\end{theorem}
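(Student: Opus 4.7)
The plan is to apply Zorn's lemma to the poset of $f$-nests extending a natural initial one built from the orbit of $x$, and then use property (CO) together with maximality to force the resulting point to be fixed.

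Fix $x\in X$. The orbit $S_0=\{f^ix:i\geq 0\}$ is $f$-closed, and iterating (CO) gives $B_{f^{i+1}x}\subseteq B_{f^ix}$; hence $\cN_0=\{B_{f^ix}:i\geq 0\}$ is totally ordered by inclusion, so it is an $f$-nest. Partially order the family of all $f$-nests containing $\cN_0$ by inclusion. Given a chain $(\cN_\alpha)$ in this family, write each $\cN_\alpha=\{B_y:y\in S_\alpha\}$ with $S_\alpha$ $f$-closed; then $\bigcup_\alpha S_\alpha$ is $f$-closed, any two balls in $\bigcup_\alpha\cN_\alpha$ lie in some common $\cN_\beta$ and are therefore comparable, and $\bigcup_\alpha\cN_\alpha=\{B_y:y\in\bigcup_\alpha S_\alpha\}$. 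So the union is an $f$-nest extending each $\cN_\alpha$, and Zorn's lemma supplies a maximal $f$-nest $\cN$ containing $\cN_0$.

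By the hypothesis of the theorem applied to $\cN$, choose $z\in\bigcap\cN$ with $B_z\subseteq\bigcap\cN$. Suppose $fz\ne z$; by (CO) pick $i\geq 1$ with $B_{f^iz}\subsetneq B_z$. Writing $\cN=\{B_y:y\in S\}$ with $S$ $f$-closed, set $S'=S\cup\{f^jz:j\geq 0\}$, which is still $f$-closed, and $\cN'=\{B_y:y\in S'\}$. Each new ball satisfies $B_{f^jz}\subseteq B_z\subseteq\bigcap\cN\subseteq B_y$ for every $y\in S$, while the new balls are pairwise comparable by iterated (CO); thus $\cN'$ is an $f$-nest extending $\cN$. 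By maximality $\cN'=\cN$, so $B_z$ and $B_{f^iz}$ both lie in $\cN$ and hence both contain $\bigcap\cN\supseteq B_z$; this yields $B_z\subseteq B_{f^iz}$, contradicting $B_{f^iz}\subsetneq B_z$. Therefore $fz=z$, and since $B_x\in\cN_0\subseteq\cN$, we conclude $z\in\bigcap\cN\subseteq B_x$, as required.

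The principal obstacle is the bookkeeping around the definition of $f$-nest, which is indexed by an $f$-closed subset of $X$ rather than being an arbitrary nest of balls: one must check both that the union of a chain of $f$-nests and that the augmentation of $\cN$ by the forward orbit of $z$ remain of this indexed form. Once this is in place, the strict-inclusion clause of (CO) collides directly with $B_z\subseteq\bigcap\cN$, producing the contradiction that forces $z$ to be a fixed point.
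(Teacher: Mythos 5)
Your proof is correct and follows essentially the same route as the paper's: the paper factors the argument through a lemma stating that every $f$-nest extends to a maximal $f$-nest (Zorn, using that unions of chains of $f$-nests are $f$-nests) and that in a maximal $f$-nest any $z\in\bigcap\cN$ with $B_z\subseteq\bigcap\cN$ must be fixed, since otherwise (CO) lets one adjoin the forward orbit balls of $z$ and enlarge the nest. The only cosmetic difference is that you order $f$-nests by inclusion of the nests themselves while the paper orders them by inclusion of the $f$-closed index sets; both make the maximality argument go through.
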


\pars
The following is the ball spaces analogue of the Ultrametric Banach
Fixed Point Theorem first proved in \cite{[PC]}. We will use the
following condition:
\sn
{\bf (C1)} \ For all $x\in X$, if $y\in B_x$, then $B_y\subseteq B_x\,$.
\sn
%
\begin{theorem}                             \label{GFPT2U}
Assume that the function $f$ on the ball space $(X,\cB)$ is ultimately
contracting on orbits and that condition (C1) is satisfied. If $(X,\cB)$ is an {\bf S}$_1$ ball
space, then for every $x\in X$, $f$ has a fixed point in $B_x\,$.
\end{theorem}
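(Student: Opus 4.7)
The plan is to deduce Theorem \ref{GFPT2U} directly from Theorem \ref{GFPT2}. Since the hypothesis already gives that $f$ is ultimately contracting on orbits with witnessing assignment $x\mapsto B_x$, the only thing left to verify is the nest condition appearing in Theorem \ref{GFPT2}: for every $f$-nest $\cN$ in $(X,\cB)$, there exists $z\in\bigcap\cN$ with $B_z\subseteq\bigcap\cN$. Everything else, including the actual construction of the fixed point in $B_x$, is then taken care of by Theorem \ref{GFPT2}.

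Concretely, I would fix an arbitrary $f$-nest $\cN=\{B_x\mid x\in S\}$ with $S\subseteq X$ closed under $f$. Since $\cN$ is in particular a nest of balls in $(X,\cB)$ and the ball space is assumed to be $\mathbf{S}_1$, the intersection $\bigcap\cN$ is nonempty. Pick any $z\in\bigcap\cN$. For each $x\in S$ we have $z\in B_x$, and condition (C1) then gives $B_z\subseteq B_x$. Taking the intersection over all $x\in S$ yields $B_z\subseteq\bigcap\cN$, which is exactly the condition demanded by Theorem \ref{GFPT2}. Applying that theorem, we obtain a fixed point of $f$ in $B_x$ for every $x\in X$.

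There is no real obstacle; the whole proof is a two-line reduction, the point being that condition (C1) converts the bare nonemptiness guaranteed by $\mathbf{S}_1$ into the stronger statement that the intersection of any nest contains a ball from the $B_x$-family. This is precisely the extra structure that the generic theorem needs, and it illustrates the usefulness of the $\mathbf{B}_x$--ball space viewpoint introduced in connection with Theorem \ref{GFPT2}: as soon as the assignment $x\mapsto B_x$ is compatible with the inclusion order of balls via (C1), one can weaken the completeness hypothesis all the way down to the minimal $\mathbf{S}_1$.
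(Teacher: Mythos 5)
Your proof is correct and is essentially the paper's own argument: the paper likewise uses \textbf{S}$_1$ to pick a point $z$ in the intersection of an $f$-nest (there, a maximal one produced by part 1) of Lemma~\ref{maxfnestz}) and then applies (C1) to conclude $B_z\subseteq\bigcap\cN$, exactly as you do. The only difference is packaging: you verify the hypothesis of Theorem~\ref{GFPT2} for \emph{all} $f$-nests and then cite that theorem, while the paper inlines the same two steps into a direct maximal-nest argument and finishes with part 2) of Lemma~\ref{maxfnestz}.
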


\parm
A particularly elegant version of our approach can be given in the
case of Caristi--Kirk and
Oettli--Th\'era ball spaces (see Theorems~\ref{CKFPT} and~\ref{OTFPT}
in Section~\ref{sectmetCK}). These ball spaces are used in complete
metric spaces. Usually, proofs of fixed point theorems in this setting
work with Cauchy sequences, while the use of metric balls
is inefficient and complicated. For this reason, a ball spaces
approach to metric spaces may seem pointless at
first glance. However, it has turned out that ball spaces made up of
Caristi--Kirk or Oettli--Th\'era balls have a
particularly strong property (cf.\ Proposition~\ref{uhok}), which
makes the ball space approach in this case
exceptionally successful, as demonstrated in Section~\ref{sectmetCK}
and the papers \cite{BCLS,KKP}.

To describe the properties of Caristi--Kirk and Oettli--Th\'era balls,
we introduce the following notions for ball spaces.
\begin{definition}
A ball space $(X,\cB)$ is a {\bf B$_x$--ball space} if there is a
function (\ref{Bx}) such that $\cB= \{B_x\mid x\in X\}$. We call a
B$_x$--ball space $(X,\cB)$
\bfind{normalized} if it satisfies condition (NB), and
\textbf{contractive} if condition (C1) and
the following additional condition are satisfied:
\sn
{\bf (C2)} \ For all $x\in X$, if $B_x$ is not a singleton, then there exists $y\in B_x$ such that
$B_y\subsetneq B_x$.
\sn
A B$_x$--ball space $(X,\cB)$ is \bfind{strongly contractive} if it
satisfies (C1) and:
\sn
{\bf (C2s)} \ For all $x\in X$, if $y\in B_x\setminus\{x\}$, then
$B_y\subsetneq B_x$.
\end{definition}
\sn
Note that condition (C2s) implies (C2) as well as that the function (\ref{Bx}) is a bijection.
In particular, every strongly contractive ball space is contractive. Proposition~\ref{CKOTsc}
will show that all Caristi--Kirk and Oettli--Th\'era ball spaces are strongly contractive
normalized B$_x$--ball spaces. Properties of contractive ball spaces are discussed in
Section~\ref{sectcontrbs}.

It will turn out that condition (NB), while present in many applications, is not always necessary
for our purposes. The next theorem has some similarity with Theorem~\ref{GFPT2}, but it does not
require the B$_x$--ball space to be normalized.
\begin{theorem}                             \label{GFPT3}
If $(X,\cB)$ is a spherically complete contractive B$_x$--ball space and the function $f$ satisfies
\begin{equation}                   \label{fxinBx}
fx\in B_x\quad \mbox{ for all } x\in X\>,
\end{equation}
then it has a fixed point in every ball $B\in\cB$.
\end{theorem}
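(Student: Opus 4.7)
The plan is to apply Zorn's Lemma to the subposet of balls contained in $B$, ordered by inclusion, locate an element minimal under $\subseteq$, and then exploit conditions (C1) and (C2) to force this minimal ball to be a singleton whose unique element is a fixed point of $f$.

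First I would fix a ball $B\in\cB$ and set $\cB_B=\{B'\in\cB\mid B'\subseteq B\}$, which is nonempty since $B\in\cB_B$. The key preparatory step is verifying that every nest $\cN$ in $(\cB_B,\subseteq)$ has a lower bound that still lies in $\cB_B$. By spherical completeness of $(X,\cB)$ the intersection $\bigcap\cN$ is nonempty, so I may pick any $z\in\bigcap\cN$. Writing $\cB=\{B_y\mid y\in X\}$, for every ball $B_y\in\cN$ we have $z\in B_y$, and condition (C1) then yields $B_z\subseteq B_y$. Intersecting over $\cN$ gives $B_z\subseteq\bigcap\cN\subseteq B$, so $B_z\in\cB_B$ is the required lower bound.

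By Zorn's Lemma applied to $(\cB_B,\supseteq)$, there exists a minimal element $B_a\in\cB_B$ under $\subseteq$. If $B_a$ were not a singleton, condition (C2) would supply $y\in B_a$ with $B_y\subsetneq B_a$; such $B_y$ would lie in $\cB_B$ and strictly refine $B_a$, contradicting minimality. Hence $B_a$ is a singleton, say $B_a=\{b\}$. Because $b\in B_a$, condition (C1) yields $B_b\subseteq B_a=\{b\}$, and since balls are nonempty this forces $B_b=\{b\}$. The hypothesis $fb\in B_b$ then gives $fb=b$, so $b$ is a fixed point of $f$; and $b\in B_a\subseteq B$ places it in $B$, as required.

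I do not anticipate a serious obstacle. The one delicate point is the absence of the normalization condition (NB): we cannot in general assert that the ``label'' $a$ of the minimal ball $B_a$ belongs to $B_a$, so we cannot apply (C1) at $a$. The argument sidesteps this because (C2) reduces us to the singleton case, and the singleton itself furnishes an element $b\in B_a$ that \emph{is} eligible for (C1), closing the loop.
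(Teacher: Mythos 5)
Your proof is correct, but it takes a genuinely different route from the paper's. The paper derives Theorem~\ref{GFPT3} from part 2) of Theorem~\ref{MTscsc}, which rests on Proposition~\ref{uhok}: there Zorn's Lemma is applied to the poset of \emph{nests} of balls (via Corollary~\ref{maxnest}), a poset that is chain complete with no hypotheses at all; spherical completeness then yields a point $a$ in the intersection of a maximal nest, (C1) forces $B_a$ into that intersection, maximality of the nest forces $B_a$ to \emph{equal} the intersection, and (C2) together with maximality forces it to be a singleton --- automatically of the form $B_a=\{a\}$, so $fa\in B_a$ immediately gives the fixed point. You instead apply Zorn's Lemma to the poset of \emph{balls} contained in $B$, ordered by reverse inclusion; this obliges you first to verify that this poset is inductively ordered, which you do using spherical completeness plus (C1) --- in effect re-proving part 1) of Theorem~\ref{MTscsc}, that {\bf S}$_1$ implies {\bf S}$_2$ for contractive B$_x$--ball spaces (cf.\ Lemma~\ref{S2ub} and Proposition~\ref{S2minb}). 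A consequence of your route is exactly the subtlety you flag: the minimal ball $B_a$ need not contain its label $a$, since (NB) is not assumed, so you need one further application of (C1) at the element $b$ of the singleton to conclude $B_b=\{b\}$ and hence $fb=b$; in the paper's maximal-nest route this step never arises, because the relevant element is chosen from the intersection and the argument shows directly that its ball is the singleton containing it. What your approach buys is a self-contained and slightly more direct argument tailored to the ball $B$; what the paper's approach buys is factorization through reusable intermediate results (Proposition~\ref{uhok}, Theorem~\ref{MTscsc}) that are also exploited elsewhere, notably for the Caristi--Kirk and Oettli--Th\'era fixed point theorems.
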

We note that if $(X,\cB)$ is a strongly contractive B$_x$--ball space and the function $f$
satisfies (\ref{fxinBx}), then it also satisfies (CO) (with $i=1$ for all $x$).

\pars
Interestingly, the exceptional strength of the Caristi--Kirk and Oettli--Th\'era ball spaces is shared by the
ball space made up of the final segments $[a,\infty)$ on partially ordered sets. It would be worthwhile to find
more examples of such strong ball spaces.

\parm
The proofs of the above generic fixed point theorems above are based on Zorn's Lemma. They will
be given in Section~\ref{sectZL} after first investigating the relation between partially ordered
sets and ball spaces. In the present paper we are not interested in avoiding the use of the axiom
of choice, nor is it our task to study its equivalence with certain fixed point theorems. For a
detailed discussion of the case of Caristi--Kirk
and Oettli--Th\'era ball spaces, see Remark~\ref{remax}.

\bn
%
%
\section{Zorn's Lemma in the context of ball spaces}              \label{sectZL}
Consider a poset $(T,<)$. By a \bfind{chain} in $T$ we mean a
\emph{nonempty} totally ordered subset of $T$. An element $a\in T$ is
said to be an \bfind{upper bound} of a subset $S\subseteq T$ if $b\leq a$ for all $b\in S$.
A poset is said to be \bfind{inductively ordered} if every chain has an upper bound.

\pars
Zorn's Lemma states that every inductively ordered poset contains maximal elements. By restricting the assertion
to the set of all elements in the chain and above it, we obtain the following more precise assertion:
\begin{lemma}                         \label{ZL}
In an inductively ordered poset, every chain has an upper bound which is a maximal element in the poset.
\end{lemma}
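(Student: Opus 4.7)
The plan is to apply Zorn's Lemma itself, but not to the whole poset $T$; instead, I would relativize to the subposet of upper bounds of the given chain. Given a chain $C$ in the inductively ordered poset $(T,<)$, introduce
\[
T' := \{t\in T \mid c\leq t \text{ for all } c\in C\},
\]
the set of upper bounds of $C$ in $T$. Since $(T,<)$ is inductively ordered, $C$ itself admits some upper bound, so $T'$ is nonempty. Any maximal element of $T'$ is automatically an upper bound of $C$ by construction, so the task reduces to producing an element of $T'$ that is maximal in $T'$ and remains maximal in the ambient poset $T$.

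The crux of the argument is to verify that $(T',\leq)$, with the induced order, is again inductively ordered, so that Zorn's Lemma applies to it. Let $D\subseteq T'$ be a chain. Then $D$ is also a chain in $T$, so by hypothesis it has an upper bound $u\in T$. Since chains are nonempty by the paper's convention, pick some $d_0\in D$; for every $c\in C$, transitivity gives $c\leq d_0\leq u$, so $u\in T'$. Thus $u$ is an upper bound of $D$ inside $T'$, as required.

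Zorn's Lemma, applied to the nonempty inductively ordered poset $T'$, then yields a maximal element $m\in T'$; by definition $m$ is an upper bound of $C$. To finish, I would check that $m$ is already maximal in $T$: if $t\in T$ satisfies $m\leq t$, then for every $c\in C$ we have $c\leq m\leq t$, so $t\in T'$, and the maximality of $m$ within $T'$ forces $t=m$.

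The only step that is not pure bookkeeping is the verification that a chain of upper bounds of $C$ admits an upper bound that is itself an upper bound of $C$; this is precisely where nonemptiness of $D$ (used to select $d_0$) enters to propagate the bound through to~$C$. Beyond that, the proof is a direct strengthening of Zorn's Lemma by relativization, and I do not foresee any further obstacle.
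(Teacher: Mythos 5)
Your proof is correct and follows essentially the same route as the paper, which also obtains the lemma by relativizing Zorn's Lemma to the part of the poset lying above the given chain. The only cosmetic difference is that the paper restricts to the chain together with its upper bounds, while you restrict to the upper bounds alone; both subposets are inductively ordered by the same transitivity argument, so the two arguments coincide in substance.
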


\begin{corollary}                         \label{ZLcor}
In an inductively ordered poset, every element lies below a maximal element.
\end{corollary}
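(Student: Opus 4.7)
The plan is a one-line reduction to Lemma~\ref{ZL}. Given any element $a$ in the inductively ordered poset $(T,<)$, I would form the singleton $\{a\}$ and observe that it is a nonempty totally ordered subset of $T$, hence a chain in the sense defined at the beginning of this section. Applying Lemma~\ref{ZL} to this chain produces an upper bound of $\{a\}$ that is simultaneously a maximal element of $T$; by the definition of upper bound, that element satisfies $a\leq m$, which is exactly what ``lies below a maximal element'' means (read in the non-strict sense, so that a maximal element trivially lies below itself).

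There is essentially no obstacle to overcome. The only small point to verify is that the convention adopted at the start of the section, namely that a chain is a \emph{nonempty} totally ordered subset, admits singletons; this is immediate since $\{a\}$ is nonempty and vacuously totally ordered. Thus the corollary is just the specialization of Lemma~\ref{ZL} to chains of length one, and no further argument is needed.
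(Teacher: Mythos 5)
Your proposal is correct and matches the paper's intended argument: the corollary is indeed the immediate specialization of Lemma~\ref{ZL} to the singleton chain $\{a\}$, which qualifies as a chain under the paper's convention that chains are nonempty. Nothing further is needed.
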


\begin{definition}                        \label{revord}
We order ball spaces $(X,\cB)$ by reverse inclusion, that is, we set $B_1<B_2$ if $B_1\supsetneq
B_2\,$. In this way we obtain a poset $(\cB,<)$. Now nests of balls in $\cB$ correspond to
chains in the poset. A maximal element in the poset $(\cB,<)$ is a \bfind{minimal ball}, i.e., a
ball that does not contain any smaller ball.
\end{definition}

\mn
%
%
\subsection{The case of {\bf S}$_2$ ball spaces}
\mbox{ }\sn
The proof of the following lemma is straightforward:
\begin{lemma}                           \label{S2ub}
The ball space $(X,\cB)$ is {\bf S}$_2$ if and only if every chain in $(\cB,<)$ has an upper bound.
\end{lemma}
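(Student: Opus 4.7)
The plan is essentially to unwind the definitions, since the equivalence should come out as a direct translation between the language of intersections of nests and the language of upper bounds in the reversed inclusion order.

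First I would fix a nest $\cN\subseteq\cB$, which by the definition in Definition~\ref{revord} is the same thing as a chain $\cN$ in $(\cB,<)$. I would then observe that under the reversed order, an upper bound of $\cN$ is by definition a ball $B\in\cB$ satisfying $B'\leq B$ for every $B'\in\cN$; expanding the relation $\leq$, this is equivalent to $B\subseteq B'$ for every $B'\in\cN$, that is, $B\subseteq\bigcap\cN$. So the property ``$\cN$ has an upper bound in $(\cB,<)$'' literally coincides with ``$\bigcap\cN$ contains a ball in $\cB$''.

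With this translation in hand, both implications are immediate. For the forward direction, if $(X,\cB)$ is \textbf{S}$_2$ and $\cC$ is a chain in $(\cB,<)$, then $\cC$ is a nest, so $\bigcap\cC$ contains some $B\in\cB$, and the observation above makes $B$ an upper bound of $\cC$. For the converse, given a nest $\cN$ in $(X,\cB)$, we view it as a chain in $(\cB,<)$, obtain an upper bound $B\in\cB$ by hypothesis, and read off $B\subseteq\bigcap\cN$, which gives \textbf{S}$_2$.

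There is essentially no obstacle beyond being careful with the direction of inequalities under the reversed order, and verifying that a nest in the ball-space sense (a \emph{nonempty} totally ordered subset of $(\cB,\subseteq)$) matches exactly the notion of a chain in $(\cB,<)$ (a \emph{nonempty} totally ordered subset) used in the lemma. Nothing needs to be said about whether the ball witnessing \textbf{S}$_2$ is strictly below every element of the nest, since an upper bound in the sense of Section~\ref{sectZL} is allowed to belong to the chain itself; correspondingly, if the nest has a smallest element $B_0$, then $B_0$ both equals $\bigcap\cN$ and serves as the required upper bound.
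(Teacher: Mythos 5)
Your proof is correct and is exactly the straightforward unwinding of definitions that the paper intends (the paper omits the proof, stating only that it is straightforward): under the reverse-inclusion order of Definition~\ref{revord}, an upper bound of a chain $\cN$ in $(\cB,<)$ is precisely a ball $B\in\cB$ with $B\subseteq\bigcap\cN$, which is the content of \textbf{S}$_2$. Your care about the nonemptiness convention for nests and chains, and about upper bounds being allowed to lie in the chain itself, matches the paper's definitions exactly.
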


From this fact, one easily deduces the following result.
\begin{proposition}                           \label{S2minb}
In an {\bf S}$_2$ ball space, every ball and therefore also the intersection of every nest contains a minimal ball.
If in addition every ball is either a singleton or contains a smaller ball, then every ball and therefore also the
intersection of every nest contains a singleton ball.
\end{proposition}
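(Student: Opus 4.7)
The plan is to apply Zorn's Lemma directly to the poset of subballs of a fixed ball, using Lemma~\ref{S2ub} as the device that supplies upper bounds.

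First I would fix a ball $B\in\cB$ and introduce the subposet $\cB_B:=\{B'\in\cB\mid B'\subseteq B\}$ of $(\cB,<)$; this is nonempty because $B\in\cB_B$. To verify that $(\cB_B,<)$ is inductively ordered I would take an arbitrary chain in $\cB_B$, which is in particular a nest in $(X,\cB)$. By Lemma~\ref{S2ub} its intersection contains a ball $B^{*}$, and since every element of the chain lies in $B$, so does $B^{*}$; hence $B^{*}\in\cB_B$ and is an upper bound there. Applying Corollary~\ref{ZLcor} to the element $B\in\cB_B$ then yields a maximal element $B_{\min}\in\cB_B$ with $B_{\min}\subseteq B$. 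Any ball of $\cB$ strictly contained in $B_{\min}$ automatically lies in $\cB_B$, so maximality in $\cB_B$ coincides with minimality in the sense of Definition~\ref{revord}; this gives the first assertion. The statement for intersections of nests then follows at once: Lemma~\ref{S2ub} first places some ball inside $\bigcap\cN$, and the first assertion then places a minimal ball inside that.

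For the second assertion, the extra hypothesis forces every minimal ball to be a singleton, because a non-singleton minimal ball would by assumption properly contain a smaller ball and thus fail to be minimal. Feeding this back into the first part of the statement yields the claim.

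The only delicate point is the verification that the upper bound supplied by $\mathbf{S}_2$ actually lands inside the subposet $\cB_B$, which however is immediate from the reverse-inclusion convention: an upper bound of a chain in $(\cB,<)$ is by definition contained in every member of the chain, hence in $B$. There is no real obstacle beyond this bookkeeping, consistent with the authors' remark that the deduction is straightforward.
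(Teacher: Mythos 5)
Your proof is correct and follows essentially the same route the paper intends: Lemma~\ref{S2ub} shows that $(\cB,<)$ is inductively ordered, Zorn's Lemma then produces a minimal ball inside any given ball (and hence inside the ball that {\bf S}$_2$ places in the intersection of a nest), and the second assertion follows because a non-singleton minimal ball would have to contain a smaller ball. The only difference is cosmetic: your detour through the subposet $\cB_B$ re-derives locally what Corollary~\ref{ZLcor} already gives globally, since applying that corollary directly to $(\cB,<)$ places a maximal element of the reverse-inclusion order, i.e.\ a minimal ball, below the given ball $B$.
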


In view of Lemma~\ref{S2ub} it is important to note
that every {\bf S}$_1$ ball space $(X,\cB)$ can easily be extended to an
{\bf S}$_2$ ball space by adding all singleton subsets of $X$: we define
\[
\cB_s\>:=\> \cB\cup\{\{a\}\mid a\in X\}\>.
\]
The proof of the following result is straightforward.
\begin{lemma}                                     \label{B_s}
The ball space $(X,\cB_s)$ is {\bf S}$_2$ if and only if $(X,\cB)$ is {\bf S}$_1\,$.
\end{lemma}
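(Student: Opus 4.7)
The plan is to argue both directions directly from the definitions, using the observation that the singletons $\{a\}\in\cB_s$ are the $\subseteq$-minimal nonempty subsets of $X$, so any nest that contains a singleton is completely controlled by it.

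For the forward implication, I would assume $(X,\cB_s)$ is {\bf S}$_2$ and take an arbitrary nest $\cN$ in $\cB$. Since $\cB\subseteq\cB_s$, the set $\cN$ is also a nest in $\cB_s$, so by {\bf S}$_2$ the intersection $\bigcap\cN$ contains a ball of $\cB_s$. Balls are nonempty by definition, so $\bigcap\cN\neq\emptyset$, which is exactly {\bf S}$_1$ for $(X,\cB)$.

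For the backward implication, I would assume $(X,\cB)$ is {\bf S}$_1$ and take a nest $\cN$ in $\cB_s$, splitting into two cases according to whether $\cN$ contains a singleton. If $\{b\}\in\cN$ for some $b\in X$, then for every $C\in\cN$ the total order by inclusion forces either $\{b\}\subseteq C$ or $C\subseteq\{b\}$; in the latter case $C=\{b\}$ because $C$ must be nonempty. In either case $b\in C$, so $b\in\bigcap\cN$ and $\{b\}\in\cB_s$ is a ball contained in $\bigcap\cN$, which is what {\bf S}$_2$ demands. Otherwise $\cN$ contains no singleton from $\cB_s\setminus\cB$, hence $\cN\subseteq\cB$ is a nest in the original ball space, and {\bf S}$_1$ for $(X,\cB)$ delivers some $a\in\bigcap\cN$; then $\{a\}\in\cB_s$ is a ball witnessing the required condition.

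There is no real obstacle in this argument; the only subtlety is recognising that once a singleton $\{b\}$ belongs to a nest, every other member of the nest must \emph{contain} $b$ rather than be contained in $\{b\}$, because the only nonempty subset of $\{b\}$ is $\{b\}$ itself. With that observation the case split dispatches the entire statement.
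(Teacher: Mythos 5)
Your proof is correct; the paper itself omits the argument, stating only that it is straightforward, and your two-direction case analysis (singleton in the nest versus nest contained in $\cB$, with the key observation that a nonempty subset of $\{b\}$ must equal $\{b\}$) is exactly the intended routine verification.
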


However, in many situations the point is exactly to prove that a given ball space admits singleton balls. This is
in particular the case when we work with ball spaces that are adapted to a given function, as in Theorem~\ref{BT'}.
In such cases, instead of applying Zorn's Lemma to chains of balls, one can work with chains of nests instead,
as we will discuss in Section~\ref{sectnob}.

\mn
%
%
\subsection{Posets of nests of balls}                   \label{sectnob}
\mbox{ }\sn
We call a poset \bfind{chain complete} if every chain of elements has a least upper bound (which we also call a
\bfind{supremum}). Note that commonly the
condition ``nonempty'' is dropped from the definition of chains, in which case a chain complete poset must have a
least element. However, for our purposes it is more convenient to only consider chains as nonempty totally ordered
sets.
\begin{lemma}
For every ball space $(X,\cB)$, the set of all nests of balls, ordered by inclusion, is a chain complete poset.
\end{lemma}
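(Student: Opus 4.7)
The plan is to show that the obvious candidate for the supremum of a chain of nests, namely their union, is itself a nest and is then by construction the least upper bound in the poset of nests ordered by inclusion.

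More precisely, let $\mathfrak{C}$ be a chain of nests in $(X,\cB)$, i.e., a nonempty collection of nests totally ordered by inclusion. I would set $\cN^*:=\bigcup_{\cN\in\mathfrak{C}}\cN$ and verify the three defining properties of a nest: (i) $\cN^*$ is a subset of $\cB$, which is immediate since every member of every nest in $\mathfrak{C}$ is a ball; (ii) $\cN^*$ is nonempty, which follows from the fact that $\mathfrak{C}$ is nonempty and each $\cN\in\mathfrak{C}$ is itself nonempty by definition of ``nest''; (iii) $\cN^*$ is totally ordered by $\subseteq$. This last condition is the only step requiring a genuine argument, and it is the main (small) obstacle: given $B_1,B_2\in\cN^*$, choose $\cN_1,\cN_2\in\mathfrak{C}$ with $B_j\in\cN_j$; since $\mathfrak{C}$ is a chain, either $\cN_1\subseteq\cN_2$ or $\cN_2\subseteq\cN_1$, so both $B_1$ and $B_2$ lie in a common nest and are therefore comparable under $\subseteq$.

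Once $\cN^*$ is known to be a nest, it is evidently an upper bound of $\mathfrak{C}$ in the inclusion order, since each $\cN\in\mathfrak{C}$ is contained in $\cN^*$. For minimality among upper bounds, let $\cN'$ be any nest with $\cN\subseteq\cN'$ for every $\cN\in\mathfrak{C}$; then $\cN^*=\bigcup_{\cN\in\mathfrak{C}}\cN\subseteq\cN'$, so $\cN^*$ is the least upper bound. This shows that the poset of all nests of balls, ordered by inclusion, is chain complete in the sense defined just before the lemma. No further structural hypothesis on $(X,\cB)$ is needed, which is consistent with the fact that the lemma is stated for arbitrary ball spaces.
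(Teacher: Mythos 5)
Your proof is correct and follows exactly the same route as the paper's (one-line) proof: the union over a chain of nests is again a nest and is the least upper bound. You have simply filled in the details, in particular the key observation that any two balls in the union lie in a common nest and are hence comparable.
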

\begin{proof}
The union over a chain of nests of balls is again a nest of balls, and it is the smallest nest that contains all
nests in the chain.
\end{proof}

\pars
This shows that in particular every chain of nests that contains a given nest $\cN_0$ has an upper bound. Hence
Zorn's Lemma shows:
\begin{corollary}                         \label{maxnest}
Every nest $\cN_0$ of balls in a ball space is contained in a maximal nest.
\end{corollary}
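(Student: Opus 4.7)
The plan is to apply Zorn's Lemma to the subposet of nests that contain $\cN_0$. I would fix $\cN_0$ and consider the collection $\cP_0$ of all nests $\cN$ of balls in $(X,\cB)$ satisfying $\cN\supseteq\cN_0$, partially ordered by inclusion. The goal is to verify that $\cP_0$ is inductively ordered in the sense of Section~\ref{sectZL}, so that Lemma~\ref{ZL} will yield a maximal element of $\cP_0$, which is then automatically a maximal element in the ambient poset of all nests of balls.

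To check inductive orderedness, I would take an arbitrary chain $\cC$ in $\cP_0$ and form the union $\cN^*:=\bigcup\cC$. By the preceding lemma, $\cN^*$ is a nest of balls; indeed it is the supremum of $\cC$ in the chain complete poset of all nests. Since every member of $\cC$ contains $\cN_0$, so does $\cN^*$, and hence $\cN^*\in\cP_0$ is an upper bound for $\cC$ inside $\cP_0$.

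An application of Lemma~\ref{ZL}, or equivalently Corollary~\ref{ZLcor} applied to the element $\cN_0$ of $\cP_0$, then produces a maximal element of $\cP_0$ lying above $\cN_0$. Any such maximal element is a nest of balls that contains $\cN_0$ and cannot be properly enlarged to a larger nest of balls in $(X,\cB)$, which is exactly the content of the corollary. There is essentially no real obstacle: the one substantive ingredient is that unions of chains of nests are again nests, and this is precisely the preceding lemma; everything else is a routine invocation of Zorn's Lemma, tailored to the subposet $\cP_0$ so as to guarantee that the maximal nest obtained actually extends the prescribed $\cN_0$.
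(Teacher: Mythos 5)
Your proof is correct and follows essentially the same route as the paper: both rest on the preceding lemma that the union of a chain of nests is again a nest, and then invoke Zorn's Lemma. Your explicit restriction to the subposet $\cP_0$ of nests containing $\cN_0$ is just an unwinding of what the paper's Corollary~\ref{ZLcor} (every element of an inductively ordered poset lies below a maximal element) already encapsulates.
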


\mn
%
%
\subsection{The case of contractive B$_x$--ball spaces}        \label{sectcontrbs}
\mbox{ }\sn
In general, a (strongly) contractive B$_x$--ball space $(X,\cB)$ may
not contain balls of the form $\{a\}$ for every $a\in X$, in which
case $\cB\subsetneq\cB_s\,$. Hence we cannot apply Lemma~\ref{B_s} in
order to prove that for such ball spaces, {\bf S}$_1$ and {\bf S}$_2$
are equivalent. However, the following proposition provides a
``sufficient'' amount of singleton balls for this purpose. We also
obtain that these singletons satisfy $B_a=\{a\}$ even if
$(X,\cB)$ is not assumed to be normalized.
\begin{proposition}                                                               \label{uhok}
In a contractive B$_x$--ball space, the intersection of a maximal nest of balls, if nonempty, is
a singleton ball of the form $B_a=\{a\}$.
\end{proposition}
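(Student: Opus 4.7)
The plan is to use the maximality of the nest $\cN$ together with conditions (C1) and (C2) to force its intersection to collapse to a singleton of the form $B_a = \{a\}$, without ever invoking condition (NB).

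First I would pick any $a \in \bigcap \cN$, which is nonempty by hypothesis. Then for every ball $B_x \in \cN$ we have $a \in B_x$, so condition (C1) gives $B_a \subseteq B_x$. Consequently $B_a$ is comparable with every ball in $\cN$ (indeed $B_a \subseteq B_x$ for all of them), and therefore $\cN \cup \{B_a\}$ is still a nest. By the maximality of $\cN$, this forces $B_a \in \cN$, whence
\[
\bigcap \cN \;\subseteq\; B_a \;\subseteq\; \bigcap \cN,
\]
so $B_a = \bigcap \cN$. In particular $a \in B_a$, even though we never assumed the normalization condition (NB); this is the first payoff of the maximality trick.

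Next I would show $B_a$ is a singleton. Suppose not: by (C2), there exists $y \in B_a$ with $B_y \subsetneq B_a$. Since $y \in B_a \subseteq B_x$ for every $B_x \in \cN$, condition (C1) yields $B_y \subseteq B_x$ for every such $B_x$, so $\cN \cup \{B_y\}$ is again a nest. By maximality, $B_y \in \cN$, hence $\bigcap \cN \subseteq B_y$. But $\bigcap \cN = B_a$, so $B_a \subseteq B_y \subsetneq B_a$, a contradiction. Therefore $B_a$ must be a singleton, and since $a \in B_a$ we conclude $B_a = \{a\}$.

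The only real subtlety is the point where (NB) is not available: one has to extract $a \in B_a$ from the maximality-driven identification $B_a = \bigcap \cN$, rather than from a blanket assumption. After that step, both the construction of the shrinking ball $B_y$ and the contradiction with maximality are routine applications of (C1) and (C2). No appeal to spherical completeness or to Zorn's Lemma beyond what is already encoded in ``maximal nest'' is needed, which is precisely why this proposition can later support reducing {\bf S}$_1$ to {\bf S}$_2$ for contractive B$_x$--ball spaces.
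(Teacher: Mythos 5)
Your proof is correct and follows essentially the same route as the paper's: use (C1) and maximality to identify $B_a$ with $\bigcap\cN$, then use (C2) and maximality again to force $B_a$ to be a singleton containing $a$. The only cosmetic difference is that the paper derives the final contradiction by noting $B_y\notin\cN$ and adjoining it to get a strictly larger nest, whereas you adjoin $B_y$, conclude $B_y\in\cN$ by maximality, and contradict $B_a\subseteq B_y\subsetneq B_a$; these are the same argument read in opposite directions.
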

\begin{proof}
Let $\cM$ be a maximal nest of balls and assume that $a\in\bigcap\cM$ for some element $a\in X$.
Since $a\in B$ for every ball $B\in\cM$, we obtain from (C1) that $B_a\subseteq B$ for every $B\in\cM$ and
thus $B_a\subseteq\bigcap\cM$. This means that
$\cM\cup\{B_a\}$ is a nest of balls, so by maximality of $\cM$ we have that $B_a\in\cM$. Consequently, $B_a=
\bigcap\cM$. Suppose that $B_a$ is not a singleton. Then by condition (C2) there is some element $b$
such that $B_b\subsetneq B_a\,$ whence $B_b\notin\cM$. But then $\cM\cup\{B_b\}$ is a nest which strictly
contains $\cM$. This contradiction to the maximality of $\cM$ shows that $B_a$ is a singleton. Since
$a\in\bigcap\cM=B_a$, we must have that $B_a=\{a\}$.
\end{proof}

\pars
Since by Corollary~\ref{maxnest} every nest is contained in a maximal nest, this proposition yields:
\begin{theorem}                                 \label{MTscsc}
\mbox{ }\n
1) A contractive B$_x$--ball space is {\bf S}$_1$ if and only if it is {\bf S}$_2\,$.
\sn
2) In a  contractive B$_x$--ball space which is {\bf S}$_1$
every ball $B_x$ contains a singleton ball of the form $B_a = \{a\}$.
\end{theorem}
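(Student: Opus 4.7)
The plan is to prove both parts as direct corollaries of Proposition~\ref{uhok} combined with Corollary~\ref{maxnest}, exactly as the paragraph preceding the theorem suggests. The one direction of part 1) that is nontrivial is $\mathbf{S}_1 \Rightarrow \mathbf{S}_2$ (the reverse implication is free from the hierarchy \eqref{hier}), and part 2) will essentially fall out of the same argument applied to a maximal nest that is required to contain a prescribed ball.

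For part 1), I would start with an arbitrary nest $\cN$ of balls in the contractive B$_x$--ball space $(X,\cB)$. By Corollary~\ref{maxnest}, there is a maximal nest $\cM \supseteq \cN$. Because $(X,\cB)$ is $\mathbf{S}_1$, the intersection $\bigcap \cM$ is nonempty, so Proposition~\ref{uhok} applies and yields an element $a \in X$ with $\bigcap \cM = B_a = \{a\}$. Since $\cN \subseteq \cM$, we have $B_a = \bigcap \cM \subseteq \bigcap \cN$, exhibiting a ball inside $\bigcap \cN$. This gives $\mathbf{S}_2$.

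For part 2), I would fix an arbitrary ball $B_x \in \cB$ and consider the nest $\{B_x\}$. By Corollary~\ref{maxnest} it extends to a maximal nest $\cM$ with $B_x \in \cM$. The $\mathbf{S}_1$ hypothesis again ensures $\bigcap \cM \neq \emptyset$, and Proposition~\ref{uhok} again produces an $a$ with $B_a = \{a\} = \bigcap \cM \subseteq B_x$, which is exactly the singleton ball inside $B_x$ demanded by the statement.

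Honestly, I do not anticipate any real obstacle: the two nontrivial inputs (the existence of a maximal extension of any nest, and the identification of the intersection of a maximal nest as a singleton of the form $B_a=\{a\}$) have already been established. The only thing to be slightly careful about is to invoke Proposition~\ref{uhok} only after first confirming the intersection is nonempty (which is where the $\mathbf{S}_1$ assumption enters), and to record that no normalization or condition (NB) is needed because Proposition~\ref{uhok} already produces $B_a=\{a\}$ without assuming $x \in B_x$.
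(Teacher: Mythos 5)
Your proposal is correct and is essentially the paper's own argument: the paper derives Theorem~\ref{MTscsc} directly from Proposition~\ref{uhok} together with Corollary~\ref{maxnest}, exactly as you do, merely stating it as an immediate consequence rather than writing out the two-step extension-to-maximal-nest argument. Your filled-in details (including the observation that the nontrivial direction is $\mathbf{S}_1 \Rightarrow \mathbf{S}_2$ and that no normalization is needed) are accurate.
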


\mn
%
%
\subsection{Proofs of the fixed point theorems}                \label{sectproofs}
\mbox{ }\sn
Take a ball space $(X,\cB)$ and a function $f: X\rightarrow X$. By $\cB^f$ we will denote the
collection of all $f$-closed balls in $\cB$, provided there exist any. From
Corollary~\ref{maxnest} we infer that every nest in
$(X,\cB)$ and every nest in $(X,\cB^f)$ is contained in a maximal nest.

Under various conditions on $f$ and on $(X,\cB)$ or $(X,\cB^f)$, we
have to make sure that the intersections of such nests contain a fixed point for $f$. The
proof of the following Lemma is straightforward.
\begin{lemma}                                \label{observe}
1) \ If $S$ is an $f$-closed set, then $ff(S)\subseteq f(S)$ since $f(S)\subseteq S$, hence $f(S)$ is
$f$-closed.
\sn
2) \ The intersection over any collection of $f$-closed sets is again an $f$-closed set.
\end{lemma}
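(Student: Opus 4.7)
The plan is to handle the two parts separately, both by direct set-theoretic unpacking of the definition ``$S$ is $f$-closed'' (meaning $f(S)\subseteq S$). Since the lemma is labelled as straightforward by the authors, I do not anticipate any genuine obstacle; the argument amounts to little more than applying $f$ to an inclusion and chasing a point through an intersection.

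For part (1), I would start from the hypothesis $f(S)\subseteq S$ and simply apply the function $f$ to both sides to obtain $f(f(S))\subseteq f(S)$. Since $f(f(S))$ is exactly $ff(S)$ in the notational convention of the paper, this is precisely the statement that $f(S)$ is $f$-closed. No further bookkeeping is needed.

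For part (2), I would let $\{S_i\}_{i\in I}$ be an arbitrary collection of $f$-closed subsets of $X$ and set $T=\bigcap_{i\in I}S_i$. To show $f(T)\subseteq T$, I would take any $x\in T$; then $x\in S_i$ for every $i\in I$, hence $fx\in f(S_i)\subseteq S_i$ for every $i$, so $fx\in\bigcap_{i\in I}S_i=T$. This gives $f(T)\subseteq T$, i.e., $T$ is $f$-closed.

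The only subtlety worth flagging is whether the collection in (2) is allowed to be empty, in which case $T$ would by convention be all of $X$, which is trivially $f$-closed; so the statement remains correct in that degenerate case as well. Otherwise, both parts follow immediately from the definitions, confirming the ``straightforward'' label.
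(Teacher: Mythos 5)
Your proof is correct and matches the paper's intent exactly: the paper declares the proof of this lemma ``straightforward'' and omits it (part 1) even carries its one-line justification inside the statement), and your direct unpacking of $f(S)\subseteq S$ --- applying $f$ to the inclusion for part 1), and chasing a point through the intersection for part 2) --- is precisely that straightforward argument, including the harmless empty-intersection case.
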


\mn
{\bf Proof of Theorem~\ref{basic1a}:} \
Take an {\bf S}$_1$ ball space $(X,\cB)$. For the proof of part 1) of the theorem, assume that
every $f$-closed subset of $X$ contains an $f$-contracting ball $B$. We have to prove that $f$
has a fixed point in each $f$-closed set $S$.

By assumption, $S$ contains an $f$-contracting ball $B$.
By definition, $B$ is $f$-closed. By Corollary~\ref{maxnest} there exists a maximal nest $\cN$
in the set $\cB^f$ of all $f$-closed balls in $\cB$ which contains the nest $\{B\}$.
Then by part 2) of Lemma~\ref{observe}, $\bigcap\cN$ is an $f$-closed set.
By assumption, it contains an $f$-contracting ball $B'$. Suppose that $B'$ is not a singleton.
Then $B'$ properly contains $f(B')$, which by part 1) of Lemma~\ref{observe} is an $f$-closed
set. Again by assumption, it contains an $f$-contracting and hence $f$-closed
ball $B''$. Since $B''\subseteq f(B')\subsetneq B'\subseteq \bigcap\cN$, we find that $\cN\cup
\{B''\}$ is a larger nest than $\cN$, which contradicts the maximality of $\cN$. This proves
that $B'$ is an $f$-closed singleton
contained in $S$ and thus, $S$ contains a fixed point. This proves part 1) of the theorem.

\pars
In order to prove part 2), assume that every $f$-closed subset of $X$ is an $f$-contracting ball.
We have to prove that $f$ has a unique fixed point.

Take any fixed points $x$ and $y$ of $f$. Then the set $S=\{x,y\}$ is $f$-closed, hence by
assumption it is $f$-contracting. Since $f(S)=S$, it must be a singleton, i.e., $x=y$.
\qed

\mn
{\bf Proof of Theorem~\ref{basic1b}:} \
Assume that $(X,\cB)$ is an {\bf S}$_5$ ball space and that $f(B)\in\cB$ for every $B\in\cB$.
Take an arbitrary $f$-closed ball $B_0\in\cB$.

For the proof of part 1) of the theorem, we have to prove, under the assumption that every
$f$-closed ball contains an $f$-contracting ball, that $B_0$ contains a fixed point.

By Corollary~\ref{maxnest} there exists a maximal nest $\cN$ in $\cB^f$ which contains the nest
$\{B_0\}$. By part 2) of Lemma~\ref{observe},
$\bigcap\cN$ is an $f$-closed set. As $(X,\cB)$ is assumed to be an {\bf S}$_5$ ball space,
$\bigcap\cN$ is also a ball, so $\bigcap\cN\in\cB^f$. Hence by assumption, $\bigcap\cN$ contains
an $f$-contracting ball $B$. If this were not a singleton, then it would contain the
smaller ball $f(B)$, which by part 1) of Lemma~\ref{observe} is $f$-closed.
This would give rise to the nest $\cN\cup\{f(B)\}$ that properly contains
$\cN$, contradicting the maximality of $\cN$. Thus, $\bigcap\cN$ is an $f$-closed singleton
contained in $B_0$ and therefore, $B_0$ contains a fixed point.

\pars
For the proof of part 2) of the theorem, we assume that every $f$-closed ball is $f$-contracting;
now we have to prove that $B_0$ contains a fixed point.

Using transfinite induction, we build a nest $\cN$ consisting of $f$-closed balls as follows.
We set $\cN_0:=\{B_0\}$. Having constructed $\cN_\nu$ for some ordinal $\nu$ with
smallest $f$-closed ball $B_\nu\in\cN_\nu\,$, we set $B_{\nu+1}:=
f(B_\nu)\subseteq B_\nu$ and $\cN_{\nu+1}:=\cN_{\nu} \cup \{B_{\nu+1}\}$. By part 1) of
Lemma~\ref{observe}, also $B_{\nu+1}$ is $f$-closed, and by assumption, it is again a ball.

If $\lambda$ is a limit ordinal and we have constructed $\cN_\nu$ for all
$\nu<\lambda$, we observe that the union over all $\cN_\nu$ is a nest
$\cN'_\lambda\,$. We set $B_\lambda:=\bigcap\cN'_\lambda$ and
$\cN_\lambda:=\cN'_\lambda \cup \{B_\lambda\}$. Since $(X,\cB)$ is an {\bf S}$_5$ ball space,
we know that $B_\lambda\in \cB$, and by part 2) of Lemma~\ref{observe}, $B_\lambda$ is $f$-closed.

The construction becomes stationary when we reach an $f$-closed ball $B_\mu$ that does not properly contain
$f(B_\mu)$. By assumption, $B_\mu$ is $f$-contracting, so this means that $B_\mu\subseteq B_0$ is a singleton
$\{x\}$. As it is $f$-closed, $x$ is a fixed point contained in $B_0\,$.

If $x\ne y\in B_0\,$, then $y\notin B_\mu$ which means that there is some $\nu<\mu$ such that
$y\in B_\nu$, but $y\notin B_{\nu+1}=f(B_\nu)$. This shows that $y$
cannot be a fixed point of $f$. Therefore, $x$ is the unique fixed point of $f$ in $B_0\,$.

\pars
The second assertion of part 2), which states that if every $f$-closed ball is $f$-contracting
and $X\in\cB$, then $f$ has a unique fixed point, is an immediate consequence of the first
assertion of part 2), because $X$ is clearly $f$-closed.
\qed

\mn
{\bf Proof of Theorem~\ref{basic1c}:} \
Assume that $(X,\cB)$ is an {\bf S}$_2$ ball space and that every ball in $\cB$ contains a
fixed point or a smaller ball. We have to prove that $f$ has a fixed point in every ball.

Take any ball $B_0\in\cB$. By Proposition~\ref{S2minb}, $B_0$ contains a minimal ball $B$. As
$B$ cannot contain a smaller ball, it must
contain a fixed point by assumption, which then is also an element of $B_0\,$.
\qed

\mn
{\bf Proof of Theorem~\ref{BT'}:} \
Assume that $\cB^f$ is a ball space of $f$-closed balls and that the intersection of every
nest of balls in $\cB^f$ is a singleton or contains a smaller ball $B\in\cB^f$.
We have to prove that $f$ has a fixed point in every ball $B\in\cB^f$.

Take a maximal nest $\cN$ in $\cB^f$ which contains the nest $\{B\}$. The intersection
$\bigcap\cN$ cannot contain a smaller ball $B'\in\cB^f$ since this would contradict the maximality of $\cN$. Hence
by assumption, $\bigcap\cN$ must be a singleton. As it is also $f$-closed by part 2) of Lemma~\ref{observe} and
contained in $B$, we have proved that $f$ has a fixed point in $B$.
\qed

\pars
For the next two proofs we will use the following fact.
\begin{lemma}                                  \label{maxfnestz}
Take a function $f$ on a ball space $(X,\cB)$.
\sn
1) Every $f$-nest $\cN_0$ in $\cB$ is contained in a maximal $f$-nest.
\sn
2) Assume that $f$ is ultimately contracting on orbits. Assume further that
$\cN$ is a maximal $f$-nest in $\cB$ containing a ball $B_x\,$, and that $z\in
\bigcap\cN$ such that  $B_z\subseteq\bigcap\cN$. Then $z$ is a fixed point of $f$ contained in
$B_x\,$.
\end{lemma}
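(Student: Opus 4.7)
This will be a direct Zorn's Lemma argument on the collection $\mathcal{F}$ of all $f$-nests in $\cB$ that contain $\cN_0\,$, ordered by inclusion. Given a chain $\{\cN_\alpha\}$ in $\mathcal{F}$, I would write each $\cN_\alpha=\{B_y\mid y\in S_\alpha\}$ with $S_\alpha\subseteq X$ closed under $f$. The candidate upper bound is $\bigcup_\alpha \cN_\alpha$. Two routine checks then suffice: any two balls in the union already lie together in some $\cN_\alpha$ by the chain property, so the union is totally ordered by inclusion; and the union equals $\{B_y\mid y\in S\}$ with $S:=\bigcup_\alpha S_\alpha$, which is closed under $f$ as a union of $f$-closed sets. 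Hence the union is an $f$-nest in $\mathcal{F}$, and Zorn's Lemma produces a maximal element.

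\textbf{Part 2.} Write $\cN=\{B_y\mid y\in S\}$ with $S\subseteq X$ closed under $f$. By hypothesis $B_z\subseteq\bigcap\cN$, and iterating condition (CO) gives $B_{f^iz}\subseteq B_z$ for every $i\geq 0$. The strategy is to force $z$ to be a fixed point by enlarging $\cN$ along the orbit of $z$. I would set
\[
\cN'\>:=\>\cN\cup\{B_{f^iz}\mid i\geq 0\}\>.
\]
The newly adjoined balls form a descending chain sitting below every ball of $\cN$ (since $B_{f^iz}\subseteq B_z\subseteq\bigcap\cN$), so $\cN'$ is a nest of balls. Moreover $\cN'=\{B_y\mid y\in S\cup\{f^iz\mid i\geq 0\}\}$, and the new index set is still closed under $f$, so $\cN'$ is an $f$-nest. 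By the maximality of $\cN$ we conclude $\cN'=\cN$; in particular $B_z\in\cN$ and $B_{f^iz}\in\cN$ for every $i\geq 0$, whence $\bigcap\cN\subseteq B_{f^iz}$ for every $i$.

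Now suppose for contradiction that $z\ne fz$. Since $f$ is ultimately contracting on orbits, there exists $i\geq 1$ with $B_{f^iz}\subsetneq B_z$. But the previous step gives $B_z\subseteq\bigcap\cN\subseteq B_{f^iz}$, which is incompatible with $B_{f^iz}\subsetneq B_z\,$. Therefore $z=fz$, and since $B_x\in\cN$ we have $z\in\bigcap\cN\subseteq B_x\,$, as required.

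I do not expect any serious obstacle here; the argument is essentially a bookkeeping one showing that a maximal $f$-nest cannot be extended by the orbit balls of a point in its intersection unless that point is already fixed. The only step requiring a little care is verifying that adjoining $\{B_{f^iz}\mid i\geq 0\}$ genuinely produces an $f$-nest whose balls are comparable with every ball of $\cN$; this uses the hypothesis $B_z\subseteq\bigcap\cN$ together with the monotonicity built into (CO).
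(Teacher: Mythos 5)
Your proposal is correct and follows essentially the same route as the paper: Zorn's Lemma applied to chain unions of $f$-nests for part 1, and for part 2 the observation that a maximal $f$-nest cannot be properly extended by the orbit balls $B_{f^iz}$, which via (CO) forces $z=fz$. The only cosmetic differences are that the paper orders $f$-nests by inclusion of their $f$-closed index sets rather than of the ball sets themselves, and that it derives the contradiction directly under the supposition $z\ne fz$ instead of first adjoining the orbit balls unconditionally; neither affects the substance of the argument.
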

\begin{proof}
1) The set of all $f$-nests is partially ordered in the following way.
If $\cN_1=\{B_x\mid x\in S_1\}$ and $\cN_2=\{B_x\mid x\in S_2\}$ are $f$-nests
with $S_1$ and $S_2$  closed under $f$, then we define $\cN_1\leq \cN_2$ if $S_1\subseteq S_2\,$.
Then the union over an ascending chain of $f$-nests is again an $f$-nest since the union over
sets that are closed under $f$ is again closed under $f$. Hence by Corollary~\ref{ZLcor}, for
every $f$-nest $\cN_0$ in $\cB$ there is a maximal $f$-nest $\cN$ containing $\cN_0\,$.
\sn
2) If $z\ne fz$ would hold, then by (CO),
$B_{f^iz}\subsetneq B_z\subseteq\bigcap\cN$ for some $i\geq 1$, and the
$f$-nest $\cN\cup\{B_{f^kz}\mid k\in\N\}$ would properly contain $\cN$. But this would
contradict the maximality of $\cN$. Hence, $z\in\bigcap\cN\subseteq B_x\,$ is a fixed point
of~$f$.
\end{proof}

\mn
{\bf Proof of Theorem~\ref{GFPT2}:} \
Take a function $f$ on a ball space $(X,\cB)$ which is ultimately contracting on orbits and
assume that for every $f$-nest $\cN$ in $\cB$ there is some $z\in \bigcap\cN$ such that
$B_z\subseteq\bigcap\cN$. We have to prove that or every $x\in X$, $f$ has a fixed point in
$B_x\,$.

The set $\{B_{f^i x}\mid i\geq 0\}$ is an $f$-nest. Hence by part 1) of Lemma~\ref{maxfnestz}
there is a maximal $f$-nest $\cN$ containing $\{B_{f^i x}\mid i\geq 0\}$. By assumption, there is
some $z\in \bigcap\cN$ such that $B_z\subseteq\bigcap\cN$. By part 2) of Lemma~\ref{maxfnestz},
$z$ is a fixed point of $f$ contained in $B_x\,$.
\qed

\mn
{\bf Proof of Theorem~\ref{GFPT2U}:} \
Assume that the function $f$ on the {\bf S}$_1$ ball space $(X,\cB)$ is ultimately
contracting on orbits and that condition (C1) is satisfied, that is, for all $x\in X$, if
$y\in B_x$, then $B_y\subseteq B_x\,$. We have to prove that or every $x\in X$, $f$ has a fixed
point in $B_x\,$.

By part 1) of Lemma~\ref{maxfnestz} there exists a maximal $f$-nest $\cN$ containing the $f$-nest
$\{B_{f^i x}\mid i\geq 0\}$. Since $(X,\cB)$ is assumed
to be an {\bf S}$_1$ ball space, there is some $z\in \bigcap\cN$. Hence for every
$B_y$ in $\cN$ we have that $z\in B_x$, whence $B_z\subseteq B_y$ by condition (C1).
Consequently, $B_z\subseteq\bigcap\cN$. By part 2) of Lemma~\ref{maxfnestz}, $z$ is a fixed point
of $f$ contained in $B_x\,$.

\mn
{\bf Proof of Theorem~\ref{GFPT3}:} \
Take a spherically complete contractive B$_x$--ball space $(X,\cB)$ and a function
$f:X\rightarrow X$ such that $fx\in B_x$ for all $x\in X$. We have to prove that $f$
has a fixed point in every ball.

By part 2) of Theorem~\ref{MTscsc}, every ball $B_x$ contains a singleton ball of the form
$B_a = \{a\}$. Since $fa\in B_a=\{a\}$, we find that $a$ is a fixed point of $f$ which is
contained in $B_x\,$.

\mn
%
%
\section{Some facts about the hierarchy of ball spaces}            \label{secth}
\subsection{Connection with posets}
\mbox{ }\sn
In this section we will consider properties of the poset $(\cB,<)$ that we derive from a ball
space $(X,\cB)$ via Definition~\ref{revord}, i.e., through ordering $\cB$ by reverse inclusion.

A \bfind{directed system} in a poset is a nonempty subset which
contains an upper bound for any two of its elements. A poset is called
\bfind{directed complete} if every directed system has a least upper bound.
Note that commonly the condition ``nonempty'' is dropped; but for our purposes it is more
convenient to only consider nonempty systems (cf.~our remark in Section~\ref{sectnob}).
As every chain is a directed system, every directed complete poset is chain complete.

The proof of the following observations is straightforward:
\begin{proposition}                    \label{S2S3}
\n
1) A ball space $(X,\cB)$ is {\bf S}$_2$ if and only if $(\cB,<)$ is inductively ordered.
\sn
2) A ball space $(X,\cB)$ is {\bf S}$_2^d$ if and only if
every directed system in $(\cB,<)$ has an upper bound.
\sn
3) A ball space $(X,\cB)$ is {\bf S}$_4$ if and only if $(\cB,<)$ is chain complete.
\sn
4) A ball space $(X,\cB)$ is {\bf S}$_4^d$ if and only if $(\cB,<)$ is directed complete.
\end{proposition}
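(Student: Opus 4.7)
The plan is to reduce all four equivalences to a single order-translation lemma: for any $\cC\subseteq\cB$, a ball $B\in\cB$ is an upper bound of $\cC$ in the poset $(\cB,<)$ if and only if $B\subseteq\bigcap\cC$. This follows at once from Definition~\ref{revord}, since $B'\le B$ in $<$ is literally $B\subseteq B'$, so being an upper bound of every $B'\in\cC$ means being contained in every $B'\in\cC$, which is the same as lying in $\bigcap\cC$. Once this is in place, the four parts become a matter of matching definitions.

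For (1) and (2), nests of balls coincide with chains in $(\cB,<)$ by Definition~\ref{revord}, so ``every chain has an upper bound'' (inductive ordering) is the same as ``every nest has a ball contained in its intersection'', i.e.\ S$_2$. For (2), I would first check that the abstract notion of directed system in the poset $(\cB,<)$ agrees with the ``directed system of balls'' defined in the introduction: a nonempty $\cD\subseteq\cB$ is directed in $(\cB,<)$ iff any two of its elements have an upper bound in $\cD$, which by the translation lemma means there is $B_3\in\cD$ with $B_3\subseteq B_1\cap B_2$, exactly the paper's definition. Then S$_2^d$ is restated as the condition that every such $\cD$ has an upper bound.

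For (3) and (4) the extra ingredient is that the least upper bound of a family $\cC$ in $(\cB,<)$, when it exists, is precisely the $\subseteq$-largest ball contained in $\bigcap\cC$: among all upper bounds (which by the lemma are the balls contained in $\bigcap\cC$), the least one in $<$ is the $\supseteq$-largest one, hence the ball that contains all other balls lying in $\bigcap\cC$. Chain completeness of $(\cB,<)$ therefore reads ``every nest's intersection contains a largest ball'', which is S$_4$, and directed completeness reads the same way with directed systems, giving S$_4^d$.

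There is no substantial obstacle — the whole argument is bookkeeping about the reversal of the order — and the only place to slip is the direction of the correspondence between ``least upper bound in $<$'' and ``largest ball in $\subseteq$''. I would therefore write the proof by stating the upper-bound equivalence once explicitly, noting the directed-system equivalence, and then dispatching each of the four parts in one line each.
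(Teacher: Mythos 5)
Your proposal is correct and matches what the paper intends: the paper states Proposition~\ref{S2S3} with the remark that its proof is straightforward and omits it, and your argument is precisely that straightforward definitional unwinding. Your translation lemma (upper bounds of $\cC$ in $(\cB,<)$ are exactly the balls contained in $\bigcap\cC$, and least upper bounds are exactly largest balls in $\bigcap\cC$), together with the identification of nests with chains and of directed systems of balls with directed systems in $(\cB,<)$, correctly dispatches all four parts.
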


Let us point out that the intersection of a system of balls may not be
itself a ball, even if it is nonempty (but if it is a ball, then it is
clearly the largest ball contained in all of the balls in the system).
For this reason, in general, the properties {\bf S}$_1\,$,
{\bf S}$_1^d\,$, {\bf S}$_5$ and {\bf S}$_5^d$ cannot be translated into
a corresponding property of $(\cB,<)$. This shows that ball spaces have
more expressive strength than the associated poset structures.

\pars
A proof of the following fact can be found in \cite[p.~33]{C}. See also \cite{M} for generalizations.
\begin{proposition}                  \label{cc=dc}
Every chain complete poset is directed complete.
\end{proposition}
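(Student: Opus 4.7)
The plan is to derive directed completeness from chain completeness by transfinite induction on the cardinality of the directed set, reducing larger cases to smaller ones via a decomposition into directed subsets of strictly smaller cardinality.

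Let $P$ be a chain complete poset and $D\subseteq P$ a directed system; the goal is to produce $\sup D$ in $P$. First I would handle the countable case: enumerate $D=\{d_n:n\in\N\}$ and, starting from $c_0:=d_0$, use directedness to choose inductively $c_{n+1}\in D$ with $c_{n+1}\geq c_n$ and $c_{n+1}\geq d_{n+1}$. The sequence $(c_n)$ is a chain in $P$, so by chain completeness its supremum $s$ exists; since $d_n\leq c_n\leq s$ for all $n$, $s$ is an upper bound of $D$, and because every upper bound of $D$ automatically bounds the cofinal subset $\{c_n:n\in\N\}$ from above, one has $s=\sup D$.

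For the general case I would proceed by transfinite induction on $|D|$, invoking Iwamura's lemma: if $|D|$ is a regular infinite cardinal $\kappa$, then $D$ can be written as an increasing union $D=\bigcup_{\alpha<\kappa} D_\alpha$ of directed subsets with $|D_\alpha|<\kappa$ and $D_\alpha\subseteq D_\beta$ whenever $\alpha\leq\beta$. By the inductive hypothesis each $s_\alpha:=\sup D_\alpha$ exists in $P$, and the monotonicity of suprema shows that $(s_\alpha)_{\alpha<\kappa}$ is a chain in $P$. Chain completeness then delivers $s:=\sup_{\alpha<\kappa}s_\alpha\in P$, and a routine verification (every $d\in D$ lies in some $D_\alpha$, so $d\leq s_\alpha\leq s$; conversely any upper bound of $D$ dominates every $s_\alpha$) yields $s=\sup D$. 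Singular cardinalities reduce to the regular case by passing to a cofinal subfamily of regular-size directed pieces.

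The main obstacle is the behaviour at limit stages. A naive transfinite recursion that tries to build a cofinal well-ordered chain \emph{inside} $D$ collapses at limit ordinals: the supremum of the chain constructed so far exists in $P$ but typically fails to belong to $D$, so directedness of $D$ cannot be invoked to continue the recursion, and the next step gets stuck. Iwamura's decomposition circumvents this by shifting the transfinite induction onto the cardinality parameter: at each stage one deals with directed sets of strictly smaller size whose suprema (guaranteed by induction) automatically assemble into a chain in $P$ to which chain completeness applies cleanly, with no need ever to locate the intermediate suprema back inside $D$.
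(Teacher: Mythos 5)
Your proof is correct in substance, but note that the paper itself does not prove this proposition at all: it simply cites Cohn's \emph{Universal algebra} (p.~33) and Markowsky's article for the proof. The argument you give --- settle the countable case by extracting an increasing cofinal sequence, then induct transfinitely on $|D|$ using Iwamura's decomposition of an infinite directed set into an increasing union of strictly smaller directed subsets, whose suprema (supplied by the inductive hypothesis) form a chain to which chain completeness applies --- is exactly the classical route taken in those references, so you have in effect reconstructed the cited proof rather than found a different one. One correction is in order: your separate treatment of singular cardinals is both vague (``passing to a cofinal subfamily of regular-size directed pieces'' is not a precise reduction) and unnecessary. Iwamura's lemma requires no regularity hypothesis: for any infinite directed set $D$ of cardinality $\kappa$ one can write $D=\bigcup_{\alpha<\kappa}D_\alpha$ with $(D_\alpha)_{\alpha<\kappa}$ increasing, each $D_\alpha$ directed and $|D_\alpha|\leq|\alpha|+\aleph_0$; since $\kappa$ is a cardinal, $|\alpha|+\aleph_0<\kappa$ holds for every $\alpha<\kappa$ whether $\kappa$ is regular or singular, so the induction goes through uniformly with no case split. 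With that repair, your closing diagnosis --- that the decomposition shifts the transfinite recursion off the directed set itself, where limit stages would get stuck because the intermediate suprema need not lie in $D$, onto the cardinality parameter --- is exactly the right explanation of why this strategy succeeds.
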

This proposition together with Proposition~\ref{S2S3} yields:
\begin{corollary}                                       \label{S4=S4d}
Every {\bf S}$_4$ ball space is an {\bf S}$_4^d$ ball space.
\end{corollary}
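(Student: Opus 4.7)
The plan is entirely routine: the corollary is a two-step composition of the two propositions just stated, and no new ideas are needed.

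First, I would invoke parts~3) and~4) of Proposition~\ref{S2S3}, which translate the ball-space conditions into poset-theoretic ones: an {\bf S}$_4$ ball space $(X,\cB)$ is characterized by $(\cB,<)$ being chain complete, while {\bf S}$_4^d$ is characterized by $(\cB,<)$ being directed complete. Recall that in Definition~\ref{revord} the order $<$ on $\cB$ is reverse inclusion, so the least upper bound of a chain (resp.\ directed system) of balls in $(\cB,<)$ is precisely the largest ball contained in the intersection, which matches condition {\bf S}$_4$ (resp.\ {\bf S}$_4^d$).

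Next, I would apply Proposition~\ref{cc=dc} to the poset $(\cB,<)$: chain completeness of a poset implies directed completeness. Chaining these together, if $(X,\cB)$ is {\bf S}$_4$, then $(\cB,<)$ is chain complete, hence directed complete, hence $(X,\cB)$ is {\bf S}$_4^d$, which is exactly the assertion of the corollary.

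There is no genuine obstacle: the nontrivial content has been delegated to Proposition~\ref{cc=dc} (cited from \cite[p.~33]{C}), while Proposition~\ref{S2S3} supplies the dictionary between the ball-space hierarchy and order-theoretic completeness notions. If one wanted to avoid the outside citation and give a self-contained argument, the only thing to verify directly would be that every directed system in $(\cB,<)$ admits a chain with the same set of upper bounds (for instance, by a Zorn's Lemma argument inside the directed system, using Corollary~\ref{maxnest} style reasoning on maximal chains within the system); but given that Proposition~\ref{cc=dc} is already in hand, invoking it is the clean and intended route.
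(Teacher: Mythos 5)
Your proof is correct and is exactly the paper's own argument: the corollary is stated there as an immediate consequence of Proposition~\ref{cc=dc} combined with parts~3) and~4) of Proposition~\ref{S2S3}. One caveat on your closing aside: the self-contained alternative you sketch (finding a chain inside a directed system with the same upper bounds) would not work in general, since a directed set need not contain a cofinal chain --- the actual proof of Proposition~\ref{cc=dc} requires a subtler induction on the cardinality of the directed system, which is precisely why the citation to \cite{C} is the right move.
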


\pars
In the next sections, we will give further criteria for the equivalence of various properties in the hierarchy.

\mn
%
%
\subsection{Singleton balls}        
\mbox{ }\sn
In many applications (e.g. metric spaces with all closed metric balls, ultrametric spaces,
T$_1$ topological spaces) the associated ball spaces
have the property that singleton sets are balls. The following observation is straightforward:
\begin{proposition}                                     \label{ssb}
For a ball space in which all singleton sets are balls, {\bf S}$_1$ is equivalent to {\bf S}$_2\,$,
{\bf S}$_1^d$ is equivalent to {\bf S}$_2^d\,$, and {\bf S}$_1^c$ is equivalent to {\bf S}$_2^c\,$.
\end{proposition}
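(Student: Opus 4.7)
The plan is very short because the proposition reduces to a one-line observation in each of the three cases. The hierarchy diagram~(\ref{hier}) already gives the implications $\mbox{\bf S}_2\Rightarrow\mbox{\bf S}_1$, $\mbox{\bf S}_2^d\Rightarrow\mbox{\bf S}_1^d$, and $\mbox{\bf S}_2^c\Rightarrow\mbox{\bf S}_1^c$ unconditionally, so all that remains is to establish the three converse implications under the extra assumption that every singleton $\{a\}\subseteq X$ is a ball.

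I would treat all three cases uniformly. Let $\cF\subseteq\cB$ be either a nest, a directed system, or a centered system of balls, and assume the appropriate ${\bf S}_1$-type property holds, so that $\bigcap\cF\ne\emptyset$. Pick any $a\in\bigcap\cF$. By hypothesis, $\{a\}\in\cB$, and trivially $\{a\}\subseteq\bigcap\cF$. Hence $\bigcap\cF$ contains a ball, which is precisely the corresponding ${\bf S}_2$-type condition. Running this argument for each of the three choices of $\cF$ yields the three equivalences simultaneously.

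There is no real obstacle here: the only substantive use of the hypothesis is that an arbitrary point of $X$ gives rise to a ball, and the hypothesis ${\bf S}_1$ (resp.\ ${\bf S}_1^d$, ${\bf S}_1^c$) does exactly the work of producing such a point inside the intersection. In this sense the proof is just the remark that ``nonempty intersection'' automatically upgrades to ``intersection contains a (singleton) ball'' as soon as singletons are balls, which is precisely why the authors announce the proof as ``straightforward.''
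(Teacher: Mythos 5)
Your proof is correct and is exactly the ``straightforward'' argument the paper has in mind (the paper omits the proof): the downward implications come from the hierarchy~(\ref{hier}), and for the converses a point $a$ in the nonempty intersection yields the singleton ball $\{a\}$ contained in it, uniformly for nests, directed systems, and centered systems.
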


\mn
%
%
\subsection{Tree-like ball spaces}         \label{sectbsut}
\mbox{ }\sn
We will call a ball space $(X,\cB)$ \bfind{tree-like} if any two balls in $\cB$ with nonempty
intersection are comparable by inclusion. We will see in Section~\ref{sectus} (Proposition~\ref{cust-l})
that the ball spaces associated with classical ultrametric spaces are tree-like.

\begin{proposition}                          \label{thbsut}
In a tree-like ball space, every centered system of balls is a nest. For such a ball space,
{\bf S}$_i\,$, {\bf S}$_i^d$ and {\bf S}$_i^c$ are equivalent, for each $i\in\{1,\ldots,5\}$. If in addition,
in this ball space all singleton sets are balls, then {\bf S}$_1$ is equivalent to {\bf S}$_2^c\,$.
\end{proposition}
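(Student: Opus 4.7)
The proof is essentially a chain of definitional unpackings, so I do not anticipate any substantial obstacle. The plan is to first verify the opening claim and then observe that it forces the hierarchy to collapse.

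First I would prove that every centered system $\cC$ of balls in a tree-like ball space $(X,\cB)$ is a nest. For any two balls $B_1,B_2\in\cC$, the centered condition applied to the pair $\{B_1,B_2\}$ gives $B_1\cap B_2\neq\emptyset$, whereupon the tree-like hypothesis forces $B_1\subseteq B_2$ or $B_2\subseteq B_1$. Hence any two members of $\cC$ are comparable by inclusion, so $\cC$ is a nest.

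Next I would record the inclusions valid in any ball space: every nest is a directed system (the intersection of two comparable balls is the smaller one, which itself lies in the nest), and every directed system is a centered system (a short induction on the number of balls shows that every finite intersection contains a ball of the system, and hence is nonempty). Combining these two observations with the reverse inclusion just established in the tree-like setting, the three classes \emph{nest}, \emph{directed system}, and \emph{centered system} coincide in a tree-like ball space. Since $\mathbf{S}_i$, $\mathbf{S}_i^d$, and $\mathbf{S}_i^c$ are obtained from each other only by changing the class of collections over which one quantifies --- while the conclusion on the intersection (``is nonempty,'' ``contains a ball,'' etc.) is identical --- these three properties become literally the same assertion, and are therefore equivalent for every $i\in\{1,\ldots,5\}$.

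For the concluding sentence, I would combine the equivalence just obtained with Proposition~\ref{ssb}. Assuming that all singleton subsets of $X$ are balls, Proposition~\ref{ssb} gives $\mathbf{S}_1\Leftrightarrow\mathbf{S}_2$, while the equivalence established above yields $\mathbf{S}_2\Leftrightarrow\mathbf{S}_2^c$; chaining these produces $\mathbf{S}_1\Leftrightarrow\mathbf{S}_2^c$, as required. The only step with any real content is the first one, and its subtlety is minimal: it is precisely the interaction between ``pairwise intersections nonempty'' (from the centered condition restricted to pairs) and the defining property of tree-likeness.
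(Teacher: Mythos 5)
Your proposal is correct and follows essentially the same route as the paper: centered plus tree-like gives pairwise comparability (hence a nest), which collapses the three classes of systems and makes {\bf S}$_i$, {\bf S}$_i^d$, {\bf S}$_i^c$ identical assertions, and the final claim is obtained via Proposition~\ref{ssb}. Your write-up merely makes explicit the general inclusions (nest $\Rightarrow$ directed $\Rightarrow$ centered) that the paper leaves implicit.
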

\begin{proof}
The first assertion follows from the fact that in a tree-like ball space, every two balls in a centered
system have nonempty intersection and therefore are comparable by inclusion, so the system is a nest. From this,
the second assertion follows immediately. The third assertion follows by way of Proposition~\ref{ssb}.
\end{proof}

\mn
%
%
\subsection{Intersection closed ball spaces}         \label{sectcs}
\mbox{ }\sn
A ball space $(X,{\cal B})$ will be called \bfind{finitely intersection
closed} if ${\cal B}$ is closed under nonempty intersections of any finite collection of balls,
\bfind{chain intersection closed} or \bfind{nest intersection closed} if ${\cal B}$ is closed
under nonempty intersections of nests of balls, and \bfind{intersection
closed} if ${\cal B}$ is closed under nonempty intersections of arbitrary collections of balls.

We will deduce the following result from Proposition~\ref{thbsut}:
\begin{proposition}
Every chain intersection closed tree-like ball space is intersection closed.
\end{proposition}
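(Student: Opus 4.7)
The plan is to reduce an arbitrary nonempty intersection of balls to a nest intersection by exploiting the tree-like hypothesis, and then invoke chain intersection closedness.

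First, I would take any subcollection $\mathcal{C}\subseteq\mathcal{B}$ such that $\bigcap\mathcal{C}\ne\emptyset$; the goal is to show $\bigcap\mathcal{C}\in\mathcal{B}$. The key observation is that $\mathcal{C}$ is automatically a centered system: for any finitely many $B_1,\dots,B_n\in\mathcal{C}$, the intersection $B_1\cap\cdots\cap B_n$ contains $\bigcap\mathcal{C}$ and is therefore nonempty. So I have converted the arbitrary collection into a centered system of balls without using any hypotheses besides the nonemptiness of $\bigcap\mathcal{C}$.

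Next, I would invoke Proposition~\ref{thbsut}, which tells us that in a tree-like ball space every centered system of balls is a nest. Thus $\mathcal{C}$ is in fact a nest of balls. Since by hypothesis $(X,\mathcal{B})$ is chain (= nest) intersection closed, and $\bigcap\mathcal{C}\ne\emptyset$, we conclude $\bigcap\mathcal{C}\in\mathcal{B}$. This proves that $\mathcal{B}$ is closed under arbitrary nonempty intersections, i.e., $(X,\mathcal{B})$ is intersection closed.

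There is essentially no obstacle here: the entire content of the statement is packaged into Proposition~\ref{thbsut}, and the only bookkeeping is the trivial observation that a collection with nonempty total intersection is automatically centered. The one point worth being explicit about is that our conventions require collections (and hence nests) to be nonempty, so the appeal to chain intersection closedness is legitimate exactly because $\bigcap\mathcal{C}\ne\emptyset$ forces $\mathcal{C}$ itself to be nonempty.
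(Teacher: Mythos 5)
Your proof is correct and is essentially identical to the paper's own argument: both observe that a collection of balls with nonempty intersection is a centered system, invoke Proposition~\ref{thbsut} to conclude it is a nest in the tree-like case, and then apply chain intersection closedness. The extra remark about nonemptiness of $\mathcal{C}$ is a harmless bookkeeping point that the paper leaves implicit.
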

\begin{proof}
Every collection $\cC$ of balls with nonempty intersection in an arbitrary ball space is a centered system. If
the ball space is tree-like, then by Proposition~\ref{thbsut}, $\cC$ is a nest. If in addition the ball space
is chain intersection closed, then the intersection $\bigcap\cC$ is a ball. Hence under the assumptions of the
proposition, the ball space is intersection closed.
\end{proof}

\parm
The proofs of the following two propositions are straightforward:
\begin{proposition}                             \label{fichier}
Assume that the ball space $(X,\cB)$ is finitely intersection closed.
Then by closing under finite intersections, every centered system of
balls can be expanded to a directed system of balls which has the same
intersection. Hence for a finitely intersection closed ball space, {\bf S}$_i^d$ is equivalent to
{\bf S}$_i^c$, for $1\leq i\leq 5$.
\end{proposition}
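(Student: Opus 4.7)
The plan is to prove the first assertion constructively by taking the closure of a centered system under finite intersections, and then deduce the hierarchy equivalence as an immediate corollary. Specifically, given a centered system $\cC$ of balls in a finitely intersection closed ball space $(X,\cB)$, I would define
\[
\cC'\>:=\>\{B_1\cap\cdots\cap B_n\mid n\geq 1,\ B_1,\ldots,B_n\in\cC\}.
\]
Because $\cC$ is centered, each such finite intersection is nonempty, and because $(X,\cB)$ is finitely intersection closed, each such nonempty finite intersection lies in $\cB$. So $\cC'$ is a collection of balls with $\cC\subseteq\cC'$.

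Next I would verify that $\cC'$ is a directed system and that $\bigcap\cC'=\bigcap\cC$. For directedness, given two elements $D_1=B_1\cap\cdots\cap B_n$ and $D_2=B'_1\cap\cdots\cap B'_m$ of $\cC'$, their intersection $D_1\cap D_2=B_1\cap\cdots\cap B_n\cap B'_1\cap\cdots\cap B'_m$ again lies in $\cC'$ by construction, and it is a ball contained in both $D_1$ and $D_2$. For the equality of intersections, $\cC\subseteq\cC'$ gives $\bigcap\cC'\subseteq\bigcap\cC$; conversely, any $x\in\bigcap\cC$ lies in every ball of $\cC$, hence in every finite intersection of such balls, so $x\in\bigcap\cC'$. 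This completes the first assertion.

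For the second assertion, the implication ${\bf S}_i^c\Rightarrow{\bf S}_i^d$ is part of the hierarchy~(\ref{hier}) (every directed system is a centered system), so only the converse requires proof. Assume $(X,\cB)$ is ${\bf S}_i^d$ and let $\cC$ be an arbitrary centered system. Apply the construction above to obtain a directed system $\cC'$ with $\bigcap\cC'=\bigcap\cC$; applying ${\bf S}_i^d$ to $\cC'$ gives the relevant property of $\bigcap\cC'$, which then holds of $\bigcap\cC$ verbatim. This handles $i=1,\ldots,5$ uniformly, since each of the conditions ${\bf S}_i$ is a statement purely about the set $\bigcap\cC$.

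There is no real obstacle here; the only point deserving attention is making sure that the finite intersection closure hypothesis is used in exactly the right place (to guarantee that each finite intersection, being nonempty by centeredness, actually belongs to $\cB$), and that the uniform transfer from $\cC'$ back to $\cC$ works because the five properties ${\bf S}_i$ depend only on the intersection as a set, not on how the collection is indexed.
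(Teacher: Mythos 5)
Your proof is correct and follows exactly the route the paper intends (the paper labels this proof ``straightforward'' and the statement itself encodes the method): close the centered system under finite intersections, use finite intersection closedness to see these intersections are balls, observe the result is a directed system with the same intersection, and transfer each {\bf S}$_i$ property back since it depends only on the intersection as a set. No gaps; the directedness check and the remark that {\bf S}$_i^c\Rightarrow${\bf S}$_i^d$ comes for free from the hierarchy are both handled properly.
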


\begin{proposition}                           \label{cicSeq}
For chain intersection closed ball spaces, the properties {\bf S}$_1\,$, {\bf S}$_2\,$, {\bf S}$_3\,$,
{\bf S}$_4\,$ and {\bf S}$_5$ are equivalent.
\end{proposition}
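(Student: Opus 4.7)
The plan is to exploit the hierarchy diagram (\ref{hier}) and observe that the five properties form a linear chain of implications $\mathbf{S}_5 \Rightarrow \mathbf{S}_4 \Rightarrow \mathbf{S}_3 \Rightarrow \mathbf{S}_2 \Rightarrow \mathbf{S}_1$ in any ball space. These implications are immediate from the definitions: if the intersection of a nest is itself a ball, then it is trivially the largest ball contained in it, hence contains maximal balls, hence contains a ball, hence is nonempty. Therefore to establish equivalence of all five properties, it suffices to close the loop by proving the single implication $\mathbf{S}_1 \Rightarrow \mathbf{S}_5$ under the additional hypothesis that $(X,\cB)$ is chain intersection closed.

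For this single implication, I would argue directly. Assume that $(X,\cB)$ is chain intersection closed and satisfies $\mathbf{S}_1$. Let $\cN$ be an arbitrary nest of balls in $\cB$. By $\mathbf{S}_1$, the set $\bigcap \cN$ is nonempty, so by the chain intersection closedness of $(X,\cB)$ the intersection $\bigcap \cN$ belongs to $\cB$, i.e., it is a ball. Since $\cN$ was arbitrary, this is precisely the defining condition of $\mathbf{S}_5$.

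I do not anticipate any obstacle in this argument: the content of the proposition lies essentially in recognizing that chain intersection closedness is designed to upgrade ``nonempty intersection'' to ``intersection is a ball'', which is exactly the gap between $\mathbf{S}_1$ and $\mathbf{S}_5$. The intermediate properties $\mathbf{S}_2, \mathbf{S}_3, \mathbf{S}_4$ are then sandwiched and must also coincide.
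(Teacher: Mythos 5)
Your proof is correct, and it is exactly the argument the paper has in mind: the paper omits the proof as ``straightforward,'' and the straightforward argument is precisely yours --- the downward implications $\mathbf{S}_5 \Rightarrow \mathbf{S}_4 \Rightarrow \mathbf{S}_3 \Rightarrow \mathbf{S}_2 \Rightarrow \mathbf{S}_1$ hold in any ball space by the hierarchy~(\ref{hier}), and chain intersection closedness upgrades a nonempty nest intersection to a ball, giving $\mathbf{S}_1 \Rightarrow \mathbf{S}_5$.
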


As can be expected, the intersection closed ball spaces are the strong\-est when it comes to equivalence of the
properties in the hierarchy.
\begin{theorem}                             \label{sccent}
For an intersection closed ball space, {\bf S}$_1$ is equivalent to {\bf S}$^*$, so
all properties in the hierarchy (\ref{hier}) are equivalent.
\end{theorem}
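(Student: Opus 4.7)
The plan is to assemble the conclusion from the structural equivalences already established in this section; the substantive work has all been done, and only a short bookkeeping argument remains.

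In view of the hierarchy (\ref{hier}), only the implication $\mathbf{S}_1 \Rightarrow \mathbf{S}^*$ requires proof, since the reverse is automatic. Moreover, $\mathbf{S}^* = \mathbf{S}_5^c$ decomposes into two clauses: that the intersection of every centered system is nonempty, and that this intersection is itself a ball. The second clause is immediate from the intersection closedness hypothesis as soon as the first is established. So I would reduce the task to proving $\mathbf{S}_1 \Rightarrow \mathbf{S}_1^c$.

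For this I would chain together the earlier propositions. First, intersection closed implies chain intersection closed, so Proposition~\ref{cicSeq} upgrades $\mathbf{S}_1$ to $\mathbf{S}_4$. Next, Corollary~\ref{S4=S4d} upgrades $\mathbf{S}_4$ to $\mathbf{S}_4^d$, which in particular yields $\mathbf{S}_1^d$ (since if the intersection of every directed system contains a largest ball, it is nonempty). Finally, intersection closed implies finitely intersection closed, so Proposition~\ref{fichier} gives $\mathbf{S}_1^d \Leftrightarrow \mathbf{S}_1^c$. Combining these, every centered system has nonempty intersection, and by intersection closedness that intersection is itself a ball, so $\mathbf{S}^*$ holds.

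There is essentially no obstacle here; the only point requiring a bit of care is the routing through $\mathbf{S}_4$ rather than $\mathbf{S}_5$, because Corollary~\ref{S4=S4d} is formulated at level $4$ (and ultimately rests on Proposition~\ref{cc=dc}, which is a statement about posets $(\cB,<)$ and does not translate to a direct passage from $\mathbf{S}_5$ to $\mathbf{S}_5^d$). Taking this detour, however, keeps the whole argument to a few lines.
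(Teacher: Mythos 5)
Your proposal is correct and follows essentially the same route as the paper: both chain together Proposition~\ref{cicSeq} (via chain intersection closedness) to get {\bf S}$_1\Rightarrow${\bf S}$_4$, Corollary~\ref{S4=S4d} to get {\bf S}$_4^d$, Proposition~\ref{fichier} (via finite intersection closedness) to pass from directed to centered systems, and finally intersection closedness to turn the nonempty intersection into a ball. The only difference is cosmetic: you apply Proposition~\ref{fichier} at level~1 after dropping from {\bf S}$_4^d$ to {\bf S}$_1^d$, whereas the paper applies it at level~4 and concludes {\bf S}$_4^c\Rightarrow${\bf S}$_5^c$ directly.
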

\begin{proof}
Since $(X,\cB)$ is intersection closed, it is in particular chain intersection closed, hence by
Proposition~\ref{cicSeq}, {\bf S}$_1$ implies {\bf S}$_4\,$. By Corollary~\ref{S4=S4d}, {\bf S}$_4$ implies
{\bf S}$_4^d$. Since $(X,\cB)$ is intersection closed, Proposition~\ref{fichier} shows that {\bf S}$_4^d$
implies {\bf S}$_4^c$. Again since $(X,\cB)$ is intersection closed, the intersection over every directed system
of balls, if nonempty, is a ball; hence {\bf S}$_4^c$ implies {\bf S}$_5^c$. Altogether, we have shown that
{\bf S}$_1$ implies {\bf S}$_5^c$, which shows that all properties in the hierarchy~(\ref{hier})
are equivalent.
\end{proof}

\begin{proposition}                                     \label{S*ic}
Every {\bf S}$^*$ ball space is intersection closed.
\end{proposition}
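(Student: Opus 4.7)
The plan is to unpack the two definitions and observe that, when the total intersection is nonempty, the hypothesis of {\bf S}$^*$ is automatically satisfied. Concretely, to prove intersection closedness I need to show: for every collection $\cC\subseteq\cB$ whose total intersection $\bigcap\cC$ is nonempty, this intersection lies in $\cB$.

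The key step is the following elementary observation. Suppose $\cC\subseteq\cB$ is a nonempty collection with $\bigcap\cC\neq\emptyset$. Then for any finite subcollection $B_1,\ldots,B_n\in\cC$ we have
\[
B_1\cap\cdots\cap B_n\>\supseteq\>\bigcap\cC\>\neq\>\emptyset,
\]
so $\cC$ satisfies the definition of a centered system of balls. Since $(X,\cB)$ is {\bf S}$^*$, i.e.\ {\bf S}$_5^c$, the intersection of every centered system of balls is a ball; applying this to $\cC$ gives $\bigcap\cC\in\cB$, as required.

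There is essentially no obstacle here beyond carefully matching the definitions: the content of {\bf S}$^*$ is precisely that \emph{centered} families have a ball as their intersection, and nonempty total intersection trivially implies the centered property. So the proof should be a couple of lines, and it shows that Proposition~\ref{S*ic} together with the definition of intersection closedness makes the notions ``{\bf S}$^*$'' and ``intersection closed and {\bf S}$_1^c$'' essentially interchangeable, complementing Theorem~\ref{sccent} from the opposite direction.
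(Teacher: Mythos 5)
Your proof is correct and is essentially identical to the paper's: both arguments observe that a collection of balls with nonempty total intersection is automatically a centered system, and then apply the {\bf S}$_5^c$ property to conclude the intersection is a ball. Nothing is missing.
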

\begin{proof}
Take any collection of balls with nonempty intersection. Each element in the intersection lies in every ball, so
the collection is a centered system. By assumption, the intersection is again a ball.
\end{proof}

\parm
In a poset, a set $S$ of elements is \bfind{bounded} if and only if it has an upper bound.
A poset is \bfind{bounded complete} if every nonempty bounded set has a least upper bound.
A \bfind{bounded system of balls} is a nonempty collection of balls whose
intersection contains a ball. Note that a bounded system of balls is a
centered system, but the converse is in general not true (not even a nest of balls
is necessarily a bounded system if the ball space is not {\bf S}$_2$).
\pars
The proof of the next lemma is straightforward.
\begin{lemma}
The poset $(\cB,<)$ is bounded complete if and only if the intersection
of every bounded system of balls in $(X,{\cal B})$ contains a largest
ball. In an intersection closed ball space, the intersection of every
bounded system of balls is a ball.
\end{lemma}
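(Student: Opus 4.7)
The plan is to unwind the definitions directly, since the lemma is essentially a translation between the poset language of $(\cB,<)$ and the set-theoretic language of intersections of balls. Recall from Definition~\ref{revord} that $(\cB,<)$ is ordered by reverse inclusion.

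For the first assertion I would first check what an upper bound in $(\cB,<)$ of a subset $\cC\subseteq\cB$ looks like. By definition, $B\in\cB$ is an upper bound of $\cC$ iff $B'\leq B$ for every $B'\in\cC$, which unravels to $B\subseteq B'$ for every $B'\in\cC$, i.e., $B\subseteq\bigcap\cC$. Thus $\cC$ is bounded in the poset sense precisely when $\bigcap\cC$ contains a ball, which is exactly the definition of a bounded system of balls. Moreover, a least upper bound of $\cC$ in $(\cB,<)$ is a ball $B\subseteq\bigcap\cC$ such that every other ball $B'\subseteq\bigcap\cC$ satisfies $B'\leq B$, i.e., $B'\supseteq B$; this says exactly that $B$ is the largest ball contained in $\bigcap\cC$. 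The equivalence in the first assertion then falls out: $(\cB,<)$ is bounded complete iff every nonempty bounded $\cC\subseteq\cB$ has a least upper bound, iff every bounded system of balls admits a largest ball inside its intersection.

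For the second assertion I would use the definition of intersection closed from Section~\ref{sectcs}: $\cB$ is closed under nonempty intersections of arbitrary collections of balls. If $\cC$ is a bounded system, then by definition $\bigcap\cC$ contains a ball, so in particular $\bigcap\cC\neq\emptyset$. Since $\cC$ is a (nonempty) collection of balls with nonempty intersection, the intersection closure assumption gives $\bigcap\cC\in\cB$, proving that the intersection is itself a ball.

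Neither part presents a real obstacle; the only thing to be careful about is making the correspondence between ``bounded set'' in $(\cB,<)$ and ``bounded system of balls'' in $(X,\cB)$ explicit, and noting that nonemptiness is built into our convention for bounded systems (a bounded system is nonempty by definition, matching the ``nonempty bounded set'' in the definition of bounded complete). Both halves are therefore a direct translation of definitions, with no auxiliary lemmas required beyond those already established.
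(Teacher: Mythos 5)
Your proof follows exactly the straightforward definitional unwinding that the paper has in mind (the paper states no proof, declaring the lemma straightforward), and both halves of your argument are in substance correct: the identification of bounded subsets of $(\cB,<)$ with bounded systems of balls, and the two-line argument for the intersection closed case, are precisely what is needed. One detail needs fixing, however: your characterization of the least upper bound has the inequality reversed. A least upper bound of $\cC$ in $(\cB,<)$ is an upper bound $B$ satisfying $B\leq B'$ for \emph{every} upper bound $B'$, which under reverse inclusion reads $B\supseteq B'$ for every ball $B'\subseteq\bigcap\cC$; as written, your condition ``$B'\leq B$, i.e., $B'\supseteq B$'' instead describes the \emph{greatest} element of the set of upper bounds, that is, the smallest ball contained in $\bigcap\cC$. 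Your verbal conclusion --- that the supremum is the largest ball contained in $\bigcap\cC$ --- is the correct one and matches the lemma, so this is a transcription slip rather than a conceptual gap; with the inequality corrected, the first assertion follows exactly as you describe, and your second paragraph is correct as written.
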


\mn
%
%
\subsection{Overview of conditions for equivalences in the hierarchy}     \label{sectceq}
\mbox{ }\sn
The following table will give an overview of conditions for equivalences in the hierarchy
(\ref{hier}) as presented in the previous sections.

\sn
\begin{center}
\begin{tabular}{|l|l|}
\hline
condition on ball spaces & equivalent properties \\
& in the hierarchy \\
\hline
\hline
&\\
no condition & {\bf S}$_4\Leftrightarrow\>${\bf S}$_4^d$\\[4pt]
\hline
&\\
all singletons are balls & {\bf S}$_1\Leftrightarrow\>${\bf S}$_2\,$; \ \
{\bf S}$_1^d\Leftrightarrow\>${\bf S}$_2^d\,$;
\ \ {\bf S}$_1^c\Leftrightarrow\>${\bf S}$_2^c$\\[4pt]
\hline
&\\
tree-like & {\bf S}$_i\Leftrightarrow\>${\bf S}$_i^d\Leftrightarrow\>${\bf S}$_i^c$\\
& for $1\leq i\leq 5$ (each row)\\[4pt]
\hline
&\\
tree-like, all singletons are balls &
{\bf S}$_1\Leftrightarrow\>${\bf S}$_1^d\Leftrightarrow\>${\bf S}$_1^c
\Leftrightarrow\>${\bf S}$_2\Leftrightarrow\>${\bf S}$_2^d
\Leftrightarrow\>${\bf S}$_2^c$\\
& and {\bf S}$_i\Leftrightarrow\>${\bf S}$_i^d\Leftrightarrow\>${\bf S}$_i^c$
for $3\leq i\leq 5$\\[4pt]
\hline
&\\
finitely intersection closed & {\bf S}$_i^d\Leftrightarrow\>${\bf S}$_i^c$ \
for $1\leq i\leq 5$\\[4pt]
\hline
&\\
chain intersection closed & {\bf S}$_1\Leftrightarrow\>${\bf S}$_2
\Leftrightarrow\>${\bf S}$_3\Leftrightarrow\>${\bf S}$_4
\Leftrightarrow\>${\bf S}$_5$ \\
& (first column)\\[4pt]
\hline
&\\
intersection closed & all properties in the hierarchy\\[4pt]
\hline
\end{tabular}
\end{center}

\bn
%
%
\section{Ball spaces and their properties in various applications}    \label{sectappl}
%
In what follows, we will give the interpretation of various levels of spherical completeness in our applications
of ball spaces. At this point, let us define a notion that we will need repeatedly. In a (totally or partially)
ordered set $(S,<)$ a subset $S$ is a \bfind{final segment} if for all $s\in S$, $s<t$ implies $t\in S$;
similarly, $S$ is an \bfind{initial segment} if for all $s\in S$, $s>t$ implies $t\in S$.

%
%
\subsection{Ultrametric spaces}            \label{sectus}
\mbox{ }\sn
For background on ultrametric spaces see \cite{[KU3],[PC],[PR1],[PRCPT],[PR2],[PRlp],[PR3]}.
An \bfind{ultrametric} $u$ on a set $X$ is a function from $X\times X$ to a partially ordered set
$\Gamma$ with smallest element $0$, such that for all $x,y,z\in X$ and all $\gamma\in\Gamma$,
\sn
{\bf (U1)} \ $u(x,y)=0$ \ if and only if \ $x=y$,\n
{\bf (U2)} \ if $u(x,y)\leq\gamma$ and $u(y,z)\leq\gamma$, then
$u(x,z)\leq\gamma$,\n
{\bf (U3)} \ $u(x,y)=u(y,x)$ \ \ \ (symmetry).
\sn
The pair $(X,u)$ is called an \bfind{ultrametric space}. Condition (U2) is the ultrametric triangle law.

We set $uX:=\{u(x,y)\mid x,y\in X\}$ and call it the \bfind{value set of} $(X,u)$. If $uX$ is totally ordered, we
will call $(X,u)$ a \bfind{classical ultrametric space}; in this case, (U2) is equivalent to:
\sn
{\bf (UT)} \ $u(x,z)\leq\max\{u(x,y),u(y,z)\}$.

\pars
We will now introduce three ways of deriving a ball space from an ultrametric space.
A \bfind{closed ultrametric ball} is a set
\[
B_\alpha(x)\>:=\>\{y\in X\mid u(x,y)\leq \alpha\}\>,
\]
where $x\in X$ and $\alpha\in\Gamma$. We obtain the \bfind{ultrametric ball space} $(X,\cB_u)$ from $(X,u)$
by taking $\cB$ to be the set of all such balls $B_\alpha(x)$.

It follows from symmetry and the ultrametric triangle law that every element in a ball is a center, meaning that
\begin{equation}                               \label{balleq}
B_\alpha(x)\>=\>B_\alpha(y)\;\mbox{ if }y\in B_\alpha(x)\>.
\end{equation}
Further,
\begin{equation}                               \label{ballincl}
B_\beta(y)\>\subseteq\> B_\alpha(x)\quad\mbox{ if }\quad y\in B_\alpha(x) \mbox{ and } \beta\leq\alpha\>.
\end{equation}

\pars
A problem with the ball $B_\alpha(x)$ can be that it may not contain any element $y$ such that $u(x,y)=\alpha$;
if it does, it is called \bfind{precise}. It is therefore convenient to work with the precise balls of the form
\[
B(x,y)\>:=\>\{z\in X\mid u(x,z)\leq u(x,y)\}\>,
\]
where $x,y\in X$. We obtain the \bfind{precise ultrametric ball space} $(X,\cB_{[u]})$ from $(X,u)$
by taking $\cB$ to be the set of all such balls $B(x,y)$.

It follows from symmetry and the ultrametric triangle law that
\[
B(x,y)\>=\>B(y,x)
\]
and that
\begin{equation}                            \label{umball}
B(t,z)\subseteq B(x,y) \mbox{ if and only if }
t\in B(x,y) \mbox{ and } u(t,z)\leq u(x,y)\;.
\end{equation}
In particular,
\begin{equation}                        \label{ubconvexprec}
B(t,z)\subseteq B(x,y)\>\mbox{ if }\> t,z\in B(x,y)\>.
\end{equation}
More generally,
\begin{equation}                        \label{ubconvex}
B(t,z)\>\subseteq\>B_\alpha(x)\>\mbox{ if }\> t,z\in B_\alpha(x)\>.
\end{equation}

\pars
Two elements $\gamma$ and $\delta$ of $\Gamma$ are \bfind{comparable} if
$\gamma\leq\delta$ or $\gamma\geq\delta$. Hence if $u(x,y)$ and $u(y,z)$
are comparable, then $B(x,y)\subseteq B(y,z)$ or $B(y,z) \subseteq
B(x,y)$. If $u(y,z)< u(x,y)$, then in addition, $x\notin B(y,z)$. We note:
\begin{equation}                            \label{cb}
u(y,z)\,<\,u(x,y)\>\Longrightarrow\>B(y,z)\,\subsetneq\, B(x,y)\;.
\end{equation}
In classical ultrametric spaces every two values $\alpha,\beta$ are comparable. Hence in this case one can derive
from (\ref{balleq}) and (\ref{ballincl}) that every two ultrametric balls with nonempty intersection are
comparable by inclusion.

\parm
From (\ref{ballincl}), we derive:
\begin{proposition}                           \label{cust-l}
In a classical ultrametric space $(X,u)$, any two balls with nonempty intersection are comparable by inclusion.
Hence $(X,\cB_{[u]})$ and $(X,\cB_u)$ are tree-like ball spaces.
\end{proposition}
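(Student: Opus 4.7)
The plan is to derive the conclusion directly from the three facts already assembled just before the proposition: equation~(\ref{balleq}) (every element of a ball is a center), equation~(\ref{ballincl}) (smaller radius at an interior point gives a subball), and the classical hypothesis that $uX$ is totally ordered.

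First I would prove the statement for the larger family $\cB_u$. So take two closed ultrametric balls $B_\alpha(x)$ and $B_\beta(y)$ with $B_\alpha(x)\cap B_\beta(y)\neq\emptyset$, and pick any $z$ in the intersection. By (\ref{balleq}) applied to each ball, $B_\alpha(x)=B_\alpha(z)$ and $B_\beta(y)=B_\beta(z)$, so both balls are now centered at the common point $z$. Since $(X,u)$ is classical, $\alpha$ and $\beta$ lie in the totally ordered set $uX\cup\{0\}\subseteq\Gamma$, so either $\alpha\leq\beta$ or $\beta\leq\alpha$. In the first case, (\ref{ballincl}) applied with $z\in B_\beta(z)$ yields $B_\alpha(z)\subseteq B_\beta(z)$, i.e., $B_\alpha(x)\subseteq B_\beta(y)$; the other case is symmetric. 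This shows $(X,\cB_u)$ is tree-like.

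Next I would handle $\cB_{[u]}$ by reducing it to the previous case: by definition $B(x,y)=\{z\mid u(x,z)\leq u(x,y)\}=B_{u(x,y)}(x)$, so every precise ultrametric ball is a closed ultrametric ball, i.e., $\cB_{[u]}\subseteq\cB_u$. Since the tree-like property is inherited by any subfamily of balls (the argument above uses only that two balls with a common point are comparable, and this is preserved on passing to a subfamily), $(X,\cB_{[u]})$ is tree-like as well.

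I do not anticipate a real obstacle; the only thing to be slightly careful about is that $\Gamma$ as a whole is only partially ordered, so the total ordering must be invoked for the specific elements $\alpha,\beta\in uX$ that arise as radii, which is exactly what the classical hypothesis supplies. A brief remark that $\cB_{[u]}\subseteq\cB_u$ suffices to deal with both ball spaces in one stroke rather than repeating the argument.
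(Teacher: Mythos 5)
Your argument follows the same route as the paper's own derivation of this proposition: recenter both balls at a common point $z$ using (\ref{balleq}), invoke the comparability supplied by the classical hypothesis, and conclude with (\ref{ballincl}); your reduction of $\cB_{[u]}$ to $\cB_u$ via $B(x,y)=B_{u(x,y)}(x)$ is also correct. However, there is a genuine gap at exactly the step you single out as the delicate one. By the paper's definition of $\cB_u$, the radius $\alpha$ of a ball $B_\alpha(x)$ ranges over the \emph{whole} partially ordered set $\Gamma$, while the classical hypothesis totally orders only $uX$. Your claim that ``$\alpha$ and $\beta$ lie in the totally ordered set $uX\cup\{0\}$'' is therefore unjustified, and it can genuinely fail: a ball of $\cB_u$ need not admit any representation with radius in $uX$. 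For instance, take $uX=\{0\}\cup\{\gamma_n\mid n\in\N\}\cup\{\delta\}$ with $0<\gamma_1<\gamma_2<\dots<\delta$, realized on $X=uX$ by $u(s,t):=\max\{s,t\}$ for $s\ne t$, and let $\Gamma$ contain one further element $\alpha$ satisfying $\gamma_n<\alpha$ for all $n$ with $\alpha$ incomparable to $\delta$; then $B_\alpha(0)=X\setminus\{\delta\}$ equals no $B_\gamma(x')$ with $\gamma\in uX$, and its radius cannot be compared with that of $B_\delta(0)$, although the two balls meet. So for $\cB_u$ the disjunction ``$\alpha\le\beta$ or $\beta\le\alpha$'' is simply not available, and your proof of the first (and main) assertion is incomplete. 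Note that the paper's sketch applies comparability to \emph{values}, not to radii; for precise balls these coincide, which is why your argument is already complete for $\cB_{[u]}$ taken by itself.

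The repair is short and preserves your structure. After recentering at $z\in B_\alpha(x)\cap B_\beta(y)$, suppose that neither $B_\alpha(z)\subseteq B_\beta(z)$ nor $B_\beta(z)\subseteq B_\alpha(z)$ holds, and pick witnesses $w_1\in B_\alpha(z)\setminus B_\beta(z)$ and $w_2\in B_\beta(z)\setminus B_\alpha(z)$. The elements $u(z,w_1)$ and $u(z,w_2)$ \emph{are} values, hence comparable because $(X,u)$ is classical; if $u(z,w_1)\le u(z,w_2)$, then $u(z,w_1)\le u(z,w_2)\le\beta$ gives $w_1\in B_\beta(z)$, a contradiction, and the other case is symmetric. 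Equivalently, each ball $B_\alpha(z)$ is determined by the initial segment $\{\gamma\in uX\mid \gamma\le\alpha\}$ of the chain $uX$, and any two initial segments of a chain are comparable by inclusion. With this two-line patch your proof is correct and coincides with the intended one.
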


\parb
We define $(X,u)$ to be \bfind{spherically complete} if its ultrametric ball space $(X,\cB_u)$ is
spherically complete, i.e., an {\bf S}$_1$ ball space. For this definition, it actually makes no
difference whether we work with $\cB_u$ or $\cB_{[u]}\,$:
\begin{proposition}
The classical ultrametric ball space $(X,\cB_u)$ is spherically complete if and only if the
precise ultrametric ball space $(X,\cB_{[u]})$ is.
\end{proposition}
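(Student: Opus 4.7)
The plan is to prove both implications separately. The forward direction is immediate: every precise ball $B(x,y)$ equals $B_{u(x,y)}(x)$ by definition, so $\cB_{[u]}\subseteq\cB_u$, and the remark following the definition of spherical completeness then transfers spherical completeness from $(X,\cB_u)$ to $(X,\cB_{[u]})$.

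For the converse, assume $(X,\cB_{[u]})$ is spherically complete and take a nest $\cN$ in $\cB_u$. If $\cN$ has a minimum, its intersection is that minimum and hence nonempty, so I assume $\cN$ has no minimum (which in particular forces $|\cN|\geq 2$). The strategy is to build a cofinal nest $\cN'\subseteq\cB_{[u]}$ satisfying $\bigcap\cN'\subseteq\bigcap\cN$, so that the assumed spherical completeness of $(X,\cB_{[u]})$ finishes the argument. For each $B_1,B_2\in\cN$ with $B_1\subsetneq B_2$, I will pick $z\in B_1$ and $y\in B_2\setminus B_1$ (nonempty by strict inclusion) and set $P(B_1,B_2):=B(z,y)$. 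Rewriting $B_2=B_\alpha(z)$ and $B_1=B_\beta(z)$ via (\ref{balleq}), the conditions $y\in B_2$ and $y\notin B_1$ force $\beta<u(z,y)\leq\alpha$, and (\ref{ballincl}) then gives the sandwich $B_1\subseteq P(B_1,B_2)\subseteq B_2$.

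Setting $\cN':=\{P(B_1,B_2):B_1,B_2\in\cN,\ B_1\subsetneq B_2\}$, I will verify that it is a nest in $\cB_{[u]}$: for any $P(B_1,B_2),\,P(B_3,B_4)\in\cN'$ these precise balls contain the nonempty comparable sets $B_1,B_3\in\cN$, so $B_1\cap B_3\neq\emptyset$ forces $P(B_1,B_2)\cap P(B_3,B_4)\neq\emptyset$, and by the tree-like property (Proposition~\ref{cust-l}) the two precise balls are comparable. The no-minimum hypothesis guarantees that every $B\in\cN$ contains some $P(B',B)\in\cN'$, whence $\bigcap\cN'\subseteq\bigcap\cN$. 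Applying the spherical completeness of $(X,\cB_{[u]})$ to the nest $\cN'$ then yields $\bigcap\cN\neq\emptyset$. The main obstacle to anticipate is verifying the nest property of $\cN'$, which crucially rests on the tree-like character of classical ultrametric ball spaces provided by Proposition~\ref{cust-l}.
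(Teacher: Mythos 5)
Your proof is correct, and its converse direction is organized genuinely differently from the paper's. The paper extracts from $\cN$ a coinitial, strictly decreasing subnest $(B_{\alpha_\nu}(x_\nu))_{\nu<\kappa}$ indexed by an infinite limit ordinal, picks at each successor step a witness $y_{\nu+1}\in B_{\alpha_\nu}(x_\nu)\setminus B_{\alpha_{\nu+1}}(x_{\nu+1})$, and sandwiches the precise ball $B(x_{\nu+1},y_{\nu+1})$ between consecutive members of the chain; the sandwiching makes the family of precise balls automatically a nest (no appeal to the tree-like property is needed), and the paper even gets equality $\bigcap\cN=\bigcap_{\nu<\kappa}B(x_{\nu+1},y_{\nu+1})$, not just an inclusion. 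You instead avoid all transfinite indexing: you attach a sandwiched precise ball $P(B_1,B_2)$ to \emph{every} strictly nested pair in $\cN$, and then you must — and correctly do — invoke Proposition~\ref{cust-l} to see that the resulting family $\cN'$ is totally ordered, after which the one-sided inclusion $\bigcap\cN'\subseteq\bigcap\cN$ suffices. What each route buys: yours dispenses with the extraction of a well-ordered coinitial subnest (a standard but choice-dependent fact the paper uses without proof), at the price of leaning on the tree-like structure of classical ultrametric ball spaces; the paper's route is self-contained with respect to nest-hood and yields the sharper equality of intersections. The sandwich inequalities themselves are obtained almost identically — you recenter via (\ref{balleq}) and apply (\ref{ballincl}) twice, where the paper uses (\ref{ballincl}) and (\ref{ubconvex}). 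One small caveat, which applies equally to the paper's own step ``it follows that $u(x_{\nu+1},y_{\nu+1})>\alpha_{\nu+1}$'': your inference from $y\notin B_\beta(z)$ to $\beta<u(z,y)$ tacitly assumes the radius $\beta\in\Gamma$ is comparable with the value $u(z,y)\in uX$; since only $uX$ is assumed totally ordered, the airtight formulation is that every $w\in B_1$ satisfies $u(z,w)\leq u(z,y)$ (otherwise $u(z,y)\leq u(z,w)\leq\beta$ would put $y$ in $B_1$), which yields $B_1\subseteq B(z,y)$ directly — but as this matches the paper's own level of rigor, it is not a gap in your argument relative to it.
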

\begin{proof}
Since $\cB_{[u]}\subseteq\cB_u\,$, the implication ``$\Rightarrow$'' is clear. Now take a nest $\cN$ of balls in
$\cB_u\,$. We may assume that it does not contain a smallest ball since otherwise this ball equals the
intersection over the nest, which consequently is nonempty. Further, there is a coinitial subnest
$(B_{\alpha_\nu}(x_{\nu}))_{\nu<\kappa}$ such that $\kappa$ is an infinite limit ordinal and $\mu<\nu<\kappa$
implies that $B_{\alpha_\nu}(x_{\nu})\subsetneq B_{\alpha_\mu}(x_{\mu})$. It follows that this subnest has the same
intersection as $\cN$.

For every $\nu<\kappa$, also $\nu+1<\kappa$ and thus $B_{\alpha_{\nu+1}}(x_{\nu+1})\subsetneq
B_{\alpha_\nu}(x_{\nu})$. Hence there is $y_{\nu+1} \in B_{\alpha_\nu}(x_{\nu})\setminus
B_{\alpha_{\nu+1}}(x_{\nu+1})$. It follows that
\[
u(x_{\nu+1},y_{\nu+1})\> >\>\alpha_{\nu+1}\>,
\]
and from (\ref{ballincl})
we obtain that
\[
B_{\alpha_{\nu+1}}(x_{\nu+1})\>\subseteq\> B_{u(x_{\nu+1},y_{\nu+1})}(x_{\nu+1}) \>=\> B(x_{\nu+1},y_{\nu+1})\>.
\]
Since $x_{\nu+1},y_{\nu+1}\in B_{\alpha_\nu}(x_{\nu})$, we know from (\ref{ubconvex}) that
\[
B(x_{\nu+1},y_{\nu+1})\>\subseteq\> B_{\alpha_\nu}(x_{\nu})\>.
\]
It follows that
\[
\bigcap\cN\>=\>\bigcap_{\nu<\kappa} B_{\alpha_\nu}(x_{\nu})\>=\>\bigcap_{\nu<\kappa} B(x_{\nu+1},y_{\nu+1})\>.
\]
Consequently, if $\cB_{[u]}$ is {\bf S}$_1\,$, then this intersection is nonempty and we have proved that also
$\cB_u$ is {\bf S}$_1\,$.
\end{proof}

\parm
Since $uX$ contains the smallest element $0:=u(x,x)$, $\cB_u$ contains
all singletons $\{x\}=B_0(x)$. Therefore, each ultrametric ball space is
already {\bf S}$_2$ once it is {\bf S}$_1\,$. The same is true for the precise ultrametric ball space
$(X,\cB_{[u]})$ in place of $(X,\cB_u)$. However, these ball spaces will in general not be {\bf S}$_3\,$,
{\bf S}$_4$ or {\bf S}$_5$ because even if an intersection of a nest is nonempty, it will not necessarily be a
ball of the form $B_\alpha(x)$ or $B(x,y)$, respectively.

In a classical ultrametric space, every two balls are comparable by inclusion once they have nonempty intersection.
Therefore, every centered system is already a nest of balls. This shows:
\begin{proposition}                \label{cussc=s2c}
A classical ultrametric space $(X,u)$ is spherically complete if and only if the ball space
$(X,\cB_u)$ (or equivalently, $(X,\cB_{[u]})$) is an {\bf S}$_2^c$ ball space.
\end{proposition}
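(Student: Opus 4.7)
The plan is to derive this as a direct corollary of the general results of Section~\ref{secth}, in particular the last assertion of Proposition~\ref{thbsut}, which states that in a tree-like ball space in which all singletons are balls, the properties \textbf{S}$_1$ and \textbf{S}$_2^c$ coincide. So the task reduces to checking that the ball spaces $(X,\cB_u)$ and $(X,\cB_{[u]})$ both satisfy these two hypotheses.

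First, I would invoke Proposition~\ref{cust-l}, which asserts precisely that for a classical ultrametric space $(X,u)$, both $(X,\cB_u)$ and $(X,\cB_{[u]})$ are tree-like, i.e., any two balls with nonempty intersection are comparable by inclusion. Second, I would verify that singletons are balls in both ball spaces: in $\cB_u$, the smallest element $0\in\Gamma$ satisfies $u(x,x)=0$, so $\{x\}=B_0(x)\in\cB_u$; in $\cB_{[u]}$, we have $\{x\}=B(x,x)$ because $u(x,z)\leq u(x,x)=0$ forces $z=x$ by (U1).

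With these two observations in place, the last assertion of Proposition~\ref{thbsut} gives \textbf{S}$_1 \Leftrightarrow$ \textbf{S}$_2^c$ for each of the two ball spaces. Since by definition $(X,u)$ is spherically complete iff $(X,\cB_u)$ is \textbf{S}$_1$, this yields the equivalence with $(X,\cB_u)$ being \textbf{S}$_2^c$. The equivalent formulation in terms of $(X,\cB_{[u]})$ then follows either by the same argument applied to $(X,\cB_{[u]})$ combined with the preceding proposition (which already established that $(X,\cB_u)$ is \textbf{S}$_1$ iff $(X,\cB_{[u]})$ is), or directly from Proposition~\ref{thbsut} applied to $(X,\cB_{[u]})$.

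There is essentially no obstacle here, since all the real work has been done: the tree-like property was established in Proposition~\ref{cust-l}, and the abstract equivalence \textbf{S}$_1 \Leftrightarrow$ \textbf{S}$_2^c$ under the stated hypotheses was handled in Proposition~\ref{thbsut}. The only thing worth being careful about is the verification that singletons belong to the ball families, which requires noting the role of the distinguished element $0\in\Gamma$ (or the diagonal $u(x,x)=0$ in the precise case); this is a one-line check but is the unique place where the structure specific to ultrametric spaces enters the argument.
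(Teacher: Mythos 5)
Your proof is correct and takes essentially the same route as the paper: the paper's own argument likewise combines the tree-like property of Proposition~\ref{cust-l} (so that every centered system is a nest) with the observation that singletons $\{x\}=B_0(x)$ are balls, which is precisely the content of the last assertion of Proposition~\ref{thbsut}. The only difference is presentational — the paper carries out this reasoning inline in the two paragraphs preceding the proposition rather than citing Proposition~\ref{thbsut} explicitly, and your one-line check that $\{x\}=B(x,x)$ in $\cB_{[u]}$ makes explicit what the paper asserts without proof.
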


If $(X,u)$ is a classical ultrametric space, then we can obtain stronger completeness properties if we work with
a larger set of ultrametric balls. Given $x\in X$ and an initial segment $S\ne \emptyset$ of $uX$, we define:
\[
B_S(x)\>=\>\{y\in X\mid u(x,y)\in S\}\>.
\]
Setting
\[
\cB_{u+}\>:=\> \{B_S(x)\mid x\in X\mbox{ and $S$ a nonempty initial segment of $uX$}\}\>,
 \]
we obtain what we will call the \bfind{full ultrametric ball space} $(X,\cB_{u+})$. Note that
$X=B_{uX}(x)\in\cB_{u+}$. We leave it to the reader to prove:
\begin{equation}                              \label{capcup}
B_S(x)\>=\>\bigcup_{\alpha\in S} B_\alpha(x)\>\subseteq\> \bigcap_{\beta\geq S} B_\beta(x)
\end{equation}
where $\beta\geq S$ means that $\beta\geq\gamma$ for all $\gamma\in S$, and the intersection over an empty index
set is taken to be $X$. We note that the inclusion on the right hand side is proper if and only if $S$ has no
largest element but admits a supremum $\alpha$ in $uX$ and there is $y\in X$ such that $\alpha=u(x,y)$. Indeed,
if $S=\{\beta\mid\beta<\alpha\}$, then $B_S(x)$ is the \bfind{open ultrametric ball}
\[
B^\circ_\alpha(x)\>:=\>\{y\in X\mid u(x,y)< \alpha\}\>,
\]
which is a proper subset of $B_\alpha(x)=\bigcap_{\beta\geq S} B_\beta(x)$ if and only if $B_\alpha(x)$ is precise.

\pars
We have that
\[
\cB_{[u]}\>\subseteq\> \cB_u\>\subseteq\> \cB_{u+}
\]
where the second inclusion holds because $B_\alpha (x)=B_S(x)$ for the initial segment $S=[0,\alpha]$ of $uX$.
We have an easy generalization of (\ref{ubconvex}):
\begin{equation}                        \label{ubconvexf}
\mbox{if }B\in \cB_{u+} \mbox{ and }  t,z\in B\, , \mbox{ then }  B(t,z)\>\subseteq\>B\>.
\end{equation}

\pars
The following results are proven in \cite{KKub2}:
\begin{theorem}                                      \label{ultcic}
Let $(X,u)$ be a classical ultrametric space. Then the following assertions hold.
\sn
1) \ The intersection over every nest of balls in $(X,\cB_{u+})$ is equal to
the intersection over a nest of balls in $(X,\cB_u)$ and therefore, $(X,\cB_{u+})$ is chain intersection closed.
\sn
2) \ The ball space $(X,\cB_{u+})$ is an {\bf S}$_1$ ball space if and only if $(X,\cB_u)$ is.
\sn
3) \ The ball space $(X,\cB_{u+})$ is tree-like and intersection closed.
If $(X,\cB_u)$ is an {\bf S}$_1$ ball space, then $(X,\cB_{u+})$ is an {\bf S}$^*$ ball space.
\end{theorem}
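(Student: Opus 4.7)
The plan is to attack part~(1) with a sandwiching construction modelled on the proof given earlier in Section~\ref{sectus} for the equivalence between spherical completeness of $\cB_u$ and $\cB_{[u]}$, and then to derive parts~(2) and~(3) essentially for free from part~(1) together with Proposition~\ref{cust-l}, Theorem~\ref{sccent}, and the proposition of Section~\ref{sectcs} that every chain intersection closed tree-like ball space is intersection closed. Concretely, for part~(1), given a nest $\cN$ in $\cB_{u+}$ with no smallest element, I would pass to a coinitial strictly decreasing subnest $(B_{S_\nu}(x_\nu))_{\nu<\kappa}$ (same intersection) and, at each successor step, pick $y_{\nu+1}\in B_{S_\nu}(x_\nu)\setminus B_{S_{\nu+1}}(x_{\nu+1})$. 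Since $y_{\nu+1}\notin B_{S_{\nu+1}}(x_{\nu+1})$ the value $u(x_{\nu+1},y_{\nu+1})$ lies strictly above every element of the initial segment $S_{\nu+1}$, while $x_{\nu+1},y_{\nu+1}\in B_{S_\nu}(x_\nu)$ together with~(\ref{ubconvexf}) yields the sandwich
\[
B_{S_{\nu+1}}(x_{\nu+1})\>\subseteq\>B(x_{\nu+1},y_{\nu+1})\>\subseteq\>B_{S_\nu}(x_\nu).
\]
The precise balls $B(x_{\nu+1},y_{\nu+1})\in\cB_u$ share common points and so by Proposition~\ref{cust-l} form a nest $\cN'\subseteq\cB_u$ with $\bigcap\cN=\bigcap\cN'$. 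The ``therefore'' clause of~(1) is then immediate: if $z\in\bigcap\cN'$, re-centring each ball of $\cN'$ at $z$ exhibits $\bigcap\cN'$ as $B_T(z)\in\cB_{u+}$, where $T=\{\alpha\in uX:\alpha\leq\alpha_i\text{ for all }i\}$ is an initial segment of $uX$ containing $0$.

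Parts~(2) and~(3) should fall out quickly once~(1) is in hand. One direction of~(2) is trivial because $\cB_u\subseteq\cB_{u+}$, and the other is a direct consequence of~(1). For tree-likeness in~(3), if two balls $B_{S_1}(x_1)$ and $B_{S_2}(x_2)$ share a point $z$, then $S_1,S_2$ are comparable initial segments of the totally ordered $uX$; assuming $S_1\subseteq S_2$, two applications of the ultrametric triangle law give first $u(x_1,x_2)\in S_2$ and then $u(x_2,w)\in S_2$ for every $w\in B_{S_1}(x_1)$, yielding $B_{S_1}(x_1)\subseteq B_{S_2}(x_2)$. Combining tree-likeness with the chain intersection closedness supplied by~(1) and the quoted proposition from Section~\ref{sectcs} delivers intersection closedness of $\cB_{u+}$, after which the final assertion of~(3) follows from~(2) combined with Theorem~\ref{sccent}.

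The main obstacle is the edge case of part~(1) in which $\cN$ already contains a smallest ball $B^*=B_{S^*}(x^*)$ that does not itself lie in $\cB_u$: the sandwich has nothing to bite on, and one must instead exhibit $B^*$ directly as the intersection of a nest of $\cB_u$-balls. Here I would split into subcases according to whether the initial segment $S^*$ admits a supremum $\alpha$ in $uX$ and, if so, whether $\alpha$ is realised as $u(x^*,y)$ for some $y\in X$. When the supremum either fails to exist in $uX$ or is unrealised, the family $\{B_\beta(x^*):\beta\geq S^*\}$ already does the job by the equality clause of~(\ref{capcup}); the genuinely delicate subcase is when $\alpha\in uX$ is realised, and this is the only place where the argument requires substantial care beyond the generic sandwich.
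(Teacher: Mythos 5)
Your core construction is sound, and the paper itself offers no proof to compare against (it defers to \cite{KKub2}), so your proposal stands or falls on its own merits. Most of it stands: the sandwich argument for nests without a smallest ball, modelled on the paper's proof that spherical completeness of $\cB_{[u]}$ and of $\cB_u$ are equivalent, is correct; so are the re-centring argument for the ``therefore'' clause of part 1), and the derivations of parts 2) and 3) from comparability of initial segments, the proposition of Section~\ref{sectcs} on chain intersection closed tree-like ball spaces, and Theorem~\ref{sccent}.

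The genuine gap is your ``delicate subcase'', and no amount of care can close it, because the literal statement of part 1) is \emph{false} in exactly that case. Take $K$ to be the algebraic closure of $\Q_p$ with $u(a,b)=p^{-v(a-b)}$, so that $uK=p^{\Q}\cup\{0\}$ and every value is realised from every point. Let $S^*$ be the initial segment of all values $<1$; it has no largest element, its supremum $1$ lies in $uK$ and is realised, and $B^*:=B_{S^*}(0)=B^\circ_1(0)$ is a ball of $\cB_{u+}$, so $\{B^*\}$ is a nest in $\cB_{u+}$. If $\cN'$ were a nest in $\cB_u$ with $\bigcap\cN'=B^*$, then every ball of $\cN'$ would contain $B^*$, hence contain $0$, hence by (\ref{balleq}) be of the form $B_\beta(0)$ with $\beta\geq\gamma$ for every $\gamma\in S^*$ (all such $\gamma$ being realised inside $B^*$); since $S^*$ has no largest element this forces $\beta\geq 1$, so every ball of $\cN'$ contains the point $1$, whence $1\in B^*$, a contradiction. (Your other subcase also has a hole: when $S^*=uX$ has no largest element, the family $\{B_\beta(x^*)\mid\beta\geq S^*\}$ is empty, hence not a nest, and $B^*=X$ likewise need not be an intersection of closed balls, e.g.\ in the Laurent series field $k((t))$.) The repair is not a cleverer argument but a re-reading of the theorem: the reduction to a nest in $\cB_u$ can only be asserted for nests without a smallest ball --- which your sandwich handles completely --- and that restricted statement suffices for everything else, since a nest with smallest ball $B^*$ has intersection $B^*\in\cB_{u+}$, making chain intersection closedness and the nonemptiness needed in part 2) trivial in that case, with part 3) unaffected. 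In short, your proof becomes complete once you stop trying to prove the literal claim of part 1) in the smallest-ball case (where it fails) and instead observe that this case requires no reduction to $\cB_u$ at all.
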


\parm
By \cite[Theorem~1.2]{KKub}, assertions 1) and 2) of Theorem~\ref{ultcic} also hold for all ultrametric spaces
$(X,u)$ with countable
narrow value sets $uX$; the condition \bfind{narrow} means that all sets of mutually incomparable elements in $uX$
are finite. On the other hand, it is shown in \cite{KKub} that the condition ``narrow'' cannot be dropped in this
case. It is however an open question whether the condition ``countable'' can be dropped.

\parb
A large number of ultrametric fixed point and coincidence point theorems have been proven by S.~Prie{\ss}-Crampe
and P.~Ribenboim (see e.g.\ \cite{[PC],[PR1],[PRCPT],[PR2],[PR3]}). Using ball spaces, some of them have been
reproven and new ones have been proven in \cite{[KK1],[KK2],[KKSo]}.

\mn
%
%
\subsection{Metric spaces with metric balls}    \label{sectmet}
\mbox{ }\sn
In metric spaces $(X,d)$ we can consider the closed metric balls
\[
B_\alpha (x)\>:=\>\{y\in X\mid d(x,y)\leq \alpha\}
\]
for $x\in X$ and $\alpha\in\R^{\geq 0}:=\{r\in\R\mid r\geq 0\}$. We set
\[
{\cal B}_d\>:=\>\{B_\alpha (x)\mid x\in X\,,\,\alpha\in\R^{\geq 0}\}\>.
\]
The following theorem will be deduced from Theorem~\ref{scmetr2} below:

\begin{theorem}                             \label{scmetr1}
If the ball space $(X,{\cal B}_d)$ is spherically complete, then
$(X,d)$ is complete.
\end{theorem}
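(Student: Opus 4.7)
The plan is to take an arbitrary Cauchy sequence $(x_n)$ in $(X,d)$ and build from it a descending nest of closed metric balls in $\cB_d$ whose common point, furnished by spherical completeness, will be the limit of the sequence. Since a Cauchy sequence converges whenever it has a convergent subsequence, I would first pass to a subsequence $(y_k)$ with $d(y_k,y_{k+1}) < 2^{-k-1}$, which the Cauchy condition provides.

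The core of the argument is choosing radii $\beta_k \searrow 0$ generous enough that the balls $B_k := B_{\beta_k}(y_k)$ nest. With the choice $\beta_k = 2^{-k+1}$, the triangle inequality gives $B_{k+1}\subseteq B_k$: any $y\in B_{k+1}$ satisfies $d(y_k,y) \leq d(y_k,y_{k+1}) + d(y_{k+1},y) \leq 2^{-k-1}+2^{-k} < 2^{-k+1} = \beta_k$. So $\{B_k \mid k\in\N\}$ is a nest in $\cB_d$. By spherical completeness there exists $z\in\bigcap_k B_k$, and then $d(y_k,z)\leq \beta_k\to 0$, so $y_k\to z$ and consequently $x_n\to z$.

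The only delicate point is the selection of radii. A naive choice such as $\beta_k = \sup_{m\geq k}d(y_k,y_m)$ fails, because the triangle inequality then only yields $B_{k+1}\subseteq B_{\beta_k+\beta_{k+1}}(y_k)$, which in general is strictly larger than $B_k$. Thinning to a geometrically decaying subsequence and taking radii with enough slack to absorb the triangle-inequality loss remedies this. No form of spherical completeness stronger than \textbf{S}$_1$ is used; in particular the argument does not require $(X,\cB_d)$ to be \textbf{S}$_2$, and indeed the discussion preceding the theorem explicitly warns that spherical completeness of $\cB_d$ is in general a strictly stronger property than metric completeness.
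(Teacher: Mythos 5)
Your proof is correct, and its mathematical substance coincides with the paper's, though the packaging differs. The paper disposes of Theorem~\ref{scmetr1} in two lines: spherical completeness is inherited by every nonempty sub-ball space, so $(X,\cB_S)$ is spherically complete for $S\subset\R^{>0}$ having $0$ as its only accumulation point (e.g.\ $S=\{2^{-k}\mid k\in\N\}$), and then implication b)\,$\Rightarrow$\,a) of Theorem~\ref{scmetr2} yields completeness. Your direct construction --- pass to a rapidly convergent subsequence, take balls whose radii decay geometrically with enough slack to absorb the triangle-inequality loss, and extract the limit from the nest's intersection --- is precisely the paper's proof of that implication b)\,$\Rightarrow$\,a), with your radii $\beta_k=2^{-k+1}$ playing the role of the paper's sequence $(s_i)$ subject to $0<2s_{i+1}\le s_i$, and your nesting computation $d(y_k,y)\le 2^{-k-1}+2^{-k}<\beta_k$ matching their estimate $d(x_{n_i},x)\le 2s_{i+1}\le s_i$. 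So you have inlined, rather than factored through Theorem~\ref{scmetr2}, the same argument: the paper's factorization buys the stronger equivalence (completeness holds iff $\cB_S$ is spherically complete for some, equivalently every, such $S$), while your version buys a self-contained proof of the single implication asserted here. Your closing remarks are also accurate: only {\bf S}$_1$ is used, and the need for slack in the radii is exactly the delicate point that the paper's condition $2s_{i+1}\le s_i$ is designed to handle.
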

The converse is not true. Consider a rational function field $k(x)$
together with the $x$-adic valuation $v_x\,$. Choose an extension of
$v_x$ to a valuation $v$ of the algebraic closure $K_0$ of $k(x)$. Then the value group is $\Q$.
An ultrametric in the sense of Section~\ref{sectus} is obtained by setting, for instance,
\[
u(a,b)\>:=\>e^{-v(a-b)}\>.
\]
Take $(K,u)$ to be the completion of $(K_0,u)$. It can be shown that the balls
\[
B_{\alpha_i} \left(\sum_{j=1}^{i-1} x^{-\frac 1 j}\right)  \;\;\mbox{ with } \alpha_i \>=\> e^{\frac 1 i}
\qquad (2\leq i\in\N)
\]
have empty intersection in $K$. Hence $(K,u)$ is not spherically complete,
that is, the ultrametric ball space induced by $u$ on $K$ is not
spherically complete. But this ultrametric is a complete metric.

Note that from Theorem~\ref{cts} below it follows that the ball space $(X,\cB_d)$ is spherically
complete if every closed metric ball in $(X,d)$ is compact under the topology induced by $d$, as
the closed metric balls are closed in this topology.

\parm
In order to characterize complete metric spaces by spherical completeness, we have to choose smaller induced ball
spaces. For any subset $S$ of the set $\R^{>0}$ of positive real numbers, we define:
\[
{\cal B}_S\>:=\>\{B_r (x)\mid x\in X\,,\,r\in S\}\>.
\]



\begin{theorem}                             \label{scmetr2}
The following assertions are equivalent:
\sn
a) \ $(X,d)$ is complete,
\sn
b) \ the ball space $(X,\cB_S)$ is spherically complete for \emph{some}
$S\subset\R^{>0}$ which admits $0$ as its only accumulation point,
\sn
c) \ the ball space $(X,\cB_S)$ is spherically complete for \emph{every}
$S\subset\R^{>0}$ which admits $0$ as its only accumulation point.
\end{theorem}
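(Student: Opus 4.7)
The plan is to establish the cycle $(c) \Rightarrow (b) \Rightarrow (a) \Rightarrow (c)$. The first link is immediate, since, for instance, $S := \{1/n : n \in \N\}$ satisfies the hypothesis on $S$, so spherical completeness of $(X,\cB_S)$ for every admissible $S$ includes it for this one. The direction $(b) \Rightarrow (a)$ builds a nest of balls from a given Cauchy sequence, while $(a) \Rightarrow (c)$ exhibits an element of the intersection directly, splitting according to whether or not the representing radii of the balls in the nest can be driven to zero.

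For $(b) \Rightarrow (a)$, let $(x_n)$ be a Cauchy sequence. Using that $0$ is an accumulation point of $S$, I choose $s_1 > s_2 > \cdots$ in $S$ with $s_{k+1} < s_k/2$ and $s_k \to 0$, and by the Cauchy property pick an increasing sequence $N_k$ with $d(x_i, x_{N_k}) \le s_{k+1}$ for every $i \ge N_k$. Setting $B_k := B_{s_k}(x_{N_k}) \in \cB_S$, the bound $2 s_{k+1} < s_k$ together with the triangle inequality gives $B_{k+1} \subseteq B_k$, so $(B_k)$ is a nest in $\cB_S$. By (b), $\bigcap_k B_k$ contains some $z$, and $d(z, x_{N_k}) \le s_k \to 0$ yields $x_n \to z$.

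For $(a) \Rightarrow (c)$, take a nest $\cN$ in $\cB_S$. A singleton ball in $\cN$ is automatically its minimum, so assume every $B \in \cN$ has more than one point. For each such $B$ the set of admissible representing radii $\{r \in S : B = B_r(x) \text{ for some } x\}$ is a subset of $S$ bounded below by $\mathrm{diam}(B)/2 > 0$, hence by the discreteness of $S$ away from $0$ has a minimum $\tilde\rho(B) \in S$. Let $R := \inf_{B \in \cN} \tilde\rho(B)$ and split into two cases. In the case $R = 0$, I extract $B_n = B_{r_n}(x_n) \in \cN$ with $r_n \to 0$; since any two balls of the nest are comparable, the center of either lies in the other, giving $d(x_m, x_n) \le \max(r_m, r_n) \to 0$. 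So $(x_n)$ is Cauchy with limit $z$ by completeness, and for every $B \in \cN$ the non-singletonness of $B$ rules out $B \subsetneq B_n$ for cofinally many $n$ (which would force $\mathrm{diam}(B) \to 0$); hence $B_n \subseteq B$ for infinitely many $n$, putting $z$ in the closed ball $B$.

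In the case $R > 0$, the discreteness of $S$ above $0$ forces $R$ itself to be attained in $S$, so some $B^* \in \cN$ admits a representation $B^* = B_R(x^*)$. The key observation is that this $x^*$ belongs to every $B \in \cN$: by the nest property $B$ is comparable with $B^*$; if $B \supseteq B^*$ then $x^* \in B^* \subseteq B$, while if $B \subsetneq B^*$, any representation $B = B_{r'}(y')$ has $r' \ge R$ by minimality of $R$, and since $y' \in B \subseteq B^* = B_R(x^*)$ we get $d(x^*, y') \le R \le r'$, so $x^* \in B$. Thus $x^* \in \bigcap \cN$. I expect this last case to be the main conceptual point: completeness is not actually used when $R > 0$, and the accumulation condition on $S$ plays two distinct roles, guaranteeing approximation to zero in the first branch and attainment of the infimum in the second.
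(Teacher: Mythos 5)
Your proposal is correct, and in the key direction it takes a genuinely different (and in fact more careful) route than the paper. The cycle c) $\Rightarrow$ b) $\Rightarrow$ a) $\Rightarrow$ c) is the same, and your proof of b) $\Rightarrow$ a) — turning a Cauchy sequence into a nest $B_{s_k}(x_{N_k})$ with $2s_{k+1}<s_k$ — is essentially the paper's argument. The difference is in a) $\Rightarrow$ c): the paper reduces to a nest with a largest but no smallest ball and then asserts that the radii of its members form an infinite strictly descending chain in $S$, hence tend to $0$, after which the limit of the (Cauchy) sequence of centers lies in every ball. Your proof instead assigns to each non-singleton ball its \emph{minimal} representing radius $\tilde\rho(B)$ (existence uses that $S$ is discrete away from $0$) and splits on $R=\inf_{B\in\cN}\tilde\rho(B)$: for $R=0$ you run the Cauchy-centers argument, while for $R>0$ you exhibit the center of a ball realizing $R$ as a point of $\bigcap\cN$, using no completeness at all. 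This case split is not a redundancy but a genuine repair: in a general metric space distinct balls of a nest can be strictly nested while admitting only one and the same representing radius, so the paper's descending-radii claim is unjustified. For instance, in the complete space $X=\{-1/n\mid n\in\N\}\cup\{0\}\cup\{-1-1/n\mid n\in\N\}\subseteq\R$ with $S=\{1\}\cup\{1/n\mid n\geq 2\}$, the balls $B_1(-1/n)$ form an infinite strictly decreasing nest (largest ball $X$, no smallest ball) whose every member has $1$ as its only representing radius in $S$; the paper's argument does not apply to this nest, whereas your $R>0$ branch handles it outright. So what your approach buys is a complete proof covering nests whose radii stay bounded away from zero, plus the observation that completeness is only needed in the $R=0$ case; what the paper's approach buys, where its reduction is legitimate, is brevity.
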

\begin{proof}
a) $\Rightarrow$ c):
Assume that $(X,d)$ is complete and take a set $S\subset\R^{>0}$ which admits $0$ as its only
accumulation point. This implies that $S$ is discretely ordered, hence every infinite descending chain in $S$
with a maximal element can be indexed by the natural numbers.

Take any nest $\cN$ of closed metric balls in $\cB_S$. If
the nest contains a smallest ball, then its intersection is nonempty; so we assume that it does not.
If $B\in\cN$, then $\cN_B:= \{B'\in\cN\mid B'\subseteq B\}$ is a nest of balls with $\bigcap\cN= \bigcap\cN_B\,$;
therefore, we may assume from the start that $\cN$ contains a largest ball. Then the radii of the balls in $\cN$
form an infinite descending chain in $S$ with a maximal element, and 0 is their unique accumulation point. Hence we
can write $\cN=\{B_{r_i}(x_i)\mid i\in \N\}$ with $r_j<r_i$ for $i<j$, and with $\lim_{i\rightarrow\infty} r_i=0$.

For every $i\in\N$ and all $j\geq i$, the element $x_j$ lies in $B_{r_i} (x_i)$ and therefore
satisfies $d(x_i,x_j)\leq r_i\,$. This shows that $(x_i)_{i\in\N}$ is a
Cauchy sequence. Since $(X,d)$ is complete, it has a limit $x$ in $X$.
We have that $d(x_i,x) \leq r_i\,$, so $x$ lies in every ball $B_{r_i}
(x_i)$. This proves that the nest has nonempty intersection.

\sn
c) $\Rightarrow$ b): \ Trivial.

\sn
b) $\Rightarrow$ a): \
Assume that $(X,\cB_S)$ is spherically complete. Take any Cauchy sequence $(x_n)_{n\in {\N}}$ in $X$.
By our assumptions on $S$, we can choose a sequence $(s_i)_{i\in \N}$ in $\{s\in S
\mid s<s_0\}$ such that $0<2s_{i+1}\leq s_i$. Now we will use induction on $i\in\N$ to choose an increasing
sequence $(n_i)_{i\in\N}$ of natural numbers such that the balls $B_i:=B_{s_i}(x_{n_i})$ form a nest.

Since $(x_n)_{n\in {\N}}$ is a Cauchy sequence, we have that
there is $n_1$ such that $d(x_n,x_m)< s_2$ for all $n,m > n_1\,$. Once we have chosen $n_{i-1}\,$,
we choose $n_i>n_{i-1}$ such that $d(x_n,x_m)< s_{i+1}$ for all $n,m \geq n_i\,$. We show that the so obtained
balls $B_i$ form a nest. Take $i\in \N$ and $x \in B_{i+1}=B_{s_{i+1}}(x_{n_{i+1}})$.
This means that $d(x_{n_{i+1}},x)\leq s_{i+1}$. Since $n_i,n_{i+1} \geq n_i$, we have that $d(x_{n_i},x_{n_{i+1}})
< s_{i+1}$. We compute:
\begin{eqnarray}
d(x_{n_i},x)&\leq & d(x_{n_i},x_{n_{i+1}})+ d(x_{n_{i+1}},x)\nonumber \\
&\leq & s_{i+1} + s_{i+1} = 2s_{i+1} \leq s_i\>.\nonumber
\end{eqnarray}
Thus $x\in B_i$ and hence $B_{i+1} \subseteq B_i$ for all $i\in \N$.
The intersection of this nest $(B_i)_{i\in \N}$ contains some $y$, by our assumption.
We have that  $y \in B_i$ for all $i\in \N$, which means that $d(x_{n_i},y)\leq s_i$. Since
\[
\lim_{i \to \infty}s_i=0,
\]
we obtain that
\[
\lim_{i \to \infty}x_{n_i}= y,
\]
which proves that $(X,d)$ is a complete metric space.
\end{proof}

\mn
Proof of Theorem~\ref{scmetr1}: \ Assume that $(X,{\cal B}_d)$ is
spherically complete. Then so is $(X,{\cal B}')$ for every nonempty $\cB'\subset {\cal B}_d\,$. Taking $\cB'=\cB_S$
with $S$ as in Theorem~\ref{scmetr2}, we obtain that $(X,d)$ is complete. \qed

\pars
\begin{remark}
Theorems~\ref{scmetr1} and~\ref{scmetr2} remain true if instead of the closed metric balls the open metric balls
\[
B_\alpha (x)\>:=\>\{y\in X\mid d(x,y)< \alpha\}
\]
are used for the metric ball space.
\end{remark}

\mn
%
%
\subsection{Metric spaces with Caristi--Kirk balls and Oettli--Th\'era balls}
\label{sectmetCK}
Consider a metric space $(X,d)$. A function $\varphi:X\to\R$ is \bfind{lower semicontinuous} if
for every $y\in X$,
\[
\liminf_{x\to y}\varphi(x)\>\ge\>\varphi(y)\>.
\]
If $\varphi$ is lower semicontinuous and bounded from below, we call it a \textbf{Caristi--Kirk function on $X$}.
For a fixed Caristi--Kirk function $\varphi$ we consider \textbf{Caristi--Kirk balls} of the form
\begin{equation}                                \label{CKb}
B^\varphi_x\>:=\>\{y\in X\mid d(x,y)\le\varphi(x)-\varphi(y)\},\quad x\in X,
\end{equation}
and the corresponding \textbf{Caristi--Kirk ball space} $(X,\cB^\varphi)$ given by
\[
\cB^\varphi:=\{B^\varphi_x\mid x\in X\}.
\]
These ball spaces and their underlying theory can be employed to prove the Caristi--Kirk Theorem in a simple manner
(see below). We found the sets that we call Caristi--Kirk balls in a proof of the Caristi--Kirk Theorem given by
J.-P.~Penot in \cite{P}.

\parm
We say that a function $\phi:X\times X\to(-\infty,+\infty]$ is an \textbf{Oettli--Th\'era function on $X$} if it
satisfies the following conditions:
\begin{align*}
(a)\quad&\phi(x,\cdot):X\rightarrow(-\infty,+\infty]\text{ is lower semicontinous for all } x\in X;\\
(b)\quad&\phi(x,x)=0\text{ for all }x\in X;\\
(c)\quad&\phi(x,y)\le \phi(x,z)+\phi(z,y)\text{ for all }x,y,z\in X;\\
(d)\quad&\text{there exists }x_0\in X\text{ such that } \inf_{x\in X}\phi(x_0,x)>-\infty.
\end{align*}
This notion was, to our knowledge, first introduced by Oettli and Th\'era in \cite{OT}. An Oettli--Th\'era function
$\phi$ yields balls of the form
\[
B^\phi_x:=\{y\in X\mid d(x,y)\le-\phi(x,y)\},\quad x\in X,
\]
which will be called \textbf{Oettli--Th\'era balls}. If an element $x_0$ satisfies condition $(d)$ above, then
we will call it an
\bfind{Oettli--Th\'era element for $\phi$ in $X$}. For a fixed Oettli--Th\'era element $x_0$ we define the
associated \textbf{Oettli--Th\'era ball space} to be $(B^\phi_{x_0},\cB^\phi_{x_0})$, where
\[
\cB^\phi_{x_0}:=\{B^\phi_x\mid x\in B^\phi_{x_0}\}.
\]
We observe that for a given Caristi--Kirk function $\varphi:X\to\R$, the mapping
\[
\phi(x,y):=\varphi(y)-\varphi(x)
\]
is an Oettli--Th\'era function. Furthermore, every Caristi--Kirk ball is also an Oettli--Th\'era ball.

In general the balls defined above are not metric balls. However, when working in complete metric spaces they prove
to be a more useful tool than metric balls. As observed in the previous section, the completeness of a metric space
need not imply spherical completeness of the space of metric balls $(X,\cB_d)$. In the case of Caristi--Kirk and
Oettli--Th\'era balls, completeness turns
out to be equivalent to spherical completeness, as shown in the following two propositions.

\begin{proposition}                                            \label{pr-ck}
Let $(X,d)$ be a metric space. Then the following assertions are equivalent:
\sn
a) The metric space $(X,d)$ is complete.
\sn
b) Every Caristi--Kirk ball space $(X,\cB^\varphi)$ is spherically complete.
\sn
c) For every continuous function $\varphi:X\to\R$ bounded from below, the Caristi--Kirk ball space
$(X,\cB^\varphi)$ is spherically complete.
\end{proposition}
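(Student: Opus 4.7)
The plan is to prove the cyclic chain a) $\Rightarrow$ b) $\Rightarrow$ c) $\Rightarrow$ a). The middle implication is immediate: every continuous function bounded from below is lower semicontinuous, hence a Caristi--Kirk function, so its ball space already lies among those covered by b).

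For a) $\Rightarrow$ b), I fix a Caristi--Kirk function $\varphi$ and a nest $\cN$ in $(X,\cB^\varphi)$, and aim to exhibit a point in $\bigcap\cN$. The key monotonicity fact is: if $B^\varphi_y\subseteq B^\varphi_x$, then $y\in B^\varphi_y\subseteq B^\varphi_x$, so $\varphi(y)\leq\varphi(x)-d(x,y)\leq\varphi(x)$. Setting $\alpha:=\inf\{\varphi(x)\mid B^\varphi_x\in\cN\}$ (finite because $\varphi$ is bounded below), I choose $B^\varphi_{x_n}\in\cN$ with $\varphi(x_n)\to\alpha$; passing to a subsequence, comparability inside the nest together with the above monotonicity makes $B^\varphi_{x_n}$ descending. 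For $m<n$ the inclusion $x_n\in B^\varphi_{x_m}$ gives $d(x_m,x_n)\leq\varphi(x_m)-\varphi(x_n)\to 0$, so $(x_n)$ is Cauchy and by a) has a limit $z\in X$. For any $B^\varphi_y\in\cN$ with $\varphi(y)>\alpha$, eventually $\varphi(x_n)<\varphi(y)$, which by the nest property forces $B^\varphi_{x_n}\subseteq B^\varphi_y$ and hence $d(y,x_n)\leq\varphi(y)-\varphi(x_n)$; letting $n\to\infty$ and invoking $\varphi(z)\leq\liminf\varphi(x_n)=\alpha$ (lower semicontinuity) gives $d(y,z)\leq\varphi(y)-\varphi(z)$, i.e.\ $z\in B^\varphi_y$. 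The edge case $\varphi(y)=\alpha$ forces $y=z$ by the same estimate.

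For c) $\Rightarrow$ a), I argue by contrapositive: assuming $(X,d)$ is not complete, I build a continuous, bounded-below $\varphi$ whose Caristi--Kirk ball space is not spherically complete. Pick a Cauchy sequence $(x_n)$ with no limit in $X$ and let $\hat x$ denote its limit in the completion $\hat X$, so $\hat x\notin X$. Define
\[
\varphi(x)\;:=\;2\lim_{n\to\infty}d(x_n,x)\;=\;2\,d_{\hat X}(\hat x,x),
\]
which is $2$-Lipschitz (hence continuous) on $X$ and bounded below by $0$. After thinning to a subsequence (still denoted $(x_n)$) with $d_{\hat X}(\hat x,x_{n+1})\leq\tfrac13 d_{\hat X}(\hat x,x_n)$, a triangle-inequality computation gives
\[
d(x_n,x_{n+1})\;\leq\;\tfrac43 d_{\hat X}(\hat x,x_n)\;\leq\;2d_{\hat X}(\hat x,x_n)-2d_{\hat X}(\hat x,x_{n+1})\;=\;\varphi(x_n)-\varphi(x_{n+1}),
\]
so $x_{n+1}\in B^\varphi_{x_n}$. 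Property (C1), easily verified directly for Caristi--Kirk balls from the triangle inequality, then makes $\{B^\varphi_{x_n}\}_{n\in\N}$ a descending nest. Any $y$ in its intersection would satisfy $d(x_n,y)\leq\varphi(x_n)-\varphi(y)$ for every $n$; letting $n\to\infty$ forces $d_{\hat X}(\hat x,y)\leq -2d_{\hat X}(\hat x,y)$, i.e.\ $y=\hat x\notin X$, a contradiction.

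The anticipated main obstacle is the choice of $\varphi$ in c) $\Rightarrow$ a). Natural $1$-Lipschitz candidates such as $\varphi(x)=\lim_n d(x_n,x)$ fail, because the inequality needed to invoke (C1), namely $d(x_n,x_{n+1})\leq\varphi(x_n)-\varphi(x_{n+1})$, demands that $\varphi$ \emph{drop} along $(x_n)$ by at least $d(x_n,x_{n+1})$, whereas any $1$-Lipschitz $\varphi$ only guarantees $|\varphi(x_n)-\varphi(x_{n+1})|\leq d(x_n,x_{n+1})$. The factor $2$ in front of the limit, combined with the $\tfrac13$-geometric thinning of the subsequence, provides the slack that turns the triangle inequality in $\hat X$ into a strict descent of $\varphi$ along the thinned sequence, simultaneously securing both the nest property and the empty intersection.
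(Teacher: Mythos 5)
Your proof is correct, but a caveat on the comparison: the paper does not contain its own proof of Proposition~\ref{pr-ck} — it explicitly defers to \cite[Proposition 3]{KKP} — so there is no in-paper argument to measure against; judged on its own merits, your argument is complete and follows the standard route (essentially that of the cited reference). For a) $\Rightarrow$ b) the mechanism is exactly right: the monotonicity $B^\varphi_y\subseteq B^\varphi_x\Rightarrow \varphi(y)\le\varphi(x)$ combined with comparability inside the nest lets you extract a descending sequence of balls whose centers, with $\varphi$-values tending to the infimum $\alpha$, form a Cauchy sequence; lower semicontinuity then gives $\varphi(z)\le\alpha$ for the limit $z$, which is precisely what places $z$ in every ball of the nest. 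Two elided steps deserve to be written out, though both are routine: (i) to get the descending sequence, first arrange $\varphi(x_{n+1})\le\varphi(x_n)$; then comparability forces $B^\varphi_{x_{n+1}}\subseteq B^\varphi_{x_n}$, since the reverse inclusion would give $\varphi(x_n)\le\varphi(x_{n+1})$, hence equal values and (by $d(x_n,x_{n+1})\le\varphi(x_n)-\varphi(x_{n+1})=0$) equal centers; (ii) in the edge case $\varphi(y)=\alpha$, either possible inclusion between $B^\varphi_y$ and $B^\varphi_{x_n}$ yields $d(x_n,y)\le\varphi(x_n)-\alpha\to 0$, so $y=z\in B^\varphi_y$ as you claim. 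For c) $\Rightarrow$ a), your Weston-type function $\varphi(x)=2d_{\hat X}(\hat x,x)$ with the $\tfrac13$-geometric thinning is a clean and correct implementation: the thinning condition $3r_{n+1}\le r_n$ is exactly equivalent to $r_n+r_{n+1}\le 2r_n-2r_{n+1}$, which together with the triangle inequality and condition (C1) (which indeed holds for all Caristi--Kirk balls, cf.\ Proposition~\ref{CKOTsc}) produces a nest whose intersection can only contain $\hat x\notin X$; your closing paragraph correctly identifies the factor $2$ as the crux that a $1$-Lipschitz candidate cannot supply.
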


\begin{proposition}                                             \label{pr-ot}
A metric space $(X,d)$ is complete if and only if the Oettli--Th\'era ball space $(B^\phi_{x_0},\cB^\phi_{x_0})$
is spherically complete for every Oettli--Th\'era function $\phi$ on $X$ and every Oettli--Th\'era element $x_0$
for $\phi$ in $X$.
\end{proposition}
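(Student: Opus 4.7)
The plan is to prove the two implications separately, with the forward direction being a direct extraction of a Cauchy sequence governed by the auxiliary function $\phi(x_0,\cdot)$, and the reverse direction obtained by reducing to Proposition~\ref{pr-ck}.

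For the forward direction, assume $(X,d)$ is complete and fix a nest $\cN \subseteq \cB^\phi_{x_0}$. The key structural observation, easily derived from conditions (b), (c) for $\phi$, is that if $y\in B^\phi_x$ then $B^\phi_y\subseteq B^\phi_x$, and if in addition $y\neq x$ then $\phi(x,y)\leq -d(x,y)<0$. A short symmetric argument using (b) and (c) shows that distinct centers yield distinct balls, so each ball in $\cN$ has a unique representative $x$. Combining the observation with the triangle inequality (c), we find that whenever $B^\phi_a\subsetneq B^\phi_b$ are both in $\cN$, the estimate $\phi(x_0,a)\leq \phi(x_0,b)+\phi(b,a)<\phi(x_0,b)$ holds, so the assignment $B^\phi_x\mapsto \phi(x_0,x)$ is strictly increasing with respect to inclusion on $\cN$.

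Let $\alpha:=\inf\{\phi(x_0,x)\mid B^\phi_x\in\cN\}$, which is finite by (d). If $\alpha$ is attained, the minimizing ball is the smallest element of $\cN$ and its center lies in $\bigcap\cN$. Otherwise, choose centers $x_n$ with $\phi(x_0,x_n)\searrow\alpha$, and after passing to a subsequence we may assume $B^\phi_{x_1}\supsetneq B^\phi_{x_2}\supsetneq\cdots$; this sub-nest is cofinal in $\cN$ because every $B^\phi_x\in\cN$ has $\phi(x_0,x)>\alpha$, forcing $B^\phi_{x_n}\subsetneq B^\phi_x$ for large $n$ by nest comparability. For $n<m$ we have $x_m\in B^\phi_{x_n}$, so applying (c) gives
\[
d(x_n,x_m)\leq -\phi(x_n,x_m)\leq \phi(x_0,x_n)-\phi(x_0,x_m)\leq \phi(x_0,x_n)-\alpha\to 0,
\]
hence $(x_n)$ is Cauchy and converges to some $y\in X$ by completeness. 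To conclude that $y\in\bigcap\cN$, fix any $B^\phi_x\in\cN$; by cofinality $x_n\in B^\phi_x$ eventually, so $d(x,x_n)\leq -\phi(x,x_n)$, and passing to the limit via continuity of $d(x,\cdot)$ and lower semicontinuity of $\phi(x,\cdot)$ yields $d(x,y)\leq \limsup_n(-\phi(x,x_n))\leq -\phi(x,y)$, that is, $y\in B^\phi_x$.

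For the reverse direction, I reduce to Proposition~\ref{pr-ck}. Given a Caristi--Kirk function $\varphi$ on $X$, set $\phi(x,y):=\varphi(y)-\varphi(x)$; then $\phi$ is an Oettli--Th\'era function (condition (c) holds with equality, and (d) holds for every $x_0$ since $\inf\varphi>-\infty$), and $B^\phi_x=B^\varphi_x$ for all $x$. Given a nest $\cN$ in $\cB^\varphi$, fix any $B^\varphi_{x_0}\in\cN$ and let $\cN':=\{B\in\cN\mid B\subseteq B^\varphi_{x_0}\}$. Every center $x$ of a ball in $\cN'$ lies in $B^\phi_{x_0}$, so $\cN'\subseteq \cB^\phi_{x_0}$, and by hypothesis $\bigcap\cN'\neq\emptyset$; since the balls in $\cN\setminus\cN'$ are supersets of $B^\varphi_{x_0}$, we obtain $\bigcap\cN=\bigcap\cN'\neq\emptyset$. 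Thus every Caristi--Kirk ball space is spherically complete, and Proposition~\ref{pr-ck} gives completeness of $(X,d)$. The main obstacle is the limit argument in the forward direction: it requires the precise interplay between the triangle inequality (c), lower semicontinuity of $\phi(x,\cdot)$, and the use of $\phi(x_0,\cdot)$ as a ``depth gauge'' along the nest.
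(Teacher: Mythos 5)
Your proof is correct. A direct comparison with ``the paper's own proof'' is not possible here, because the paper does not prove Proposition~\ref{pr-ot} internally: it defers it (and Proposition~\ref{pr-ck}) to the references \cite{BCLS} and \cite{KKP}. Judged against that background, your forward direction is the expected argument, carried out correctly: from conditions (b) and (c) you re-derive for Oettli--Th\'era balls exactly the properties the paper records abstractly in Proposition~\ref{CKOTsc} (condition (C1), injectivity of $x\mapsto B^\phi_x$, strict shrinking for distinct centers), then use $x\mapsto\phi(x_0,x)$ as a strictly monotone gauge on the nest, bounded below thanks to (d), split on whether its infimum $\alpha$ is attained, extract a coinitial countable subnest, get a Cauchy sequence of centers from the estimate $d(x_n,x_m)\le\phi(x_0,x_n)-\alpha$, and place the limit in every ball of the nest via continuity of $d(x,\cdot)$ and lower semicontinuity of $\phi(x,\cdot)$; every step checks out, including the extended-real bookkeeping (all relevant values of $\phi$ are finite because the centers lie in $B^\phi_{x_0}$). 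Your reverse direction is a reduction rather than a direct construction: instead of manufacturing an Oettli--Th\'era function from an arbitrary Cauchy sequence, you check that for $\phi(x,y)=\varphi(y)-\varphi(x)$ every Caristi--Kirk nest, truncated below a fixed ball $B^\varphi_{x_0}\in\cN$, becomes a nest in $(B^\phi_{x_0},\cB^\phi_{x_0})$ with the same intersection, so spherical completeness of all Oettli--Th\'era ball spaces gives spherical completeness of all Caristi--Kirk ball spaces, and Proposition~\ref{pr-ck} finishes; the truncation step is precisely the right way to handle the mismatch of underlying sets ($X$ versus $B^\phi_{x_0}$). The only caveat is that this makes your proof depend on Proposition~\ref{pr-ck}, whose proof the paper also only cites; within the paper's logical structure this is unobjectionable, but a fully self-contained argument would additionally need the implication that spherical completeness of all Caristi--Kirk ball spaces implies completeness of $(X,d)$, which could be obtained by adapting your own forward technique to a Caristi--Kirk function built from a given Cauchy sequence.
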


The proofs of Proposition \ref{pr-ck} and Proposition \ref{pr-ot} can be found in \cite[Proposition 3]{KKP} and
in \cite{BCLS}, respectively.

\parm
The easy proof of the next proposition is provided in \cite{BCLS}.
\begin{proposition}                                  \label{CKOTsc}
Every Caristi--Kirk ball space $(X,\cB^\varphi)$ and every Oettli--Th\'era ball space
$(B^\phi_{x_0},\cB^\phi_{x_0})$ is a strongly contractive normalized B$_x$--ball space.
\end{proposition}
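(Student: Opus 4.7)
The plan is to verify directly, from the definitions, that each of the two ball spaces satisfies (NB), (C1), and (C2s). Both ball spaces are B$_x$--ball spaces by their very construction: the family is indexed as $\{B^\varphi_x \mid x\in X\}$ in the Caristi--Kirk case, and as $\{B^\phi_x \mid x\in B^\phi_{x_0}\}$ in the Oettli--Th\'era case. Condition (NB) is immediate: for Caristi--Kirk balls, $d(x,x)=0=\varphi(x)-\varphi(x)$; for Oettli--Th\'era balls, $d(x,x)=0=-\phi(x,x)$ using property (b). In particular this shows that the balls are nonempty, so these are genuine ball spaces in the sense of the paper.

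For (C1), I would exploit the triangle inequality of $d$ together with the ``triangle-like'' behaviour of the defining inequality. In the Caristi--Kirk case, if $y\in B^\varphi_x$ and $z\in B^\varphi_y$, then
\[
d(x,z)\>\le\>d(x,y)+d(y,z)\>\le\>(\varphi(x)-\varphi(y))+(\varphi(y)-\varphi(z))\>=\>\varphi(x)-\varphi(z),
\]
so $z\in B^\varphi_x$. The Oettli--Th\'era case is parallel, using property (c): from $d(x,y)\le-\phi(x,y)$ and $d(y,z)\le-\phi(y,z)$ one obtains $d(x,z)\le-\phi(x,y)-\phi(y,z)\le-\phi(x,z)$, hence $z\in B^\phi_x$.

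For (C2s), the key step, and really the only place where one has to be slightly careful, is showing that the inclusion $B_y\subseteq B_x$ is \emph{strict} whenever $y\in B_x\setminus\{x\}$. The idea is to show that $x\notin B_y$ while $x\in B_x$ by (NB). In the Caristi--Kirk case, $y\in B^\varphi_x$ with $y\ne x$ gives $0<d(x,y)\le\varphi(x)-\varphi(y)$, so $\varphi(y)<\varphi(x)$ and $\varphi(y)-\varphi(x)<0\le d(y,x)$, hence $x\notin B^\varphi_y$. In the Oettli--Th\'era case, the analogue uses (b) and (c): $0=\phi(x,x)\le\phi(x,y)+\phi(y,x)$ combined with $\phi(x,y)\le-d(x,y)<0$ yields $-\phi(y,x)\le\phi(x,y)<0\le d(y,x)$, so again $x\notin B^\phi_y$.

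I anticipate no real obstacle; the proof is essentially a bookkeeping exercise, and the only subtle point is recognizing that the strictness in (C2s) follows from the fact that $x$ itself is excluded from $B_y$ when $y\ne x$. As a by-product one gets, in both settings, that $x\mapsto B_x$ is injective (consistent with the remark after the definition of strongly contractive ball space), since $B_x=B_y$ would force $x=y$ by (C2s) applied in either direction.
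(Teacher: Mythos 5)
Your proof is correct. The paper gives no argument of its own for this proposition---it simply refers to \cite{BCLS} for the ``easy proof''---and your direct verification of (NB), (C1) and (C2s), with strictness in (C2s) obtained by showing $x\notin B_y$ while $x\in B_x$, is exactly the routine check intended; the only point worth making explicit is that your (C1) computation also shows that $B^\phi_x\subseteq B^\phi_{x_0}$ for every $x\in B^\phi_{x_0}$, which is needed for $(B^\phi_{x_0},\cB^\phi_{x_0})$ to be a ball space on the underlying set $B^\phi_{x_0}$ in the first place.
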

\n
We will meet another strongly contractive ball space in the case of partially ordered sets; see
Proposition~\ref{pfssc}.

\pars
The following is the \bfind{Caristi--Kirk Fixed Point Theorem}:
\begin{theorem}                                       \label{CKFPT}
Take a complete metric space $(X,d)$ and a lower semicontinuous function $\varphi:X\to\R$ which is
bounded from below. If a function $f:X\to X$ satisfies the \bfind{Caristi condition}
\begin{equation}                           \label{CC}
d(x,fx)\>\le\>\varphi(x)-\varphi(fx)\>,
\end{equation}
for all $x\in X$, then $f$ has a fixed point on $X$.
\end{theorem}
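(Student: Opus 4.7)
The plan is to deduce the Caristi--Kirk Fixed Point Theorem as a direct application of the generic fixed point Theorem~\ref{GFPT3}, after identifying the appropriate ball space. Given the lower semicontinuous function $\varphi:X\to\R$ bounded from below, I would form the associated Caristi--Kirk ball space $(X,\cB^\varphi)$ with balls $B^\varphi_x$ as defined in~(\ref{CKb}).

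First, I would verify that the hypotheses of Theorem~\ref{GFPT3} are satisfied. By Proposition~\ref{CKOTsc}, $(X,\cB^\varphi)$ is a strongly contractive normalized B$_x$--ball space; since strongly contractive implies contractive, this gives us a contractive B$_x$--ball space. Next, since $(X,d)$ is complete, Proposition~\ref{pr-ck} guarantees that $(X,\cB^\varphi)$ is spherically complete. It then remains to check that $f$ satisfies condition~(\ref{fxinBx}) of Theorem~\ref{GFPT3}, i.e.\ $fx\in B^\varphi_x$ for every $x\in X$. But this is precisely a reformulation of the Caristi condition~(\ref{CC}): the inequality $d(x,fx)\le\varphi(x)-\varphi(fx)$ is exactly what it means for $fx$ to belong to $B^\varphi_x=\{y\in X\mid d(x,y)\le\varphi(x)-\varphi(y)\}$.

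Having checked all three hypotheses of Theorem~\ref{GFPT3}, I can conclude that $f$ has a fixed point in every ball $B\in\cB^\varphi$; in particular, choosing any ball (say $B^\varphi_{x_0}$ for an arbitrary $x_0\in X$), we obtain a fixed point of $f$ in $X$.

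There is essentially no hard step: all the work has already been done in setting up the ball space framework. The main point of this proof is to illustrate how the abstract machinery recovers a well-known concrete theorem almost effortlessly once the correct ball space (Caristi--Kirk balls rather than metric balls) has been identified. The only thing to be mindful of is the translation between the analytic Caristi condition and the ball-theoretic condition $fx\in B^\varphi_x$, which is immediate from the definition of the balls.
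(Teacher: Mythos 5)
Your proposal is correct and is essentially identical to the paper's own proof: the paper likewise observes that the Caristi condition~(\ref{CC}) is exactly the statement $fx\in B^\varphi_x$ and then invokes Theorem~\ref{GFPT3} together with Propositions~\ref{pr-ck} and~\ref{CKOTsc}. Your added remark that strong contractivity implies contractivity, and the choice of an arbitrary ball $B^\varphi_{x_0}$ to land the fixed point in $X$, are exactly the (routine) details the paper leaves implicit.
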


Also in \cite{BCLS}, the same tools (with Proposition~\ref{pr-ck} replaced by Proposition~\ref{pr-ot}) are
used to prove the following generalization:
\begin{theorem}                                   \label{OTFPT}
Take a complete metric space $(X,d)$ and $\phi$ an Oettli-Th\'era function on $X$. If a function $f:X\to X$
satisfies
\begin{equation}                           \label{OTC}
d(x,fx)\>\le\>-\phi(x,fx),
\end{equation}
for all $x\in X$, then $f$ has a fixed point on $X$.
\end{theorem}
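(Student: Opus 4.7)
The plan is to derive this from Theorem~\ref{GFPT3} applied to an Oettli--Th\'era ball space for $\phi$. By condition (d) in the definition of an Oettli--Th\'era function, there exists an Oettli--Th\'era element $x_0\in X$ for $\phi$, and I would consider the associated ball space $(B^\phi_{x_0},\cB^\phi_{x_0})$. By Proposition~\ref{pr-ot}, completeness of $(X,d)$ ensures this ball space is spherically complete, and by Proposition~\ref{CKOTsc} it is a strongly contractive normalized B$_x$--ball space, hence in particular a contractive B$_x$--ball space as required by the hypotheses of Theorem~\ref{GFPT3}.

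Next, I would verify that $f$ restricts to a self-map of $B^\phi_{x_0}$ satisfying $fx\in B^\phi_x$ for every $x$ in this underlying set. The latter is immediate from hypothesis (\ref{OTC}), which literally says $d(x,fx)\le -\phi(x,fx)$, i.e.\ $fx\in B^\phi_x$. For the self-map property, take $x\in B^\phi_{x_0}$, so that $d(x_0,x)\le -\phi(x_0,x)$. Then by the triangle inequality for $d$, applying (\ref{OTC}) to $(x,fx)$ and the bound defining $x\in B^\phi_{x_0}$, and finally condition (c) of the Oettli--Th\'era function, one computes
\[
d(x_0,fx)\,\le\,d(x_0,x)+d(x,fx)\,\le\,-\phi(x_0,x)-\phi(x,fx)\,\le\,-\phi(x_0,fx),
\]
which shows $fx\in B^\phi_{x_0}$. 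The same chain also shows that every ball $B^\phi_x$ with $x\in B^\phi_{x_0}$ is contained in $B^\phi_{x_0}$, confirming that $(B^\phi_{x_0},\cB^\phi_{x_0})$ is a well-defined ball space on $B^\phi_{x_0}$.

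With these two ingredients in place, I would apply Theorem~\ref{GFPT3} to the restriction of $f$ to $B^\phi_{x_0}$ and obtain a fixed point of $f$ in every ball of $\cB^\phi_{x_0}$; in particular, $f$ has a fixed point inside $B^\phi_{x_0}\subseteq X$, which is the desired fixed point on $X$.

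I do not expect a serious obstacle: once Proposition~\ref{pr-ot}, Proposition~\ref{CKOTsc} and Theorem~\ref{GFPT3} are available, this reduces to the triangle-inequality check above. The conceptual work has been pushed into Proposition~\ref{pr-ot}, the nontrivial equivalence between metric completeness and spherical completeness of the Oettli--Th\'era ball space; granted that equivalence, the Oettli--Th\'era fixed point theorem is essentially a direct specialization of the generic ball space Theorem~\ref{GFPT3}.
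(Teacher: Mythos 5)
Your proof is correct and takes essentially the same route as the paper: the paper also derives Theorem~\ref{OTFPT} by applying Theorem~\ref{GFPT3} to the Oettli--Th\'era ball space $(B^\phi_{x_0},\cB^\phi_{x_0})$, using Proposition~\ref{pr-ot} for spherical completeness, Proposition~\ref{CKOTsc} for the (strongly) contractive B$_x$--ball space structure, and condition (\ref{OTC}) to get $fx\in B^\phi_x$. Your explicit triangle-inequality verification that $f$ maps $B^\phi_{x_0}$ into itself (and that each $B^\phi_x\subseteq B^\phi_{x_0}$) is a detail the paper leaves implicit, since it is in effect condition (C1) supplied by Proposition~\ref{CKOTsc}.
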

The conditions (\ref{CC}) and (\ref{OTC}) guarantee that $fx\in B_x$ for every $B_x\in\cB^\varphi$ or
$B_x\in\cB^\phi_{x_0}$, respectively. Hence Theorem~\ref{GFPT3} in conjunction with Propositions~\ref{pr-ck},
\ref{pr-ot} and~\ref{CKOTsc} proves Theorems~\ref{CKFPT} and~\ref{OTFPT}. Similar proofs were given in
\cite{BCLS} (see also \cite{KKP}). Note that conditions (\ref{CC}) and (\ref{OTC}) do not necessarily imply that
every ball $B_x$ is $f$-closed.

A variant of part 2) of Theorem~\ref{MTscsc} is used in \cite{BCLS} to give quick proofs of several theorems
that are known to be equivalent to the Caristi--Kirk Fixed Point Theorem (see \cite{OT,P,P2} for presentations of
these equivalent results and generalizations).

\pars
\begin{remark}                             \label{remax}
Assume that $(X,\cB)$ is a contractive B$_x$--ball space. Then we can define a partial
ordering on $X$ by setting
\[
x\>\prec\> y\>:\Leftrightarrow\> B_y\subsetneq B_x\>.
\]
If $(X,\cB)$ is strongly contractive, then the function $x\mapsto B_x$ is injective, and $X$ together with the
reverse of the partial order we have defined is order isomorphic to $\cB$ with inclusion, that is, the function
$x\mapsto B_x$ is an order isomorphism from $(X,\prec)$ onto $(\cB,<)$ where the latter is defined as in
the beginning of Section~\ref{sectZL}.

If the $B_x$ are the Caristi--Kirk balls defined in (\ref{CKb}), then we have that
\[
x\>\prec\> y\>\Leftrightarrow\> d(x,y)<\varphi(x)-\varphi(y)\>,
\]
which means that $\prec$ is the \bfind{Br{\o}nsted ordering} on $X$. The \bfind{Ekeland Variational Principle}
(cf.\ \cite{BCLS}) states that if the metric space is complete, then $(X,\prec)$ admits maximal elements, or in
other words, $\cB$ admits minimal balls. The Br{\o}nsted ordering has been used in several different proofs of the
Caristi--Kirk Fixed Point Theorem. However, at least in the proofs that also define and use the Caristi--Kirk balls
(such as the one of Penot in \cite{P}), it makes more sense to use directly their natural partial
ordering (as done in \cite{KKP}). But the main incentive to use the balls instead of the ordering is that it
naturally subsumes the metric
case in the framework of fixed point theorems in several other areas of mathematics which is provided by the
general theory of ball spaces as laid out in the present paper (see also \cite{[KK1],[KK2],[KKSo]}).

It has been shown that the Ekeland Variational Principle can be proven in the Zermelo Fraenkel axiom system ZF
plus the \bfind{axiom of dependent choice} DC which covers the usual mathematical induction (but not transfinite
induction, which is equivalent to the full axiom of choice). Conversely, it has been shown in \cite{Br} that the
Ekeland Variational Principle implies the axiom of dependent choice.

Several proofs have been provided for the Caristi--Kirk FPT that work in ZF+DC. Kozlowski has
given a proof that is \bfind{purely metric} as defined in his paper \cite{Ko}, which implies that the proof works
in ZF+DC. The proofs of Proposition~\ref{pr-ck} in \cite{KKP} and of Proposition~\ref{pr-ot} in \cite{BCLS} are
purely metric. The existence of singleton balls in Caristi--Kirk and Oettli-Th\'era ball spaces over complete
metric spaces can also be shown directly by purely metric proofs and this result can be used to give quick proofs
of many principles that are equivalent to the Caristi--Kirk FPT in ZF+DC (cf.\ \cite{BCLS}).
However, in other settings it may not be possible to deduce the existence in ZF+DC, so then the axiom of choice is
needed. Therefore, in view of the number of possible applications even beyond the scope as presented in this paper,
we do not hesitate to use Zorn's Lemma for the proofs of our generic fixed point theorems.

We should point out that proofs have been given that apparently prove the Caristi--Kirk FPT in ZF (see
\cite{Man,J2}). This means that the Caristi--Kirk FPT and the Ekeland Variational Principle are
equivalent in ZF+DC, but {\it not} in ZF. For the topic of axiomatic strength, see the discussions in
\cite{J,Ki,Ko}.
\end{remark}

\mn
%
%
\subsection{Totally ordered sets, abelian groups and fields}         \label{sectCK}
\mbox{ }\sn
Take any ordered set $(I,<)$. We define the \bfind{closed interval ball space} associated with $(I,<)$ to be
$(I,\cBcb)$ where $\cBcb$ consists of all closed intervals $[a,b]$ with $a,b\in I$. By a \bfind{cut} in
$(I,<)$ we mean a partition $(C,D)$ of $I$ such that $c<d$ for all $c\in C$, $d\in D$ and $C,D$ are nonempty.
The \bfind{cofinality} of a totally ordered set is the least cardinality of all cofinal subsets,
and the \bfind{coinitiality} of a totally ordered set is the cofinality of this set under the reverse ordering.
A cut $(C,D)$ is \bfind{asymmetric} if the cofinality of $C$ is different from the coinitiality of $D$. For
example, every cut in $\R$ is asymmetric. The following fact was first proved in \cite{[S]} for ordered fields,
and then in \cite{[KKSh]} for any totally ordered sets.
\begin{lemma}
The ball space $(I,\cBcb)$ associated with the totally ordered set
$(I,<)$ is spherically complete if and only if  every cut $(C,D)$ in $(I,<)$ is asymmetric.
\end{lemma}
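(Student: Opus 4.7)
My plan is to treat the two implications separately, translating between nests with empty intersection and symmetric cuts. For the $(\Rightarrow)$ direction I argue the contrapositive: given a symmetric cut $(C,D)$ with $\kappa:=\mathrm{cof}(C)=\mathrm{coinit}(D)$, pick a strictly increasing cofinal sequence $(c_\alpha)_{\alpha<\kappa}\subseteq C$ and a strictly decreasing coinitial sequence $(d_\alpha)_{\alpha<\kappa}\subseteq D$. Since $C<D$, each $[c_\alpha,d_\alpha]$ is a ball, and the strict monotonicities make this family a nest. Any $x\in I$ is in $C$ or $D$; in the first case strict cofinality gives some $\alpha$ with $x<c_\alpha$, so $x\notin[c_\alpha,d_\alpha]$, and symmetrically in the second. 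Thus the nest has empty intersection and $(I,\cBcb)$ fails to be spherically complete.

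For the $(\Leftarrow)$ direction, suppose every cut is asymmetric and take any nest $\cN$. If $\cN$ has a smallest ball its intersection is that ball, so assume it does not; let $\mu$ be the cofinality of $(\cN,\supseteq)$, a regular infinite cardinal, and fix a strictly descending cofinal subnest $(B_\alpha=[a_\alpha,b_\alpha])_{\alpha<\mu}$, which has the same intersection as $\cN$. Assume toward a contradiction that $\bigcap_{\alpha<\mu}B_\alpha=\emptyset$. If the weakly increasing sequence $(a_\alpha)$ were eventually constant, that constant value would lie in every $B_\alpha$ (using the nest structure for the earlier indices), contradicting emptiness; hence by regularity of $\mu$ it admits a strictly increasing cofinal subsequence, and symmetrically for $(b_\alpha)$. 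Now define
\[
C:=\{x\in I\mid x<a_\alpha\text{ for some }\alpha<\mu\},\qquad D:=\{x\in I\mid x>b_\alpha\text{ for some }\alpha<\mu\}.
\]
The nest structure forces $c<d$ whenever $c\in C$ and $d\in D$, so in particular $C\cap D=\emptyset$; the empty-intersection assumption forces $C\cup D=I$; and non-eventual-constancy keeps both sets nonempty. So $(C,D)$ is a cut.

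The main obstacle is then to verify that this cut is symmetric of common cofinality $\mu$, yielding the required contradiction. The easy bound $\mathrm{cof}(C)\leq\mu$ is witnessed by the $a_\alpha$ sitting cofinally in $C$; the harder $\mathrm{cof}(C)\geq\mu$ relies on the regularity of $\mu$ together with the cofinal strict monotonicity established above: any cofinal subset of $C$ of cardinality $<\mu$ would be bounded above by some $a_{\alpha^*}$ with $\alpha^*<\mu$, but cofinal strict monotonicity then produces $a_\beta\in C$ with $a_\beta>a_{\alpha^*}$, a contradiction. The symmetric argument gives $\mathrm{coinit}(D)=\mu$, closing the proof.
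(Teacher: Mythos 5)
The paper never proves this lemma itself: it is quoted with attribution to \cite{[S]} (for ordered fields) and \cite{[KKSh]} (for arbitrary totally ordered sets), so there is no in-paper proof to compare against, and your argument has to stand on its own. Your ($\Leftarrow$) direction does stand: passing to a strictly descending cofinal subnest indexed by the regular infinite cardinal $\mu$, ruling out eventually constant endpoint sequences, and computing both the cofinality of $C$ and the coinitiality of $D$ to be exactly $\mu$ via the regularity argument is correct in every step, and it is the standard argument for this direction.

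The ($\Rightarrow$) direction, however, has a genuine gap, and it sits precisely at $\kappa=1$. Under the paper's definitions a ``jump'' --- a cut $(C,D)$ in which $C$ has a largest element and $D$ a smallest one --- is symmetric, since $\mathrm{cof}(C)=1=\mathrm{coin}(D)$. For such a cut your construction yields the single ball $[\max C,\min D]$, whose intersection is certainly nonempty, and your key step ``$x\in C$ implies $x<c_\alpha$ for some $\alpha$'' fails at $x=\max C$. This is not a fixable slip of exposition, because the implication is false as literally stated: take $I=\{0,1\}$, or $I=\Z$. There every ball is a finite set, so every nest contains a minimal ball and $(I,\cBcb)$ is spherically complete; yet the cut $(\{0\},\{1\})$ (respectively, every cut of $\Z$) is symmetric of cofinality $(1,1)$. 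The lemma is only correct when jump cuts cannot occur (e.g.\ for densely ordered sets, in particular the ordered fields treated in \cite{[S]}) or when such cuts are excluded from the notion of ``symmetric''. What your two directions actually establish --- once the forward direction is restricted to infinite $\kappa$, where your argument is sound --- is the amended statement: $(I,\cBcb)$ is spherically complete if and only if there is no cut $(C,D)$ with $\mathrm{cof}(C)=\mathrm{coin}(D)$ infinite. You should either prove that corrected statement and flag the discrepancy with the quoted formulation, or add the case $\kappa=1$ to your proof and observe that there the claimed implication cannot be completed.
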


Totally ordered sets, abelian groups or fields whose cuts are all asymmetric are called
\bfind{symmetrically complete}. By our above remark, $\R$ is symmetrically complete. The following theorem
was proved in \cite{[KKSh]}; its first assertion follows from the previous lemma. The second assertion addresses
the \bfind{natural valuation} of an ordered abelian group or field, which is the finest valuation compatible with
the ordering; it is nontrivial if and only if the ordering is nonarchimedean.
\begin{theorem}                         \label{symcompsphcomp}
A totally ordered set, abelian group or field is symmetrically complete if and only if its associated closed
interval ball space is spherically complete. The ultrametric ball space associated with the natural valuation
of a symmetrically complete ordered abelian group or field is a spherically complete ball space.
\end{theorem}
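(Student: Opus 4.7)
The first assertion is immediate from the preceding Lemma by unwinding definitions: a totally ordered set (respectively, ordered abelian group or field) is symmetrically complete exactly when every cut is asymmetric, which by the Lemma is equivalent to $(I,\cBcb)$ being spherically complete.

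For the second assertion, let $G$ be a symmetrically complete ordered abelian group (or field), let $v$ be its natural valuation, and let $\cN$ be a nest of ultrametric balls for $v$; the task is to show $\bigcap\cN\neq\emptyset$. The key structural input is that every ball $B_\alpha(x)=\{y:v(x-y)\geq\alpha\}$ equals the coset $x+H_\alpha$ of the convex subgroup $H_\alpha=\{g\in G:v(g)\geq\alpha\}$, so every ball in $\cN$ is order-convex in $G$. Moreover, whenever $B\supsetneq B'$ in $\cN$, the associated convex subgroups satisfy $H_{B'}\subsetneq H_B$, and by convexity of $H_{B'}$ inside $H_B$ every positive element of $H_{B'}$ is infinitesimally smaller than every element of $H_B^{>0}\setminus H_{B'}$.

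My plan is to refine $\cN$ into a nest $\cN'$ of closed intervals in $G$ such that for every $B\in\cN$ some $I\in\cN'$ is contained in $B$. Then $\bigcap\cN'\subseteq\bigcap\cN$, and by the first assertion applied to $G$ the intersection $\bigcap\cN'$ is nonempty, producing an element of $\bigcap\cN$. To build $\cN'$, I first reduce to the case that $\cN$ has no smallest ball (otherwise the intersection is trivially nonempty), fix a strictly decreasing coinitial subnest $(B_\nu)_{\nu<\kappa}$ of $\cN$, and proceed by transfinite induction on $\nu$, choosing at stage $\nu$ a center $x_\nu\in B_\nu$ and a positive radius $h_\nu\in H_{B_\nu}$, and setting $I_\nu=[x_\nu-h_\nu,\,x_\nu+h_\nu]\subseteq B_\nu$. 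At a successor stage $\mu+1$ the infinitesimality observation above lets me pick $x_{\mu+1}\in B_{\mu+1}\cap I_\mu$ together with a sufficiently small $h_{\mu+1}\in H_{B_{\mu+1}}^{>0}$ to force $I_{\mu+1}\subseteq I_\mu$, provided $h_\mu$ was chosen in $H_{B_\mu}^{>0}\setminus H_{B_{\mu+1}}$ so that $I_\mu$ has room for a full coset representative of $H_{B_{\mu+1}}$.

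The hard part will be the limit stage of the induction: at a limit ordinal $\lambda$ I need a point $x_\lambda$ lying simultaneously in $B_\lambda$ and in $\bigcap_{\mu<\lambda}I_\mu$, together with an appropriate $h_\lambda\in H_{B_\lambda}^{>0}$. Here the symmetric completeness of $G$ is invoked a second time, applied to the nest of closed intervals $(I_\mu)_{\mu<\lambda}$ built so far to ensure $\bigcap_{\mu<\lambda}I_\mu\neq\emptyset$; one then combines this with the coset structure of $B_\lambda$ inside each preceding $B_\mu$ to select $x_\lambda$ inside $B_\lambda$ itself. A cleaner implementation would bypass the explicit transfinite bookkeeping by applying Zorn's Lemma directly to the poset of all nests of closed intervals contained in balls of $\cN$, extracting a maximal such nest $\mathcal M^*$, using spherical completeness of $(G,\cBcb)$ to obtain $y\in\bigcap\mathcal M^*$, and verifying via maximality together with the order-convexity of the balls in $\cN$ that $y$ must lie in every $B\in\cN$.
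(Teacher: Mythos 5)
For the first assertion you do exactly what the paper does: it follows from the preceding Lemma together with the definition of symmetric completeness (the paper says precisely this and gives no further argument). For the second assertion, however, the paper offers no proof at all --- it is cited from \cite{[KKSh]} --- and your attempted proof of it has two genuine gaps. First, the successor step fails as justified: you pick $x_{\mu+1}\in B_{\mu+1}\cap I_\mu$ and claim this set is nonempty ``provided $h_\mu$ was chosen in $H_{B_\mu}^{>0}\setminus H_{B_{\mu+1}}$ so that $I_\mu$ has room for a full coset representative of $H_{B_{\mu+1}}$''. A large radius does not help when the center sits far from the next ball: in $G=\R\times\R$ with the lexicographic order, take $B_\mu=G$, $x_\mu=(0,0)$, $h_\mu=(1,0)\notin H':=\{0\}\times\R$, and $B_{\mu+1}=(2,0)+H'$; this is a legitimate continuation of a nest of natural-valuation balls, yet $B_{\mu+1}\cap I_\mu=\emptyset$, because the distance from $x_\mu$ to $B_{\mu+1}$ is an arbitrary element of $H_{B_\mu}$ and can exceed $h_\mu$. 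Width is not the issue; position is. Second, the limit step --- which you yourself concede is ``the hard part'' --- is not an argument: symmetric completeness gives a point of $\bigcap_{\mu<\lambda}I_\mu$, but you give no reason why $B_\lambda\cap\bigcap_{\mu<\lambda}I_\mu\ne\emptyset$, and nothing you have set up provides one. The Zorn variant inherits the same defect: maximality of a nest of intervals, each contained in some ball, does not force a point of its intersection to lie in every ball of $\cN$, because a candidate new interval inside a ball avoiding that point need not be comparable with all members of the maximal nest.

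Both gaps vanish --- and the entire transfinite recursion becomes unnecessary --- if each interval is centered in the \emph{next} ball rather than in its own, which is permissible since the whole nest is given in advance. Pass to a coinitial strictly decreasing subnest $(B_\nu)_{\nu<\kappa}$ (if $\cN$ has a smallest ball there is nothing to prove); each $B_\nu$ is, as you note, a coset of a convex subgroup $H_\nu$, and $H_{\nu+1}\subsetneq H_\nu$ because distinct cosets of one subgroup are disjoint. Choose $x_\nu\in B_{\nu+1}$ and $h_\nu\in H_\nu^{>0}\setminus H_{\nu+1}$, and set $I_\nu:=[x_\nu-h_\nu,\,x_\nu+h_\nu]$. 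Convexity of $H_\nu$ gives $I_\nu\subseteq B_\nu$, while convexity of $H_{\nu+1}$ gives $|b-x_\nu|<h_\nu$ for every $b\in B_{\nu+1}$ (since $b-x_\nu\in H_{\nu+1}$ and $h_\nu\notin H_{\nu+1}$), so that $B_{\nu+1}\subseteq I_\nu\subseteq B_\nu$. This sandwich makes $(I_\nu)_{\nu<\kappa}$ a nest outright, with no induction and no limit cases, since $I_\nu\subseteq B_\nu\subseteq B_{\mu+1}\subseteq I_\mu$ for $\mu<\nu$; the first assertion then yields $\emptyset\ne\bigcap_\nu I_\nu\subseteq\bigcap_\nu B_\nu=\bigcap\cN$ (the last equality by coinitiality), and symmetric completeness is invoked exactly once, at the end. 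So your reduction-to-intervals strategy is sound and, once repaired in this way, gives a complete, self-contained proof of a statement the paper itself only cites; but as written, the two steps that actually carry your proof are respectively wrong and missing.
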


In \cite{[S]} it was shown that arbitrarily large symmetrically complete ordered
fields exist. With a different construction idea, this was reproved and
generalized in \cite{[KKSh]} to the case of ordered abelian groups and
totally ordered sets, and a characterization of symmetrically complete
ordered abelian groups and fields has been given.

In order to give an example of a fixed point theorem that can be proven in this setting,
it is enough to consider symmetrically complete ordered abelian groups, as the additive group of a
symmetrically complete ordered field is a symmetrically complete ordered abelian group. The following is
Theorem~21 of \cite{[KK1]} (see also \cite{[KKSh]}).
\begin{theorem}
Take an ordered abelian group $(G,<)$ and a function $f:G\rightarrow G$.
Assume that every nonempty chain of closed intervals in $G$ has
nonempty intersection and that $f$ has the following properties:
\sn
1) \ $f$ is nonexpanding:
\[
|fx-fy|\>\leq\> |x-y| \> \mbox{ for all } x,y\in G\>,
\]
2) \ $f$ is contracting on orbits: there is a positive rational number
$\frac{m}{n}<1$ with $m,n\in\N$ such that
\[
n|fx-f^2x| \>\leq\> m|x-fx| \> \mbox{ for all } x\in G\>.
\]
Then $f$ has a fixed point.
\end{theorem}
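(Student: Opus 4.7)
The plan is to invoke Theorem~\ref{GFPT2U} with a suitable assignment $x\mapsto B_x$ of closed intervals. The hypothesis says precisely that $(G,\cBcb)$ is a spherically complete ball space. First, iterating the contraction-on-orbits inequality $n|f^{i+1}x-f^{i+2}x|\le m|f^ix-f^{i+1}x|$ yields $n^kd_k\le m^kd_0$ where $d_k:=|f^kx-f^{k+1}x|$, and a telescoping sum gives $(n-m)\sum_{i=0}^{k-1}d_i\le nd_0$. Since $n-m\ge 1$ (as $m,n\in\N$ with $m<n$), the whole forward orbit satisfies $|f^kx-x|\le n|x-fx|$. Thus one can define
\[
B_x := [x-n|x-fx|,\,x+n|x-fx|]\in\cBcb,
\]
so that every iterate $f^kx$ lies in $B_x$. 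The family $\{B_x:x\in G\}$ is a sub-ball space of $\cBcb$ and hence inherits spherical completeness.

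Next I would verify conditions (NB) and (CO): $x\in B_x$ is immediate; and for any $y\in B_{fx}$ we have $|y-x|\le|y-fx|+|fx-x|\le n|fx-f^2x|+|x-fx|\le m|x-fx|+|x-fx|=(m+1)|x-fx|\le n|x-fx|$ using $m+1\le n$, hence $B_{fx}\subseteq B_x$; strict inclusion when $x\ne fx$ follows because equality of endpoints would force $fx=x$. In particular $B_{f^ix}\subsetneq B_x$ with $i=1$.

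The main obstacle is verifying condition (C1), that $y\in B_x$ implies $B_y\subseteq B_x$. With the naive radius $n|x-fx|$, nonexpansion only yields $|y-fy|\le 2|y-x|+|x-fx|$, which is too loose. My intended remedy is either (a) to replace $B_x$ by the Caristi--Kirk-style set $\widetilde B_x:=\{y:|y-x|+n|y-fy|\le n|x-fx|\}$, which satisfies (C1) by the standard Caristi triangle argument, after separately verifying spherical completeness of the family via the inclusion $\widetilde B_x\subseteq B_x\in\cBcb$; or (b) to apply Theorem~\ref{GFPT2} instead, which replaces (C1) by a weaker condition on $f$-nests. In route (b) one extracts $z\in\bigcap_k B_{f^kx}$ by spherical completeness, then shows $z=fz$ as follows: from $|z-f^kx|\le nd_k$ and nonexpansion, $|fz-f^{k+1}x|\le nd_k$, whence $|z-fz|\le nd_{k+1}+nd_k\le(n+m)d_k$ for every $k$. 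Combining with $n^kd_k\le m^kd_0$ gives $n^k|z-fz|\le(n+m)m^kd_0$ for all $k$; in the Archimedean case this at once forces $|z-fz|=0$, while in the general case one invokes the asymmetric-cuts characterisation of symmetric completeness (Theorem~\ref{symcompsphcomp}) applied to the nest $\{[0,(n+m)d_k]\}_k$ in $\cBcb$ to rule out a nonzero value of $|z-fz|$, so that $z$ is the desired fixed point.
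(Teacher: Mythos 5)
Your preparatory work is correct and well aimed: the telescoping bound $(n-m)\sum_{i<k}d_i\le nd_0$, the verification of (NB) and (CO) for $B_x=[x-n|x-fx|,\,x+n|x-fx|]$, and the observation that (C1) fails for these intervals are all right. (For calibration: the paper does not prove this theorem itself, it quotes it as Theorem~21 of \cite{[KK1]}, so your argument has to stand on its own --- and it does not, in either route.) In route (a), the claim that the family $\{\widetilde B_x\}$ ``inherits'' spherical completeness from $\widetilde B_x\subseteq B_x\in\cBcb$ is a non sequitur: the sets $\widetilde B_x$ are not closed intervals, hence form no subfamily of $\cBcb$, and a point in the intersection of the enclosing intervals need not lie in any of the Caristi--Kirk balls. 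In the metric case this is precisely the step where completeness, lower semicontinuity and the real-valuedness of the potential are used (cf.\ Proposition~\ref{pr-ck}); here the potential $\varphi(x)=n|x-fx|$ takes values in $G$, where decreasing bounded families need not have infima, and no substitute argument is offered. In route (b), the final step is simply false: the inequalities $n^k|z-fz|\le(n+m)m^kd_0$ do not force $|z-fz|=0$ in a non-Archimedean group. Concretely, let $K$ be a non-Archimedean symmetrically complete ordered field (these exist by \cite{[S],[KKSh]}), $f(y)=y/2$, $m=1$, $n=2$, $x=1$. Then $B_{f^kx}=[0,2^{-k+1}]$, and every positive infinitesimal $\iota$ lies in $\bigcap_k B_{f^kx}$ although $f\iota\ne\iota$; moreover $\bigcap_k[0,(n+m)d_k]=\bigcap_k[0,3\cdot 2^{-k-1}]$ contains all positive infinitesimals, so your appeal to Theorem~\ref{symcompsphcomp} has nothing to bite on: no contradiction with spherical completeness arises from a nonzero $|z-fz|$.

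The repair is close to what you already computed, but the correct target is not ``$z$ is a fixed point''; it is the hypothesis of Theorem~\ref{GFPT2}, namely that some (in fact every) $z\in\bigcap\cN$ satisfies $B_z\subseteq\bigcap\cN$, for every $f$-nest $\cN$; the fixed point then comes out of the maximal-$f$-nest (Zorn) machinery inside Theorem~\ref{GFPT2}, not from $z$ itself. First replace $m/n$ by a larger rational ratio, still $<1$, with $n\ge m+2$; this is harmless, since $mn'\le m'n$ and $n|fx-f^2x|\le m|x-fx|$ yield $n'|fx-f^2x|\le m'|x-fx|$ after multiplying by suitable positive integers and cancelling (cancellation of positive integer factors is valid in any ordered abelian group). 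Now take any $f$-nest $\cN=\{B_y\mid y\in S\}$ with $S$ closed under $f$, any $z\in\bigcap\cN$ (nonempty, since these balls are closed intervals), and any $x\in S$, writing $d_k=d_k(x)$. Your own estimates give $|z-x|\le|z-fx|+d_0\le nd_1+d_0\le(m+1)d_0$ and $n|z-fz|\le n(n+m)d_k$ for every $k$. Choosing $k$ so that the integer inequality $n(n+m)m^k\le(n-m-1)n^k$ holds and combining it with $n^kd_k\le m^kd_0$, one gets $n(n+m)d_k\le(n-m-1)d_0$, hence $|z-x|+n|z-fz|\le(m+1)d_0+(n-m-1)d_0=nd_0$, i.e., $B_z\subseteq B_x$. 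As $x\in S$ was arbitrary, $B_z\subseteq\bigcap\cN$, and Theorem~\ref{GFPT2} yields a fixed point in every $B_x$. So your strategy is viable, but only after the erroneous ``any intersection point is already fixed'' claim is replaced by this verification.
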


\parm
As in the case of ultrametric spaces, all singletons in $\cBcb$ are balls: $\{a\}=
[a,a]$. So also here, $(I,\cBcb)$ is {\bf S}$_2$ as soon as it is
{\bf S}$_1\,$. But again as in the case of ultrametric spaces,
{\bf S}$_2$ does not necessarily imply {\bf S}$_5$ or even
{\bf S}$_3\,$. For example, consider a nonarchimedean ordered
symmetrically complete field. The set of infinitesimals is the
intersection of balls $[-a,a]$ where $a$ runs through all positive
elements that are not infinitesimals. This intersection is not a ball,
nor is there a largest ball contained in it.

Further, we note:
\begin{lemma}                        \label{cc}
Assume that $(I,<)$ is a totally ordered set and its associated ball space
$(I,\cBcb)$ is an {\bf S}$_1^d$ or {\bf S}$_3$ ball space. Then $(I,<)$ is cut complete, that is,
for every cut $(C,D)$ in $(I,<)$, $C$ has a largest or $D$ has a smallest element.
\end{lemma}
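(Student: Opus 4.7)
The plan is to establish the contrapositive. Assume there is a cut $(C,D)$ in $(I,<)$ such that $C$ has no largest element and $D$ has no smallest element; I will then show that $(I,\cBcb)$ fails both {\bf S}$_1^d$ and {\bf S}$_3$, handling the two cases in parallel constructions.

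For {\bf S}$_1^d$, I would exhibit the family $\mathcal{F}=\{[c,d]\mid c\in C,\,d\in D\}$ and argue that it is a directed system of balls. Given $[c_1,d_1],[c_2,d_2]\in\mathcal{F}$, setting $c=\max(c_1,c_2)\in C$ and $d=\min(d_1,d_2)\in D$ produces $[c,d]\in\mathcal{F}$ (note $c<d$ because $c\in C$ and $d\in D$), and this ball is contained in $[c_1,d_1]\cap[c_2,d_2]$. The intersection $\bigcap\mathcal{F}$ is empty: any $x\in I$ lies in $C$ or in $D$; in the first case the hypothesis ``no largest in $C$'' supplies some $c>x$ in $C$, pushing $x$ out of $[c,d]$ for every $d\in D$; the symmetric argument handles $x\in D$ using ``no smallest in $D$''.

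For {\bf S}$_3$, I would fix an arbitrary $d_0\in D$ and consider $\mathcal{N}=\{[c,d_0]\mid c\in C\}$. This is automatically a nest, because two closed intervals sharing an upper endpoint are linearly ordered by inclusion via comparison of lower endpoints. A direct identification yields $\bigcap\mathcal{N}=\{x\in D\mid x\leq d_0\}$: an $x$ satisfying $x\geq c$ for every $c\in C$ cannot lie in $C$ (the absence of a largest element of $C$ would supply a bigger $c$), hence $x\in D$; conversely every $x\in D$ dominates all of $C$. This set contains $d_0$, so the intersection is nonempty and contains the singleton ball $[d_0,d_0]$. To contradict {\bf S}$_3$ I then verify that no ball in the intersection is maximal: for any $[a,b]\subseteq\bigcap\mathcal{N}$, the endpoint $a$ lies in $D$, so ``no smallest in $D$'' produces $a'\in D$ with $a'<a$; since $a'\in D$ we still have $c<a'$ for every $c\in C$ and $a'<a\leq d_0$, so $a'\in\bigcap\mathcal{N}$, making $[a',b]$ a strictly larger ball in the intersection.

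The main obstacle is the {\bf S}$_3$ case: the natural instinct is to mimic the {\bf S}$_1^d$ proof by constructing a nest with empty intersection, but this would require matching the cofinality of $C$ with the coinitiality of $D$, which need not be achievable when these two cardinals differ. The key observation that unblocks the argument is that {\bf S}$_3$ fails not only when a nest has empty intersection, but also whenever the intersection contains balls without containing a maximal one. Freezing the upper endpoint at a single $d_0$ and letting only the lower endpoint vary produces exactly such a nest with essentially no bookkeeping, since inclusion among these intervals is controlled by a single coordinate.
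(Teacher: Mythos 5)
Your proposal is correct and takes essentially the same route as the paper: the {\bf S}$_1^d$ case uses the identical directed system $\{[c,d]\mid c\in C,\, d\in D\}$ (the paper argues directly from the {\bf S}$_1^d$ property that a point of the intersection must be a largest element of $C$ or a least element of $D$, while you phrase it contrapositively via an empty intersection, a trivial reformulation), and your {\bf S}$_3$ nest is the order-dual of the paper's, which fixes a lower endpoint $c\in C$ and lets the upper endpoint run through $D$, so that the intersection is a final segment of $C$ admitting no maximal ball because $C$ has no largest element. Up to reversing the order, both arguments coincide, including the key observation you highlight that {\bf S}$_3$ is violated by a nest whose nonempty intersection contains balls but no maximal one.
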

\begin{proof}
First assume that $(I,\cBcb)$ is an {\bf S}$_1^d$ ball space, and take a cut $(C,D)$ in $I$. If $a,c\in C$ and
$b,d\in D$, then $\max\{a,c\}\in C$ and $\min\{b,d\}\in D$ and $[a,b]\cap [c,d]=[\max\{a,c\}, \min\{b,d\}]$. This
shows that
\[
\{[c,d]\mid c\in C\,,\, b\in D\}
\]
is a directed system in $\cBcb\,$. Hence its intersection is nonempty; if $a$ is contained in this intersetion,
it must be the largest element of $C$ or the least element of $D$. Hence $(I,<)$ is cut complete.

\pars
Now assume that $(I,<)$ is not cut complete; we wish to show that $(I,\cBcb)$ is not an {\bf S}$_3$ ball space.
Take a cut $(C,D)$ in $I$ such that $C$ has no largest element and $D$ has no least element. Pick some $c\in C$. Then
\[
\{[c,d]\mid d\in D\}
\]
is a nest of balls in $(I,\cBcb)$. Its intersection is the set $\{a\in C\mid c\leq a\}$. Since $C$ has no largest
element, this set does not contain a maximal ball. This shows that $(I,\cBcb)$ is not an {\bf S}$_3$ ball space.
\end{proof}

\pars
It is a well known fact that the only cut complete densely ordered
abelian group or ordered field is $\R$. So we have:

\begin{proposition}                        \label{limitedS}
The associated ball space of the reals is {\bf S}$^*\,$. For all other
densely ordered abelian groups and ordered fields the associated ball space can
at best be {\bf S}$_2\,$.
\end{proposition}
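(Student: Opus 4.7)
The plan is to treat the two assertions separately. For the first, I want to verify directly that $(\R,\cBcb)$ satisfies $\mathbf{S}^*=\mathbf{S}_5^c$. Let $\{[a_i,b_i]\}_{i\in I}$ be a centered system of closed intervals in $\R$. Since any two intervals in the system meet, we have $a_i\le b_j$ for all $i,j\in I$, so the set $\{a_i\}$ is bounded above (by any $b_j$) and the set $\{b_i\}$ is bounded below. By the least upper bound property of $\R$, the values $a:=\sup_i a_i$ and $b:=\inf_i b_i$ exist in $\R$ and satisfy $a\le b$. A straightforward verification then shows that $x\in\bigcap_i[a_i,b_i]$ if and only if $a_i\le x\le b_i$ for every $i$, which is equivalent to $a\le x\le b$. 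Hence $\bigcap_i[a_i,b_i]=[a,b]\in\cBcb$, proving that $(\R,\cBcb)$ is $\mathbf{S}^*$.

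For the second assertion, let $(G,<)$ be a densely ordered abelian group or ordered field other than $\R$. By the classical fact invoked just before the proposition, $\R$ is the only cut complete densely ordered abelian group or ordered field, so $(G,<)$ is not cut complete. The contrapositive of Lemma \ref{cc} therefore says that $(G,\cBcb)$ is not an $\mathbf{S}_3$ ball space. By hierarchy (\ref{hier}), each of the properties $\mathbf{S}_3$, $\mathbf{S}_4$, $\mathbf{S}_5$ (together with their $d$ and $c$ variants) implies $\mathbf{S}_3$, so none of these hold for $(G,\cBcb)$. This shows the hierarchy strength of $(G,\cBcb)$ is capped at $\mathbf{S}_2$.

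To justify that $\mathbf{S}_2$ really is attainable (so that the phrase "at best $\mathbf{S}_2$" is sharp), I would point out that symmetrically complete densely ordered abelian groups and fields different from $\R$ exist by the constructions cited in Section \ref{sectCK} from \cite{[S],[KKSh]}; by Theorem \ref{symcompsphcomp} their associated interval ball spaces are $\mathbf{S}_1$, and since singletons $\{a\}=[a,a]$ lie in $\cBcb$, Proposition \ref{ssb} upgrades this to $\mathbf{S}_2$.

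I do not expect a real obstacle: both halves reduce to already-established material (the lub property of $\R$ and Lemma \ref{cc}). The only point that requires a small check is the existence of $\sup a_i$ and $\inf b_i$ in the first paragraph, which is immediate once the pairwise-meeting condition of a centered system is unfolded to obtain mutual boundedness; the identification $\bigcap[a_i,b_i]=[a,b]$ is then routine.
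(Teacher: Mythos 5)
Your first paragraph is essentially the paper's own argument for $(\R,\cBcb)$ being $\mathbf{S}^*$: set $a=\sup_i a_i$, $b=\inf_i b_i$ and identify the intersection with $[a,b]$. You are in fact slightly more careful than the paper, since you justify the existence of the supremum and infimum and obtain $a\le b$ directly from the pairwise-meeting condition, where the paper gets $a\le b$ by contradiction. Your closing paragraph on attainability of $\mathbf{S}_2$ is a reasonable reading of the word ``can'' and is harmless extra material.

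The gap is in your second paragraph. You invoke only the $\mathbf{S}_3$ half of Lemma~\ref{cc}, so what you actually exclude is every property that implies $\mathbf{S}_3$, namely $\mathbf{S}_i$, $\mathbf{S}_i^d$, $\mathbf{S}_i^c$ for $i\ge 3$. That does not cap the strength at $\mathbf{S}_2$: the hierarchy (\ref{hier}) is two-dimensional, and the properties $\mathbf{S}_1^d$, $\mathbf{S}_1^c$, $\mathbf{S}_2^d$, $\mathbf{S}_2^c$ do \emph{not} imply $\mathbf{S}_3$, so your argument leaves open, for instance, that $(G,\cBcb)$ is $\mathbf{S}_2^c$ --- a property strictly stronger than $\mathbf{S}_2$, which would contradict ``at best $\mathbf{S}_2$''. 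This is precisely why Lemma~\ref{cc} is stated for ``$\mathbf{S}_1^d$ \emph{or} $\mathbf{S}_3$'': its contrapositive kills both prongs. Since every property in the hierarchy other than $\mathbf{S}_1$ and $\mathbf{S}_2$ implies either $\mathbf{S}_1^d$ (this covers all the $d$ and $c$ variants, because $\mathbf{S}_i^c\Rightarrow\mathbf{S}_i^d\Rightarrow\mathbf{S}_1^d$) or $\mathbf{S}_3$, citing the full lemma closes the argument; with the $\mathbf{S}_3$ prong alone, your stated conclusion does not follow.
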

\begin{proof}
Take any centered system $\{[a_i,b_i]\mid i\in I\}$ of intervals in
$\R$. We set $a:=\sup_{i\in I} a_i$ and $b:=\inf_{i\in I} b_i\,$. Then
\[
\bigcap_{i\in I} [a_i,b_i]\>=\> [a,b]\>.
\]
We have to show that $[a,b]\ne\emptyset$, i.e., $a\leq b$. Suppose that
$a>b$. Then there are $i,j\in I$ such that $a_i>b_j\,$. But by
assumption, $[a_i,b_i]\cap [a_j,b_j]\ne\emptyset$, a contradiction. We
have now proved that the associated ball space of the reals is {\bf S}$^*\,$.

\pars
The second assertion follows from Lemma~\ref{cc}.
\end{proof}

\mn
%
%
\subsection{Topological spaces}
\mbox{ }\sn
If $\cal X$ is a topological space on a set $X$, then we will take its associated ball space to be $(X,\cB)$ where
$\cB$ consists of all nonempty closed sets. Since the intersections of arbitrary collections of closed sets are
again closed, this ball space is intersection closed.
\pars
The following theorem shows how compact topological spaces are characterized by the properties of their
associated ball spaces; note that we use ``compact'' in the sense of ``quasi-compact'', that is, it does not
imply the topology being Hausdorff.
\begin{theorem}                             \label{cts}
The following are equivalent for a topological space ${\cal X}$:
\sn
a) \ ${\cal X}$ is compact,
\sn
b) \ the nonempty closed sets in ${\cal X}$ form an {\bf S}$_1$ ball
space,
\sn
c) \ the nonempty closed sets in ${\cal X}$ form an {\bf S}$^*$ ball space.
\end{theorem}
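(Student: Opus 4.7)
The plan is to reduce the theorem to the classical characterization of compactness via the finite intersection property, and then invoke the hierarchy collapse results from Section~\ref{sectcs}.

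First I would observe the key structural fact: the associated ball space $(X,\cB)$, where $\cB$ is the family of nonempty closed sets, is \emph{intersection closed} in the sense of Section~\ref{sectcs}. Indeed, an arbitrary intersection of closed sets is closed, so whenever such an intersection is nonempty it lies in $\cB$. By Theorem~\ref{sccent}, this single fact forces all the properties in the hierarchy~(\ref{hier}) to be equivalent; in particular, (b) and (c) are equivalent, and both are equivalent to {\bf S}$_1^c$.

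Next, to get (a) into the picture, I would recall the standard reformulation of compactness: ${\cal X}$ is compact if and only if every family of closed subsets of $X$ with the finite intersection property has nonempty intersection (this is the dual of the open cover definition, obtained by taking complements). A family of nonempty closed sets with the finite intersection property is precisely a centered system of balls in $(X,\cB)$, and conversely every centered system of balls is such a family. Therefore compactness of ${\cal X}$ is exactly the assertion that every centered system of balls in $(X,\cB)$ has nonempty intersection, i.e., (a) $\Leftrightarrow$ {\bf S}$_1^c$.

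Combining the two equivalences yields (a) $\Leftrightarrow$ {\bf S}$_1^c$ $\Leftrightarrow$ {\bf S}$_1$ $\Leftrightarrow$ {\bf S}$^*$, which proves the theorem. There is no real obstacle here: the entire argument is a bookkeeping exercise once one notices that the ball space is intersection closed and recalls the FIP characterization of compactness. The only subtlety worth flagging is that ``compact'' here means quasi-compact, so no Hausdorff axiom is used, matching exactly what the FIP characterization gives.
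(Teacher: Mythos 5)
Your proposal is correct, but its architecture differs from the paper's. Both arguments share the key step of invoking Theorem~\ref{sccent} after observing that the family of nonempty closed sets is intersection closed; the difference lies in how the topological content enters. The paper proves the cycle a) $\Rightarrow$ b) $\Rightarrow$ c) $\Rightarrow$ a), handling the two topological implications by explicit cover arguments: for a) $\Rightarrow$ b) it supposes a nest has empty intersection, passes to the open complements, extracts a finite subcover, and derives a contradiction from the fact that finite intersections along a nest equal its smallest member; for c) $\Rightarrow$ a) it dualizes an open cover via De Morgan and uses the {\bf S}$^*$ property to deny that the complements form a centered system. You instead prove two bi-implications: (b) $\Leftrightarrow$ (c) $\Leftrightarrow$ {\bf S}$_1^c$ from Theorem~\ref{sccent}, and (a) $\Leftrightarrow$ {\bf S}$_1^c$ by quoting the classical finite-intersection-property characterization of compactness and identifying centered systems of balls with nonempty families of closed sets having the FIP. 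What your route buys is modularity: all the topology is outsourced to a standard fact, and no cover argument is repeated. What the paper's route buys is self-containedness: its inline arguments effectively \emph{reprove} the FIP characterization rather than assume it, and its implication a) $\Rightarrow$ b) uses only nests, which is strictly less than FIP, letting Theorem~\ref{sccent} do the upgrading. Two trivial points you should still make explicit for completeness: a family with the FIP automatically consists of nonempty sets (take singleton subfamilies), and the empty family is harmless since its intersection is the whole nonempty space; with those remarks your identification of ``FIP families'' with centered systems of balls is airtight.
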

\begin{proof}
a) $\Rightarrow$ b): \ Assume that ${\cal X}$ is compact. Take a nest $(X_i)_{i\in I}$ of balls in
$(X,\cB)$ and suppose that $\bigcap_{i\in I} X_i =\emptyset$. Then $\bigcup_{i\in I} X\setminus X_i = X$, so
$\{X\setminus X_i\mid i\in I\}$ is an open cover of $\cal X$. It follows that there are $i_1,\ldots, i_n\in I$
such that $X\setminus X_{i_1}\cup\ldots\cup X\setminus X_{i_n}=X$, whence $X_{i_1}\cap\ldots\cap X_{i_n}=
\emptyset$. But since the $X_i$ form a nest, this intersection equals the smallest of the $X_{i_j}$, which is
nonempty. This contradiction proves that the nonempty closed sets in ${\cal X}$ form an {\bf S}$_1$ ball
space.

\mn
b) $\Rightarrow$ c): \ This follows from Theorem~\ref{sccent}.

\mn
c) $\Rightarrow$ a): \ Assume that the nonempty closed sets in ${\cal X}$ form an {\bf S}$^*$ ball
space. Take an open cover $Y_i$, $i\in I$, of $\cal X$. Since $\bigcup_{i\in I} Y_i =X$, we have that
$\bigcap_{i\in I} X\setminus Y_i = \emptyset$. As the ball space is {\bf S}$^*$, this means that
$\{X\setminus Y_i\mid i\in I\}$ cannot be a centered system. Consequently, there are $i_1,\ldots, i_n\in I$
such that $X\setminus Y_{i_1}\cap\ldots\cap X\setminus Y_{i_n}=\emptyset$, whence $Y_{i_1}\cup\ldots\cup Y_{i_n}=X$.
\end{proof}

\parm
Some of the assertions of the following topological fixed point theorems were already proven in
\cite[Theorem~11]{[KK1]}. We will give their modified and improved proofs here as they illustrate applications
of Theorems~\ref{GFPT2} and~\ref{basic1b}.
\begin{theorem}                             \label{top3}
Take a compact space $X$ and a closed function $f:X\rightarrow X$.
Assume that for every $x\in X$ with $fx\ne x$ there is a closed subset
$B$ of $X$ such that $x\in B$ and $x\notin f(B)\subseteq B$. Then $f$
has a fixed point in $B$.
%
\end{theorem}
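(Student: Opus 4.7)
The plan is to deduce this from Theorem~\ref{basic1b}(1) applied to the ball space $(X,\cB)$ where $\cB$ is the collection of all nonempty closed subsets of $X$. Since $X$ is compact, Theorem~\ref{cts} tells us that $(X,\cB)$ is an {\bf S}$^*$ ball space, and in particular it is {\bf S}$_5$. Since $f$ is a closed function, $f(B)$ is closed for every $B\in\cB$; being the image of a nonempty set, it is also nonempty, so $f(B)\in\cB$. Hence the two framework hypotheses of Theorem~\ref{basic1b} are in place.

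The substantive step is to verify the assumption of part 1): every $f$-closed ball $B_0\in\cB$ contains an $f$-contracting ball. If every element of $B_0$ is a fixed point of $f$, then for any $x\in B_0$ the singleton $\{x\}$ lies in $\cB$ and is trivially $f$-contracting. Otherwise, pick $x\in B_0$ with $fx\ne x$ and let $B$ be the closed subset furnished by the hypothesis, so $x\in B$ and $x\notin f(B)\subseteq B$. I would then work with the intersection $B\cap B_0$: it is closed and contains $x$, so it lies in $\cB$; moreover
\[
f(B\cap B_0)\>\subseteq\> f(B)\cap f(B_0)\>\subseteq\> B\cap B_0,
\]
so $B\cap B_0$ is $f$-closed, while $x\in B\cap B_0$ and $x\notin f(B)\supseteq f(B\cap B_0)$ force the inclusion to be strict. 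Hence $B\cap B_0$ is an $f$-contracting ball inside $B_0$.

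With these conditions verified, Theorem~\ref{basic1b}(1) yields a fixed point of $f$ in every $f$-closed ball. For the particular conclusion stated, the closed set $B$ associated with any given $x$ is itself $f$-closed (since $f(B)\subseteq B$), so it contains a fixed point of $f$. The main obstacle I anticipate is precisely the need to upgrade the hypothesis, which only supplies a contracting closed set $B$ for \emph{individual} points $x$, into the global statement that \emph{every} $f$-closed ball contains an $f$-contracting ball; the remedy is the intersection trick $B\cap B_0$, which preserves both closedness (thanks to the compact, intersection-closed ambient ball space) and the strict containment witnessed by $x\notin f(B)$.
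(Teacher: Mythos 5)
There is a genuine gap, and it sits exactly in the case you treat as trivial. The paper's ``compact'' is explicitly quasi-compact (see the remark preceding Theorem~\ref{cts}): no Hausdorff or even T$_1$ separation is assumed, so singletons need not be closed. In your first case --- an $f$-closed ball $B_0$ all of whose points are fixed points --- you claim $\{x\}\in\cB$ for $x\in B_0$; this can fail, and then $B_0$ may contain no $f$-contracting ball whatsoever, because every subset $S\subseteq B_0$ satisfies $f(S)=S$, so the only candidates for $f$-contracting balls inside $B_0$ are closed singletons. Concretely, take $X=\{a,b,c\}$ with open sets $\emptyset$, $\{c\}$, $X$ (so the nonempty closed sets are $\{a,b\}$ and $X$), and $f(a)=a$, $f(b)=b$, $f(c)=a$. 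Then $X$ is quasi-compact, $f$ is a closed map, and the hypothesis of Theorem~\ref{top3} holds: for the only non-fixed point $x=c$ take $B=X$, since $c\notin f(X)=\{a,b\}\subseteq X$. Yet the $f$-closed ball $\{a,b\}$ contains no $f$-contracting ball, so the hypothesis of part 1) of Theorem~\ref{basic1b} is outright false in this example (even though the conclusion of Theorem~\ref{top3} is true there). Hence your strategy of invoking Theorem~\ref{basic1b} with $\cB$ the full collection of nonempty closed sets cannot be completed in the stated generality; it does become a correct proof if one additionally assumes the space to be T$_1$. Your intersection trick $B\cap B_0$ for the second case is sound and genuinely useful, but it cannot repair the first case, since no trick can manufacture a closed singleton where none exists.

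The paper's proof avoids ever having to produce contracting balls inside sets of fixed points. For each $x$ it forms $B_x:=\bigcap\mathfrak B_x$, the smallest closed $f$-closed set containing $x$ (closed as an intersection of closed sets, $f$-closed by part 2) of Lemma~\ref{observe}); it shows $B_{fx}\subseteq B_x$ always, and $B_{fx}\subsetneq B_x$ whenever $fx\ne x$ --- this is the only place the hypothesis of Theorem~\ref{top3} is used, and it is used only at non-fixed points. Thus $f$ is contracting on orbits, and Theorem~\ref{GFPT2} applies in conjunction with Theorem~\ref{cts}: compactness gives a point $z$ in the intersection of any $f$-nest $\cN$, and minimality of $B_z$ gives $B_z\subseteq\bigcap\cN$. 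The structural difference is that your route requires a statement about \emph{all} $f$-closed balls, while the paper's route only requires information along orbits, which is precisely what the theorem's pointwise hypothesis supplies.
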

\begin{proof}
For every $x\in X$ we consider the following family of
balls:
\[
\mathfrak B_x := \{B \mid B \text{ closed subset of $X$, }
x\in B \text{ and } f(B)\subseteq B\}.
\]
Note that $\mathfrak B_x$ is nonempty because it contains $X$.
We define
\begin{equation}                       \label{topB_x}
B_x := \bigcap\mathfrak B_x\>.
\end{equation}
We see that $x \in B_x$ and that $f(B_x)\subseteq B_x$ by part 2) of Lemma~\ref{observe}. Further, $B_x$
is closed, being the intersection of closed sets. This shows that $B_x$
is the smallest member of $\mathfrak B_x\,$.

For every $B\in \mathfrak B_x$ we have that $fx\in B$ and therefore,
$B\in \mathfrak B_{fx}\,$. Hence we find that $B_{fx} \subseteq B_x$.

Assume that $fx\ne x$. Then by hypothesis, there is a closed set $B$ in
$X$ such that $x\in B$ and $x\notin f(B)\subseteq B$. Since $f$ is a
closed function, $f(B)$ is closed. Moreover, $f(f(B)) \subseteq f(B)$
and $fx \in f(B)$, so $f(B) \in \mathfrak B_{fx}$. Since $x\notin f(B)$,
we conclude that $x\notin B_{fx}$, whence $B_{fx} \subsetneq B_x\,$. We
have proved that $f$ is contracting on orbits.
Our theorem now follows from Theorem~\ref{GFPT2} in conjunction with Theorem~\ref{cts}.
\end{proof}
\sn
Note that if $B$ satisfies the assumptions of the theorem, then
$B\in \mathfrak B_x\,$. Hence the set $B_x$ defined in (\ref{topB_x}) satisfies
$B_x\subseteq B$, $f(B_x)\subseteq f(B)$ and therefore $x\notin f(B_x)$. This shows that $B_x$ is the smallest of
all closed subsets $B$ of $X$ for which $x\in B$ and $x\notin f(B)\subseteq B$.

An interesting interpretation of
the ball $B_x$ defined in (\ref{topB_x}) will be given in Remark~\ref{topB_xrem} below.

\parm
The next theorem follows immediately from part 1) of Theorem~\ref{basic1b} in conjunction with Theorem~\ref{cts}.
\begin{theorem}                             \label{topn}
Take a compact space $X$ and a closed function $f:X\rightarrow X$.\sn
1) \ If every nonempty closed and $f$-closed subset $B$ of $X$ contains
a closed $f$-contracting subset, then $f$ has a fixed point in $X$. \sn
2) \ If every nonempty closed and $f$-closed subset $B$ of $X$ is
$f$-contracting, then $f$ has a unique fixed point in $X$.
\end{theorem}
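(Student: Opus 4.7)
The plan is to recognize Theorem~\ref{topn} as a direct specialization of Theorem~\ref{basic1b} to the ball space associated with a compact topological space, and simply verify that all the hypotheses of Theorem~\ref{basic1b} are met.

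First, take the associated ball space $(X,\cB)$ with $\cB$ consisting of all nonempty closed subsets of $X$. Since $X$ is compact, Theorem~\ref{cts} gives that $(X,\cB)$ is an {\bf S}$^*$ ball space; by the hierarchy (\ref{hier}), it is in particular an {\bf S}$_5$ ball space, which is the main structural hypothesis of Theorem~\ref{basic1b}.

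Next, I verify the auxiliary hypothesis that $f(B)\in\cB$ for every $B\in\cB$. If $B$ is a nonempty closed set, then $f(B)$ is nonempty, and it is closed because $f$ is assumed to be a closed function; hence $f(B)\in\cB$. Moreover, since $f$ maps $X$ into itself, $X$ is $f$-closed, and $X\in\cB$ because $X$ is nonempty and closed in itself.

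For part 1), the hypothesis that every nonempty closed and $f$-closed subset $B$ of $X$ contains a closed $f$-contracting subset is precisely the statement that every $f$-closed ball in $\cB$ contains an $f$-contracting ball (recall that any $f$-contracting set is $f$-closed by definition, and here it is also topologically closed, hence a ball). Applying part 1) of Theorem~\ref{basic1b} to the $f$-closed ball $X$ yields a fixed point of $f$ in $X$. For part 2), the hypothesis translates to the statement that every $f$-closed ball is $f$-contracting, so part 2) of Theorem~\ref{basic1b}, together with $X\in\cB$, yields a unique fixed point of $f$. There is no real obstacle to overcome in this proof; the entire content lies in Theorem~\ref{cts} and Theorem~\ref{basic1b}, which have already been established.
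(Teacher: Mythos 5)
Your proposal is correct and takes exactly the route the paper does: the paper derives Theorem~\ref{topn} by applying Theorem~\ref{basic1b} to the ball space of nonempty closed sets, with Theorem~\ref{cts} supplying the {\bf S}$^*$ (hence {\bf S}$_5$) property. Your write-up merely spells out the details the paper leaves implicit (closedness of $f$ giving $f(B)\in\cB$, the fact that $X$ itself is an $f$-closed ball, and that $f$-contracting closed sets are automatically nonempty, hence balls), all of which check out.
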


\pars
The condition that every $f$-closed ball is $f$-contracting may appear to be quite strong. Yet there is a natural
example in the setting of topological spaces where this condition is satisfied in a suitable collection of closed
sets. In \cite{[SWJ]}, Steprans, Watson and Just define the notion of ``$J$-contraction'' for a
continuous function $f:X\to X$ on a topological space $X$ as follows.
An open cover $\mathcal U$ of $X$ is called \bfind{$J$-contractive for $f$} if for every $U\in \mathcal U$ there
is $U' \in \mathcal U$ such that the image of the closure of $U$ under $f$ is a subset of $U'$. Then $f$ is called
a \bfind{$J$-contraction} if any open cover $\mathcal U$  has a {$J$-contractive} refinement for $f$.
We will use two important facts about $J$-contractions $f$ on a connected compact Hausdorff space $X$ which the
authors prove in the cited paper:
\begin{enumerate}
\item[(J1)] If $B$ is a closed subset of $X$ with $f(B)\subseteq B$,
then the restriction of $f$ to $B$ is also a $J$-contraction (\cite[Proposition 1, p.\ 552]{[SWJ]});
\item[(J2)] If $f$ is onto, then $|X|$ = 1 (\cite[Proposition 4, p.\ 554]{[SWJ]}).
\end{enumerate}

The following is Theorem 4 of \cite{[SWJ]}:

\begin{theorem}                             \label{SWJ}
Take a connected compact Hausdorff space $X$ and a continuous
$J$-contraction $f:X\rightarrow X$. Then $f$ has a unique fixed point.
\end{theorem}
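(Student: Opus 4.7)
The plan is to invoke part 2 of Theorem~\ref{basic1b} after choosing a ball space tailored to this situation, one whose balls are connected so that fact (J2) can be applied to the restrictions of $f$. Take $\cB$ to be the family of all nonempty closed connected subsets of $X$; this is nonempty since $X\in\cB$, so $(X,\cB)$ is a ball space.

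First I would verify the two structural hypotheses of Theorem~\ref{basic1b}, namely that $(X,\cB)$ is an {\bf S}$_5$ ball space and that $f(B)\in\cB$ whenever $B\in\cB$. For the latter, if $B\in\cB$ then $f(B)$ is nonempty, is closed (as the continuous image of a compact set in a Hausdorff space is compact hence closed), and is connected (as the continuous image of a connected set). For the {\bf S}$_5$ property, take a nest $\cN$ in $\cB$. The intersection $\bigcap\cN$ is nonempty by the finite intersection property in the compact space $X$, and is closed as an intersection of closed sets. It is also connected by the classical fact that in a Hausdorff space the intersection of a decreasing family of compact connected sets is connected: a disconnection of $\bigcap\cN$ into two disjoint nonempty closed pieces could be separated by disjoint open sets $U,V\subseteq X$, and by compactness some member of the nest would already lie in $U\cup V$, contradicting its connectedness unless one piece were empty.

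Next I would verify the key hypothesis of Theorem~\ref{basic1b} part 2: every $f$-closed ball is $f$-contracting. Let $B\in\cB$ satisfy $f(B)\subseteq B$. Then $B$ is itself a connected compact Hausdorff space, and by (J1) the restriction $f|_B\colon B\to B$ is a $J$-contraction. If we had $f(B)=B$, then $f|_B$ would be onto, and (J2) applied on the connected compact Hausdorff space $B$ would force $|B|=1$. Hence either $B$ is a singleton or $f(B)\subsetneq B$; in either case $B$ is $f$-contracting.

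Since $X\in\cB$ and $X$ is trivially $f$-closed, Theorem~\ref{basic1b} part 2 applies and delivers a unique fixed point of $f$ in $X$. The main conceptual obstacle is that the most natural ball space, consisting of all nonempty closed subsets of $X$ as in Theorem~\ref{cts}, fails because (J2) genuinely requires connectedness of the restricted space; restricting $\cB$ to closed connected sets resolves this precisely because connectedness is preserved both under continuous images and under filtered intersections in compact Hausdorff spaces, so the {\bf S}$_5$ structure and the $f$-invariance of $\cB$ both survive the restriction.
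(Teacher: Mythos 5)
Your proof is correct and takes essentially the same approach as the paper: the same ball space of all nonempty closed connected subsets of $X$, the same verification that $f(B)\in\cB$, the same use of (J1) and (J2) to show that every $f$-closed ball is $f$-contracting, and the same final appeal to part 2) of Theorem~\ref{basic1b}. The only cosmetic difference is that you verify the {\bf S}$_5$ property directly (supplying a proof that chain intersections of compact connected sets are nonempty, closed and connected), whereas the paper obtains it by noting that $\cB$ is chain intersection closed and invoking Theorem~\ref{cts} together with Proposition~\ref{cicSeq}.
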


We will deduce our theorem from Theorem~\ref{basic1b}. We take $\cB$ to be the set of all nonempty closed
connected subsets of $X$; in particular, $X\in\cB$. Take any $B\in\cB$. As $f$ is a continuous function on the
compact Hausdorff space $X$, it is a closed function, so $f(B)$ is closed. Since $B$ is connected and
$f$ is continuous, $f(B)$ is also connected. Hence $f(B)\in\cB$.

Further, the intersection of any chain of closed connected subsets of $X$ is closed and connected. This shows
that $\cB$ is chain intersection closed. By Theorem~\ref{cts} the ball space consisting of all nonempty closed
subsets of the compact space $X$ is {\bf S}$^*$. As it contains $\cB$, $(X,\cB)$ is {\bf S}$_1$ and it follows
from Proposition~\ref{cicSeq} that $(X,\cB)$ is an {\bf S}$_5$ ball space.

Finally, we have to show that every $f$-closed ball $B\in\cB$ is $f$-contracting. As $B$ is closed in $X$, it is
also compact Hausdorff, and it is connected as it is a ball in $\cB$. By (J1), the restriction of $f$ to $B$ is
also a $J$-contraction. Therefore, we can replace $X$ by $B$ and apply (J2) to find that if $f$ is onto, then $B$
is a singleton; this shows that $B$ is $f$-contracting. Now Theorem~\ref{SWJ} follows from part 2) of
Theorem~\ref{basic1b} as desired.

\parm
It should be noted that $J$-contractions appear in a natural way in the metric setting. The following is the
content of Theorems~2 and~3 of \cite{[SWJ]}:
\begin{theorem}
Any contraction on a compact metric space is a $J$-contrac\-tion. Conversely, if $f$ is a $J$-contraction on a
connected compact metrizable space $X$, then $X$ admits a metric under which $f$ is a contraction.
\end{theorem}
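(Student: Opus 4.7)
The plan is to treat the two implications separately.

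For the forward direction, fix a contraction $f$ on a compact metric space $(X,d)$ with Lipschitz constant $c\in[0,1)$ and an open cover $\mathcal{U}$. By compactness $\mathcal{U}$ admits a Lebesgue number $\delta>0$. Set $\varepsilon=\delta/3$ and pick a finite $\tfrac{(1-c)\varepsilon}{2}$-dense subset $\{x_1,\ldots,x_N\}\subseteq X$; the refinement $\mathcal{V}=\{B_\varepsilon(x_j):1\le j\le N\}$ (open $\varepsilon$-balls) then refines $\mathcal{U}$ because each ball has diameter $<\delta$. For each $j$ choose $x_{k(j)}$ with $d(fx_j,x_{k(j)})\le\tfrac{(1-c)\varepsilon}{2}$; contractivity yields
\[
\overline{f(B_\varepsilon(x_j))}\subseteq\{y:d(y,fx_j)\le c\varepsilon\}\subseteq B_\varepsilon(x_{k(j)}),
\]
because $c\varepsilon+\tfrac{(1-c)\varepsilon}{2}=\tfrac{(1+c)\varepsilon}{2}<\varepsilon$. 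Hence $\mathcal{V}$ is a $J$-contractive refinement of $\mathcal{U}$ and $f$ is a $J$-contraction.

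For the converse, fix a $J$-contraction $f$ on a connected compact metrizable space $X$ together with a compatible background metric $d_0$. I would construct inductively a sequence of open covers $\mathcal{U}_0,\mathcal{U}_1,\ldots$ in which $\mathcal{U}_{n+1}$ is, at one and the same time, (i) a $J$-contractive refinement of $\mathcal{U}_n$ for $f$, (ii) a classical star-refinement of $\mathcal{U}_n$, and (iii) of $d_0$-mesh smaller than $2^{-n}$; this is feasible because any common refinement of a $J$-contractive cover, a star-refinement and a fine-mesh cover remains $J$-contractive over $\mathcal{U}_n$. Setting $V_n:=\bigcup_{U\in\mathcal{U}_n}U\times U$, condition (ii) gives $V_{n+1}\circ V_{n+1}\circ V_{n+1}\subseteq V_n$, so Frink's metrization lemma produces a compatible metric $d$ in which $(x,y)\in V_n$ forces $d(x,y)$ to be of order $2^{-n}$. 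Condition (i) translates into $(f\times f)(V_{n+1})\subseteq V_n$, whence $d(fx,fy)\le\tfrac12 d(x,y)$, exhibiting $f$ as a contraction in $d$.

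The main obstacle is carrying out this simultaneous induction: the $J$-contractive property comes only from the hypothesis on $f$, while star-refinement and mesh control are topological features of the metrizable compactum, so one must verify that amalgamating the three requirements at each stage preserves all of them. Connectedness enters precisely to exclude the pathology in which $f$ preserves a nontrivial clopen decomposition of $X$; for then distances between components would stay rigid under $f$ and no compatible metric could turn $f$ into a strict contraction. Connectedness removes this blockage and allows the Frink construction to deliver a genuine contraction metric.
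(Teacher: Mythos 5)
First, a point of reference: the paper itself gives \emph{no} proof of this statement; it is quoted verbatim as the content of Theorems~2 and~3 of Stepr\={a}ns--Watson--Just \cite{[SWJ]}. So your attempt can only be judged on its own merits. Your forward direction is correct and complete: the Lebesgue-number argument with a finite $\tfrac{(1-c)\varepsilon}{2}$-net and the estimate $c\varepsilon+\tfrac{(1-c)\varepsilon}{2}<\varepsilon$ does produce a $J$-contractive refinement of an arbitrary open cover, and this is the standard proof.

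The converse direction, however, has a genuine gap at its central step. $J$-contractivity is a property of a \emph{single} cover: for each $U\in\mathcal{U}_{n+1}$ there is $U'$ in the \emph{same} cover $\mathcal{U}_{n+1}$ with $f(\overline{U})\subseteq U'$. In entourage language this gives $(f\times f)(V_{n+1})\subseteq V_{n+1}\subseteq V_n$: the map $f$ \emph{preserves} each scale. Your inclusion $(f\times f)(V_{n+1})\subseteq V_n$ is therefore true, but it points the wrong way: fed into Frink's lemma, whose output metric satisfies (roughly) $V_n\subseteq\{d<2^{-n}\}\subseteq V_{n-1}$, it yields only a bound of the form $d(fx,fy)\le C\,d(x,y)$ with $C>1$, not $d(fx,fy)\le\tfrac12 d(x,y)$. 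What a contraction requires is a \emph{scale-dropping} inclusion such as $(f\times f)(V_{n-1})\subseteq V_{n+1}$, i.e.\ that images of closures of members of the \emph{coarse} cover fit inside members of the \emph{fine} cover; nothing in your construction delivers this, and $J$-contractivity of each individual $\mathcal{U}_n$ cannot by itself deliver it. A two-point test makes the failure concrete: let $X=\{a,b\}$ be discrete and $f$ the swap. Then $\{\{a\},\{b\}\}$ is a $J$-contractive refinement of every open cover, so $f$ is a $J$-contraction; taking $\mathcal{U}_n=\{\{a\},\{b\}\}$ for all $n$ satisfies your conditions (i)--(iii), $V_n$ is the diagonal, and $(f\times f)(V_{n+1})=V_n$ holds -- yet no compatible metric makes $f$ a contraction, since $f$ has no fixed point. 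This $X$ is of course disconnected, and that is exactly the point: connectedness must enter the construction as a working hypothesis (in \cite{[SWJ]} it does, e.g.\ through the fact quoted as (J2) in the paper), whereas in your write-up it appears only in a heuristic closing paragraph and is never used in any deduction. An argument that nowhere uses connectedness cannot prove a statement that is false without it.

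There is also a smaller, fixable slip in the inductive step: a common refinement of a $J$-contractive cover with other covers need \emph{not} remain $J$-contractive (the candidate target sets $U'$ shrink under refinement), so your feasibility claim is backwards. The repair is to order the operations correctly -- first refine $\mathcal{U}_n$ to a star-refinement of small mesh, then apply the $J$-contraction hypothesis to \emph{that} cover to obtain $\mathcal{U}_{n+1}$; star-refinement and mesh bounds, unlike $J$-contractivity, are inherited by further refinements. But this repair does not close the main gap above.
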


\mn
%
%
\subsection{Partially ordered sets}       \label{sectpos}
\mbox{ }\sn
Take any nonempty partially ordered set $(T,<)$. We will associate with it two different ball
spaces; first, the ball space of principal final segments, and then later the segment ball space.

A \bfind{principal final segment} is a set $[a,\infty):=\{c\in T\mid a\leq c\}$ with $a\in T$.
Then the \bfind{ball space of principal final segments} is $(T,\cBpfs)$ where $\cBpfs:=
\{[a,\infty)\mid a \in T\}$. The following proposition gives the interpretation of spherical
completeness for this ball space; we leave its straightforward proof to the reader.
\begin{proposition}                             \label{io}
The following assertions are equivalent:
\sn
a) \ the poset $(T,<)$ is inductively ordered,
\sn
b) \ the ball space $(T,\cBpfs)$ is an {\bf S}$_1$ ball space,
\sn
c) \ $(T,\cBpfs)$ is an {\bf S}$_2$ ball space.
\end{proposition}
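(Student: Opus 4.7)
The plan is to exploit the natural correspondence between chains in $(T,<)$ and nests in $\cBpfs$. Given a chain $C\subseteq T$, the family $\{[a,\infty)\mid a\in C\}$ is totally ordered by inclusion (since for $a,b\in C$ we have $a\leq b$ iff $[b,\infty)\subseteq[a,\infty)$), hence is a nest of balls. Conversely, if $\cN=\{[a_i,\infty)\mid i\in I\}$ is a nest, then the comparability of the balls under inclusion forces the elements $a_i$ to be pairwise comparable (since $[a_i,\infty)\subseteq[a_j,\infty)$ implies $a_j\leq a_i$), so the underlying indexing set is a chain in $T$.

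The crucial identity is
\[
\bigcap_{a\in C}[a,\infty)\>=\>\{c\in T\mid a\leq c\text{ for every }a\in C\},
\]
i.e., the intersection of the nest corresponding to $C$ is exactly the set of upper bounds of $C$ in $(T,<)$. This immediately yields (a) $\Leftrightarrow$ (b): every chain has an upper bound if and only if every nest in $\cBpfs$ has nonempty intersection.

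For (b) $\Leftrightarrow$ (c), the direction (c) $\Rightarrow$ (b) is already contained in the hierarchy (\ref{hier}). For the converse, assume $(T,\cBpfs)$ is an \textbf{S}$_1$ ball space and take an arbitrary nest $\cN\subseteq\cBpfs$. By assumption there exists $c\in\bigcap\cN$, and by the identity above $c$ is an upper bound of the chain underlying $\cN$. Hence for every $[a,\infty)\in\cN$ we have $a\leq c$, so $[c,\infty)\subseteq[a,\infty)$, and therefore $[c,\infty)\subseteq\bigcap\cN$ is a ball in $\cBpfs$ contained in the intersection.

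There is no genuine obstacle here: the principal-final-segment structure is rigid enough that any element $c$ of an intersection canonically produces a ball $[c,\infty)$ lying inside that same intersection, so \textbf{S}$_1$ upgrades for free to \textbf{S}$_2$ for this particular ball space. The only point meriting a little care is the bookkeeping that nests and chains correspond to one another, but this is forced by antisymmetry of $\leq$ together with the elementary observation that $[a,\infty)$ is determined by its minimum $a$.
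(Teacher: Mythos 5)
Your proof is correct and is essentially the ``straightforward proof'' that the paper leaves to the reader: it rests on the bijective correspondence between chains in $(T,<)$ and nests in $(T,\cBpfs)$ together with the observation (stated as Lemma~\ref{prelpo} in the paper) that the intersection of such a nest is exactly the set of upper bounds of the underlying chain. The upgrade from {\bf S}$_1$ to {\bf S}$_2$ by noting that any $c$ in the intersection yields the ball $[c,\infty)$ inside that intersection is precisely the intended argument.
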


\pars
We also leave it to the reader to show that $(T,\cBpfs)$ is an {\bf S}$_3$ (or {\bf S}$_3^d$ or
{\bf S}$_3^c$) ball space if and only if every chain (or directed system, or centered system,
respectively) has minimal upper bounds.

\parm
If $\{a_i\mid i\in I\}$ is a subset of $T$, then $\sup_{i\in I} a_i$ will denote its supremum,
if it exists. We will need the following fact, whose proof we again leave to the reader.
\begin{lemma}                               \label{prelpo}
The equality
\[
[a,\infty)=\bigcap_{i\in I} \, [a_i,\infty)
\]
holds if and only if $a=\sup_{i\in I} a_i\,$. Further, $\bigcap_{i\in I}^{}
[a_i,\infty)$ is the (possibly empty) set of all upper bounds for $\{a_i\mid i\in I\}$.
\end{lemma}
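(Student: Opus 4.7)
The plan is to handle the second assertion first, as it is essentially unpacking definitions, and then use it to prove the first assertion cleanly.

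For the second assertion, I would simply observe that $x \in \bigcap_{i\in I} [a_i,\infty)$ holds precisely when $x \geq a_i$ for every $i \in I$, and this is exactly the definition of $x$ being an upper bound for $\{a_i \mid i \in I\}$. So $\bigcap_{i\in I} [a_i,\infty)$ is the set of all upper bounds of the family, which may of course be empty (e.g.\ if the family is not bounded above).

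For the first assertion, I would argue both directions using what has just been established. For the implication $(\Leftarrow)$, suppose $a = \sup_{i\in I} a_i$. Then $a$ is in particular an upper bound, so $a \geq a_i$ for all $i$, giving $[a,\infty) \subseteq \bigcap_{i\in I} [a_i,\infty)$. Conversely, any $x \in \bigcap_{i\in I} [a_i,\infty)$ is an upper bound by the second assertion, and by the least-upper-bound property of $a$ we get $x \geq a$, so $x \in [a,\infty)$. For the implication $(\Rightarrow)$, suppose $[a,\infty) = \bigcap_{i\in I} [a_i,\infty)$. Since $a \in [a,\infty)$, the equality yields $a \in \bigcap_{i\in I} [a_i,\infty)$, so by the second assertion $a$ is an upper bound of $\{a_i \mid i \in I\}$. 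To see it is the least, take any upper bound $b$; then again by the second assertion $b \in \bigcap_{i\in I} [a_i,\infty) = [a,\infty)$, whence $b \geq a$. Thus $a = \sup_{i\in I} a_i$.

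There is no real obstacle here; the statement is a direct translation between the order-theoretic language of suprema and the set-theoretic language of intersections of principal final segments. The only mild subtlety to flag is that the intersection $\bigcap_{i\in I} [a_i,\infty)$ can be empty, in which case the first assertion implicitly asserts that no $a$ makes the equality hold (consistent with $\sup_{i\in I} a_i$ not existing). It may therefore be worth noting explicitly that the first assertion should be read as an equivalence of two conditions, both of which presuppose the existence of the objects on each side.
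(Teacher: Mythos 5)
Your proof is correct: the paper itself offers no proof of this lemma (it explicitly leaves it to the reader), and your argument---identifying the intersection with the set of upper bounds and then reading off both directions of the equivalence from the definition of supremum---is exactly the straightforward definition-unpacking that is intended. Your closing remark about how to read the equivalence when the supremum fails to exist is a sensible clarification and consistent with how the lemma is used later (e.g.\ in Propositions~\ref{propo} and~\ref{propo2}).
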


\pars
An element $a$ in a poset is called \bfind{top element} if $b\leq a$ for all elements $b$ in the poset,
and \bfind{bottom element} if $b\geq a$ for all elements $b$ in the poset. A top element is commonly denoted by
\gloss{$\top$}, and a bottom element by \gloss{$\bot$}. A poset $(T,<)$ is an \bfind{upper semilattice} if
every two elements in $T$ have a supremum, and a \bfind{complete upper semilattice} if
every nonempty set of elements in $T$ has a supremum.
\begin{proposition}                         \label{propo}
1) \ $(T,\cBpfs)$ is finitely intersection closed if and only if every
nonempty finite bounded subset of $T$ has a supremum.
\sn
2) \ $(T,\cBpfs)$ is intersection closed if and only if every nonempty bounded
subset of $T$ has a supremum, i.e., $(T,<)$ is bounded complete.
\sn
3) \ If $(T,<)$ has a top element, then $(T,<)$ is an upper semilattice if and only if $(T,\cBpfs)$ is finitely
intersection closed,
\sn
4) \ $(T,<)$ is a complete upper semilattice if and only if $(T,<)$ has a top element and $(T,\cBpfs)$ is
intersection closed.
\end{proposition}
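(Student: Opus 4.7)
The plan is to derive all four assertions uniformly from Lemma~\ref{prelpo}. The key observation is that a (nonempty) intersection $\bigcap_{i\in I}[a_i,\infty)$ equals the set of all upper bounds of $\{a_i\mid i\in I\}$, and this set coincides with a ball $[a,\infty)\in\cBpfs$ exactly when $a=\sup_{i\in I}a_i$ exists in $T$. Thus ``the intersection of a collection of principal final segments is nonempty'' translates to ``the corresponding set of generators is bounded in $T$'', and ``that intersection is a ball'' translates to ``that bounded set has a supremum''.

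For part~1), I would argue the equivalence in both directions. If $(T,\cBpfs)$ is finitely intersection closed and $\{a_1,\dots,a_n\}\subseteq T$ is a nonempty finite bounded subset, then $\bigcap_{i=1}^n[a_i,\infty)$ is nonempty (it contains any upper bound), hence by assumption equals some $[a,\infty)$, and Lemma~\ref{prelpo} forces $a=\sup\{a_1,\dots,a_n\}$. Conversely, if every nonempty finite bounded subset of $T$ has a supremum, then any nonempty intersection $\bigcap_{i=1}^n[a_i,\infty)$ witnesses that $\{a_1,\dots,a_n\}$ is bounded, hence admits a supremum $a$, so by Lemma~\ref{prelpo} the intersection equals $[a,\infty)\in\cBpfs$. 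Part~2) is obtained by running exactly the same argument with arbitrary (rather than finite) collections; the word ``finite'' plays no role anywhere in the two directions.

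For part~3), assume $\top\in T$ is a top element. Then every nonempty finite subset of $T$ is automatically bounded by $\top$, so ``nonempty finite bounded subset'' simplifies to ``nonempty finite subset''. If $(T,<)$ is an upper semilattice, a routine induction on cardinality shows every nonempty finite subset has a supremum, so by part~1) $(T,\cBpfs)$ is finitely intersection closed. For the converse, finite intersection closure gives suprema of all nonempty finite subsets, in particular of all pairs, which is precisely the upper semilattice property. For part~4), I would observe that if $(T,<)$ is a complete upper semilattice then $\sup T$ exists and is a top element, and every nonempty subset has a supremum, hence in particular every nonempty bounded subset does; part~2) then gives intersection closure. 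Conversely, with $\top\in T$ every nonempty subset is bounded, so intersection closure together with part~2) supplies suprema for all nonempty subsets of $T$.

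The main obstacle is essentially bookkeeping: I need to make sure that in each direction the nonemptiness of the intersection is correctly identified with the boundedness of the underlying set (which Lemma~\ref{prelpo} does cleanly), and that the presence or absence of a top element is used precisely where needed in parts~3) and~4) to reduce ``bounded'' to ``arbitrary''. There are no genuine analytic difficulties; the proposition is really a translation of order-theoretic statements into ball-space statements via the bijection $a\mapsto[a,\infty)$ and Lemma~\ref{prelpo}.
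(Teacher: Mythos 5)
Your proposal is correct and takes essentially the same route as the paper's own proof: parts 1) and 2) are obtained from Lemma~\ref{prelpo} by translating nonemptiness of $\bigcap_{i\in I}[a_i,\infty)$ into boundedness of $\{a_i\mid i\in I\}$ and the intersection being a ball into existence of the supremum, and parts 3) and 4) are reduced to 1) and 2) via the observation that a top element makes every nonempty subset bounded. The only cosmetic difference is that you spell out the induction showing an upper semilattice has suprema of all finite nonempty sets, which the paper leaves implicit.
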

\begin{proof}
1), 2): \ Assume that $(T,\cBpfs)$ is (finitely) intersection closed and take a nonempty (finite) subset
$\{a_i\mid i\in I\}$ of $T$. If this set is bounded, then $\bigcap_{i\in I}^{} [a_i,\infty)$ is nonempty, and thus
by assumption it is equal to $[a,\infty)$ for some $a\in T$. By Lemma~\ref{prelpo}, this implies that $a=\sup_{i
\in I} a_i\,$, showing that $\{a_i\mid i\in I\}$ has a supremum.

Now assume that every nonempty (finite) bounded subset of $T$ has a supremum. Take a nonempty (finite) set
$\{[a_i,\infty)\mid i\in I\}$ of balls in $\cBpfs$ with nonempty intersection. Take $b\in\bigcap_{i\in I}
[a_i,\infty)$.
Then $b$ is an upper bound of $\{a_i\mid i\in I\}$. By assumption, there exists $a=\sup_{i\in I} a_i$ in $T$.
Again by Lemma~\ref{prelpo}, this implies that $\bigcap_{i\in I}^{} [a_i,\infty)=[a,\infty)$. Hence,
$(T,\cBpfs)$ is (finitely) intersection closed.
\mn
3) and 4) follow from 1) and 2), respectively, because if $(T,<)$ has a
top element, then every nonempty subset is bounded.
\end{proof}

\parm
We add to our hierarchy~(\ref{hier}) an even stronger property: we say that the ball space $(X,\cB)$ is an
{\bf S}$^{**}$ ball space if $\cB$ is closed under {\emph arbitrary} intersections; in particular, this implies
that intersections of arbitrary collections of balls are nonempty. Every {\bf S}$^{**}$ ball
space is an {\bf S}$^*$ ball space. Note that every complete upper semilattice has a top element.
\begin{proposition}                         \label{propo2}
1) \ Assume that $(T,<)$ has a top element $\top$. Then every intersection of balls in $(T,\cBpfs)$ contains the
ball $[\top,\infty)$, and $(T,\cBpfs)$ is an {\bf S}$_2^c$ ball space. Moreover, $(T,\cBpfs)$ is an {\bf S}$^*$
ball space if and only if it is an {\bf S}$^{**}$ ball space.
\sn
2) \ $(T,\cBpfs)$ is an {\bf S}$^{**}$ ball space if and only if $(T,<)$ has a top element and $(T,\cBpfs)$ is
intersection closed.
\sn
3) $(T,<)$ is a complete upper semilattice if and only if $(T,\cBpfs)$ is an {\bf S}$^{**}$ ball space.
\end{proposition}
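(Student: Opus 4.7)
The plan is to establish the three parts progressively, leveraging the single key observation that when $\top$ exists, the ball $[\top,\infty)=\{\top\}$ is contained in every principal final segment. Once this is in hand, each claim reduces to unpacking definitions and invoking the already-proved Proposition~\ref{propo}.

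For part 1), since $a\leq\top$ for every $a\in T$, we have $\top\in[a,\infty)$, hence $[\top,\infty)\subseteq[a,\infty)$ for all $a$. Therefore every (nonempty) collection of balls has intersection containing the ball $\{\top\}$, which gives both nonemptiness of the intersection and the existence of a ball inside it; this yields the {\bf S}$_2^c$ conclusion immediately. For the equivalence of {\bf S}$^*$ and {\bf S}$^{**}$, observe that under the presence of $\top$ every nonempty subcollection of $\cBpfs$ is automatically centered (its intersection contains $\top$), so the classes of collections appearing in the two definitions coincide, and hence the two properties do as well.

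For part 2), the backward direction combines part 1) with the hypothesis of intersection closedness: the former guarantees every arbitrary intersection of balls is nonempty, and the latter then guarantees that such an intersection is itself a ball. For the forward direction, assuming {\bf S}$^{**}$, I would consider the particular intersection $\bigcap_{a\in T}[a,\infty)$, which by {\bf S}$^{**}$ must be a ball, say $[t,\infty)$. Since $[t,\infty)\subseteq[b,\infty)$ for every $b\in T$, Lemma~\ref{prelpo} forces $b\leq t$ for all $b$, so $t=\top$ is a top element. Intersection closedness is then a trivial consequence of {\bf S}$^{**}$. For part 3), it suffices to chain part 2) with part 4) of Proposition~\ref{propo}, which already identifies complete upper semilattices with posets having a top and with intersection-closed $\cBpfs$.

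I do not anticipate a substantial obstacle; the proof is essentially a packaging exercise once the containment $[\top,\infty)\subseteq[a,\infty)$ is noted. The only point requiring slight care is the bookkeeping between the three similar-sounding notions ``intersection closed,'' {\bf S}$^*$, and {\bf S}$^{**}$: the first two a priori concern only nonempty (respectively centered) intersections, while {\bf S}$^{**}$ also demands nonemptiness of all considered intersections. The presence of $\top$ collapses these distinctions, and explicitly pointing this out is what makes the ``moreover'' clause of part 1) and the equivalence in part 2) go through cleanly.
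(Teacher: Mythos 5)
Your proposal is correct and follows essentially the same route as the paper: the key observation that $[\top,\infty)$ lies in every ball (making every collection of balls centered), the use of the particular intersection $\bigcap_{a\in T}[a,\infty)$ together with Lemma~\ref{prelpo} to recover the top element from {\bf S}$^{**}$, and the reduction of part 3) to part 2) combined with part 4) of Proposition~\ref{propo}. The paper's proof is just a terser version of the same argument.
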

\begin{proof}
1): The first two statements are obvious. If $(T,<)$ has a top element, then every collection of balls in
$\cBpfs$ is a centered system. Hence if $(T,\cBpfs)$ is an {\bf S}$^*$ ball space, then it is an {\bf S}$^{**}$
ball space.
\sn
2): Assume that $(T,\cBpfs)$ is an {\bf S}$^{**}$ ball space. Then it follows directly from the definition that it
is intersection closed. Further, the intersection over $\{[a,\infty)\mid
a\in T\}$ is a ball $[b,\infty)$. By Lemma~\ref{prelpo}, $b$ is the supremum of $T$ and thus a top element.

Now assume that $(T,<)$ has a top element $\top$ and $(T,\cBpfs)$ is intersection closed, and take an arbitrary
collection of balls in $\cBpfs$. As all of the balls contain $\top$, their intersection is nonempty, and hence
by our assumption, it is a ball.
\sn
3): This follows from part 2) of our proposition together with part 4) of Proposition~\ref{propo}.
\end{proof}

\pars
Now we can characterize chain complete and directed complete posets by properties from our hierarchy:
\begin{theorem}
Take a poset $(T,<)$. Then the following are equivalent:
\sn
a) \ $(T,<)$ is chain complete,
\sn
b) \ $(T,<)$ is directed complete,
\sn
c) \ $(T,\cBpfs)$ is an {\bf S}$_5$ ball space,
\sn
d) \ $(T,\cBpfs)$ is an {\bf S}$_5^d$ ball space.

\pars
If every nonempty finite bounded subset of $T$ has a supremum, then the above
properties are also equivalent to
\sn
e) \ $(T,\cBpfs)$ is an {\bf S}$^*$ ball space.
\end{theorem}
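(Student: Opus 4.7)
The plan is to exploit the natural order-reversing correspondence between the poset $(T,<)$ and the ball space $(T,\cBpfs)$: elements $a\in T$ correspond to balls $[a,\infty)$, with $a\leq b$ iff $[b,\infty)\subseteq [a,\infty)$. Under this correspondence, Lemma~\ref{prelpo} tells us that a family $\{a_i\}_{i\in I}$ has supremum $a$ in $T$ precisely when $\bigcap_{i\in I}[a_i,\infty)=[a,\infty)$. This is essentially the whole content of the theorem; each equivalence reduces to checking that chains/directed systems in $T$ match up with nests/directed systems of balls.

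For a) $\Leftrightarrow$ c), I would first verify that $\{[a_i,\infty)\}_{i\in I}$ is a nest in $\cBpfs$ iff $\{a_i\}_{i\in I}$ is a chain in $T$ (both reduce to total comparability). Combined with Lemma~\ref{prelpo}, property {\bf S}$_5$ for $\cBpfs$ says exactly that every chain in $T$ admits a supremum, i.e.\ $(T,<)$ is chain complete. For b) $\Leftrightarrow$ d), I would verify analogously that $\{[a_i,\infty)\}_{i\in I}$ is a directed system of balls iff $\{a_i\}_{i\in I}$ is a directed system in $T$: the existence, for given $i,j$, of some $k$ with $[a_k,\infty)\subseteq[a_i,\infty)\cap[a_j,\infty)$ is literally the statement that $a_k$ is an upper bound of $\{a_i,a_j\}$ in $T$. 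Applying Lemma~\ref{prelpo} again, {\bf S}$_5^d$ for $\cBpfs$ amounts to directed completeness of $(T,<)$.

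To tie a)--d) together, I would observe that b) $\Rightarrow$ a) is immediate (every chain is a directed system), while a) $\Rightarrow$ b) is Proposition~\ref{cc=dc}. This gives a) $\Leftrightarrow$ b), and combined with a) $\Leftrightarrow$ c) and b) $\Leftrightarrow$ d) the four conditions are mutually equivalent.

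For the final equivalence with e) under the extra hypothesis, I would invoke part~1) of Proposition~\ref{propo}, which says that $(T,\cBpfs)$ is finitely intersection closed iff every nonempty finite bounded subset of $T$ has a supremum. Under this hypothesis, Proposition~\ref{fichier} (with $i=5$) gives {\bf S}$_5^d\Leftrightarrow ${\bf S}$_5^c$, and by definition {\bf S}$_5^c=${\bf S}$^*$; hence d) $\Leftrightarrow$ e).

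There is no serious obstacle: the proof is a matter of bookkeeping the two parallel correspondences (chains $\leftrightarrow$ nests, directed sets $\leftrightarrow$ directed systems of balls) and quoting Lemma~\ref{prelpo}, Proposition~\ref{cc=dc}, Proposition~\ref{propo}(1), and Proposition~\ref{fichier}. The only point that deserves mild care is checking that a nest (resp.\ directed system) of balls really does arise from a chain (resp.\ directed system) of \emph{indices} and not from some more complicated indexed family, but this is automatic because every ball of $\cBpfs$ is of the form $[a,\infty)$ for a \emph{unique} $a\in T$.
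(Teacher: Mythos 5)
Your proposal is correct and uses essentially the same ingredients as the paper's proof: the order-reversing bijection $a\mapsto[a,\infty)$ matching chains with nests and directed sets with directed systems of balls, Lemma~\ref{prelpo} to identify intersections with suprema, Proposition~\ref{cc=dc} for a)~$\Leftrightarrow$~b), and Proposition~\ref{propo}(1) together with Proposition~\ref{fichier} for e). The only difference is organizational — you prove the pairwise biconditionals a)~$\Leftrightarrow$~c) and b)~$\Leftrightarrow$~d) directly, while the paper closes a cycle a)~$\Leftrightarrow$~b)~$\Rightarrow$~d)~$\Rightarrow$~c)~$\Rightarrow$~a), using the hierarchy implication {\bf S}$_5^d\Rightarrow$~{\bf S}$_5$ to save one verification — which is immaterial.
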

\begin{proof}
The equivalence of assertions a) and b) follows from Proposition~\ref{cc=dc}.
\sn
b) $\Rightarrow$ d): \ Assume that $(T,<)$ is directed complete and take
a directed system $S=\{[a_i,\infty)\mid i\in I\}$ in $\cBpfs\,$. Then also $\{a_i\mid i\in I\}$ is a
directed system in $(T,<)$. By our assumption on $(T,<)$ it follows that
$\{a_i\mid i\in I\}$ has a supremum $a$ in $T$. By
Lemma~\ref{prelpo}, $[a,\infty)=\bigcap_{i\in I}^{} [a_i,\infty)$, which
shows that the intersection of $S$ is a ball.
\sn
d) $\Rightarrow$ c) holds by the general properties of the hierarchy.
\sn
c) $\Rightarrow$ a): \ Take a chain $\{a_i\mid i\in I\}$ in $T$. Since $(T,\cBpfs)$ is  an {\bf S}$_5$ ball space,
the intersection of the nest $\cN=([a_i,\infty))_{i\in I}$ is a ball $[a,\infty)$. It follows by Lemma~\ref{prelpo}
that $a$ is the supremum of the chain, which proves that $(T,<)$ is chain complete.

\parm
If every nonempty finite bounded subset of $T$ has a supremum, then by part 1) of
Proposition~\ref{propo}, $(T,\cBpfs)$ is finitely intersection closed,
hence by Proposition~\ref{fichier}, properties {\bf S}$_5^d$ and {\bf S}$^*$ are equivalent.
\end{proof}

\begin{remark}
Note that we define chains to be \emph{nonempty} totally ordered sets and similarly, consider directed systems to
be  nonempty. If we drop this convention, then the theorem remains true if we require in c) and d) that $(T,<)$ has
a least element.
\end{remark}

\parm
The ball space $(T,\cBpfs)$ shares an important property with Caristi--Kirk and Oettli--Th\'era
ball spaces, as shown by the next proposition, whose straightforward proof we omit.
\begin{proposition}                          \label{pfssc}
The ball space $(T,\cBpfs)$ is a normalized strongly contractive B$_x$-ball space, where
\[
B_x\>:=\> [x,\infty)\>\in\,\cBpfs\>.
\]
\end{proposition}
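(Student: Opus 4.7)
The plan is to verify directly the four ingredients of the statement: that $(T,\cBpfs)$ is a B$_x$-ball space under the assignment $x\mapsto [x,\infty)$, that condition (NB) holds, that (C1) holds, and that (C2s) holds. All four follow from nothing more than the defining axioms of a partial order (reflexivity, transitivity, antisymmetry), so the proof is essentially a tidy unwinding of definitions.

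First, by the definition of $\cBpfs$ given in the paragraph preceding Proposition~\ref{io}, we have $\cBpfs=\{[a,\infty)\mid a\in T\}$, so the rule $B_x:=[x,\infty)$ gives a function $T\to\cBpfs$ whose image is all of $\cBpfs$. This is exactly what makes $(T,\cBpfs)$ a B$_x$-ball space. Next, reflexivity of $\leq$ yields $x\in[x,\infty)=B_x$, so (NB) holds and the ball space is normalized.

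For (C1), suppose $y\in B_x$, i.e.\ $x\leq y$. For any $z\in B_y$ we have $y\leq z$, and transitivity gives $x\leq z$, hence $z\in B_x$; this shows $B_y\subseteq B_x$. For (C2s), suppose $y\in B_x\setminus\{x\}$, so $x\leq y$ and $y\neq x$. Condition (C1) already gives $B_y\subseteq B_x$, and it remains to exclude equality: since $x\in B_x$, it suffices to check $x\notin B_y$. Were $x\in B_y$ we would have $y\leq x$, and together with $x\leq y$ antisymmetry would force $x=y$, contradicting $y\neq x$. Hence $x\notin B_y$, so $B_y\subsetneq B_x$.

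There is no genuine obstacle here; if anything, the only step using a nontrivial poset axiom is the antisymmetry argument in (C2s), which is what converts the inclusion $B_y\subseteq B_x$ coming from (C1) into the strict inclusion demanded by strong contractivity.
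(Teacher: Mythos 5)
Your proof is correct and is exactly the straightforward verification the paper has in mind (the paper explicitly omits the proof of Proposition~\ref{pfssc} as straightforward): surjectivity of $x\mapsto[x,\infty)$ gives the B$_x$--ball space structure, reflexivity gives (NB), transitivity gives (C1), and antisymmetry upgrades the inclusion to the strict inclusion required by (C2s). Nothing is missing.
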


\parm
A function $f$ on a poset $(T,<)$ is \bfind{increasing} if $f(x)\geq x$ for all $x\in T$. The following
result is an immediate consequence of Zorn's Lemma, but can also be seen as a
corollary to Propositions~\ref{io} and~\ref{pfssc} together with Theorem~\ref{GFPT3}:
\begin{theorem}                               \label{wBW}
Every increasing function $f:X\rightarrow X$ on an inductively ordered poset $(T,<)$ has a fixed
point.
\end{theorem}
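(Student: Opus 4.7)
The plan is to obtain the theorem as an immediate consequence of Theorem~\ref{GFPT3} applied to the ball space $(T,\cBpfs)$ of principal final segments. First I would verify that the hypotheses of Theorem~\ref{GFPT3} hold. By Proposition~\ref{io}, the assumption that $(T,<)$ is inductively ordered is equivalent to $(T,\cBpfs)$ being an {\bf S}$_1$ ball space, i.e., spherically complete. By Proposition~\ref{pfssc}, $(T,\cBpfs)$ is a normalized strongly contractive B$_x$-ball space with $B_x=[x,\infty)$, and since every strongly contractive ball space is contractive, $(T,\cBpfs)$ is in particular a contractive B$_x$-ball space.

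Next I would check that $f$ satisfies condition (\ref{fxinBx}). Because $f$ is increasing, $fx\geq x$ for every $x\in T$, so $fx\in[x,\infty)=B_x$, which is exactly the required condition. Theorem~\ref{GFPT3} then yields that $f$ has a fixed point in every ball $B\in\cBpfs$; since $T$ is nonempty, $\cBpfs$ is nonempty, and we obtain the desired fixed point.

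I expect no real obstacle here: the theorem is essentially a repackaging of Theorem~\ref{GFPT3} in the poset setting, with Propositions~\ref{io} and~\ref{pfssc} supplying precisely the two ingredients needed. As noted, one could also argue directly from Zorn's Lemma: $(T,<)$ inductively ordered implies the existence of a maximal element $m$ by Lemma~\ref{ZL}, and then $fm\geq m$ together with maximality of $m$ forces $fm=m$. I would mention this alternative for orientation, but the route through Theorem~\ref{GFPT3} is the one that fits the expository thrust of the paper, since it exhibits yet another classical fixed point result as a specialization of the generic ball spaces machinery.
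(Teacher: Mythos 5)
Your proposal is correct and matches the paper exactly: the paper presents this theorem as a corollary of Propositions~\ref{io} and~\ref{pfssc} together with Theorem~\ref{GFPT3}, which is precisely your main argument, and it also notes the direct Zorn's Lemma route that you mention as an alternative. All verifications (spherical completeness via Proposition~\ref{io}, contractivity via Proposition~\ref{pfssc}, and condition~(\ref{fxinBx}) from $fx\geq x$) are carried out correctly.
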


Note that this theorem implies the \bfind{Bourbaki-Witt Theorem} (see \cite{Bou,Wi} or the short
description on Wikipedia), which differs from it by assuming that every chain in $(T,<)$ even has
a supremum.

\pars
A function $f$ on a poset $(T,<)$ is called \bfind{order preserving} if $x\leq y$ implies
$fx\leq fy$. The following result  is an easy consequence of Theorem~\ref{wBW}:
\begin{theorem}
Take an order preserving function $f$ on a nonempty poset $(T,<)$ which contains at least one
$x$ such that $fx\geq x$ (in particular, this holds when $(T,<)$ has a bottom element). Assume
that $(T,<)$ is chain complete. Then $f$ has a fixed point.
\end{theorem}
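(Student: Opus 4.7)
The plan is to reduce this to Theorem~\ref{wBW} by carving out of $T$ a subposet on which $f$ becomes an \emph{increasing} function (in the sense of that theorem: $fy \ge y$ for every $y$), while preserving enough structure so that this subposet remains inductively ordered. Once we have such a subposet, Theorem~\ref{wBW} delivers a fixed point directly.

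Let $x_0 \in T$ be any element with $f x_0 \ge x_0$, and define
\[
S \;:=\; \{\,y \in T \mid y \ge x_0 \text{ and } fy \ge y\,\}.
\]
Then $x_0 \in S$, so $S$ is nonempty. I would then establish three things in sequence. First, $f(S) \subseteq S$: if $y \in S$, then $fy \ge y \ge x_0$, and applying order preservation to $fy \ge y$ gives $f(fy) \ge fy$, so $fy \in S$. Second, equipped with the order induced from $T$, the poset $(S,<)$ is inductively ordered. Given a chain $C \subseteq S$, chain completeness of $T$ provides a supremum $s \in T$. Clearly $s \ge x_0$. For every $c \in C$ we have $c \le s$ and hence $fc \le fs$ by order preservation, while $c \le fc$ because $c \in S$; combining gives $c \le fs$ for all $c \in C$, so $fs$ is an upper bound of $C$ in $T$, whence $fs \ge s$. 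Thus $s \in S$ and $s$ is an upper bound of $C$ in $S$.

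Third, $f$ restricts to a function $f|_S : S \to S$ satisfying $f|_S(y) \ge y$ for all $y \in S$, i.e., $f|_S$ is increasing in the sense of Theorem~\ref{wBW}. That theorem then yields a fixed point of $f|_S$, which is also a fixed point of $f$ on $T$.

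The only delicate point is not getting lost in false starts: one is tempted to try $S = [x_0,\infty)$, but on this set $f$ need not be increasing, only $f$-invariant. The honest move is to build the ``$f$ already wants to go up'' condition directly into $S$, and then verify that chain completeness of $T$ propagates to $S$ because the supremum of a chain in $S$ automatically inherits the inequality $fs \ge s$ from its members. After that, everything follows from Theorem~\ref{wBW}.
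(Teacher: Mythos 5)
Your proof is correct and is essentially the same as the paper's: both restrict to the subposet $S$ of points $y$ with $fy\geq y$, show it is closed under $f$ and that the supremum (in $T$) of any chain in $S$ again lies in $S$ by order preservation, and then invoke Theorem~\ref{wBW}. The only difference is your extra condition $y\geq x_0$ in the definition of $S$, which is harmless but unnecessary.
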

\begin{proof}
Take $S:=\{x\in T\mid fx\geq x\}\ne
\emptyset$. Then also $S$ is chain complete. Indeed, if $(x_i)_{i\in I}$ is a chain in $S$, hence
also in $T$, then it has a supremum $z\in T$ by assumption. Since $z\geq x_i$ and $f$ is order
preserving, we have that $fz\geq fx_i\geq x_i$ for all $i\in I$, so $fz$ is also an upper bound
for $(x_i)_{i\in I}$. Therefore,
$fz\geq z$ since $z$ is the supremum of the chain, showing that $z\in S$.

Further, $S$ is closed under $f$, because if $x\in S$, then $fx\geq x$, hence $f^2x\geq fx$ since $f$ is assumed
to be order preserving; this shows that $fx\in S$ Now the existence of a fixed point follows from
Theorem~\ref{wBW}.
\end{proof}

\parb
The second ball space we associate with posets will be particularly useful for the study of lattices.
We define the \bfind{principal segment ball space} $(T,\cBiv)$ of the poset $(T,<)$ by taking $\cBiv$ to contain
all \bfind{principal segments} (which may also be called ``closed
convex subsets''), that is, the closed intervals $[a,b]:= \{c\in T\mid a\leq c\leq b\}$ for $a,b\in
T$ with $a\leq b$, the principal initial and final segments $\{c\in T\mid c\leq a\}$ and $\{c\in
T\mid a\leq c\}$ for $a\in T$, and $T$ itself. Note that all of these sets are of the
form $[a,b]$ if and only if $T$ has a top element $\top$ and a bottom
element $\bot$. Even if $T$ does not have these elements, we will still
use the notation $[\bot,b]$ for $\{c\in T\mid c\leq b\}$ and $[a,\top]$
for $\{c\in T\mid a\leq c\}$. Hence,
\[
\cBiv=\{[a,b]\mid a\in T\cup\{\bot\},\,b\in T\cup\{\top\}\}\>.
\]
If $\bot,\top\in T$ (as is the case for complete lattices), this is a generalization to posets of the closed
interval ball space $\cBcb$ that we defined for linearly ordered sets. We will thus also talk again of ``closed
intervals'' $[a,b]$.

A greatest lower bound of a subset $S$ of $T$ will also be called its \bfind{infimum}.
If $\{a_i\mid i\in I\}$ is a subset of $T$, then $\inf_{i\in I} a_i$ will denote its infimum, if it exists.
\begin{lemma}                                \label{int=ab}
Take subsets $\{a_i\mid i\in I\}$ and $\{b_i\mid i\in I\}$ of T such that $a_i\leq b_j$ for all $i,j\in I$.
If $a=\sup_{i\in I} a_i$ and $b=\inf_{i\in I} b_i$ exist, then $a\leq b$ and
\[
[a,b]\>=\>\bigcap_{i\in I}^{} [a_i,b_i]\>.
\]
\end{lemma}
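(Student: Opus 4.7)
The plan is to verify the inequality $a \leq b$ first, and then establish the set equality by mutual inclusion, all of which reduce directly to the defining properties of supremum and infimum.

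First I would show $a \leq b$. The hypothesis $a_i \leq b_j$ for all $i,j \in I$ says that every $b_j$ is an upper bound for the set $\{a_i \mid i \in I\}$, so by the definition of supremum, $a = \sup_{i \in I} a_i \leq b_j$ for each $j \in I$. Consequently $a$ is a lower bound for the set $\{b_j \mid j \in I\}$, and so by the definition of infimum, $a \leq \inf_{j \in I} b_j = b$. This both gives the inequality and ensures that the interval $[a,b]$ is nonempty-as-notation (it contains at least $a$).

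Next I would establish the two inclusions. For $[a,b] \subseteq \bigcap_{i \in I} [a_i,b_i]$, I take $c$ with $a \leq c \leq b$; since $a_i \leq a$ and $b \leq b_i$ for every $i$ (the first because $a$ is an upper bound for the $a_i$, the second because $b$ is a lower bound for the $b_i$), transitivity yields $a_i \leq c \leq b_i$, so $c \in [a_i,b_i]$ for every $i$. For the reverse inclusion, I take $c \in \bigcap_{i \in I} [a_i,b_i]$, so $a_i \leq c \leq b_i$ for all $i$. Then $c$ is an upper bound for $\{a_i\}$, giving $a \leq c$, and $c$ is a lower bound for $\{b_i\}$, giving $c \leq b$; hence $c \in [a,b]$.

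There is no real obstacle here: the whole statement is a direct unpacking of the universal properties of $\sup$ and $\inf$ in a poset, and the only point worth flagging in the write-up is that the existence of $a$ and $b$ is used at each step (to move between ``every $b_j$ bounds above'' and ``$a$ is below every $b_j$'', and similarly on the other side). No appeal to earlier results in the paper is needed.
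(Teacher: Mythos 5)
Your proof is correct. It differs from the paper's in route: the paper first decomposes each interval as $[a_i,b_i]=[a_i,\top]\cap[\bot,b_i]$ and then invokes Lemma~\ref{prelpo} (the statement that $\bigcap_{i\in I}[a_i,\infty)=[a,\infty)$ holds if and only if $a=\sup_{i\in I}a_i$), once as stated and once with the order reversed, to identify $\bigcap_{i\in I}[a_i,\top]=[a,\top]$ and $\bigcap_{i\in I}[\bot,b_i]=[\bot,b]$; the set equality then falls out, and only afterwards does the paper prove $a\leq b$ (by exactly your argument) to conclude that the intersection is nonempty. You instead prove $a\leq b$ first and then verify the two inclusions element-by-element from the universal properties of $\sup$ and $\inf$. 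The mathematical content is the same --- your mutual-inclusion computation is essentially the (reader-supplied) proof of Lemma~\ref{prelpo} carried out inline, applied twice --- but the paper's version is more modular, reusing an already-stated lemma and its order dual, whereas yours is self-contained and, as you correctly note, needs no appeal to earlier results. Both orderings of the steps work, since the two inclusions in your argument do not actually depend on having established $a\leq b$ beforehand.
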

\begin{proof}
We can write
\[
\bigcap_{i\in I}^{} [a_i,b_i]\>=\>\bigcap_{i\in I}^{} ([a_i,\top]\cap
[\bot,b_i])\>=\> \bigcap_{i\in I}^{} [a_i,\top] \>\cap\> \bigcap_{i\in I}^{} [\bot,b_i]
\]
Applying Lemma~\ref{prelpo}, we obtain that $[a,\top]= \bigcap_{i\in I} [a_i,\top]$, and applying it to
$L$ with the reverse order, we obtain that $[\bot,b]=\bigcap_{i\in I}^{} [\bot,b_i]$. Hence the above
intersection is equal to $[a,b]$, which we will now show to be nonempty.

By the assumption of our lemma, every $b_j$ is an upper bound of the set $\{a_i\mid i\in I\}$. Since $a$ is the
least upper bound of this set, we find that $a\leq b_i$ for all $i\in I$. As $b$ is the greatest lower bound of
the set $\{b_i\mid i\in I\}$, it follows that $a\leq b$.
\end{proof}

\mn
%
%
\subsection{Lattices}                          \label{sectlat}
\mbox{ }\sn
A \bfind{lattice} is a poset in which every two elements have a supremum and an
infimum (greatest lower bound). It then follows that all finite sets in a lattice $(L,<)$ have a supremum and an
infimum. A \bfind{complete lattice} is a poset in which all nonempty sets have a supremum and an
infimum. Lemma~\ref{int=ab} implies the following analogue to Proposition~\ref{propo}:

\begin{proposition}
The ball space $(L,\cBiv)$ associated to a lattice $(L,<)$ is finitely intersection closed.
The ball space $(L,\cBiv)$ associated to a complete lattice $(L,<)$ is intersection closed.
\end{proposition}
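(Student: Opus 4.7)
The plan is to apply Lemma~\ref{int=ab} directly in both cases, once I verify its hypotheses. Recall that a ball in $\cBiv$ has the form $[a,b]$ with $a\in L\cup\{\bot\}$ and $b\in L\cup\{\top\}$, where $\bot$ and $\top$ are purely formal symbols denoting the absence of a lower or upper endpoint in case $L$ lacks extremal elements.

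For the first assertion, I would take a nonempty finite collection $\{[a_i,b_i]\mid i\in I\}\subseteq\cBiv$ whose intersection contains some element $c\in L$. Then $a_i\le c\le b_j$ for all $i,j\in I$, which is precisely the compatibility hypothesis of Lemma~\ref{int=ab}. Since $L$ is a lattice, every nonempty finite subset of $L$ has both a supremum and an infimum in $L$. Setting $I_L:=\{i\in I\mid a_i\ne\bot\}$ and $I_U:=\{i\in I\mid b_i\ne\top\}$, I would define $a:=\bigvee_{i\in I_L}a_i$ (or $a:=\bot$ if $I_L=\emptyset$) and $b:=\bigwedge_{i\in I_U}b_i$ (or $b:=\top$ if $I_U=\emptyset$). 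The indices with $a_i=\bot$ impose no lower-bound constraint on membership in the intersection, and likewise for those with $b_i=\top$, so after restricting to $I_L$ and $I_U$ the lemma applies and yields $\bigcap_{i\in I}[a_i,b_i]=[a,b]\in\cBiv$.

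For the second assertion, a complete lattice possesses a genuine top element $\top\in L$ and bottom element $\bot\in L$, and every nonempty subset of $L$ has a supremum and an infimum. Thus for an arbitrary collection $\{[a_i,b_i]\mid i\in I\}\subseteq\cBiv$ with nonempty intersection, the same reasoning applies without any bookkeeping for formal symbols: I would put $a:=\bigvee_{i\in I}a_i$ and $b:=\bigwedge_{i\in I}b_i$, observe that nonemptiness of the intersection supplies a witness $c$ with $a_i\le c\le b_j$ for all $i,j$, and then invoke Lemma~\ref{int=ab} to conclude $\bigcap_{i\in I}[a_i,b_i]=[a,b]\in\cBiv$.

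The only obstacle worth flagging is the cosmetic one in the first assertion: handling the formal $\bot$ and $\top$ when $L$ has no true extremal elements. This is easily disposed of by noting that these symbols impose no constraint on intersection membership and can therefore be stripped from the index sets when computing the relevant supremum and infimum, reducing the argument to a single application of Lemma~\ref{int=ab}.
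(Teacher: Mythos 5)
Your proof is correct and is essentially the paper's own argument: the paper gives no written proof at all, simply asserting that the proposition follows from Lemma~\ref{int=ab}, which is precisely the lemma you invoke. Your extra bookkeeping with the formal endpoints $\bot$ and $\top$ (where the degenerate one-sided cases strictly rest on Lemma~\ref{prelpo} and its reverse-order form rather than on a literal single application of Lemma~\ref{int=ab}) merely fills in the details the paper leaves implicit.
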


\pars
For a lattice $(L,<)$, we denote by $(L,>)$ the lattice endowed with the reverse order.
We will now characterize complete lattices by properties from our hierarchy.
\begin{theorem}                             \label{poulbS4c}
For a poset $(L,<)$, the following assertions are equivalent.
\sn
a) \ $(L,<)$ is a complete lattice,
\sn
b) \ $(L,<)$ and $(L,>)$ are complete upper semilattices,
\sn
c) \ the principal final segments of $(L,<)$ and of $(L,>)$ form {\bf S}$^{**}$ ball spaces,
\sn
d) \ $(L,\cBiv)$ is an {\bf S}$^*$ ball space and $(L,<)$ admits a top and a bottom element,
\sn
e) \ $(L,\cBiv)$ is an {\bf S}$^*$ ball space and every finite set in $(L,<)$ has an upper and a lower bound.
\end{theorem}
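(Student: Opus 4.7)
The plan is to establish the five assertions equivalent via the cycle $(a) \Rightarrow (d) \Rightarrow (e) \Rightarrow (a)$ together with the short equivalences $(a) \Leftrightarrow (b) \Leftrightarrow (c)$. Most implications are quick once the tools from Section~\ref{secth} are brought to bear; the substantive steps are $(a) \Rightarrow (d)$ and especially $(e) \Rightarrow (a)$.

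The equivalence $(a) \Leftrightarrow (b)$ is essentially unpacking the definitions, since $(L,>)$ being a complete upper semilattice says precisely that every nonempty subset of $L$ has an infimum with respect to $<$. For $(b) \Leftrightarrow (c)$, I apply Proposition~\ref{propo2}(3) twice, once to $(L,<)$ and once to $(L,>)$, noting that principal final segments of $(L,>)$ are the principal initial segments of $(L,<)$. The implication $(d) \Rightarrow (e)$ is immediate, since the top and bottom give upper and lower bounds for any subset.

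For $(a) \Rightarrow (d)$, a complete lattice has top $\sup L$ and bottom $\inf L$. For the {\bf S}$^*$ property, the proposition immediately preceding the theorem gives that $(L,\cBiv)$ is intersection closed when $L$ is a complete lattice, so Theorem~\ref{sccent} reduces the task to verifying {\bf S}$_1$. Given a nest $\{[a_i,b_i]\}_{i\in I}$, the fact that any two of its intervals are comparable by inclusion forces $a_i \leq b_j$ for every pair $i,j$, whence $a := \sup_i a_i$ and $b := \inf_i b_i$ exist and satisfy $a \leq b$; Lemma~\ref{int=ab} then gives $\bigcap_i [a_i,b_i] = [a,b] \neq \emptyset$.

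The principal obstacle is $(e) \Rightarrow (a)$, which I carry out in two stages. First, a top and bottom of $L$ are extracted by applying {\bf S}$^*$ to the centered systems $\{[c,\top] \mid c \in L\}$ and $\{[\bot,c] \mid c \in L\}$: each finite sub-intersection is a set of upper (respectively lower) bounds of a finite subset of $L$, nonempty by (e), so by {\bf S}$^*$ the full intersection is a nonempty ball, any element of which is a top (respectively bottom) of $L$. Second, for any nonempty $S = \{a_i\}_{i \in I} \subseteq L$, the family $\{[a_i,\top] \mid i \in I\}$ is centered (every finite sub-intersection contains the top just produced), so {\bf S}$^*$ gives that its intersection $B$, which is the set of upper bounds of $S$, is a ball in $\cBiv$. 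The delicate remaining step is a case analysis over the four interval forms in $\cBiv$: since $B$ contains the top of $L$, it must be a principal final segment $[\alpha,\top]$, and then $\alpha$ (interpreted as the bottom of $L$ in the degenerate case $\alpha = \bot$, where $B = L$ forces every $a_i$ to equal the bottom) is $\sup S$. A dual argument yields $\inf S$, so $(L,<)$ is a complete lattice.
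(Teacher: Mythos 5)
Your proposal is correct and takes essentially the same route as the paper: the same decomposition a)\,$\Leftrightarrow$\,b)\,$\Leftrightarrow$\,c) via part 3) of Proposition~\ref{propo2}, plus the cycle a)\,$\Rightarrow$\,d)\,$\Rightarrow$\,e)\,$\Rightarrow$\,a), with Lemma~\ref{int=ab} driving a)\,$\Rightarrow$\,d) and Lemma~\ref{prelpo} applied to the system $\{[a,\top]\mid a\in S\}$ driving e)\,$\Rightarrow$\,a). The only cosmetic differences are that the paper checks {\bf S}$^*$ directly on centered systems in a)\,$\Rightarrow$\,d) instead of routing through intersection-closedness and Theorem~\ref{sccent}, and in e)\,$\Rightarrow$\,a) it obtains centeredness straight from the hypothesis on finite sets and concludes that the intersection ball is a final segment from its upward-closedness, whereas you first manufacture the top and bottom elements and then run a case analysis over the possible ball forms -- both justifications are sound.
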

\begin{proof}
The equivalence of a) and b) follows directly from the definitions. The equivalence of b) and c) follows from part
3) of Proposition~\ref{propo2}.
\sn
a) $\Rightarrow$ d): \
Assume that $(L,<)$ is a complete lattice. Then it admits a top element (supremum of all its elements) and a
bottom element (infimum of all its elements).
Take a centered system $\{[a_i,b_i]\mid i\in I\}$ in $(L,\cBiv)$. Then for all $i,j\in I$, $[a_i,b_i]
\cap [a_j,b_j]\ne\emptyset$, so $a_i\leq b_j\,$.  Since $(L,<)$ is a complete lattice, $a:=\sup_{i\in I} a_i$
and $b:=\inf_{i\in I} b_i$ exist. From
Lemma~\ref{int=ab} it follows that $\bigcap_{i\in I}^{} [a_i,b_i]=[a,b]\ne\emptyset$, which consequently is a ball
in $\cBiv$. We have proved that $(L,\cBiv)$ is an {\bf S}$^*$ ball space.
\sn
d) $\Rightarrow$ e): \ A top element is an upper bound and a bottom element a lower bound for every set of
elements.
\sn
e) $\Rightarrow$ a): \
Take a poset $(L,<)$ that satisfies the assumptions of e), and
any subset $S\subseteq L$. If $S_0$ is a finite subset of $S$, then
it has an upper bound $b$ by assumption. Hence the balls $[a,\top]$,
$a\in S_0\,$, have a nonempty intersection, as it contains $b$. This
shows that $\{[a,\top]\mid a\in S\}$ is a centered system of balls.
Since $(L,\cBiv)$ is an {\bf S}$^*$ ball space, its intersection is a ball $[c,d]$, where we must have $d=\top$.
By Lemma~\ref{prelpo}, $c$ is the supremum of $S$.

Working with the reverse order, one similarly shows that $S$ has an
infimum since $(L,\cBiv)$ is an {\bf S}$^*$ ball space. Hence, $(L,<)$ is a complete lattice.
\end{proof}

\parm
For our next theorem, we will need one further lemma:

\begin{lemma}                               \label{lcomlat}
For a lattice $(L,<)$, the following are equivalent:
\sn
a) \ $(L,<)$ is a complete lattice,
\sn
b) \ $(L,<)$ and $(L,>)$ are directed complete posets,
\sn
c) \ $(L,<)$ and $(L,>)$ are chain complete posets.
\end{lemma}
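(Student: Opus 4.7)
The plan is to prove the cycle of implications (a) $\Rightarrow$ (b) $\Rightarrow$ (c) $\Rightarrow$ (a). The first two are essentially definitional, and only (c) $\Rightarrow$ (a) requires an argument; even there, most of the work has been done in Proposition~\ref{cc=dc}.

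For (a) $\Rightarrow$ (b): if $(L,<)$ is a complete lattice, then by definition every nonempty subset of $L$ admits both a supremum and an infimum in $(L,<)$. Since a directed system in $(L,<)$ is in particular a nonempty subset, it has a least upper bound, so $(L,<)$ is directed complete. A directed system in $(L,>)$ is a nonempty subset whose infimum in $(L,<)$ provides its supremum in $(L,>)$, so $(L,>)$ is directed complete as well. For (b) $\Rightarrow$ (c): every nonempty chain is in particular a directed system (for any two elements, one of them is their upper bound), so directed completeness of $(L,<)$ and $(L,>)$ immediately gives chain completeness of both.

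For (c) $\Rightarrow$ (a) I would first apply Proposition~\ref{cc=dc} to both $(L,<)$ and $(L,>)$ to upgrade chain completeness to directed completeness in both orderings. Given an arbitrary nonempty $S\subseteq L$, the task is to produce $\sup S$ in $(L,<)$. Since $(L,<)$ is a lattice, every nonempty finite $F\subseteq S$ admits a supremum $\sup F\in L$. Define
\[
F(S)\>:=\>\{\sup F\mid F\subseteq S,\>F\text{ finite and nonempty}\}.
\]
Then $F(S)$ is directed in $(L,<)$: given $\sup F_1,\sup F_2\in F(S)$, the element $\sup(F_1\cup F_2)\in F(S)$ dominates both. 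By directed completeness of $(L,<)$, $F(S)$ has a least upper bound $s\in L$. I would then verify that $s=\sup S$: every $a\in S$ lies in $F(S)$ via the singleton $\{a\}$, hence $a\leq s$; and if $u$ is any upper bound of $S$ in $L$, then $u\geq\sup F$ for each finite $F\subseteq S$, so $u$ is an upper bound of $F(S)$ and hence $u\geq s$. The existence of $\inf S$ follows by the dual argument carried out in $(L,>)$, which is also a lattice and directed complete by the previous step.

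The one step that is not completely routine is the passage from chain completeness to directed completeness, but this is precisely the content of Proposition~\ref{cc=dc}, which has already been cited. Once that tool is in hand, the only subtlety is to notice that a lattice supplies all finite joins and meets, which is exactly what makes the family $F(S)$ directed and allows the standard ``finite joins first, then pass to the supremum'' argument to go through.
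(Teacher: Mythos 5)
Your proposal is correct and takes essentially the same route as the paper: both arguments rest on Proposition~\ref{cc=dc} to pass from chain completeness to directed completeness, and on the same key device of closing a subset under finite lattice operations to obtain a directed system whose least upper bound is then verified to be the supremum of the original set. The differences are purely organizational --- the paper proves b) $\Rightarrow$ a) directly (using a single set closed under both finite suprema and infima, directed in both orders) and obtains b) $\Leftrightarrow$ c) from Proposition~\ref{cc=dc}, whereas you run the cycle a) $\Rightarrow$ b) $\Rightarrow$ c) $\Rightarrow$ a), invoke the proposition inside c) $\Rightarrow$ a), and treat suprema and infima with two separate directed families.
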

\begin{proof}
The implication a) $\Rightarrow$ b) is trivial as every nonempty set in a complete
lattice has a supremum and an infimum.
\sn
b) $\Rightarrow$ a): \ Take a nonempty subset $S$ of $L$. Let $S'$ be
the closure of $S$ under suprema and infima of arbitrary finite subsets
of $S$. Then $S'$ is a directed system in both $(L,<)$ and $(L,>)$.
Hence by b), $S'$ has an infimum $a$ and a supremum $b$. These are lower
and upper bounds, respectively, for $S$. Suppose there was an upper
bound $c<b$ for $S$. Then there would be a supremum $d$ of some finite
subset of $S$ such that $d>c$. But as $c$ is also an upper bound of this
finite subset, we must have that $d\leq c$. This contradiction shows
that $b$ is also the supremum of $S$. Similarly, one shows that $a$ is
also the infimum of $S$. This proves that $(L,<)$ is a complete lattice.
\sn
b) $\Leftrightarrow$ c) follows from Proposition~\ref{cc=dc}.
\end{proof}

\parm
Now we can prove:
\begin{theorem}                             \label{charcola}
For a lattice $(L,<)$, the following are equivalent:
\sn
a) \ $(L,<)$ is a complete lattice,
\sn
b) \ $(L,\cBiv)$ is an {\bf S}$_5$ ball space,
\sn
c) \ $(L,\cBiv)$ is an {\bf S}$^*$ ball space.
\end{theorem}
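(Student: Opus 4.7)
The plan is to close the cycle a) $\Rightarrow$ c) $\Rightarrow$ b) $\Rightarrow$ a). The implication a) $\Rightarrow$ c) is immediate from Theorem~\ref{poulbS4c}, since a complete lattice automatically has a top and a bottom element and hence satisfies condition d) of that theorem. The implication c) $\Rightarrow$ b) is part of the general hierarchy~(\ref{hier}), as {\bf S}$^*$ = {\bf S}$_5^c$ implies {\bf S}$_5$.

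The core of the argument is b) $\Rightarrow$ a). By Lemma~\ref{lcomlat} it suffices to show that both $(L,<)$ and $(L,>)$ are chain complete, i.e., that every chain in $L$ has a supremum and an infimum. Fix a chain $\{a_i\mid i\in I\}$ in $(L,<)$ and consider $\{[a_i,\top]\mid i\in I\}$, which is a nest in $\cBiv$ (ordered by reverse inclusion, since $a_i\leq a_j$ yields $[a_j,\top]\subseteq [a_i,\top]$). Its intersection equals the set of common upper bounds of $\{a_i\}$ in $L$. By the {\bf S}$_5$ hypothesis this intersection is some ball $[a,b]\in\cBiv$, and in particular is nonempty. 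Because the set of upper bounds is an up-set in $L$, the right endpoint $b$ must be the formal symbol $\top$ (or the top element of $L$ if one happens to exist in $L$), so the intersection is actually a principal final segment of the form $[a,\top]$. A short case distinction on whether $a\in L$ or $a=\bot$ formally then shows that $a$ — or, in the degenerate case where the intersection equals all of $L$, the necessarily existing minimum of $L$ — is the least upper bound of $\{a_i\}$. The symmetric argument with the nest $\{[\bot,a_i]\mid i\in I\}$ produces infima, so $(L,>)$ is also chain complete, and Lemma~\ref{lcomlat} yields a).

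The main subtlety will be the careful handling of the formal symbols $\bot,\top$ in the classification of the ball $[a,b]$ provided by {\bf S}$_5$: one must rule out the possibility that $b$ is a proper element of $L$ that is strictly below some upper bound of $\{a_i\}$, and correctly interpret the edge case in which the intersection equals all of $L$. As a byproduct of the analysis one also sees that {\bf S}$_5$ on $\cBiv$ forces $L$ to possess both a top and a bottom element, since otherwise a maximal chain (obtained via Zorn's Lemma applied to the poset of chains in $L$) would be unbounded above or below, and the associated nest would have empty intersection, contradicting {\bf S}$_5$.
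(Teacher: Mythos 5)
Your proof is correct and follows essentially the same route as the paper's: a) $\Rightarrow$ c) via Theorem~\ref{poulbS4c}, c) $\Rightarrow$ b) from the hierarchy, and b) $\Rightarrow$ a) by applying Lemma~\ref{lcomlat} to nests of the form $\{[a_i,\top]\mid i\in I\}$ and using Lemma~\ref{prelpo} to identify the left endpoint of the intersection ball as the supremum. Your additional case analysis of the formal symbols $\bot,\top$ (and the Zorn's Lemma byproduct about top and bottom elements) merely makes explicit what the paper dispatches in one terse sentence, so the two arguments coincide in substance.
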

\begin{proof}
a) $\Rightarrow$ c): This follows from Theorem~\ref{poulbS4c}.
\sn
c) $\Rightarrow$ b) holds by the general properties of the hierarchy.
\sn
b) $\Rightarrow$ a):
By Lemma~\ref{lcomlat} it suffices to prove that $(L,<)$ and $(L,>)$ are
chain complete posets. Take a chain $\{a_i\mid i\in I\}$ in $(L,<)$.
Then $\{[a_i,\top]\mid i\in I\}$ is a nest of balls in $(L,\cBiv)$.
Since $(L,\cBiv)$ is an {\bf S}$_5$ ball space, the intersection of this
nest is a ball $[a,b]$ for some $a,b\in L$; it must be of the form $[a,\top]$
since the intersection contains $\top$. From Lemma~\ref{prelpo} we
infer that $a=\sup_{i\in I} a_i\,$. This shows that $(L,<)$ is a chain
complete poset. The proof for $(L,>)$ is similar.
\end{proof}

\parm
An example of a fixed point theorem that holds in complete lattices is the Knaster--Tarski
Theorem, which we have mentioned in the Introduction.

\bn
%
%
\section{Spherical closures in {\bf S}$^*$ ball spaces}
%
%
\subsection{Spherical closures and subspaces}               \label{sectsub}
\mbox{ }\n
The particular strength of {\bf S}$^*$ ball spaces enables us to introduce a closure operation
similar to the topological closure. We will also introduce a notion of sub-ball space and show
that a sub-ball space of an {\bf S}$^*$ ball space will again be an {\bf S}$^*$ ball space.

In order to distinguish between a ball space on a set $X$ and one on a subset $Y$, we will use
the notations $\cB_X$ and $\cB_Y$, respectivly. As before, if $f:X\rightarrow X$ is a
function, then $\cB_X^f$ will denote the collection of all $f$-closed balls in $\cB_X\,$.
The next lemma presents a simple but useful observation. It follows from the fact that the
intersection over any collection of $f$-closed sets is again $f$-closed, see part 2) of
Lemma~\ref{observe}.
\begin{lemma}                               \label{S4cfcl}
If $(X,\cB_X)$ is an {\bf S}$^*$ ball space, then so is $(X,\cB_X^f)$, provided that
$\cB_X^f\ne\emptyset$.
\end{lemma}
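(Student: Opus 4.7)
The plan is essentially a one-step reduction: every centered system in $\cB_X^f$ is already a centered system in the larger family $\cB_X$, and the property of being $f$-closed is preserved under arbitrary intersections. So the bulk of the work has already been done by the hypotheses and by part 2) of Lemma~\ref{observe}.

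In detail, I would take an arbitrary centered system $\cC\subseteq\cB_X^f$. Since $\cB_X^f\subseteq\cB_X$, the family $\cC$ is also a centered system in $\cB_X$. Because $(X,\cB_X)$ satisfies {\bf S}$^*$ = {\bf S}$_5^c$, the intersection $B:=\bigcap\cC$ is a ball of $\cB_X$; in particular $B$ is nonempty. Next, each member of $\cC$ is $f$-closed by definition of $\cB_X^f$, so by part 2) of Lemma~\ref{observe} the set $B$ is itself $f$-closed. Combining these two facts gives $B\in\cB_X^f$, which is exactly what {\bf S}$^*$ requires for the ball space $(X,\cB_X^f)$.

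There is essentially no obstacle here: the only thing worth stating explicitly is the assumption $\cB_X^f\ne\emptyset$, which is needed so that $(X,\cB_X^f)$ qualifies as a ball space at all (i.e.\ the family of distinguished subsets is nonempty). The argument does not require invoking any of the finer equivalences in the hierarchy; it is a direct consequence of the definition of {\bf S}$^*$ together with the fact that the class of $f$-closed sets is closed under arbitrary intersections.
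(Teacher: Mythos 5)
Your proof is correct and follows exactly the route the paper intends: the paper dispatches this lemma in one sentence by citing part 2) of Lemma~\ref{observe}, and your argument simply spells out the details of that same reduction (a centered system in $\cB_X^f$ is one in $\cB_X$, its intersection is a ball by {\bf S}$^*$, and it is $f$-closed by Lemma~\ref{observe}).
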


For every nonempty subset $S$ of some ball in $\cB_X\,$, we define
\[
\scl_{\cB_X}(S)\>:=\> \bigcap\{B\in\cB_X\mid S\subseteq B\}
\]
and call it the \bfind{spherical closure} of $S$ in $\cB_X\,$.
\begin{lemma}                     \label{sclsmb}
Take an {\bf S}$^*$ ball space $(X,\cB_X)$.
\sn
1) \ For every nonempty subset $S$ of some ball in $\cB_X\,$, $\scl_{\cB_X}(S)$ is the smallest
ball in $\cB_X$ containing $S$.
\sn
2) \ If $f:X\rightarrow X$ is a function, then for every nonempty subset $S$ of some
$f$-closed ball in $\cB_X\,$, $\scl_{\cB_X^f}(S)$ is the smallest $f$-closed ball containing $S$.
\end{lemma}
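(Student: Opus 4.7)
The plan is to apply Proposition~\ref{S*ic} (every {\bf S}$^*$ ball space is intersection closed) to conclude both parts of the lemma, with part 2) reduced to part 1) via Lemma~\ref{S4cfcl}.

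For part 1), I would first observe that the collection $\cC := \{B\in\cB_X\mid S\subseteq B\}$ is nonempty, since by hypothesis $S$ is contained in some ball of $\cB_X$. Every ball in $\cC$ contains the nonempty set $S$, so $S\subseteq\bigcap\cC$, showing in particular that $\bigcap\cC$ is nonempty. Thus $\cC$ is a centered system of balls (any finite subcollection contains $S$ in its intersection), and since $(X,\cB_X)$ satisfies {\bf S}$^*={}${\bf S}$_5^c$, the intersection $\scl_{\cB_X}(S) = \bigcap\cC$ is itself a ball in $\cB_X$. By construction it contains $S$, and it is contained in every ball containing $S$ (since each such ball appears in $\cC$), so it is the smallest ball in $\cB_X$ containing $S$.

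For part 2), I would use Lemma~\ref{S4cfcl} to obtain that $(X,\cB_X^f)$ is itself an {\bf S}$^*$ ball space (this requires $\cB_X^f\ne\emptyset$, which is granted because by hypothesis $S$ is contained in some $f$-closed ball). Then part 1) applied to the ball space $(X,\cB_X^f)$ immediately gives that $\scl_{\cB_X^f}(S)$ is the smallest element of $\cB_X^f$ containing $S$, that is, the smallest $f$-closed ball containing $S$.

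There is no real obstacle here; the proof is essentially bookkeeping once one sees that the family of balls containing $S$ automatically forms a centered system because their common intersection already contains the nonempty set $S$. The only point worth being careful about is ensuring nonemptiness hypotheses are verified before invoking Lemma~\ref{S4cfcl} in part 2), which is handled by the standing assumption that $S$ is contained in some $f$-closed ball.
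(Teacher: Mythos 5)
Your proposal is correct and follows essentially the same route as the paper: the collection of all balls (respectively, $f$-closed balls) containing $S$ is a centered system because every finite subcollection has intersection containing $S\ne\emptyset$, so the {\bf S}$^*$ property makes its full intersection a ball, and part 2) reduces to part 1) via Lemma~\ref{S4cfcl}. The only cosmetic difference is that your opening paragraph invokes Proposition~\ref{S*ic}, while the actual argument you give (and the paper's) uses the {\bf S}$_5^c$ property directly.
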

\begin{proof}
1) \ The collection of all balls containing $S$ is nonempty by our condition that $S$ is a subset
of a ball in $\cB_X\,$. The intersection of this collection contains $S\ne\emptyset$, so it is a
centered system, and since $(X,\cB_X)$ is {\bf S}$^*$, its intersection is a ball. As all balls
containing $S$ appear in the system, the intersection must be the smallest ball containing $S$.
\sn
2) \ This follows from part 1) together with Lemma~\ref{S4cfcl}.
\end{proof}

\sn
Note that if $X\in\cB_X\,$, then we can drop the condition that $S$ is the subset of some ball (or some $f$-closed
ball, respectively) in $\cB_X\,$.

\pars
\begin{remark}                           \label{topB_xrem}
The ball $B_x$ defined in (\ref{topB_x}) in the proof of Theorem~\ref{top3} is equal to $\scl_{\cB^f}(\{x\})$,
where $\cB_X^f$ is the set of all closed $f$-closed sets of the topological space under consideration.
\end{remark}

\parm
The proof of the following observation is straightforward:

\begin{lemma}                               \label{hullinc}
Take an {\bf S}$^*$ ball space $(X,\cB_X)$. If $S\subseteq T$ are nonempty subsets of a ball in
$\cB_X\,$, then $\scl_{\cB_X}(S)\subseteq\scl_{\cB_X}(T)$.
\end{lemma}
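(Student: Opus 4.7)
The plan is to exploit directly the definition of the spherical closure together with Lemma~\ref{sclsmb}. The essential observation is that enlarging $S$ to $T$ only shrinks the family of balls one is intersecting over, so the intersection can only grow.

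More precisely, I would first note that since $T$ is a subset of some ball in $\cB_X$, the spherical closure $\scl_{\cB_X}(T)$ is well defined, and by part~1) of Lemma~\ref{sclsmb} it is the smallest ball in $\cB_X$ containing $T$. Since $S \subseteq T \subseteq \scl_{\cB_X}(T)$, the ball $\scl_{\cB_X}(T)$ lies in the family $\{B\in\cB_X\mid S\subseteq B\}$ whose intersection defines $\scl_{\cB_X}(S)$. Therefore
\[
\scl_{\cB_X}(S)\>=\>\bigcap\{B\in\cB_X\mid S\subseteq B\}\>\subseteq\>\scl_{\cB_X}(T)\>.
\]

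Alternatively, and equivalently, I would point out that the inclusion $S\subseteq T$ gives $\{B\in\cB_X\mid T\subseteq B\}\subseteq \{B\in\cB_X\mid S\subseteq B\}$, and taking intersections reverses inclusion. There is no genuine obstacle here; the only point requiring minimal care is checking that both spherical closures are defined, which is guaranteed by the hypothesis that $S$ and $T$ are both contained in some ball (so the families of balls being intersected are nonempty, and the {\bf S}$^*$ property ensures the intersections are again balls, though the latter is not even needed for this inclusion).
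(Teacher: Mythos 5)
Your proof is correct: the paper omits the argument entirely, labelling it straightforward, and your observation that $S\subseteq T$ shrinks the family $\{B\in\cB_X\mid T\subseteq B\}\subseteq\{B\in\cB_X\mid S\subseteq B\}$ so that the intersections satisfy $\scl_{\cB_X}(S)\subseteq\scl_{\cB_X}(T)$ is exactly the intended straightforward proof. You are also right that the {\bf S}$^*$ hypothesis is not needed for the inclusion itself, only to guarantee (via Lemma~\ref{sclsmb}) that the closures are balls.
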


\parm
For any subset $Y$ of $X$, we define:
\begin{equation}
\cB_X\cap Y\>:=\>\{B\cap Y\mid B\in \cB_X\}\setminus\{\emptyset\}\>.
\end{equation}
If there is at least one ball $B\in\cB_X$ such that $Y\cap B\ne\emptyset$, then $\cB_X\cap Y
\ne\emptyset$ and $(Y,\cB_X\cap Y)$ is a ball space.

\begin{lemma}                               \label{SC5sub}
Take an {\bf S}$^*$ ball space $(X,\cB_X)$ and a subset $Y\subseteq X$ such that $\cB_X\cap Y
\ne\emptyset$.
\sn
1) \ For each $B\in \cB_X\cap Y$,
\[
\scl_{\cX}(B)\cap Y\>=\>B\>.
\]
\sn
2) \ The function
\begin{equation}
\cB_X\cap Y\ni B\mapsto \scl_{\cX}(B)
\end{equation}
preserves inclusion in the strong sense that
\[
B_1\subseteq B_2\;\Longleftrightarrow\; \scl_{\cX}(B_1)
\subseteq\scl_{\cX}(B_2)\>.
\]
3) \ If $(B_i)_{i\in\N}$ is a centered system of balls in $(Y,\cB_X\cap
Y)$, then ($\scl_{\cX}(B_i))_{i\in I}$ is a centered system of balls in
$(X,\cB_X)$ with
\begin{equation}
\bigcap_{i\in I} B_i\>=\> \left(\bigcap_{i\in I}
\scl_{\cX}(B_i)\right)\,\cap\,Y\>.
\end{equation}
\end{lemma}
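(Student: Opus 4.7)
The plan is to first establish part~1, which turns out to be the engine that drives parts~2 and~3. Since $(X,\cB_X)$ is {\bf S}$^*$, Lemma~\ref{sclsmb} guarantees that $\scl_{\cX}(B)$ is the smallest ball in $\cB_X$ containing $B$; this minimality property is the single tool I would lean on throughout. To prove part~1, I would pick any $B' \in \cB_X$ witnessing $B \in \cB_X \cap Y$, that is, $B = B' \cap Y$ (so in particular $B \subseteq B'$). Minimality of the spherical closure gives $\scl_{\cX}(B) \subseteq B'$, hence $\scl_{\cX}(B) \cap Y \subseteq B' \cap Y = B$. The reverse inclusion $B \subseteq \scl_{\cX}(B) \cap Y$ is automatic from $B \subseteq Y$ and $B \subseteq \scl_{\cX}(B)$.

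With part~1 in hand, part~2 becomes purely formal. The implication ``$\Rightarrow$'' is just Lemma~\ref{hullinc}. For ``$\Leftarrow$'', I would intersect both sides of $\scl_{\cX}(B_1) \subseteq \scl_{\cX}(B_2)$ with $Y$ and apply part~1 to each side to recover $B_1 \subseteq B_2$.

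For part~3, centeredness of $(\scl_{\cX}(B_i))_{i\in I}$ in $(X,\cB_X)$ follows because for any finite $\{i_1,\dots,i_n\} \subseteq I$ one has the sandwich
\[
\emptyset \;\ne\; B_{i_1}\cap\cdots\cap B_{i_n} \;\subseteq\; \scl_{\cX}(B_{i_1})\cap\cdots\cap\scl_{\cX}(B_{i_n}),
\]
so nonemptiness transfers upward. For the equality of intersections, the inclusion ``$\subseteq$'' is immediate from $B_i \subseteq \scl_{\cX}(B_i)$ together with $B_i \subseteq Y$. The reverse inclusion uses part~1 pointwise: if $y \in Y$ lies in every $\scl_{\cX}(B_i)$, then $y \in \scl_{\cX}(B_i) \cap Y = B_i$ for each $i$, hence $y \in \bigcap_{i\in I} B_i$.

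The main obstacle, if one wants to call it that, is localized in part~1; once this handle on how $\scl_{\cX}(\cdot)$ interacts with restriction to $Y$ is secured via the minimality of spherical closure, parts~2 and~3 unfold formally without any further use of the {\bf S}$^*$ hypothesis beyond what was already absorbed into Lemma~\ref{sclsmb}.
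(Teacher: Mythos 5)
Your proof is correct and follows essentially the same route as the paper's: part 1 via minimality of the spherical closure applied to a witness $B'\in\cB_X$ with $B=B'\cap Y$, and part 3 by exactly the same two observations (nonemptiness transfers upward, and part 1 turns $\bigcap_i B_i$ into $\bigl(\bigcap_i \scl_{\cX}(B_i)\bigr)\cap Y$). The only divergence is in part 2, where the paper reduces the reverse implication to injectivity of $B\mapsto\scl_{\cX}(B)$ (via Lemma~\ref{hullinc}), whereas you intersect with $Y$ and apply part 1 directly; your version is in fact the tighter argument, since monotonicity plus injectivity alone does not formally yield order reflection, while your one-line computation does.
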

\begin{proof}
1): \ It follows from the definition of $\scl_{\cX}(B)$ that $B\subseteq
\scl_{\cX}(B)$, so $B\subseteq\scl_{\cX}(B)\cap Y$. Since $B\in \cB_X\cap Y$,
we can write $B=B'\cap Y$ for some $B'\in\cB_X\,$. Since $\scl_{\cX}(B)$ is
the smallest ball containing $B$, it must be contained in $B'$ and
therefore, $\scl_{\cX}(B)\cap Y\subseteq B'\cap Y=B$.
\sn
2): \ In view of Lemma~\ref{hullinc}, it suffices to show that
$B_1\ne B_2$ implies $\scl_{\cX}(B_1)\ne\scl_{\cX}(B_2)$. This is a
consequence of part 1) of this lemma.
\sn
3): \ Take a centered system of balls $(B_i)_{i\in I}$ in $(Y,\cB_X\cap
Y)$. Then $(\scl_{\cX}(B_i))_{i\in I}$ is a centered system of balls in
$(X,\cB_X)$ since $B_{i_1}\cap\ldots\cap B_{i_n}\ne\emptyset$ implies that $\scl_{\cX}(B_{i_1})
\cap\ldots\cap\scl_{\cX}(B_{i_n})\ne\emptyset$. By part 1), $B_i=\scl_{\cX}(B_i)\cap Y$, whence
\[
\bigcap_{i\in I} B_i\>=\> \bigcap_{i\in I} \left(\scl_{\cX}(B_i)\cap
Y\right) \>=\>\left(\bigcap_{i\in I} \scl_{\cX}(B_i)\right)\,\cap\,Y\>.
\]
\end{proof}

\pars
With the help of this lemma, we obtain:
\begin{proposition}                         \label{subsc}
Take an {\bf S}$^*$ ball space $(X,\cB_X)$ and assume that $B\cap Y\ne\emptyset$ for every
$B\in\cB_X\,$. Then also $(Y,\cB_X\cap Y)$ is an {\bf S}$^*$ ball space.
\end{proposition}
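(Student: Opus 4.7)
The plan is to verify the {\bf S}$^*$ property for $(Y,\cB_X\cap Y)$ directly from the definition, using Lemma~\ref{SC5sub} (especially part 3) and Lemma~\ref{sclsmb} as the main machinery, and invoking the hypothesis $B\cap Y\ne\emptyset$ for every $B\in\cB_X$ at a single decisive point.

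First, I would take an arbitrary centered system of balls $(C_i)_{i\in I}$ in $(Y,\cB_X\cap Y)$. Observing that each $C_i$ is a nonempty subset of $Y$ of the form $B_i\cap Y$ for some $B_i\in\cB_X$, in particular each $C_i$ lies in a ball of $\cB_X$, so $\scl_{\cX}(C_i)$ is defined and is by Lemma~\ref{sclsmb}(1) the smallest ball in $\cB_X$ containing $C_i$. Part 3) of Lemma~\ref{SC5sub} then says that $(\scl_{\cX}(C_i))_{i\in I}$ is itself a centered system in $(X,\cB_X)$, and that
\[
\bigcap_{i\in I}C_i\;=\;\Bigl(\bigcap_{i\in I}\scl_{\cX}(C_i)\Bigr)\cap Y.
\]

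Next I would invoke the hypothesis that $(X,\cB_X)$ is {\bf S}$^*$: the intersection $B:=\bigcap_{i\in I}\scl_{\cX}(C_i)$ is a ball in $\cB_X$. At this point the standing assumption of the proposition enters: since $B\in\cB_X$, we have $B\cap Y\ne\emptyset$. Therefore $B\cap Y\in\cB_X\cap Y$ by the definition of $\cB_X\cap Y$, and the displayed equality above gives $\bigcap_{i\in I}C_i=B\cap Y\in\cB_X\cap Y$. This shows that the intersection of the centered system is a ball in the induced ball space, which is exactly property {\bf S}$^*$ for $(Y,\cB_X\cap Y)$.

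There is essentially no obstacle once Lemma~\ref{SC5sub}(3) is available: the only subtle point is making sure we can actually take spherical closures of the $C_i$ in $\cB_X$, and that the resulting intersection meets $Y$. The first is automatic because each $C_i\subseteq B_i\in\cB_X$; the second is precisely what the hypothesis $B\cap Y\ne\emptyset$ for every $B\in\cB_X$ delivers. So the proof is really a one-line application of Lemma~\ref{SC5sub}(3) packaged with the {\bf S}$^*$ hypothesis on $X$ and the nonempty-trace hypothesis on $Y$.
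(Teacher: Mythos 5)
Your proof is correct and follows essentially the same route as the paper's own proof: both apply part 3) of Lemma~\ref{SC5sub} to transfer the centered system to $(X,\cB_X)$ via spherical closures, use the {\bf S}$^*$ property of $(X,\cB_X)$ to see that the intersection of the closures is a ball, and then use the hypothesis $B\cap Y\ne\emptyset$ to conclude that its trace on $Y$ is a ball in $\cB_X\cap Y$. Your write-up is in fact slightly more explicit than the paper's at the final step, where the nonempty-trace hypothesis is invoked.
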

\begin{proof}
Take a centered system of balls $(B_i)_{i\in\N}$ in $(Y,\cB_X\cap Y)$.
Then by part 3) of Lemma~\ref{SC5sub}, $(\scl_{\cB_X}(B_i))_{i\in\N}$
is a centered system of balls in $(X,\cB_X)$ with $\bigcap_{i\in I} B_i=
\left(\bigcap_{i\in I} \scl_{\cB_X}(B_i)\right)\,\cap\,Y$. Since $(X,\cB_X)$
is assumed to be {\bf S}$^*$, $\bigcap_{i\in I} \scl_{\cB_X}(B_i)$ is a ball
in $\cB_X\,$. Therefore, $\bigcap_{i\in I} B_i= \left(\bigcap_{i\in I}
\scl_{\cB_X}(B_i)\right)\,\cap\,Y\ne\emptyset$ is a ball in $\cB_X\cap Y$.
\end{proof}

%
%
\subsection{Analogues of the Knaster--Tarski Theorem}        \label{sectanKT}
\mbox{ }\n
Proposition~\ref{subsc} can be applied to the special case where a function $f:X\rightarrow X$
is given and  $Y$ is the set $\Fix(f)$ of all fixed points of $f$. Using also
Lemma~\ref{S4cfcl}, we obtain:
\begin{corollary}                             \label{subsccor}
Take an {\bf S}$^*$ ball space $(X,\cB_X)$ and a function $f:X\rightarrow X$.
If each ball in $\cB_X$ contains a fixed point, then
\[
(\Fix(f), \cB_X\cap\Fix(f))
\]
is an {\bf S}$^*$ ball space. If each $f$-closed ball in $\cB_X$ contains a fixed point,
then
\[
(\Fix(f), \cB_X^f\cap\Fix(f))
\]
is an {\bf S}$^*$ ball space.
\end{corollary}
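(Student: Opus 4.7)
The plan is to obtain both statements as direct applications of Proposition~\ref{subsc}, once to the original ball space $(X,\cB_X)$ and once to the auxiliary ball space $(X,\cB_X^f)$ of $f$-closed balls.

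For the first assertion, I would observe that the hypothesis ``each ball in $\cB_X$ contains a fixed point'' is precisely saying $B\cap\Fix(f)\ne\emptyset$ for every $B\in\cB_X$. This is exactly the condition needed to invoke Proposition~\ref{subsc} with $Y:=\Fix(f)$, yielding that $(\Fix(f),\cB_X\cap\Fix(f))$ is an {\bf S}$^*$ ball space.

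For the second assertion, the key preliminary step is to replace $(X,\cB_X)$ by $(X,\cB_X^f)$. Assuming there is at least one $f$-closed ball (otherwise the statement is vacuous as $\cB_X^f\cap\Fix(f)$ would be empty), Lemma~\ref{S4cfcl} tells us that $(X,\cB_X^f)$ is again an {\bf S}$^*$ ball space. Now the hypothesis ``each $f$-closed ball in $\cB_X$ contains a fixed point'' translates to $B\cap\Fix(f)\ne\emptyset$ for every $B\in\cB_X^f$, so a second application of Proposition~\ref{subsc}, this time with the ball space $(X,\cB_X^f)$ and $Y:=\Fix(f)$, gives that $(\Fix(f),\cB_X^f\cap\Fix(f))$ is an {\bf S}$^*$ ball space.

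There is essentially no obstacle here: the corollary is a packaging of the two previous results, and the only thing to verify is that the $f$-closedness hypothesis is correctly matched to the nonempty-intersection hypothesis of Proposition~\ref{subsc}. The sole point that deserves a word of care is the nonemptiness of $\cB_X^f$ in the second part, which is needed so that $(X,\cB_X^f)$ qualifies as a ball space in the sense of the paper's definition before Lemma~\ref{S4cfcl} can be applied.
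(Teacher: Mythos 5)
Your proposal is correct and follows essentially the same route as the paper, which obtains this corollary precisely by applying Proposition~\ref{subsc} with $Y=\Fix(f)$, invoking Lemma~\ref{S4cfcl} to pass to $(X,\cB_X^f)$ for the second assertion. Your remark on the nonemptiness of $\cB_X^f$ matches the proviso built into Lemma~\ref{S4cfcl} and is the right point of care.
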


Using these results, a ball spaces analogue of the Knaster-Tarski Theorem can be proved:
\begin{theorem}                             \label{KTball}
Take an {\bf S}$^*$ ball space $(X,\cB)$ and a function $f:X\rightarrow X$.
\sn
1) \ Assume that every ball in $\cB$ contains a fixed point or a smaller ball. Then every ball in
$\cB$ contains a fixed point, and $(\Fix(f), \cB\cap\Fix(f))$ is an {\bf S}$^*$ ball space.
\sn
2) \ Assume that $\cB$ contains an $f$-closed ball and every $f$-closed ball in
$\cB$ contains a fixed point or a smaller $f$-closed ball. Then every $f$-closed ball in $\cB$
contains a fixed point, and $(\Fix(f), \cB^f\cap\Fix(f))$ is an {\bf S}$^*$ ball space.
\end{theorem}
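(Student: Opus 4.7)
The plan is to reduce Theorem \ref{KTball} to a combination of three earlier results: Theorem \ref{basic1c} (existence of fixed points in every ball of an $\mathbf{S}_2$ ball space under the ``fixed point or smaller ball'' hypothesis), Lemma \ref{S4cfcl} (the $f$-closed balls of an $\mathbf{S}^*$ ball space form an $\mathbf{S}^*$ ball space), and Corollary \ref{subsccor} (the spherical closure of $\Fix(f)$ inherits $\mathbf{S}^*$). The theorem is essentially a bookkeeping combination of these.

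For part 1), I would first observe that every $\mathbf{S}^*$ ball space is in particular $\mathbf{S}_2$: this follows from Theorem \ref{sccent} together with Proposition \ref{S*ic}, or more directly from the inclusions in hierarchy (\ref{hier}). Given the hypothesis that every ball contains a fixed point or a smaller ball, Theorem \ref{basic1c} applies immediately and yields a fixed point in every ball of $\cB$. The first part of Corollary \ref{subsccor} then gives that $(\Fix(f), \cB\cap\Fix(f))$ is an $\mathbf{S}^*$ ball space.

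For part 2), the assumption that $\cB$ contains an $f$-closed ball guarantees that $\cB^f\neq\emptyset$, so by Lemma \ref{S4cfcl} the ball space $(X,\cB^f)$ is $\mathbf{S}^*$ and hence $\mathbf{S}_2$. The hypothesis that every $f$-closed ball contains a fixed point or a smaller $f$-closed ball is exactly the hypothesis of Theorem \ref{basic1c} applied to the ball space $(X,\cB^f)$, which consequently yields a fixed point in every ball of $\cB^f$. The second part of Corollary \ref{subsccor} then produces the desired $\mathbf{S}^*$ ball space $(\Fix(f),\cB^f\cap\Fix(f))$.

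There is no real obstacle here; the only point requiring mild care is to verify that the two pieces of Corollary \ref{subsccor} apply in the respective settings, and that ``smaller ball'' in the hypothesis of part 2) genuinely translates into ``smaller ball in the restricted ball space $\cB^f$'' so that Theorem \ref{basic1c} is applicable verbatim. Once this is noted, the proof is essentially a two-line assembly in each part.
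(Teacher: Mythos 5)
Your proposal is correct and follows essentially the same route as the paper: part 1) via Theorem~\ref{basic1c} (using that {\bf S}$^*$ implies {\bf S}$_2$) plus Corollary~\ref{subsccor}, and part 2) via Lemma~\ref{S4cfcl} to pass to $(X,\cB^f)$. The only cosmetic difference is that the paper phrases part 2) as an application of its own part 1) to $\cB^f$ in place of $\cB$, whereas you inline that step by reapplying Theorem~\ref{basic1c} and Corollary~\ref{subsccor} directly.
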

\begin{proof}
1): \ It follows from our assumptions together with Theorem~\ref{basic1c}
that every $B\in\cB$ contains a fixed point. Therefore, $B\cap \Fix(f)\ne\emptyset$.
From Corollary~\ref{subsccor} it follows that $(\Fix(f),\cB^f\cap\Fix(f))$ is an {\bf S}$^*$
ball space.
\sn
2): \ By Lemma~\ref{S4cfcl}, $(X,\cB^f)$ is an {\bf S}$^*$ ball space. Hence it follows from
our assumptions together with part 1) of our theorem, applied to $\cB^f$ in place of $\cB$, that
every $f$-closed ball $B$ in $\cB$ contains a fixed point and that $(\Fix(f),\cB^f\cap\Fix(f))$
is an {\bf S}$^*$ ball space.
\end{proof}

\parm
Let us apply this theorem to the case of topological spaces.
Take a compact topological space $X$ and $(X,\cB)$ the associated
ball space formed by the collection $\cB$ of all nonempty closed sets. If
$f: X\rightarrow X$ is any function, then $\cB^f$ can be taken as the set
of all nonempty closed sets of a (possibly coarser) topology, as arbitrary unions and
intersections of $f$-closed sets are again $f$-closed. By Theorem~\ref{cts} and
Lemma~\ref{S4cfcl}, both $(X,\cB)$ and $(X,\cB^f)$ are {\bf S}$^*$ ball spaces (note that
$\cB^f$ is nonempty since it contains $X$). From part 2) of Theorem~\ref{KTball} we now
obtain the following result:

\begin{theorem}                             \label{topnsf}
Take a compact topological space $X$ and a function $f:X\rightarrow X$.
Assume that every nonempty closed, $f$-closed set contains a fixed point or a
smaller closed, $f$-closed set. Then the topology on the set $\Fix(f)$ of
fixed points of $f$ having $\cB^f\cap\Fix(f)$ as its collection of nonempty closed sets
is itself compact.
\end{theorem}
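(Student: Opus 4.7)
The plan is to package the theorem as a direct corollary of part 2) of Theorem~\ref{KTball} combined with Theorem~\ref{cts}. The paragraph preceding the statement already establishes the two structural inputs I need: by Theorem~\ref{cts}, the compactness of $X$ makes $(X,\cB)$ an {\bf S}$^*$ ball space, and then Lemma~\ref{S4cfcl} transfers this to $(X,\cB^f)$, which is nonempty because $X$ is trivially closed and $f$-closed and hence belongs to $\cB^f$.

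With these inputs in hand, I would apply part 2) of Theorem~\ref{KTball} with $\cB^f$ playing the role of $\cB$. Its hypothesis, that every $f$-closed ball contains a fixed point or a smaller $f$-closed ball, is precisely the hypothesis assumed here, and its conclusion yields simultaneously that every $f$-closed ball contains a fixed point (so in particular $\Fix(f)\ne\emptyset$) and that $(\Fix(f),\cB^f\cap\Fix(f))$ is itself an {\bf S}$^*$ ball space.

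The remaining step is to pass from this ball space statement back to a topological one. I would first check, as a bookkeeping matter, that $(\cB^f\cap\Fix(f))\cup\{\emptyset\}$ really is the family of closed sets of a topology on $\Fix(f)$: it contains $\Fix(f)=X\cap\Fix(f)$, and closure under arbitrary intersections and finite unions is inherited from the corresponding closure of $\cB^f\cup\{\emptyset\}$ by distributing $(\,\cdot\,)\cap\Fix(f)$ through those operations. Since the nonempty closed sets of this topology are by construction exactly the balls of $(\Fix(f),\cB^f\cap\Fix(f))$, the implication c) $\Rightarrow$ a) of Theorem~\ref{cts} delivers compactness.

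I expect no genuine obstacle: the substance has been pre-digested in Sections~\ref{sectZL} and~\ref{sectsub}, and Theorem~\ref{KTball} is designed precisely to extract the fixed point set as an {\bf S}$^*$ subspace. The only care points are the mild verification that the prescribed closed-set family is indeed a topology and the reminder that $\cB^f$ is nonempty so that Lemma~\ref{S4cfcl} and Theorem~\ref{KTball} may be invoked.
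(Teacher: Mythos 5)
Your proposal is correct and follows essentially the same route as the paper, which likewise combines Theorem~\ref{cts}, Lemma~\ref{S4cfcl} and part 2) of Theorem~\ref{KTball}, using the observation that $\cB^f$ is the collection of nonempty closed sets of a (possibly coarser) topology on $X$, so that $\cB^f\cap\Fix(f)$ is the closed-set family of the induced topology on $\Fix(f)$. Your explicit verification that this family is indeed a topology, and the appeal to the implication c) $\Rightarrow$ a) of Theorem~\ref{cts}, merely spell out steps the paper leaves implicit.
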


As we are rather interested in the topology on $\Fix(f)$ induced by the original topology of $X$,
we ask for a criterion on $f$ which guarantees that
\begin{equation}                      \label{eqindbs}
\cB_X^f\cap\Fix(f)\>=\> \cB_X\cap\Fix(f)\>.
\end{equation}
\begin{proposition}                             \label{BcapBfcap}
Take an {\bf S}$^*$ ball space $(X,\cB)$ and a function $f:X\to X$.
\sn
1) If $\cB_X\cap\Fix(f)\ne\emptyset$ and
$B_0\in \cB_X\cap\Fix(f)$ is such that $\scl_{\cB_X}(B_0)$ is $f$-closed, then
\begin{equation}                      \label{eqscl}
\scl_{\cB_X}(B_0)\>=\>\scl_{\cB_X^f}(B_0)\>.
\end{equation}
If this holds for every $B_0\in \cB_X\cap\Fix(f)$, then equation (\ref{eqindbs}) holds.
\sn
2) Assume that $f^{-1}(B)\in\cB_X$ for every
$B\in\cB_X$ that contains a fixed point. Then equation (\ref{eqindbs}) holds.
\end{proposition}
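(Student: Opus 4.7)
The plan is to note first that the inclusion $\cB_X^f\cap\Fix(f)\subseteq\cB_X\cap\Fix(f)$ in (\ref{eqindbs}) is immediate from $\cB_X^f\subseteq\cB_X$, so only the reverse inclusion requires work. Throughout I would systematically exploit the minimality characterization of spherical closures from Lemma~\ref{sclsmb} together with Lemma~\ref{SC5sub}(1).

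For part 1), I would verify the two inclusions in (\ref{eqscl}) separately. Since $\scl_{\cB_X^f}(B_0)$ lies in $\cB_X^f\subseteq\cB_X$ and contains $B_0$, the minimality of $\scl_{\cB_X}(B_0)$ gives $\scl_{\cB_X}(B_0)\subseteq\scl_{\cB_X^f}(B_0)$. Conversely, the standing hypothesis that $\scl_{\cB_X}(B_0)$ is $f$-closed makes it an element of $\cB_X^f$ containing $B_0$, and the minimality of $\scl_{\cB_X^f}(B_0)$ (applicable because $\cB_X^f\ne\emptyset$ is itself an {\bf S}$^*$ ball space by Lemma~\ref{S4cfcl}) forces the reverse inclusion. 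For the second assertion of part 1), I would apply part 1) of Lemma~\ref{SC5sub} with $Y=\Fix(f)$ to get $B_0=\scl_{\cB_X}(B_0)\cap\Fix(f)$; since under the assumption $\scl_{\cB_X}(B_0)\in\cB_X^f$, this exhibits $B_0$ as an element of $\cB_X^f\cap\Fix(f)$.

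For part 2), I would reduce to part 1) by proving that the new hypothesis on $f^{-1}$ forces $C:=\scl_{\cB_X}(B_0)$ to be $f$-closed for every $B_0\in\cB_X\cap\Fix(f)$. From $B_0\subseteq\Fix(f)$ and $B_0\subseteq C$ I get that $C$ contains a fixed point, so $f^{-1}(C)\in\cB_X$ by assumption. For every $x\in B_0$ one has $fx=x\in B_0\subseteq C$, hence $B_0\subseteq C\cap f^{-1}(C)$. Consequently $\{C,f^{-1}(C)\}$ is a centered system in the {\bf S}$^*$ ball space $(X,\cB_X)$, so its intersection is a ball containing $B_0$. The minimality of $C$ then yields $C\subseteq C\cap f^{-1}(C)\subseteq f^{-1}(C)$, i.e., $f(C)\subseteq C$, so $C$ is $f$-closed and the hypothesis of part 1) is satisfied for every $B_0\in\cB_X\cap\Fix(f)$.

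The main obstacle, while minor, lies in part 2): the assumption gives only that the preimage $f^{-1}(C)$ is a ball, not that $C$ itself is $f$-closed. The trick is to intersect $C$ with $f^{-1}(C)$ and simultaneously use the {\bf S}$^*$ property (to recognize this intersection as a ball) and the minimality of $C=\scl_{\cB_X}(B_0)$ (to force $C$ into the intersection). Once this is done, part 2) follows by invoking the second assertion of part 1).
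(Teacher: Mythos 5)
Your proposal is correct and takes essentially the same route as the paper: part 1) via the minimality characterization of the two spherical closures (Lemma~\ref{sclsmb}, with Lemma~\ref{S4cfcl} making $(X,\cB_X^f)$ an {\bf S}$^*$ ball space) plus Lemma~\ref{SC5sub}(1) for the second assertion, and part 2) by reducing to part 1) after showing $C:=\scl_{\cB_X}(B_0)$ is $f$-closed from the hypothesis on preimages. The only (harmless) difference is that in part 2) you detour through the intersection $C\cap f^{-1}(C)$ and the {\bf S}$^*$ property, whereas the paper applies the minimality of $C$ directly to the ball $f^{-1}(C)$, which already contains $B_0$, giving $C\subseteq f^{-1}(C)$ at once.
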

\begin{proof}
1): Pick $B_0\in \cB_X\cap\Fix(f)$. By part 1) of Lemma~\ref{sclsmb}, $\scl_{\cB_X}(B_0)$ is the
smallest of all balls in $\cB_X$ that contain $B_0\,$. Consequently, if $\scl_{\cB_X}(B_0)$ is
$f$-closed, then it is also the smallest of all balls in $\cB_X^f$ that contain $B_0\,$. Then
by part 2) of Lemma~\ref{sclsmb}, it must be equal to $\scl_{\cB_X^f}(B_0)$.

Since $B_0=\scl_{\cB_X}(B_0)\cap\Fix(f)$ by part 1) of Lemma~\ref{SC5sub},
equality (\ref{eqscl}) implies that $B_0=\scl_{\cB_X^f}(B_0)\cap\Fix(f)\in
\cB_X^f\cap\Fix(f)$. If equality (\ref{eqscl}) holds
for all $B_0\in \cB_X\cap\Fix(f)$, then this implies the inclusion ``$\supseteq$'' in
(\ref{eqindbs}). The converse inclusion follows from the fact that $\cB_X^f\subseteq \cB_X\,$.

\sn
2): Pick $B_0\in \cB_X\cap\Fix(f)$. Since $B:=\scl_{\cB_X}(B_0)\in\cB_X\,$, we have by assumption
that $f^{-1}(B)\in\cB_X\,$. All fixed points contained in $B$ are also contained in $f^{-1}(B)$,
hence $B_0\subseteq f^{-1}(B)$. As $B$ is the smallest ball in $\cB_X$ containing $B_0\,$, it
follows that $B\subseteq f^{-1}(B)$ and thus $f(B)\subseteq f(f^{-1}(B))\subseteq B$, i.e., $B$
is $f$-closed. Hence by part 1) of our proposition, (\ref{eqscl}) holds for
arbitrary balls $B_0\in \cB_X\cap\Fix(f)$, which implies that (\ref{eqindbs}) holds.
\end{proof}

\pars
The condition of part 2) of this proposition inspires the following definition.
\begin{definition}                            \label{ballcont}
A function on a ball space $(X,\cB)$ is \bfind{ball continuous} if $f^{-1}(B)\in\cB_X$ for every
$B\in\cB_X$.
\end{definition}

If the function $f$ is continuous in the topology of $X$, then it is ball continuous on the
associated ball space $(X,\cB)$ and the equation (\ref{eqindbs}) follows from
Proposition~\ref{BcapBfcap}. Hence we obtain:
\begin{theorem}                               \label{KTtop}
Take a compact topological space $X$ and a continuous function $f:X\rightarrow X$.
Assume that every nonempty closed, $f$-closed set contains a fixed point or a
smaller closed, $f$-closed set. Then the induced topology on the set $\Fix(f)$ of
fixed points of $f$ is itself compact.
\end{theorem}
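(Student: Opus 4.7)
The plan is to assemble the conclusion from Theorem~\ref{topnsf} together with Proposition~\ref{BcapBfcap}, bridging the gap between the two ball space structures on $\Fix(f)$. Theorem~\ref{topnsf} already guarantees that the topology on $\Fix(f)$ whose nonempty closed sets are $\cB^f\cap\Fix(f)$ is compact. The only thing that remains is to identify this topology with the subspace topology inherited from $X$, whose nonempty closed sets are $\cB\cap\Fix(f)$. In other words, everything hinges on establishing equation (\ref{eqindbs}).

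First I would observe that the hypotheses of Theorem~\ref{topnsf} are satisfied verbatim: $X$ is compact and every nonempty closed, $f$-closed set is assumed to contain a fixed point or a smaller closed, $f$-closed set. This immediately yields compactness of $(\Fix(f), \cB^f\cap\Fix(f))$ viewed as the closed-set family of some topology on $\Fix(f)$.

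Next I would verify ball continuity of $f$ in the sense of Definition~\ref{ballcont}. Since $f:X\to X$ is continuous in the given topology and the balls in $\cB$ are precisely the nonempty closed subsets of $X$, for any $B\in\cB$ the preimage $f^{-1}(B)$ is closed; hence $f^{-1}(B)\in\cB$ whenever it is nonempty, and in particular whenever $B$ contains a fixed point (since that fixed point then lies in $f^{-1}(B)$). Thus the hypothesis of part~2) of Proposition~\ref{BcapBfcap} holds, and we conclude $\cB^f\cap\Fix(f)=\cB\cap\Fix(f)$.

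Finally, I would note that $\cB\cap\Fix(f)$ is by construction the family of nonempty closed sets of the subspace topology on $\Fix(f)$ inherited from $X$. Combining this identification with the compactness obtained from Theorem~\ref{topnsf} gives the desired conclusion. There is no real obstacle here; the only point to be careful about is that the topology produced by Theorem~\ref{topnsf} is \emph{a priori} the one generated by $\cB^f\cap\Fix(f)$ (which could be coarser than the subspace topology), and the whole purpose of invoking Proposition~\ref{BcapBfcap} and ball continuity is precisely to rule out this discrepancy.
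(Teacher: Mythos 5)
Your proof is correct and is essentially the paper's own argument: apply Theorem~\ref{topnsf}, then use continuity of $f$ to verify the hypothesis of part~2) of Proposition~\ref{BcapBfcap}, thereby identifying $\cB^f\cap\Fix(f)$ with $\cB\cap\Fix(f)$ and hence the compact topology of Theorem~\ref{topnsf} with the subspace topology. Your explicit check that $f^{-1}(B)$ is nonempty whenever $B$ contains a fixed point is a slightly more careful reading of what the paper subsumes under ``ball continuity,'' but the route is the same.
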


\bn
%
%
\section{Set theoretic operations on ball spaces}                  \label{sectuc}
%

%
\subsection{Subsets of ball spaces}
\mbox{ }

\begin{proposition}                      \label{sub}
Take two ball spaces $(X,\cB_1)$ and $(X,\cB_2)$ on the same set $X$ such that $\cB_1\subseteq\cB_2\,$. If
$(X,\cB_2)$ is {\bf S}$_1$ (or {\bf S}$_1^d$ or {\bf S}$_1^c$), then also $(X,\cB_1)$ is {\bf S}$_1$
(or {\bf S}$_1^d$ or {\bf S}$_1^c$, respectively). This does in general not hold for any other property in the
hierarchy.
\end{proposition}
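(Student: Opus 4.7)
The plan for the positive assertion is almost immediate. Since $\cB_1\subseteq\cB_2$, any nest (respectively directed system, centered system) of balls in $(X,\cB_1)$ is automatically a nest (respectively directed system, centered system) in $(X,\cB_2)$: being a nest only refers to the inclusion ordering inherited from $(\cal P(X),\subseteq)$, and being directed or centered is a property of finite intersections in $X$, none of which is affected by enlarging the ambient family. The intersection of such a collection is literally the same set whether computed inside $\cB_1$ or inside $\cB_2$, so nonemptiness transfers from $\cB_2$ to $\cB_1$. Applying this three times yields the implications for $\mathbf{S}_1$, $\mathbf{S}_1^d$ and $\mathbf{S}_1^c$.

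For the negative part, I would exhibit one pair $(X,\cB_1)\subseteq(X,\cB_2)$ that defeats inheritance of \emph{every} other property in the hierarchy simultaneously. Since $\mathbf{S}^*$ sits at the top and $\mathbf{S}_2$ at the bottom of the block of ``non-$\mathbf{S}_1$'' properties, it suffices to produce such a pair for which $(X,\cB_2)$ is $\mathbf{S}^*$ while $(X,\cB_1)$ fails $\mathbf{S}_2$. The natural candidate is to take $X:=\N\cup\{\infty\}$, to set $B_n:=\{n,n+1,n+2,\ldots\}\cup\{\infty\}$ for $n\in\N$, and to define
\[
\cB_2\>:=\>\{B_n\mid n\in\N\}\cup\{\{\infty\}\}\>,\qquad \cB_1\>:=\>\{B_n\mid n\in\N\}\>.
\]
I would verify $(X,\cB_2)\in\mathbf{S}^*$ by a routine case split: a centered system of balls in $\cB_2$ either contains a smallest $B_n$, in which case the intersection is that $B_n$, or consists of arbitrarily small $B_n$'s (possibly with $\{\infty\}$), in which case the intersection is $\{\infty\}$; in either case it is a ball of $\cB_2$.

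On the other side, every subcollection of $\cB_1$ is totally ordered by inclusion and hence a nest, and every such nest contains $\infty$ in its intersection, so $(X,\cB_1)$ is $\mathbf{S}_1^c$; yet the nest $\{B_n\mid n\in\N\}$ itself has intersection $\{\infty\}\notin\cB_1$, which shows that $(X,\cB_1)$ fails $\mathbf{S}_2$. By the diagram (\ref{hier}) this single example refutes the inheritance of $\mathbf{S}_i$, $\mathbf{S}_i^d$ and $\mathbf{S}_i^c$ for every $i\geq 2$, finishing the proof. I expect the only mildly delicate step to be the case analysis showing $(X,\cB_2)\in\mathbf{S}^*$; the rest is bookkeeping inside the hierarchy.
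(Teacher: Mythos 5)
Your proposal is correct and follows essentially the same route as the paper: the positive part is the identical observation that nests, directed systems and centered systems in $\cB_1$ remain such in $\cB_2$, and your counterexample is a concrete instance of the paper's own recipe (an {\bf S}$^*$ ball space $\cB_2$ together with a nest whose intersection $\{\infty\}$ is a ball of $\cB_2$ not belonging to the nest, from which $\cB_1$ is obtained by deleting the balls contained in that intersection), so that $\cB_1$ fails {\bf S}$_2$ and hence every property in the hierarchy that implies it. One small slip in your case analysis for {\bf S}$^*$: a centered system containing both $\{\infty\}$ and a smallest $B_n$ has intersection $\{\infty\}$, not $B_n$ --- but the intersection is still a ball of $\cB_2$, so the conclusion stands.
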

\begin{proof}
The first assertion holds since every nest (or directed system, or centered system) in $\cB_1$ is also a
nest (or directed system, or centered system) in $\cB_2\,$. To prove the second assertion one constructs an
{\bf S}$^*$ ball space $(X,\cB_2)$ and a nest (or directed system, or centered system) $\cN$ such that the
intersection $\bigcap\cN\in\cB_2$ does not lie in $\cN$. Then to obtain $\cB_1$ one removes all balls from $\cB_2$
that lie in $\bigcap\cN$.
\end{proof}

%
%
\subsection{Unions of two ball spaces on the same set} \label{sectsum}
\mbox{ }\sn
The easy proof of the following proposition is left to the reader:
\begin{proposition}                          \label{Bunion}
If $(X,\cB_1)$ and $(X,\cB_2)$ are {\bf S}$_1$ ball spaces on the same set $X$,
then so is $(X,\cB_1\cup\cB_2)$. The same holds with {\bf S}$_2$ or
{\bf S}$_5$ in place of {\bf S}$_1\,$, and for all properties in the hierarchy if $\cB_2$ is finite.
\end{proposition}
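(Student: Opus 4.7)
The plan is to reduce every nest $\cN$ in $\cB_1 \cup \cB_2$ to a sub-nest contained entirely in one of the $\cB_i$ and having the same intersection, then apply the assumed property of $(X,\cB_i)$. Set $\cN_i := \cN \cap \cB_i$ for $i = 1,2$; if one of them is empty the argument is immediate, so assume both are nonempty nests. I would distinguish two cases. If for every $B \in \cN$ there exists some $B' \in \cN_1$ with $B' \subseteq B$, then $\bigcap \cN = \bigcap \cN_1$ and the property of $(X,\cB_1)$ gives the conclusion. Otherwise, fix $B_0 \in \cN$ such that no element of $\cN_1$ is contained in $B_0$. Then $B_0 \notin \cN_1$, forcing $B_0 \in \cN_2$, and the tail $\cN^* := \{B \in \cN : B \subseteq B_0\}$ lies entirely in $\cN_2$, because any $B \in \cN_1$ with $B \subseteq B_0$ would contradict the choice of $B_0$. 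Since $\bigcap\cN^* = \bigcap\cN$, the property of $(X,\cB_2)$ applied to $\cN^*$ delivers the desired conclusion. In each case the output ball (for {\bf S}$_2$) or the intersection realised as a ball (for {\bf S}$_5$) automatically lies in $\cB_i \subseteq \cB_1 \cup \cB_2$, establishing the first three assertions.

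For the final assertion, assume $\cB_2$ is finite. When the system in question is a nest, the argument above suffices, and the finiteness of $\cN_2$ supplies a smallest element to play the role of $B_0$ in Case~2; this trim also transfers the {\bf S}$_3$ and {\bf S}$_4$ properties, because any competing ball in $\cB_1 \cup \cB_2$ contained in $\bigcap\cN$ but not contained in the $\cB_i$-output can only come from the finite set $\cB_2 \cap \{B : B \subseteq \bigcap\cN\}$, so a finite-poset argument produces the required maximal or largest element. For directed systems $\cC$, repeated applications of directedness to the finite set $\cC \cap \cB_2$ yield an in-system ball $B_0 \in \cC$ sitting below all of $\cC \cap \cB_2$; the restricted system $\{B \in \cC : B \subseteq B_0\}$ is directed, has the same intersection as $\cC$, and apart from $B_0$ itself lies in $\cB_1$, to which the property applies. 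The centered case would be handled analogously, using the nonempty finite intersection $\bigcap(\cC \cap \cB_2)$ together with the centeredness of $\cC \cap \cB_1$ to reduce the question to $(X,\cB_1)$.

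The step I anticipate being the main obstacle is the transfer of {\bf S}$_3$ and {\bf S}$_4$ under the finiteness hypothesis: the issue is not the existence of a maximal or largest ball inside $\bigcap\cN$ on the $\cB_i$ side, but the risk that balls from the other collection $\cB_{3-i}$ sit between that ball and $\bigcap\cN$ and destroy its maximality in the combined ball space. Finiteness of $\cB_2$ resolves this because only finitely many such intermediate balls can exist, allowing a finite-poset argument to produce a genuine maximal or largest element; this is also the conceptual reason why the finiteness hypothesis is essential for {\bf S}$_3$ and {\bf S}$_4$ and cannot be dropped in general.
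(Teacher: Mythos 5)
Your first paragraph is correct, and the dichotomy it uses (either $\cN_1:=\cN\cap\cB_1$ is coinitial in $\cN$, or some tail of $\cN$ lies entirely in $\cN_2$ and has the same intersection) is surely the ``easy proof'' the paper had in mind for the {\bf S}$_1$, {\bf S}$_2$ and {\bf S}$_5$ assertions; that part needs no changes.

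The finite-$\cB_2$ clause is where the proposal has genuine gaps. Your key sentence, that ``a finite-poset argument produces the required maximal or largest element,'' is only half right. For \emph{maximal} it works and should be spelled out: if $M$ is maximal among the balls of $\cB_1$ contained in $I:=\bigcap\cN$, then every ball of $\cB_1\cup\cB_2$ contained in $I$ and properly containing $M$ must lie in the finite set $\cC_2:=\{B\in\cB_2\mid B\subseteq I\}$, so a strictly ascending chain starting at $M$ terminates at a ball maximal in the union; this proves the {\bf S}$_3$ transfer. For \emph{largest} it fails: a finite poset need not have a greatest element, and no repair is possible, because the {\bf S}$_4$ assertion is itself false. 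Let $X=\N\cup\{a,b\}$, let $C_n:=\{m\in\N\mid m\geq n\}\cup\{a,b\}$, and set $\cB_1:=\{C_n\mid n\in\N\}\cup\{\{a\}\}$, $\cB_2:=\{\{b\}\}$. The balls of $\cB_1$ form a chain, and the only nest intersection in $(X,\cB_1)$ that is not itself a ball is $\{a,b\}$, in which $\{a\}$ is the largest ball of $\cB_1$; hence $(X,\cB_1)$ is {\bf S}$_4$. The one-ball space $(X,\cB_2)$ is {\bf S}$^*$ and $\cB_2$ is finite. Yet in the union, the nest $\{C_n\mid n\in\N\}$ has intersection $\{a,b\}$, which contains exactly the two incomparable balls $\{a\}$ and $\{b\}$ and therefore no largest ball. (The paper states this proposition without proof; its {\bf S}$_4$ claim, and likewise {\bf S}$_4^d$ and {\bf S}$_4^c$, is simply wrong, which is why your attempt to prove it could not succeed.)

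The centered case is also irreparable, and your sketch for it is a non sequitur: nonemptiness of $\bigcap(\cC\cap\cB_2)$ together with nonemptiness of $\bigcap(\cC\cap\cB_1)$ says nothing about $\bigcap\cC$, since these two sets may be disjoint. Indeed, take $X=[0,1]$, let $\cB_1$ be the set of all nonempty closed subsets of $X$ (an {\bf S}$_1^c$ ball space by Theorem~\ref{cts}), and let $\cB_2:=\{(0,1)\}$. Then $\{[0,1/n]\mid n\in\N\}\cup\{(0,1)\}$ is a centered system in $\cB_1\cup\cB_2$, since each finite subfamily's intersection contains an interval of the form $(0,1/N]$, yet its total intersection is $\{0\}\cap(0,1)=\emptyset$; so even {\bf S}$_1^c$ is destroyed by adding a single ball, and with it the whole centered column. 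Your directed-system reduction, by contrast, is essentially sound modulo one detail: after finding $B_0$, pass to $\{B\in\cC\mid B\subsetneq B_0\}$, which (when nonempty) is still directed, lies in $\cB_1$, and has the same intersection as $\cC$, while if it is empty then $\bigcap\cC=B_0$ is a ball. In summary, your proof is correct for {\bf S}$_1$, {\bf S}$_2$, {\bf S}$_5$, and can be completed for {\bf S}$_3$ and for the directed column except {\bf S}$_4^d$; but for {\bf S}$_4$ and for the centered properties the statement itself is refuted by the examples above, so the flagged ``main obstacle'' is not resolved by finiteness, contrary to what both you and the paper assert.
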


Note that the assertion may become false if $\cB_2$ is infinite and we replace {\bf S}$_1$ by {\bf S}$_3$ or
{\bf S}$_4\,$. Indeed, the intersection of a nest in $\cB_1$ may properly contain maximal balls which do not
remain maximal balls contained in the intersection in $\cB_1\cup\cB_2\,$.

It is also clear that in general infinite unions of {\bf S}$_1$ ball spaces on the same set $X$ will not again be
{\bf S}$_1\,$. For instance, ball spaces with just one ball are always {\bf S}$_1\,$, but by a suitable infinite
union of such spaces one can build nests with empty intersection.

\pars
For any ball space $(X,\cB)$, we define the ball space $(X,\widehat\cB)$ by setting:
\[
\widehat\cB\>:=\> \cB\cup\{X\}\>.
\]
Taking $\cB_1=\cB$ and $\cB_2=\{X\}$ in Proposition~\ref{Bunion},
we obtain:
\begin{corollary}                        \label{corBunion}
A ball space $(X,\cB)$ is {\bf S}$_1$ if and only if $(X,\widehat\cB)$ is {\bf S}$_1\,$. The same holds for all
properties in the hierarchy in place of {\bf S}$_1\,$.
\end{corollary}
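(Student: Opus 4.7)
My plan is to prove the two directions of the equivalence separately, treating all fifteen properties of the hierarchy~(\ref{hier}) in a uniform way.

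For the direction ``$(X,\cB)$ satisfies the property $\Rightarrow$ $(X,\widehat\cB)$ does'', I would apply Proposition~\ref{Bunion} with $\cB_1 := \cB$ and $\cB_2 := \{X\}$. The ball space $(X,\{X\})$ trivially satisfies {\bf S}$^*$ (its only system of balls is $\{X\}$ itself, whose intersection is the ball $X$), and hence satisfies every property in the hierarchy. Since $\cB_2$ is finite, the final clause of Proposition~\ref{Bunion} yields that every hierarchy property transfers from $(X,\cB)$ to $(X,\cB\cup\{X\})=(X,\widehat\cB)$.

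For the converse, I would suppose that $(X,\widehat\cB)$ satisfies one of the properties {\bf S}$_i\,$, {\bf S}$_i^d$ or {\bf S}$_i^c$ and take $\cN\subseteq\cB$ to be a nest, directed system, or centered system, respectively. Then $\cN$ is a system of the same type in $\widehat\cB$ with the same intersection $I:=\bigcap\cN$, so the assumed property gives information about $I$ with respect to balls in $\widehat\cB$, which I would translate back to $\cB$ by a case split. If $I=X$, then every $B\in\cN$ satisfies $X\subseteq B\subseteq X$, forcing $\cN=\{X\}$ and in particular $X\in\cB$; in this situation $X$ is itself a ball in $\cB$ serving as a witness for every condition in the hierarchy (a ball contained in $I$, a maximal, a largest such ball, and equal to $I$). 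If instead $I\subsetneq X$, then $X\not\subseteq I$, so the collection of balls in $\widehat\cB$ contained in $I$ coincides with the collection of balls in $\cB$ contained in $I$; hence the hierarchy condition assumed for $(X,\widehat\cB)$ descends verbatim to $(X,\cB)$.

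The only step requiring some care is this converse direction: the remark following Proposition~\ref{sub} warns that passing from a larger ball space down to a sub-ball space generally fails to preserve properties beyond the {\bf S}$_1$ row, so one cannot simply quote Proposition~\ref{sub}. The argument works here precisely because the single added ball $X$ is the maximum of $(\widehat\cB,\subseteq)$ and therefore only interacts with a system in $\cB$ in the degenerate case $I=X$, which already forces $X\in\cB$.
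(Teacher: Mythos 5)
Your proof is correct. Its forward half is exactly the paper's argument: the corollary appears in the paper immediately after the sentence ``Taking $\cB_1=\cB$ and $\cB_2=\{X\}$ in Proposition~\ref{Bunion}, we obtain'', so the paper, like you, feeds the finite ball space $(X,\{X\})$ (which trivially satisfies every property in the hierarchy) into Proposition~\ref{Bunion}. Where you go beyond the paper is the converse direction, which the paper does not prove at all: for the first row ({\bf S}$_1$, {\bf S}$_1^d$, {\bf S}$_1^c$) it would follow from Proposition~\ref{sub}, but, as you correctly note, Proposition~\ref{sub} explicitly fails for the remaining rows, so passing from $\widehat\cB$ down to $\cB$ genuinely requires an argument. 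Your case split supplies it: if $I=\bigcap\cN=X$ then every ball of the system equals $X$, forcing $X\in\cB$, and $X$ itself witnesses every condition; if $I\subsetneq X$ then the balls of $\widehat\cB$ contained in $I$ coincide with the balls of $\cB$ contained in $I$, so witnesses for {\bf S}$_2$ through {\bf S}$_5$ (a ball, maximal balls, a largest ball, or $I$ itself) descend verbatim from $\widehat\cB$ to $\cB$. This is the right observation --- the adjoined ball $X$ is the maximum of $(\widehat\cB,\subseteq)$ and can only interfere in the degenerate case you isolate --- and it makes your write-up a more complete proof of the stated equivalence than the paper's one-line derivation.
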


\mn
%
%
\subsection{Closure under finite unions of balls}       \label{sectun}
\mbox{ }\sn
Take a ball space $(X,\cB)$.
%
%
By $\fun(\cB)$ we denote the set of all unions of finitely many balls in $\cB$.
%
%
The following lemma is inspired by Alexander's Subbase Theorem:
\begin{lemma}                               \label{BfB}
If ${\cal S}$ is a maximal centered system of balls in $\fun(\cB)$ (that
is, no subset of $\fun(\cB)$ properly containg ${\cal S}$ is a centered
system), then there is a subset ${\cal S}_0$ of ${\cal S}$ which is a
centered system in $\cB$ and has the same intersection as ${\cal S}$.
\end{lemma}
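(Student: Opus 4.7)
The plan is to mimic the classical proof of Alexander's Subbase Theorem. Define $\cS_0 := \cS \cap \cB$ (the elements of $\cS$ that happen to be single balls, not proper finite unions). Clearly $\cS_0$ is a centered system in $\cB$, since any finite sub-collection of $\cS_0$ is also a finite sub-collection of $\cS$, whose finite intersections are nonempty by hypothesis. The real content of the lemma is therefore to show that $\bigcap \cS_0 = \bigcap \cS$; the inclusion $\bigcap \cS \subseteq \bigcap \cS_0$ is trivial, so what must be proved is $\bigcap \cS_0 \subseteq \bigcap \cS$.

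For this it suffices to show that every $U \in \cS$ contains some ball $B \in \cS_0$, because then $\bigcap \cS_0 \subseteq B \subseteq U$. This is the step where the maximality of $\cS$ is used. I would argue by contradiction: take $U = B_1 \cup \dots \cup B_n \in \cS$ with each $B_i \in \cB$ and suppose that none of the $B_i$ lies in $\cS$. Since each $B_i$ itself belongs to $\fun(\cB)$, maximality of $\cS$ as a centered system in $\fun(\cB)$ means that for every $i$ there exist finitely many elements $S_{i,1},\dots,S_{i,k_i} \in \cS$ such that
\[
B_i \,\cap\, S_{i,1}\,\cap\,\dots\,\cap\, S_{i,k_i} \>=\> \emptyset \>.
\]

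The contradiction is then the one-line computation
\[
U \,\cap\, \bigcap_{i,j} S_{i,j} \>=\> \bigcup_{i=1}^n \Bigl(B_i \,\cap\, \bigcap_{i,j} S_{i,j}\Bigr) \>\subseteq\> \bigcup_{i=1}^n \Bigl(B_i \,\cap\, S_{i,1}\,\cap\,\dots\,\cap\, S_{i,k_i}\Bigr) \>=\> \emptyset\>,
\]
which contradicts the fact that $\{U\} \cup \{S_{i,j}\}_{i,j}$ is a finite sub-collection of the centered system $\cS$. Hence at least one $B_i$ is in $\cS$, i.e., in $\cS_0$, and we are done.

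The argument is essentially a routine transcription of Alexander's proof, so I do not anticipate a genuine obstacle; the only subtle point is being careful that each single ball $B_i$ is itself an element of $\fun(\cB)$ (as a one-term union), which legitimates invoking the maximality of $\cS$ to obtain the incompatibility witnesses $S_{i,1},\dots,S_{i,k_i}$. No use is made of any completeness property of the ball space, only of the combinatorial structure of $\fun(\cB)$ and the Zorn-style maximality of $\cS$.
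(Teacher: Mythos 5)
Your proposal is correct and is essentially the same argument as the paper's: both take $\cS_0=\cS\cap\cB$ (the paper leaves this implicit), reduce to showing that any finite union $B_1\cup\ldots\cup B_n\in\cS$ has some $B_i\in\cS$, and prove this by using maximality to extract, for each $B_i\notin\cS$, a finite subset of $\cS$ whose intersection with $B_i$ is empty, then combining these witnesses via distributivity to contradict centeredness of $\cS$. The only cosmetic difference is that the paper phrases the final contradiction as $B_1\cup\ldots\cup B_n\notin\cS$ rather than as a violation of centeredness, which is the same thing.
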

\begin{proof}
It suffices to prove the following: if $B_1,\ldots,B_n\in \cB$ such that
$B_1\cup\ldots\cup B_n\in \cS$, then there is some $i\in\{1,\ldots,n\}$
such that $B_i\in\cS$.

Suppose that $B_1,\ldots,B_n\in \cB\setminus \cS$. By the maximality
of $\cS$ this implies that for each $i\in\{1,\ldots,n\}$, $\cS\cup
\{B_i\}$ is not centered. This in turn means that there is a finite
subset $\cS_i$ of $\cS$ such that $\bigcap\cS_i\cap B_i=\emptyset$. But
then $\cS_1\cup\ldots\cup\cS_n$ is a finite subset of $\cS$ such that
\[
\bigcap(\cS_1\cup\ldots\cup\cS_n)\cap(B_1\cup\ldots\cup B_n)
\>=\>\emptyset\>.
\]
This yields that $B_1\cup\ldots\cup B_n\notin \cS$, which proves our
assertion.
\end{proof}
\pars
The centered systems of balls in a ball space form a poset under inclusion. Since the union of every chain of
centered systems is again a centered system, this poset is chain complete. Hence by Corollary~\ref{ZLcor}
every centered system is contained in a maximal centered system. We use this to prove:
\begin{theorem}                        \label{thfun}
If $(X,\cB)$ is an {\bf S}$_1^c$ ball space, then so is $(X,\fun(\cB))$.
\end{theorem}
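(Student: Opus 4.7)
The plan is to reduce an arbitrary centered system in $\fun(\cB)$ to a centered system in the original $\cB$ via Lemma~\ref{BfB}, exploiting a maximality argument.

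First, I would take any centered system $\cC$ of balls in $\fun(\cB)$ and observe that the collection of centered systems in $\fun(\cB)$, ordered by inclusion, is chain complete: the union of an ascending chain of centered systems is again centered, since any finite subfamily already lives in some member of the chain. Hence Corollary~\ref{ZLcor} applies, and $\cC$ is contained in a maximal centered system $\cS\subseteq\fun(\cB)$.

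Next, I would apply Lemma~\ref{BfB} to $\cS$ to extract a subset $\cS_0\subseteq\cS$ which is a centered system in $\cB$ itself and satisfies $\bigcap\cS_0=\bigcap\cS$. Since $(X,\cB)$ is {\bf S}$_1^c$, the intersection $\bigcap\cS_0$ is nonempty, and therefore so is $\bigcap\cS$. Because $\cC\subseteq\cS$ we have $\bigcap\cS\subseteq\bigcap\cC$, so $\bigcap\cC\neq\emptyset$, proving that $(X,\fun(\cB))$ is {\bf S}$_1^c$.

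There is no real obstacle beyond remembering to invoke the maximality in the right order: Lemma~\ref{BfB} only produces the subfamily $\cS_0$ out of a \emph{maximal} centered system, so one cannot skip the Zorn step and apply the lemma directly to the given $\cC$. Once the maximal extension is in hand, the rest is immediate from the hypothesis that $\cB$ is {\bf S}$_1^c$.
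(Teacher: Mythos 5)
Your proposal is correct and follows essentially the same route as the paper: extend the given centered system to a maximal one via Zorn's Lemma (Corollary~\ref{ZLcor}, using chain completeness of the poset of centered systems), then apply Lemma~\ref{BfB} to pull the intersection back to a centered system in $\cB$ and invoke the {\bf S}$_1^c$ hypothesis. The only cosmetic difference is that the paper states the Zorn step once before the theorem rather than inside the proof.
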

\begin{proof}
Take a centered system $\cS'$ of balls in $\fun(\cB)$. Take a maximal centered
system $\cS$ in $\fun(\cB)$ which contains $\cS'$. By Lemma~\ref{BfB} there is a
centered system $\cS_0$ of balls in $\cB$ such that $\bigcap\cS_0=
\bigcap\cS\subseteq\bigcap\cS'$. Since $(X,\cB)$ is an {\bf S}$_1^c$ ball
space, we have that $\bigcap\cS_0\ne\emptyset$, which yields that
$\bigcap\cS'\ne\emptyset$. This proves that $(X,\fun(\cB))$
is an {\bf S}$_1^c$ ball space.
\end{proof}
\sn
In \cite{BKK} it is shown that the theorem becomes false if ``{\bf S}$_1^c$'' is replaced by ``{\bf S}$_1$''.

\parm
In \cite{BKK}, the notion of ``hybrid ball space'' is introduced. The idea is to start with the
union of two ball spaces as in Section~\ref{sectsum} and then close under finite unions. The
question is whether the resulting ball space is an {\bf S}$_1$ ball space if the original ball
spaces are. On symmetrically complete ordered
fields $K$ we have two {\bf S}$_1$ ball spaces: $(K,\cBcb)$ and $(K,\cB_u)$ where $u$ is the
ultrametric induced by the natural valuation of $(K,<)$ (cf.\ Theorem~\ref{symcompsphcomp}). But
by Proposition~\ref{limitedS}, $(K,\cBcb)$ is not {\bf S}$_1^c$, hence Theorem~\ref{thfun} cannot
be applied. Nevertheless, the following result is proven in \cite{BKK} by a direct proof. The
principles that make it work still remain to be investigated more closely.
\begin{theorem}
Take a symmetrically complete ordered field $K$ and $\cB$ to be the set
of all convex sets in $K$ that are finite unions of closed
intervals and ultrametric balls. Then $(K,\cB)$ is spherically complete.
\end{theorem}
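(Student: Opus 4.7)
The plan is to leverage the spherical completeness of both $(K,\cBcb)$ and $(K,\cB_u)$, guaranteed by Theorem~\ref{symcompsphcomp}, where $u$ is the ultrametric induced by the natural valuation of $K$. First I would dispose of a warm-up: $(K,\cBcb\cup\cB_u)$ is already spherically complete, since any nest in that union consists of balls of only two possible kinds, and by pigeonhole along the linear order of the nest a cofinal sub-nest of a single kind can be extracted; then spherical completeness of $(K,\cBcb)$ or $(K,\cB_u)$ produces a point in the intersection.

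For a general nest $\cN$ in $\cB$, the structural input is that every $B\in\cB$ is order-convex in $K$, because closed intervals and ultrametric balls of the natural valuation are order-convex and $\cB$ consists of convex unions. I would fix, for each $B\in\cN$, a minimal decomposition $B=J_1^B\cup\cdots\cup J_{n(B)}^B$ into closed intervals and ultrametric balls, with pieces arranged in increasing position. Using Proposition~\ref{cust-l}, any two ultrametric-ball pieces in a minimal decomposition must be disjoint (else the larger would absorb the smaller), and two adjacent pieces of the same kind would merge; hence minimal decompositions alternate between the two kinds and have a rigid finite shape.

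Fix $B_0\in\cN$ and set $\cN':=\{B\in\cN:B\subseteq B_0\}$, which is cofinal in $\cN$. Each $B\in\cN'$ interacts with the decomposition of $B_0$ either by lying entirely inside some piece $J_j^{B_0}$ or by straddling a junction between two consecutive pieces of $B_0$. Since $B_0$ has only finitely many pieces and junctions, a pigeonhole argument along the linear order of the nest yields one of two scenarios: either (i) some piece $J_j^{B_0}$ contains cofinally many balls of $\cN'$, in which case a cofinal sub-nest lies inside a single closed interval or single ultrametric ball and spherical completeness of $(K,\cBcb)$ or $(K,\cB_u)$ provides a point in $\bigcap\cN$; or (ii) some fixed junction of $B_0$ is straddled by cofinally many balls, and a further local analysis at the junction extracts a pure-type sub-nest whose intersection is nonempty.

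The main obstacle is scenario (ii): controlling how an ultrametric-ball piece inside a smaller $B\in\cN'$ can bridge a closed-interval piece and an ultrametric-ball piece of $B_0$, and verifying that the straddling balls nevertheless force a point of intersection. This requires a delicate interplay between the order structure of $K$ and the tree structure of the natural valuation at junctions, which is likely what the authors refer to when they remark that ``the principles that make it work still remain to be investigated more closely.''
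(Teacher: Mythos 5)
First, a point of order: the paper does not prove this theorem at all. It is stated in Section~\ref{sectun} as a result ``proven in \cite{BKK} by a direct proof'', and the authors add that ``the principles that make it work still remain to be investigated more closely''. So your proposal cannot be measured against an in-paper argument; it has to stand on its own, and it does not. Your warm-up is correct, but it is exactly Proposition~\ref{Bunion} of the paper. The main argument then breaks in both of its terminal scenarios. In scenario (i) the step is a non sequitur: if cofinally many balls of $\cN'$ lie inside a single piece $J_j^{B_0}$, those balls are still \emph{hybrid} convex sets (finite unions of closed intervals and ultrametric balls), not elements of $\cBcb$ or $\cB_u\,$. Spherical completeness of $(K,\cBcb)$ or $(K,\cB_u)$ applies to nests consisting of \emph{their own} balls and says nothing about a nest of hybrid sets that merely happens to be contained in one interval or one ultrametric ball. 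Containment in a piece does not convert a hybrid ball into a pure one, and no induction on the number of pieces is available either, since that number is neither bounded nor monotone along the nest; you are left with the original problem, relocated inside $J_j^{B_0}$.

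Scenario (ii) you concede outright: ``a further local analysis at the junction extracts a pure-type sub-nest'' is asserted, not carried out, and that is precisely where the substance of the theorem lies. A single piece of a smaller ball can straddle a junction of a larger ball, so no choice of one piece per ball yields a nested thread of pure balls; and even if one shows that all sufficiently small balls of the nest meet a fixed overlap $J_j^{B_0}\cap J_{j+1}^{B_0}$ of two consecutive pieces, their traces on that overlap are intersections of a hybrid set with an interval and an ultrametric ball --- again general convex sets, not balls of either kind --- so the problem reproduces itself one level down. Note finally that no soft argument can close the gap: by Proposition~\ref{limitedS}, $(K,\cBcb)$ fails to be {\bf S}$_1^c$ whenever $K\ne\R$, so Theorem~\ref{thfun} is unavailable, which is exactly why the paper defers to the ``direct proof'' of \cite{BKK}. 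Your outline identifies the difficulty correctly, and even locates it where the authors do, but it does not overcome it.
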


\mn
%
%
\subsection{Closure under nonempty intersections of balls}       \label{sectint}
\mbox{ }\sn
Take a ball space $(X,\cB)$. We define:
\mn
(a) \ $\int(\cB)$ to be the set of all nonempty intersections of arbitrarily many balls in $\cB$,
\n
(b) \ $\fint(\cB)$ to be the set of all nonempty intersections of finitely many balls in $\cB$,
\n
(c) \ $\nint(\cB)$ to be the set of all nonempty intersections of nests in $\cB$.
\sn
Note that $(X,\cB)$ is intersection closed if and only if $\int(\cB)=\cB$, finitely intersection closed if and only
if $\fint(\cB)=\cB$, and chain intersection closed if and only if $\nint(\cB)=\cB$. If $(X,\cB)$ is {\bf S}$_5\,$,
then $\nint(\cB)=\cB$.
%
%
If $(X,\cB)$ is {\bf S}$^*\,$, then $\int(\cB)=\cB$ by Proposition~\ref{S*ic}.
We note:
\begin{proposition}
Take an arbitrary ball space $(X,\cB)$. Then the ball space $(X,\int(\cB))$ is intersection closed, and
$(X,\fint(\cB))$ is finitely intersection closed.
\end{proposition}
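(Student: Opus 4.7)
The plan is to prove both assertions directly from the definitions, using the simple set-theoretic fact that an intersection of intersections is itself an intersection. Since the statement has no content beyond this associativity, there is no real obstacle; the only care needed is to keep track of the nonemptiness condition built into the definitions of $\int(\cB)$ and $\fint(\cB)$.

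For the first assertion, I would take an arbitrary family $\{C_i\}_{i\in I}$ of balls in $\int(\cB)$ whose intersection $C := \bigcap_{i\in I} C_i$ is nonempty, and show that $C\in\int(\cB)$. By the very definition of $\int(\cB)$, for each $i\in I$ we can write $C_i=\bigcap_{j\in J_i} B_{ij}$ for some nonempty index set $J_i$ and balls $B_{ij}\in\cB$. Then
\[
C\>=\>\bigcap_{i\in I}\,\bigcap_{j\in J_i} B_{ij}\>=\>\bigcap_{(i,j)\in K} B_{ij}\>,
\]
where $K=\{(i,j)\mid i\in I,\, j\in J_i\}$. Since $C$ is by assumption nonempty, this exhibits $C$ as a nonempty intersection of balls from $\cB$, so $C\in\int(\cB)$ as required.

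For the second assertion the argument is the same with finiteness carried along. Taking a finite collection $C_1,\ldots,C_n\in\fint(\cB)$ with nonempty intersection, each $C_k$ is by definition a nonempty intersection of finitely many balls from $\cB$, say $C_k=\bigcap_{j=1}^{m_k} B_{kj}$. Hence
\[
\bigcap_{k=1}^n C_k\>=\>\bigcap_{k=1}^n \bigcap_{j=1}^{m_k} B_{kj}
\]
is a nonempty intersection of finitely many balls from $\cB$ (there are $m_1+\ldots+m_n$ of them), and therefore lies in $\fint(\cB)$. This establishes both claims. No step presents any real difficulty; the content of the proposition is essentially that the closure operators $\int$ and $\fint$ are idempotent, which follows at once from the associativity of intersection.
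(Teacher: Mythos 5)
Your proof is correct and is essentially identical to the paper's own argument: both rewrite each ball in $\int(\cB)$ (resp.\ $\fint(\cB)$) as an intersection of balls from $\cB$ and invoke associativity of intersection, with finiteness of the index sets carried along in the second case. Your explicit tracking of the nonemptiness hypothesis is a small point of extra care that the paper leaves implicit.
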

\begin{proof}
Take balls $B_i\in\int(\cB)$, $i\in I$, and for every $i\in I$, balls $B_{i,j}\in\cB$, $j\in
J_i\,$, such that $B_i=\bigcap_{j\in J_i} B_{i,j}\,$. Then
\[
\bigcap_{i\in I} B_i\>=\> \bigcap_{i\in I,\,j\in J_i} B_{i,j}\>\in\,\int(\cB)\>.
\]
If $I$ is finite and $B_i\in\fint(\cB)$ for every $i\in I$, then every $J_i$ can be taken to be
finite and thus the right hand side is a ball in $\fint(\cB)$.
\end{proof}
\sn
In view of these facts, we introduce the following notions.
\begin{definition}
We call $(X, \int(\cB))$ the \bfind{intersection closure} of $(X,\cB)$, and $(X, \fint(\cB))$
the \bfind{finite intersection closure} of $(X,\cB)$. If a chain intersection closed ball space
$(X,\cB')$ is obtained from $(X,\cB)$ by a (possibly transfinite) iteration of the process of
replacing $\cB$ by $\nint(\cB)$, then we call $(X,\cB')$ a \bfind{chain intersection closure}
of $(X,\cB)$.
\end{definition}

Chain intersection closures are studied in \cite{KKub} and conditions are given for
$(X,\nint(\cB))$ to be the
chain intersection closure of $(X,\cB)$. As stated already in part 1) of Theorem~\ref{ultcic},
this holds for classical ultrametric spaces. This result follows from a
more general theorem (cf.\ \cite[Theorem~2.2]{KKub}):
\begin{theorem}
If $(X,\cB)$ is a tree-like ball space, then $(X,\nint(\cB))$ is its chain intersection closure,
and if in addition $(X,\cB)$ is an {\bf S}$_1$ ball space, then so is $(X,\nint(\cB))$.
\end{theorem}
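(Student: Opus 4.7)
The plan is to prove both assertions simultaneously via a single construction: starting from a nest $\cN$ in $\nint(\cB)$, I will manufacture a nest $\cN^*$ in the original ball space $\cB$ whose intersection coincides with $\bigcap\cN$. Concretely, for each $B\in\cN$, the definition of $\nint(\cB)$ supplies a nest $\cN_B\subseteq\cB$ with $B=\bigcap\cN_B$; set
\[
\cN^*\>:=\>\bigcup_{B\in\cN}\cN_B\>.
\]
The equality $\bigcap\cN^*=\bigcap_{B\in\cN}\bigcap\cN_B=\bigcap_{B\in\cN}B=\bigcap\cN$ is then immediate from the choice of the $\cN_B$.

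The crux is to verify that $\cN^*$ is totally ordered by inclusion, and this is where tree-likeness enters. I would take arbitrary $B_1\in\cN_{B^{(1)}}$ and $B_2\in\cN_{B^{(2)}}$ with $B^{(1)},B^{(2)}\in\cN$. Since $\cN$ is a nest, one of $B^{(1)}\subseteq B^{(2)}$ or $B^{(2)}\subseteq B^{(1)}$ holds; assume the former. Then $B^{(1)}=\bigcap\cN_{B^{(1)}}\subseteq B_1$ and $B^{(1)}\subseteq B^{(2)}=\bigcap\cN_{B^{(2)}}\subseteq B_2$, so $\emptyset\neq B^{(1)}\subseteq B_1\cap B_2$. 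Tree-likeness of $(X,\cB)$ now forces $B_1$ and $B_2$ to be comparable, so $\cN^*$ is indeed a nest in $\cB$.

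Both conclusions of the theorem drop out of this. For the {\bf S}$_1$ assertion, spherical completeness of $(X,\cB)$ guarantees $\bigcap\cN^*\neq\emptyset$, hence $\bigcap\cN\neq\emptyset$, proving that $(X,\nint(\cB))$ is again an {\bf S}$_1$ ball space. For the chain intersection closure assertion, whenever $\bigcap\cN\neq\emptyset$ we have $\bigcap\cN=\bigcap\cN^*\in\nint(\cB)$ by construction; this gives $\nint(\nint(\cB))\subseteq\nint(\cB)$, and the reverse inclusion is trivial (a single ball is a nest). Consequently $(X,\nint(\cB))$ is already chain intersection closed, so the possibly transfinite iteration in the definition of chain intersection closure terminates after one step, and $(X,\nint(\cB))$ is the chain intersection closure of $(X,\cB)$.

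The main obstacle is really just spotting the construction $\cN^*$; once one has it, the tree-like hypothesis performs the decisive conversion from ``pairwise nonempty intersections'' (the easy side, inherited essentially for free from the chain structure of $\cN$ together with the fact that balls are nonempty) into ``pairwise comparability'', which is all that is needed. A more direct approach — trying to compare two arbitrary elements of $\nint(\cB)$ via their representing nests, or attempting a transfinite induction on the iteration — would require substantially more bookkeeping and would obscure how cleanly tree-likeness controls both assertions at once.
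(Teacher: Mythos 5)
Your proof is correct. Note that the paper does not prove this theorem itself but quotes it from \cite[Theorem~2.2]{KKub}, so there is no in-paper argument to compare against; your construction --- replacing each ball of a nest $\cN$ in $\nint(\cB)$ by a representing nest in $\cB$, taking the union $\cN^*$, and using tree-likeness (together with the nonemptiness of the balls of $\nint(\cB)$, which supplies the needed nonempty pairwise intersections) to show that $\cN^*$ is again totally ordered --- is the natural one, and it correctly delivers both conclusions at once: $\bigcap\cN=\bigcap\cN^*$ gives $\nint(\nint(\cB))=\nint(\cB)$, so the (possibly transfinite) iteration in the definition of chain intersection closure stabilizes after a single step, and spherical completeness of $(X,\cB)$ passes to $(X,\nint(\cB))$.
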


Since chain intersection closed {\bf S}$_1$ ball spaces are {\bf S}$_5\,$, we obtain:
\begin{corollary}
If $(X,\cB)$ is a tree-like {\bf S}$_1$ ball space, then $(X,\nint(\cB))$ is an {\bf S}$_5$ ball
space.
\end{corollary}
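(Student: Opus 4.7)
The plan is to deduce the corollary directly by chaining two already-established results: the theorem immediately preceding it (which identifies $(X,\nint(\cB))$ as the chain intersection closure of $(X,\cB)$ and preserves property \textbf{S}$_1$), and Proposition~\ref{cicSeq} (which collapses the lower part of the hierarchy for chain intersection closed ball spaces). So essentially no new construction is needed; the work is bookkeeping.

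Concretely, I would proceed as follows. First, invoke the preceding theorem on the tree-like ball space $(X,\cB)$ to obtain that $(X,\nint(\cB))$ is the chain intersection closure of $(X,\cB)$, and in particular that $\nint(\nint(\cB))=\nint(\cB)$, so $(X,\nint(\cB))$ is chain intersection closed in the sense of Section~\ref{sectcs}. Second, use the hypothesis that $(X,\cB)$ is \textbf{S}$_1$ together with the second assertion of the same theorem to conclude that $(X,\nint(\cB))$ is itself \textbf{S}$_1$. Third, apply Proposition~\ref{cicSeq}: for a chain intersection closed ball space the properties \textbf{S}$_1$, \textbf{S}$_2$, \textbf{S}$_3$, \textbf{S}$_4$ and \textbf{S}$_5$ are equivalent, so in particular \textbf{S}$_1$ implies \textbf{S}$_5$. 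This yields that $(X,\nint(\cB))$ is an \textbf{S}$_5$ ball space, which is exactly the claim.

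There is essentially no obstacle in the corollary itself; all the substantive content is packaged inside the previous theorem (showing that $\nint$ applied once to a tree-like ball space already closes it under nest intersections, and preserves spherical completeness) and inside Proposition~\ref{cicSeq} (whose proof relies on the fact that in a chain intersection closed space the intersection of any nest is itself a ball, which at once upgrades \textbf{S}$_1$ to \textbf{S}$_5$). The only thing worth double-checking in writing it up is that the definition of ``chain intersection closure'' in the excerpt indeed forces the output to satisfy $\nint(\cB')=\cB'$, so that Proposition~\ref{cicSeq} applies; this is immediate from the definition given just before the statement of the preceding theorem. Hence the corollary follows in two lines.
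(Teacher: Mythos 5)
Your proposal is correct and matches the paper's own argument exactly: the paper derives the corollary by noting that the preceding theorem makes $(X,\nint(\cB))$ a chain intersection closed {\bf S}$_1$ ball space, and then applies Proposition~\ref{cicSeq} (``chain intersection closed {\bf S}$_1$ ball spaces are {\bf S}$_5$''). Your extra check that a chain intersection closure is by definition chain intersection closed is a sound, if minor, point of care.
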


Intersection closure can also increase the strength of ball spaces:
\begin{theorem}                                    \label{thint}
If $(X,\cB)$ is an {\bf S}$_1^c$ ball space, then its intersection closure $(X,\int(\cB))$ is
an {\bf S}$^*$ ball space.
\end{theorem}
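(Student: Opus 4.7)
The plan is to unwind the definition of {\bf S}$^*$ (which is {\bf S}$_5^c$): every centered system of balls must have an intersection that is itself a ball. Since the preceding proposition already tells us that $(X,\int(\cB))$ is intersection closed, any nonempty intersection of balls in $\int(\cB)$ is automatically again in $\int(\cB)$. So the whole task reduces to showing that every centered system $\cS$ in $\int(\cB)$ has nonempty intersection, and then using intersection closure to conclude that this intersection is a ball.

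The key move is to ``unfold'' the balls of $\cS$ into balls of $\cB$. For each $B\in\cS$, write $B=\bigcap_{j\in J_B} B_{B,j}$ with $B_{B,j}\in\cB$, which is possible by definition of $\int(\cB)$. Now set
\[
\cS_0\>:=\>\{B_{B,j}\mid B\in\cS,\;j\in J_B\}\>\subseteq\>\cB\>,
\]
and observe that by construction $\bigcap\cS=\bigcap\cS_0$. I would then verify that $\cS_0$ is itself a centered system in $\cB$: given finitely many $B_{B^{(1)},j_1},\ldots,B_{B^{(n)},j_n}$ in $\cS_0$, each enlarges the corresponding $B^{(k)}\in\cS$, so
\[
B_{B^{(1)},j_1}\cap\cdots\cap B_{B^{(n)},j_n}\>\supseteq\> B^{(1)}\cap\cdots\cap B^{(n)}\>\ne\>\emptyset
\]
by the assumption that $\cS$ is centered.

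Now I invoke the hypothesis that $(X,\cB)$ is {\bf S}$_1^c$: the centered system $\cS_0$ in $\cB$ has nonempty intersection, hence so does $\cS$. Moreover $\bigcap\cS=\bigcap\cS_0$ is a nonempty intersection of balls from $\cB$, so it is a ball in $\int(\cB)$ by definition. This shows that the intersection of every centered system in $\int(\cB)$ is a ball, i.e.\ $(X,\int(\cB))$ satisfies {\bf S}$_5^c={}${\bf S}$^*$.

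There is no real obstacle here; the only subtle point is to check that collapsing balls of $\int(\cB)$ to their ``components'' in $\cB$ preserves the centered system property, and this follows just from monotonicity of intersections. The construction also shows why we need the hypothesis {\bf S}$_1^c$ rather than the weaker {\bf S}$_1$: after unfolding, the family $\cS_0$ is generally no longer a nest even if $\cS$ was, so we really need nonempty intersection for arbitrary centered systems in $\cB$.
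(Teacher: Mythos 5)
Your proposal is correct and follows essentially the same route as the paper's proof: unfold each ball of a centered system in $\int(\cB)$ into balls of $\cB$, verify that the unfolded family is still a centered system in $\cB$ (by monotonicity of intersections), and apply {\bf S}$_1^c$ to get a nonempty intersection. The only, harmless, difference is in the concluding step: the paper invokes Theorem~\ref{sccent} (for intersection closed ball spaces, {\bf S}$_1$ is equivalent to {\bf S}$^*$), whereas you observe directly that $\bigcap\cS=\bigcap\cS_0$ is a nonempty intersection of balls of $\cB$ and hence a ball of $\int(\cB)$ by definition, which is if anything slightly more economical.
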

\begin{proof}
Take a centered system $\{B_i\mid i\in I\}$ in $(X,\int(\cB))$. Write $B_i=\bigcap_{j\in J_i} B_{i,j}$ with $B_{i,j}
\in \cB$. Then $\{B_{i,j}\mid i\in I,j\in J_i\}$ is a centered system in $(X,\cB)$: the intersection of finitely
many balls $B_{i_1,j_1},\ldots,B_{i_n,j_n}$ contains the intersection $B_{i_1}\cap\ldots\cap B_{i_n}\,$, which by
assumption is non\-empty. Since $(X,\cB)$ is {\bf S}$_1^c$, $\bigcap_i B_i =\bigcap_{i,j} B_{i,j}\ne
\emptyset$. This proves that $(X,\int(\cB))$ is an {\bf S}$_1^c$ ball space. Since $(X,\int(\cB))$ is intersection
closed, Theorem~\ref{sccent} now shows that $(X,\int(\cB))$ is an {\bf S}$^*$ ball space.
\end{proof}

%

\mn
%
%
\subsection{Closure under finite unions and under intersections}       \label{sectunint}
\mbox{ }\sn
From Theorems~\ref{thfun} and~\ref{thint} we obtain:
\begin{theorem}                                                            \label{thfunint}
Take any ball space $(X,{\cB})$. If $\cB'$ is obtained from $\cB$ by first closing under finite unions
and then under arbitrary nonempty intersections, then:
\sn
1) \ $\cB'$ is closed under finite unions,
\sn
2) \ $\cB'$ is intersection closed,
\sn
3) \ if in addition $(X,{\cB})$ is an {\bf S}$_1^c$ ball space, then $(X,\cB')$ is an {\bf S}$^*$ ball space.
\end{theorem}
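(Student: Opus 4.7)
The plan is to handle the three parts in reverse order of difficulty, since parts (2) and (3) are essentially immediate from the preceding theorems of Section~\ref{sectint}, leaving only the distributive computation needed for part (1).

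For part (2), I would simply invoke the proposition proved just before Theorem~\ref{thint}, which states that for \emph{any} ball space $(X,\cC)$ the ball space $(X,\int(\cC))$ is intersection closed. Applying this with $\cC=\fun(\cB)$ gives that $\cB'=\int(\fun(\cB))$ is intersection closed. For part (3), if $(X,\cB)$ is an {\bf S}$_1^c$ ball space, then Theorem~\ref{thfun} gives that $(X,\fun(\cB))$ is an {\bf S}$_1^c$ ball space, and then Theorem~\ref{thint} applied to $(X,\fun(\cB))$ yields that $(X,\int(\fun(\cB)))=(X,\cB')$ is an {\bf S}$^*$ ball space.

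The substantive content is part (1), which requires showing that an intersection closure built over a union-closed family is itself closed under finite unions. First I would note that $\fun(\cB)$ is closed under finite unions directly from its definition. Then, given two balls $B_1,B_2\in\cB'$, I would write $B_1=\bigcap_{i\in I} C_i$ and $B_2=\bigcap_{j\in J} D_j$ with $C_i,D_j\in\fun(\cB)$, and apply the distributive law
\[
B_1\cup B_2 \>=\> \bigcap_{(i,j)\in I\times J}(C_i\cup D_j)\>.
\]
Since each $C_i\cup D_j\in\fun(\cB)$ and the intersection on the right contains the nonempty set $B_1$, it is itself a nonempty intersection of balls from $\fun(\cB)$, hence an element of $\int(\fun(\cB))=\cB'$. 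Closure under arbitrary finite unions then follows by an immediate induction on the number of balls being united (or equivalently by indexing over a finite product of index sets in one step).

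The proof contains no real obstacle: part (1) is pure distributivity and does not use any completeness hypothesis, while parts (2) and (3) are formal consequences of results already in place. The only point at which one should be slightly careful is verifying that the intersection displayed for $B_1\cup B_2$ is nonempty, which is why I would explicitly note that $B_1\cup B_2\supseteq B_1\neq\emptyset$ so that the resulting set legitimately belongs to $\int(\fun(\cB))$ under our convention that only nonempty intersections are admitted.
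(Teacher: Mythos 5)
Your proposal is correct and matches the paper's own proof in all essentials: parts (2) and (3) are obtained by exactly the same appeals to the intersection-closure proposition and to Theorems~\ref{thfun} and~\ref{thint}, and part (1) rests on the same distributive law $\bigl(\bigcap_i C_i\bigr)\cup\bigl(\bigcap_j D_j\bigr)=\bigcap_{i,j}(C_i\cup D_j)$, which the paper states directly for $n$ families at once while you do two at a time plus induction. Your explicit check that the resulting intersection is nonempty is a point the paper leaves implicit, but it is the same argument.
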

\begin{proof}
1): \ Take $S_1,\ldots,S_n\subseteq\fun(\cB)$
such that $\bigcap S_i\ne\emptyset$ for $1\leq i\leq n$. Then
\[
\left(\bigcap S_1\right)\cup\ldots\cup \left(\bigcap S_n\right) \>=\> \bigcap \{B_1\cup\ldots\cup B_n\mid B_i\in S_i
\mbox{ for } 1\leq i\leq n\}\>.
\]
Since $B_i\in \fun(\cB)$ for $1\leq i\leq n$, we have that also $B_1\cup\ldots\cup B_n\in \fun(\cB)$. This implies
that $(\bigcap S_1)\cup\ldots\cup (\bigcap S_n)\in\cB'$.
\sn
2): \ Since $\cB'$ is an intersection closure, it is intersection closed.
\sn
3): \  By Theorems~\ref{thfun} and~\ref{thint}, $(X,\cB')$ is an {\bf S}$^*$ ball space.
\end{proof}

\mn
%
%
\subsection{The topology associated with a ball space}       \label{sectasstop}
\mbox{ }\sn
Take any ball space $(X,\cB)$. Theorem~\ref{thfunint} tells us that in a canonical way we can associate
with it a ball space $(X,\cB')$ which is closed under nonempty intersections and under finite unions.
If we also add $X$ and $\emptyset$ to $\cB'$, then we obtain the collection of closed sets for a topology
whose associated ball space is $(X,\cB'\cup\{X\})$.
%
%
\begin{theorem}                                             \label{asstopco}
The topology associated with a ball space $(X,{\cB})$ is compact if and only if $(X,{\cB})$ is an {\bf S}$_1^c$
ball space.
\end{theorem}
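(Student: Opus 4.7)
The strategy is to combine three earlier results. For the forward implication, assume $(X,\cB)$ is {\bf S}$_1^c$. By part~3) of Theorem~\ref{thfunint}, the ball space $(X,\cB')$ obtained by closing $\cB$ successively under finite unions and then under arbitrary nonempty intersections is an {\bf S}$^*$ ball space. Applying Corollary~\ref{corBunion}, the enlarged ball space $(X,\cB'\cup\{X\})$ is likewise {\bf S}$^*$. By the construction described in Section~\ref{sectasstop}, $\cB'\cup\{X\}$ is precisely the collection of nonempty closed sets of the associated topology, so the implication c)~$\Rightarrow$~a) of Theorem~\ref{cts} delivers compactness.

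For the converse, assume the associated topology is compact and let $\cS\subseteq\cB$ be a centered system. Since $\cB\subseteq\fun(\cB)\subseteq\cB'$, every ball in $\cS$ is a closed set in the associated topology, and the centered system condition on $\cS$ is exactly the finite intersection property of this family of closed sets. Compactness (via the implication a)~$\Rightarrow$~b) of Theorem~\ref{cts} applied to the full collection of nonempty closed sets) then yields $\bigcap\cS\ne\emptyset$, which shows that $(X,\cB)$ is {\bf S}$_1^c$.

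There is essentially no obstacle beyond correctly tracking the construction: the nontrivial work has already been done in Theorem~\ref{thfunint}, which is what allows the centered system property in the small ball space $(X,\cB)$ to propagate to an {\bf S}$^*$ property in the larger $(X,\cB')$ of actual closed sets. The converse direction is almost immediate, since the containment $\cB\subseteq\cB'$ trivially reinterprets any centered system in $\cB$ as a family of closed sets with the finite intersection property, for which compactness guarantees a common point.
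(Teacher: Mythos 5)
Your forward direction follows essentially the same route as the paper: Theorem~\ref{thfunint} part 3) combined with Theorem~\ref{cts}, and your extra step via Corollary~\ref{corBunion} to account for adjoining $X$ to $\cB'$ is a careful touch that the paper's one-line proof leaves implicit. The converse is also structurally the paper's argument (the paper packages the restriction step as Proposition~\ref{sub}, while you argue it directly via $\cB\subseteq\cB'$), so the overall decomposition matches.

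However, the converse as written rests on a citation that is too weak. You invoke the implication a)~$\Rightarrow$~b) of Theorem~\ref{cts}, but b) only asserts that the nonempty closed sets form an {\bf S}$_1$ ball space, i.e., that \emph{nests} of closed sets have nonempty intersection. Your family $\cS$ is a centered system, not necessarily a nest, so {\bf S}$_1$ of the closed sets does not deliver $\bigcap\cS\ne\emptyset$. What you need is a)~$\Rightarrow$~c): compactness makes the nonempty closed sets an {\bf S}$^*$ ball space, and {\bf S}$^*$ (that is, {\bf S}$_5^c$) implies {\bf S}$_1^c$ in the hierarchy~(\ref{hier}), which is exactly the statement that families of closed sets with the finite intersection property have a common point. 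With that substitution your argument is complete and coincides with the paper's use of Theorem~\ref{cts} together with Proposition~\ref{sub}; the repair is trivial, but the step as cited would fail.
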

\begin{proof}
The ``if'' direction of the equivalence follows from Theorems~\ref{thfunint} and~\ref{cts}. The other direction
follows from Theorem~\ref{cts} and Proposition~\ref{sub}.
\end{proof}

\mn
{\bf Example: the $p$-adics.}
\sn
The field ${\mathbb Q}_p$ of $p$-adic numbers together with the $p$-adic valuation $v_p$ is
spherically complete.
(This fact can be used to prove the original Hensel's Lemma via the ultrametric fixed point theorem,
see \cite{[PC]}, or even better, via the ultrametric attractor theorem, see \cite{[KU3]}.) The associated ball
space is a classical ultrametric ball space and hence tree-like. It follows from Proposition~\ref{thbsut}
that it is an {\bf S}$_1^c$ ball space. Hence by Theorem~\ref{asstopco} the topology derived from this ball space
is compact.

However, ${\mathbb Q}_p$ is known to be locally compact, but not compact under the topology induced by the $p$-adic
metric. But in this topology the ultrametric balls $B_\alpha (x)$ are basic open sets, whereas in the topology
derived from the ultrametric ball space they are closed and their complements are the basic open sets. It follows
that the balls $B_\alpha (x)$ are not open. It thus turns out that the usual $p$-adic topology on ${\mathbb Q}_p$
is strictly finer than the one we derived from the ultrametric ball space.

\bn
%
%
\section{Tychonoff type theorems}      \label{sectTy}\label{sectprod}
%
%
%
\subsection{Products in ball spaces}
\mbox{ }\sn
In \cite{BKK} it is shown that the category consisting of all ball spaces together with the ball
continuous functions (see Definition~\ref{ballcont}) as morphisms allows products and coproducts.
The products can be defined as follows.

Assume that $(X_j,\cB_j)_{j\in J}$ is a family of ball spaces. Recall that
$\widehat\cB_j=\cB_j\cup\{X_j\}$.
\begin{definition}
We set $X=\prod_{j\in J} X_j$ and define the
\bfind{product} $(X_j,\cB_j)_{j\in J}\pr$ to be $(X,(\cB_j)_{j\in J}\pr)$, where
\[
(\cB_j)_{j\in J}\pr \>:=\> \left\{\prod_{j\in J} B_j\mid \mbox{ for some }k\in J,\> B_k\in\cB_k\mbox{ and }
\forall j\ne k: B_j=X_j \right\}\>.
\]
Further, we define the \bfind{topological product} $(X_j,\cB_j)_{j\in J}\tpr$ to be $(X,(\cB_j)_{j\in J}\tpr)$,
where
\[
(\cB_j)_{j\in J}\tpr := \left\{\prod_{j\in J} B_j\mid \forall j\in J:\>B_j\in
\widehat\cB_j\mbox{ and } B_j=X_j \mbox{ for almost all } j\right\}\>,
\]
and the \bfind{box product} $(X_j,\cB_j)_{j\in J}\bpr$ of the family to be $(X,(\cB_j)_{j\in J}\bpr)$, where
\[
(\cB_j)_{j\in J}\bpr \>:=\>\left\{\prod_{j\in J} B_j\mid \forall j\in J:\>B_j\in\cB_j\right\}\>.
\]
Since the sets $\cB_i$ are nonempty, it follows that $\cB\ne\emptyset$, and as no ball in any $\cB_i$ is empty,
it follows that no ball in $(\cB_j)_{j\in J}\pr$, $(\cB_j)_{j\in J}\tpr$ and $(\cB_j)_{j\in J}\bpr$ is empty.
\end{definition}

\pars
We leave the proof of the following observations to the reader:
\begin{proposition}                \label{propproducts}
\n
a) We have that
\[
(\cB_j)_{j\in J}\pr\>\subseteq\> (\cB_j)_{j\in J}\tpr\>=\> (\widehat\cB_j)_{j\in J}\tpr \>\subseteq\>
(\widehat\cB_j)_{j\in J}\bpr\>.
\]
b) The following equations hold:
\begin{eqnarray*}
\fint\left((\widehat\cB_j)_{j\in J}\pr\right) &=& \fint\left((\cB_j)_{j\in J}\tpr\right)
\>=\> (\fint(\cB_j))_{j\in J}\tpr\>, \\
\int\left((\widehat\cB_j)_{j\in J}\pr\right) &=& \int\left((\cB_j)_{j\in J}\tpr\right)
\>=\> (\int(\widehat\cB_j))_{j\in J}\bpr\>.
\end{eqnarray*}
\end{proposition}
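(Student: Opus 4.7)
The plan is to derive everything from the set-theoretic identity
\[
\bigcap_{i\in I}\prod_{j\in J} B_j^{(i)}\>=\>\prod_{j\in J}\bigcap_{i\in I} B_j^{(i)},
\]
which computes intersections of products coordinate-wise, together with a straightforward unpacking of the three product constructions.

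For part (a), I would verify the three relations in turn. A generator of $(\cB_j)_{j\in J}\pr$ has one factor in some $\cB_k\subseteq\widehat\cB_k$ and all other factors equal to $X_j$, so it trivially satisfies the defining condition of $(\cB_j)_{j\in J}\tpr$; this gives the first inclusion. The equality $(\cB_j)_{j\in J}\tpr=(\widehat\cB_j)_{j\in J}\tpr$ is immediate from the definition, since the formation of $\tpr$ replaces its input by the corresponding hat, and hatting twice has no effect. The final inclusion $(\widehat\cB_j)_{j\in J}\tpr\subseteq(\widehat\cB_j)_{j\in J}\bpr$ follows at once by forgetting the almost-all-equal-$X_j$ restriction.

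For part (b), my plan is to prove each chain of equalities by mutual inclusions based on the factor-wise formula above, starting with the easy observation that $\fint(\widehat\cB_j)=\widehat{\fint(\cB_j)}$ and $\int(\widehat\cB_j)=\widehat{\int(\cB_j)}$, since any intersection involving the factor $X_j$ reduces to the intersection of the remaining balls. Given a finite intersection of balls from $(\widehat\cB_j)_{j\in J}\pr$, the factor-wise formula rewrites it as a product $\prod_j B_j$ in which only finitely many factors differ from $X_j$ and each such factor is a finite intersection of elements of $\widehat\cB_j$, hence the product lies in $(\fint(\cB_j))_{j\in J}\tpr$. Conversely, any element of $(\fint(\cB_j))_{j\in J}\tpr$ can be realised as a finite intersection of single-coordinate generators in $(\widehat\cB_j)_{j\in J}\pr$ by writing each $B_j$ as a finite intersection of elements $C_j^{(i)}\in\widehat\cB_j$ and taking the intersection of those products which carry $C_j^{(i)}$ at coordinate $j$ and $X_k$ elsewhere. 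The very same argument, with ``finite'' replaced by ``arbitrary'' throughout, yields the $\int$-equality; in this case the almost-all-equal-$X_j$ condition is lost, because an arbitrary intersection of generators may touch all of $J$ at once, which is precisely why the result is a ball of the box-type product rather than of the topological product.

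The argument is essentially bookkeeping; the only delicate point is correctly tracking the support condition as one moves between finite and arbitrary intersections. Nonemptiness poses no difficulty, since a product is nonempty exactly when every factor is nonempty, so membership in $\fint$ and $\int$ is automatically preserved on both sides of every equality.
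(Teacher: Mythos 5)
Your proof is correct, and since the paper explicitly leaves the proof of Proposition~\ref{propproducts} to the reader, your argument is in effect the intended one: the coordinatewise intersection identity you build everything on is exactly the identity (\ref{pr}) that the paper itself displays in the proof of Theorem~\ref{bprtpr}. The bookkeeping you supply — the observations $\fint(\widehat\cB_j)=\widehat{\fint(\cB_j)}$ and $\int(\widehat\cB_j)=\widehat{\int(\cB_j)}$, the reduction of balls of $(\cB_j)_{j\in J}\tpr$ to finite intersections of one-coordinate cylinders, and the loss of the finite-support condition when passing to arbitrary intersections — is precisely what is needed, and the nonemptiness transfer via ``a product is nonempty iff all factors are'' is unproblematic given the paper's free use of the axiom of choice.
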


\parm
The following theorem presents our main results on the various products.
\begin{theorem}                                   \label{bprtpr}
The following assertions are equivalent:
\sn
a) \ the ball spaces $(X_j,{\cal B}_j)$, $j\in J$, are spherically complete,
\sn
b) \ their box product is spherically complete,
\sn
c) \ their topological product is spherically complete.
\sn
d) \ their product is spherically complete.
\sn
The same holds with  ``\/{\bf S}$_1^d$'' and ``\/{\bf S}$_1^c$'' in place of ``spherically complete''.

\parm
The equivalence of a) and b) also holds for all other properties in the hierarchy,
and the equivalence of a) and d) also holds for {\bf S}$_2\,$, {\bf S}$_3\,$, {\bf S}$_4$ and {\bf S}$_5\,$.
\end{theorem}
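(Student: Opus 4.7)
The plan is to make the box product (b) the master case, because its balls are genuine coordinate-wise products, so nests, directed systems and centered systems decompose into one system in each factor. Once a) $\Leftrightarrow$ b) is established for every property in the hierarchy, the cases c) and d) follow by sandwiching the remaining two products between $(\cB_j)_{j\in J}\pr$ and $(\widehat\cB_j)_{j\in J}\bpr$ (Proposition~\ref{propproducts}) and invoking Corollary~\ref{corBunion} together with Proposition~\ref{sub}.

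For a) $\Leftrightarrow$ b): given any nest, directed system, or centered system $\cS=\{\prod_j B_j^{(i)}\mid i\in I\}$ in $(\cB_j)_{j\in J}\bpr$, inclusion and finite intersections of box balls decompose coordinate-wise, so each projection $\cS_k:=\{B_k^{(i)}\mid i\in I\}$ is a system of the same kind in $(X_k,\cB_k)$, and $\bigcap\cS=\prod_k\bigcap\cS_k$. Consequently $\bigcap\cS$ is nonempty, contains a ball, contains maximal balls, contains a largest ball, or is itself a ball iff each $\bigcap\cS_k$ has the analogous property, since products of coordinate-wise maximal, largest or full balls retain those attributes. The converse is obtained by lifting: fix some $C_j\in\cB_j$ for every $j\ne k$, and given a system in $\cB_k$ form cylinder box balls that use the original balls in coordinate $k$ and $C_j$ elsewhere; this yields a system of the same type in the box product whose intersection projects back to the coordinate-$k$ intersection, preserving each structural property.

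For c) and d), Proposition~\ref{propproducts} gives $(\cB_j)_{j\in J}\pr\subseteq (\cB_j)_{j\in J}\tpr=(\widehat\cB_j)_{j\in J}\tpr\subseteq (\widehat\cB_j)_{j\in J}\bpr$. If each $(X_k,\cB_k)$ is {\bf S}$_1^\dagger$ with $\dagger\in\{\emptyset,d,c\}$, then so is each $(X_k,\widehat\cB_k)$ by Corollary~\ref{corBunion}, hence the box product $(\widehat\cB_j)_{j\in J}\bpr$ is {\bf S}$_1^\dagger$ by the case already handled, and Proposition~\ref{sub} transmits this property to both the topological product and the product. The converses are again obtained by lifting a system in $\cB_k$ to cylinders (with $X_j$ in every other coordinate), which is a legal system in the product and hence in the topological product. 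For the extra levels {\bf S}$_2,\ldots,${\bf S}$_5$ in a) $\Leftrightarrow$ d), the crucial observation is that in any nest of product balls all balls that are not equal to $X$ must restrict the same coordinate $k$, since two nontrivially restricted product balls with distinct restricted coordinates are never comparable by inclusion. Hence such a nest projects to a nest $\cN_k$ in $\cB_k$ with
\[
\bigcap\cN\>=\>\Bigl(\bigcap\cN_k\Bigr)\times\prod_{j\ne k}X_j\>,
\]
and the {\bf S}$_i$-properties ($i=2,\ldots,5$) transfer through this cylinder identification in both directions.

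The main obstacle is the fixed-coordinate lemma just described, since everything else reduces to clean coordinate-wise bookkeeping in the box product. Secondarily, one must treat the degenerate case in which $X_j\in\cB_j$ for some $j$ (so a ``nontrivial'' product ball can coincide with the entire space $X$), and resist the temptation to strengthen a) $\Leftrightarrow$ d) to {\bf S}$_i^d$ or {\bf S}$_i^c$ for $i\ge 2$: a directed or centered system in $(\cB_j)_{j\in J}\pr$ can carry balls that restrict many distinct coordinates simultaneously, so its intersection need not be a cylinder and the factor-to-product transfer breaks down.
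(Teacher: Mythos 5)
Your proposal is correct and follows essentially the same route as the paper: coordinate-wise decomposition of intersections for a) $\Leftrightarrow$ b) (the paper's equation $\bigcap_i \prod_j B_{i,j} = \prod_j \bigcap_i B_{i,j}$ together with its facts 4) and 5)), lifting via cylinders with fixed balls for the converses, and the sandwich $(\cB_j)_{j\in J}\pr\subseteq(\cB_j)_{j\in J}\tpr\subseteq(\widehat\cB_j)_{j\in J}\bpr$ with Corollary~\ref{corBunion} and Proposition~\ref{sub} for c) and d). The one place where you go beyond the paper's written proof is the fixed-coordinate lemma: the paper's concluding paragraph only treats a) $\Leftrightarrow$ b) for the higher properties (its facts 4) and 5) are phrased for box balls, where every coordinate is a ball), and it never spells out why a nest in $(\cB_j)_{j\in J}\pr$ reduces to a nest in a single factor; your observation that two product balls restricting distinct coordinates nontrivially are incomparable, so that a nest is a cylinder nest over one fixed coordinate (modulo the degenerate ball $X$), is exactly the missing bookkeeping that makes the claimed a) $\Leftrightarrow$ d) equivalence for {\bf S}$_2$--{\bf S}$_5$ rigorous. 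Your closing caveat is also well judged: it matches the paper's two examples after the theorem, which show that the topological product and the product can fail {\bf S}$_2$ and {\bf S}$_2^c$ even when all factors are {\bf S}$^*$.
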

\begin{proof}
Take ball spaces $(X_j,\cB_j)$, $j\in J$, and in every $\cB_j$ take a set of balls $\{B_{i,j}\mid i\in I\}$.
Then we have:
\begin{equation}                                  \label{pr}
\bigcap_{i\in I}^{} \prod_{j\in J} B_{i,j}
\>=\> \prod_{j\in J} \bigcap_{i\in I}^{} B_{i,j} \>.
\end{equation}
If $\cN=(\prod_{j\in J} B_{i,j})_{i\in I}$ is a nest of balls in $(\prod_{j\in J} X_j,
(\cB_j)_{j\in J}\bpr)$, then for every $j\in J$, also $(B_{i,j})_{i\in I}$ must be a nest in $(X_j,\cB_j)$.

\sn
a) $\Rightarrow$ b):
Assume that all ball spaces $(X_j,{\cal B}_j)$, $j\in J$, are spherically complete.
Then for every $j\in J$,  $(B_{i,j})_{i\in I}$ has nonempty intersection. By (\ref{pr})
it follows that $\bigcap {\cal N}\ne\emptyset$. This proves the implication a) $\Rightarrow$ b).

\sn
b) $\Rightarrow$ a):
Assume that $(\prod_{j\in J} X_j,(\cB_j)_{j\in J}\bpr)$ is spherically
complete. Take $j_0\in J$ and a nest of balls ${\cal N}=
(B_i)_{i\in I}$ in $(X_{j_0},{\cal B}_{j_0})$. For each $i\in I$, set
$B_{i,j_0}=B_i$ and $B_{i,j}=B_{0,j}$ for $j\ne j_0$ where $B_{0,j}$ is an arbitrary fixed ball in
$\cB_j\,$. Then $(\prod_{j\in J} B_{i,j})_{i\in I}$
is a nest in $(\prod_{j\in J} X_j,(\cB_j)_{j\in J}\bpr)$. By assumption,
\[
\emptyset\>\ne\> \bigcap_{i\in I}^{} \prod_{j\in J} B_{i,j}\>=\>
\left(\bigcap_{i\in I}^{} B_i\right)\times \left(\prod_{j_0\ne j\in J} B_{0,j} \right)\>,
\]
whence $\bigcap_{i\in I}^{} B_i\ne\emptyset$. We have shown that for
every $j\in J$, $(X_j,{\cal B}_j)$ is spherically complete. This proves the implication b) $\Rightarrow$ a).

\sn
a) $\Rightarrow$ c):
Assume that all ball spaces $(X_j,{\cal B}_j)$, $j\in J$, are spherically complete. Then by
Corollary~\ref{corBunion}, all ball spaces $(X_j,\widehat\cB_j)$, $j\in J$, are spherically complete. By the
already proven implication a) $\Rightarrow$ b), their box product $(X_j,\widehat\cB_j)_{j\in J}\bpr$ is
spherically complete. By part a) of Proposition~\ref{propproducts} together with Proposition~\ref{sub},
$(X,\cB)_{j\in J}\tpr$ is spherically complete, too.

\sn
c) $\Rightarrow$ d): Again, by part a) of Proposition~\ref{propproducts} together with Proposition~\ref{sub},
the product of the ball spaces $(X_j,\widehat\cB_j)$, $j\in J$, is spherically complete, and as the product of
the ball spaces $(X_j,\cB_j)$, $j\in J$, is a subspace of this, it is also spherically complete.
\sn
d) $\Rightarrow$ a):
Same as the proof of b) $\Rightarrow$ a), where we now take $B_{0,j}=X_j\,$.

\parm
These proofs also work when ``spherically complete'' is replaced by ``\/{\bf S}$_1^d$'' or ``\/{\bf S}$_1^c$'',
as can be deduced from the following observations:
\sn
1) $\{\prod_{j\in J} B_{i,j} \mid i\in I\}$ is a
centered system if and only if all sets $\{B_{i,j}\mid i\in I\}$, $j\in J$, are.
\sn
2) If $\{\prod_{j\in J} B_{i,j} \mid i\in I\}$ is a directed system, then so are $\{B_{i,j}\mid i\in I\}$ for all
$j\in J$.
\sn
3) Fix $j_0\in J$. If $\{B_{i,j_0}\mid i\in I\}$ is a directed system, then so is $\{\prod_{j\in J} B_{i,j} \mid
i\in I\}$ when the balls are chosen as in the proof of b) $\Rightarrow$ a) or d) $\Rightarrow$ a).

\parm
A proof of the equivalence of a) and b) similar to the above also holds for all other properties in the hierarchy.
For the properties {\bf S}$_2\,$, {\bf S}$_3$, {\bf S}$_4$ and {\bf S}$_5\,$, one uses the fact that by definition,
$\prod_{j\in J} B_j$ is a ball in $(\cB_j)_{j\in J}\bpr$ if and only if every $B_j$  is a ball in $\cB_j$ and that
\sn
4) $\prod_{j\in J} B'_j$ is a ball contained in $\prod_{j\in J} B_j$ if and only if every $B'_j$ is a ball
contained in $B_j\,$,
\sn
5) $\prod_{j\in J} B'_j$ is a maximal (or largest) ball contained in $\prod_{j\in J} B_j$ if and only if every
$B'_j$ is a maximal (or largest, respectively) ball contained in $B_j\,$.
\end{proof}

\begin{example}
There are {\bf S}$^*$ ball spaces $(X_j,\cB_j)$, $j\in\N$, such that the ball space $(X,(\cB_j)_{j\in\N}\tpr)$ is
not even {\bf S}$_2\,$.
\rm
Indeed, we choose a set $Y$ with at least two elements, and for every $j\in\N$ we take $X_j=Y$ and $\cB_j=\{B\}$
with $\emptyset\ne B\ne Y$. Then trivially, all ball spaces $(X_j,\cB_j)$ are {\bf S}$^*$. For all $i,j\in\N$,
define
\[
B_i\>:=\>
\underbrace{B\times B\times\ldots\times B}_{i\text{ times }}\times Y\times Y\times\ldots\>\in(\cB_j)_{j\in\N}\tpr.
\]
Then $\cN=\{B_i\mid i\in I\}$ is a nest of balls in $(\cB_j)_{j\in\N}\tpr$, but the intersection
$\bigcap\cN=\prod_{j\in\N}B$ does not contain any ball in this ball space.
\end{example}

\begin{example}
There are {\bf S}$^*$ ball spaces $(X,\cB_j)$, $j=1,2$, such that the ball space $(X,(\cB_j)_{j\in\{1,2\}}\pr)$
is not {\bf S}$_2^c$.
\rm
Indeed, we choose again a set $Y$ with at least two elements and take $\cB_1=\cB_2=\{B\}$ with $\emptyset\ne B
\ne Y$. Then as in the previous example, $(X_j,\cB_j)$, $j=1,2$
are {\bf S}$^*$ ball spaces. Further, $(\cB_j)_{j\in\{1,2\}}\pr=\{Y\times Y,B\times Y,Y\times B\}$, which
is a centered system whose intersection does not contain any ball.
\end{example}

\mn
%
%
\subsection{The ultrametric case}
\mbox{ }\sn
If $(X_j,u_j)$, $j\in J$ are ultrametric spaces with value sets $u_j X_j=\{u_j(a,b)\mid a,b\in X_j\}$, and if
$B_j=B_{\gamma_j}(a_j)$ is an ultrametric ball in $(X_j,u_j)$ for each $j$, then
\[
\prod_{j\in J} B_j\>=\>\{(b_j)_{j\in J}\mid \forall j\in J:\, u_j(a_j,b_j)\leq \gamma_j\}\>.
\]
This shows that the box product is the ultrametric ball space for the product ultrametric on $\prod_{j\in J} X_j$
which is defined as
\[
u_{\rm prod}((a_j)_{j\in J}\,,\,(b_j)_{j\in J})\>=\> (u_j(a_j,b_j))_{j\in J}\>\in \prod_{j\in J} u_j X_j\>.
\]
The latter is a poset, but in general not totally ordered, even if all $u_j X_j$ are totally ordered
and even if $J$ is finite.
So the product ultrametric is a natural example for an ultrametric with partially ordered value set.

If the index set $J$ is finite and all $u_j X_j$ are contained in some totally ordered set $\Gamma$ such that all
of them have a common least element $0\in\Gamma$, then we can define an ultrametric on the product
$\prod_{j\in J} X_j$ which takes values in $\bigcup_{j\in J} u_j X_j\subseteq\Gamma$ as follows:
\[
u_{\rm max}((a_j)_{j\in J}\,,(b_j)_{j\in J})\>=\> \max_j u_j(a_j,b_j)
\]
for all $(a_j)_{j\in J},(b_j)_{j\in J}\in\prod_{j\in J} X_j\,$. We leave it to the reader to prove that this is
indeed an ultrametric. The corresponding ultrametric balls are the sets of the form
\[
\{(b_j)_{j\in J}\mid \forall j\in J:\, u_j(a_j,b_j)\leq \gamma\}
\]
for some $(a_j)_{j\in J}\in\prod_{j\in J} X_j$ and $\gamma\in\bigcup_{j\in J} u_j X_j\,$. Now the value set is
totally ordered. It turns out that the collection
of balls so obtained is a (usually proper) subset of the full ultrametric ball space of the product ultrametric.
Therefore, if all $(X_j,u_j)$ are spherically complete, then so is $(\prod_{j\in J} X_j,u_{\rm max})$
by Theorem~\ref{bprtpr} and Proposition~\ref{sub}.

\pars
\begin{theorem}                                   
Take ultrametric spaces $(X_j,u_j)$, $j\in J$. Then the ultrametric space $(\prod_{j\in J} X_j,u_{\rm prod})$ is
spherically complete if and only if all $(X_j,u_j)$, $j\in J$, are spherically complete.

If the index set $J$ is finite and all $u_j X_j$ are contained in some totally ordered set $\Gamma$ such that all
of them have a common least element, then the same also holds for $u_{\rm max}$ in place of $u_{\rm prod}\,$.
\end{theorem}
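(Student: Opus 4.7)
The plan is to reduce both parts of the theorem to the equivalence (a)$\Leftrightarrow$(b) of Theorem~\ref{bprtpr} for the property ``spherically complete''. For the first assertion, the computation $\prod_{j}B_{\gamma_j}(a_j)=\{(b_j)_{j\in J}\mid u_j(a_j,b_j)\le\gamma_j\;\forall j\in J\}$ recorded just before the theorem shows that every $u_{\rm prod}$-ball is a box product of ultrametric balls from the factors, and conversely every element of the box product arises as a $u_{\rm prod}$-ball; hence $\cB_{u_{\rm prod}}=(\cB_{u_j})_{j\in J}\bpr$. Theorem~\ref{bprtpr} then yields that $(\prod_j X_j,u_{\rm prod})$ is spherically complete if and only if each $(X_j,u_j)$ is.

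For $u_{\rm max}$ under the additional hypothesis, the $u_{\rm max}$-balls are precisely the diagonal products $\prod_j B_\gamma(a_j)$ in which all factors share a common radius $\gamma\in\Gamma$, so $\cB_{u_{\rm max}}\subseteq\cB_{u_{\rm prod}}$. The ``if'' direction is then immediate from the first assertion combined with Proposition~\ref{sub}, as already noted in the paragraph preceding the theorem. So only the converse remains: if $(\prod_j X_j,u_{\rm max})$ is spherically complete, then each $(X_j,u_j)$ is.

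The main work is in this converse. Fix $j_0\in J$, take a nest $(B_{\gamma_i}(a_i))_{i\in I}$ in $(X_{j_0},u_{j_0})$, and choose anchors $a^*_j\in X_j$ for each $j\ne j_0$. Consider the lifted family
\[
C_i\>:=\>\{(y_l)_{l\in J}\mid u_{j_0}(a_i,y_{j_0})\le\gamma_i \mbox{ and } u_j(a^*_j,y_j)\le\gamma_i \mbox{ for all } j\ne j_0\}\>,
\]
which is the $u_{\rm max}$-ball of radius $\gamma_i$ centered at the tuple whose $j_0$-entry is $a_i$ and whose other entries are the $a^*_j$. The crucial point is to verify that $(C_i)_{i\in I}$ is a nest: after discarding duplicate balls, this reduces to showing that a strict inclusion $B_{\gamma_i}(a_i)\subsetneq B_{\gamma_{i'}}(a_{i'})$ forces $\gamma_i<\gamma_{i'}$. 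Picking $y$ in the difference, one has $u_{j_0}(a_i,y)>\gamma_i$ by total ordering of $\Gamma$, while the ultrametric triangle inequality applied via $a_{i'}$ (using $a_i\in B_{\gamma_i}(a_i)\subseteq B_{\gamma_{i'}}(a_{i'})$) gives $u_{j_0}(a_i,y)\le\gamma_{i'}$, whence $\gamma_i<\gamma_{i'}$. With the radii strictly monotone, the inclusion $C_i\subseteq C_{i'}$ is immediate, so $(C_i)$ is a nest; spherical completeness of $u_{\rm max}$ then produces some $(y_l)\in\bigcap_i C_i$, whose $j_0$-entry lies in $\bigcap_i B_{\gamma_i}(a_i)$ as desired. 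This last step is where the total ordering of $\Gamma$ and the classical ultrametric triangle inequality enter essentially.
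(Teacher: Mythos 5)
Your proposal is correct and takes essentially the same route as the paper: the first assertion is the same identification of the $u_{\rm prod}$-balls with the box product followed by Theorem~\ref{bprtpr}, and for the converse of the $u_{\rm max}$ assertion you lift a nest from a fixed factor $(X_{j_0},u_{j_0})$ to $u_{\rm max}$-balls with the same radii and anchored entries in the remaining coordinates, which is exactly the paper's construction. The only differences are minor: you verify nest-hood by proving the radii are strictly monotone along strict inclusions and deducing $C_i\subseteq C_{i'}$ directly, whereas the paper checks that any two lifted balls have nonempty intersection and uses that $u_{\rm max}$ is a classical ultrametric, so its balls are comparable whenever they meet (Proposition~\ref{cust-l}); also, for the forward direction your inclusion $\cB_{u_{\rm max}}\subseteq\cB_{u_{\rm prod}}$ should strictly speaking be into the \emph{full} ultrametric ball space of $u_{\rm prod}$ (as in the paper's preceding paragraph), since a common radius $\gamma\in\bigcup_j u_jX_j$ need not lie in each value set $u_jX_j$ -- a harmless adjustment, as Proposition~\ref{sub} applies either way.
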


\begin{proof}
As was remarked earlier, the ultrametric ball space of the product ultrametric is the box product of the
ultrametric ball spaces of the ultrametric spaces $(X_j,u_j)$. Thus the first part of the theorem is a corollary to
Theorem~\ref{bprtpr}.

To prove the second part of the theorem, it suffices to prove the converse of the implication we have stated just
before the theorem. Assume that the space $(\prod_{j\in J}X_j,u_{\rm max})$ is spherically complete and choose
any $j_0\in J$. Let $\cN_{j_0}=\{B_{\gamma_i}(a_{i,j_0})\mid i\in I\}$ be a nest of balls in $(X_{j_0},u_{j_0})$.
Further, for every $j\in J\setminus\{j_0\}$ choose some element $a_j\in X_j$ and for every
$i\in I$ set $a_{i,j}:=a_j$ and
\begin{eqnarray*}
B_i&:=& \{(b_j)_{j\in J}\in\prod_{j\in J} X_j\mid u_{\rm max} ((a_{i,j})_{j\in J},(b_j)_{j\in J})\le\gamma_i\}\\
&=& \{(b_j)_{j\in J}\in\prod_{j\in J} X_j\mid\forall j\in J:\ u_j(a_{i,j},b_j)\le\gamma_i\}\>.
\end{eqnarray*}
In order to show that $\cN:=\{B_i\mid i\in I\}$ is a nest of balls in $(\prod_{j\in J}X_j,u_{\rm max})$, we have to
show that any two balls $B_i\,$, $B_k\,$, $i,k\in I$, have nonempty intersection. Assume without loss of generality
that $\gamma_i\leq\gamma_k\,$. As $\{B_{\gamma_i}(a_{i,j_0})\mid i\in I\}$ is a nest of balls, we have that
$a_{i,j_0}\in B_{\gamma_k}(a_{k,j_0})$. It follows that $u_{j_0}(a_{k,j_0},a_{i,j_0})\le\gamma_k\,$, and since $a_{i,j}=a_j=a_{k,j}$ for every $j\in J\setminus\{j_0\}$,
\[
(a_{i,j})_{j\in J} \>\in\> B_i\>\cap\> \{(b_j)_{j\in J}\in\prod_{j\in J} X_j\mid\forall j\in J:\
u_j(a_{k,j},b_j)\le\gamma_k\} \>=\> B_i\cap B_k\>.
\]
As $(\prod_{j\in J}X_j,u_{\rm max})$ is assumed to be spherically complete, there is some $(z_j)_{j\in J}
\in\bigcap\cN$; it satisfies $u_j(a_{i,j},z_j)\le\gamma_i$ for all $i\in I$ and all $j\in J$. In particular, taking
$j=j_0\,$, we find that $z_{j_0}\in B_{\gamma_i}(a_{i,j_0})$ for all $i\in I$ and thus, $z_{j_0}\in\bigcap
\cN_{j_0}$.
\end{proof}

\mn
%
%
\subsection{The topological case}
\mbox{ }\sn
In which way does Tychonoff's theorem follow from its analogue for ball spaces? The problem in
the case of topological spaces is that the topological product ball space we have defined, while
containing only closed sets of the product, does not contain all of them, as it is not
necessarily closed under finite unions and arbitrary
intersections. We have to close it under these operations.

If the topological spaces $X_i\,$, $i\in I$, are compact, then their associated ball spaces
$(X_i,\cB_i)$ are {\bf S}$_1^c$ (cf.\ Theorem~\ref{cts}). By Theorem~\ref{bprtpr} their
topological product is also {\bf S}$_1^c$.
Theorem~\ref{thfunint} shows that the product topology of the topological spaces $X_i$ is the
closure of $(\cB_j)_{j\in J}\tpr$ under finite unions and under arbitrary nonempty
intersections, when $\emptyset$ and the whole space are adjoined. By Theorem~\ref{asstopco},
this topology is compact.

We have shown that Tychonoff's Theorem follows from its ball spaces analogue.

\end{document}